\def\wt{\widetilde}
\def\ov{\overline}
\def \im{{\rm Im}}
 \def\up{\upharpoonright}
\def\cH{\mathcal H}
\def\cD{\mathcal D}  
\def\cK{\mathcal K} \def\cL{\mathcal L}
\def\cM{\mathcal M} \def\cN{\mathcal N} \def\cP{\mathcal P} 
 \def\cT{\mathcal T} \def\cI{\mathcal I}
\def \gH{\mathfrak H}   \def \gN{\mathfrak N}
\def \bC{\mathbb C}    \def\bR{\mathbb R}
\def\bH{\mathbb H} \def\bK{\mathbb K}
\def \l{\lambda}
\def \a{\alpha} \def \b{\beta}    
 \def \t{\theta} \def\g {\gamma}
\def\d {\delta}  
\def \f{\varphi}  \def \G{\Gamma} \def\D {\Delta}
\def \C{\widetilde {\mathcal C}}
\def \CA{\C(\cH_0,\cH_1)}
\def \CB{\C(\cH_1,\cH_0)}
\def \cd {\cdot}
\def\BR {\G:\gH^2\to \cH_0\oplus\cH_1}
\def\lb {\left\{} \def\rb {\right\}}
\def\HH {\cH_0\oplus\cH_1}
\def\AC {AC(\cI)} \def\ACf {AC_0(\cI)}  \def\LI {L_\Delta^2(\cI)}
\def\lI {\cL_\Delta^2(\cI)}
\def\tma{\cT_{max}} \def\tmi{\cT_{min}} \def\Tma{T_{max}} \def\Tmi{T_{min}}
\def \dom {{\rm dom}\,}  \def \ran {{\rm ran}\,}  \def \ker{{\rm ker\,}}
 \def \mul {{\rm mul}\,} \def \sign {{\rm sign}\,}
\def \exa { {Ext}_A}
\def \CR {\bC\setminus\bR}
\def\bt{\{\cH,\G_0,\G_1\}}
\def\bta{\{\cH_0\oplus \cH_1,\Gamma _0,\Gamma _1\}}
\newtheorem{theorem}{Theorem}[section]
\newtheorem{proposition}[theorem]{Proposition}
\newtheorem{corollary}[theorem]{Corollary}
\newtheorem{lemma}[theorem]{Lemma}
\theoremstyle{definition}
\newtheorem{example}[theorem]{Example}
\theoremstyle{definition}
\newtheorem {definition} [theorem]{Definition}
\theoremstyle{remark}
\newtheorem{remark}[theorem]{Remark}
\numberwithin{equation}{section}
\begin{document}
\title [Boundary relations and boundary conditions ]
{Boundary relations and boundary conditions for general (not necessarily
definite) canonical systems with possibly unequal deficiency indices}
\author {Vadim Mogilevskii}
\address{Department of Calculus\\
Lugans'k National   University  \\
2 Oboronna, Lugans'k, 91011\\
Ukraine}

\begin{abstract}
 We investigate in the paper general (not necessarily definite) canonical
systems of differential equation in the framework of extension theory of
symmetric linear relations. For this aim we first introduce  the new notion of
a boundary relation $\G:\gH^2\to\HH$ for $A^*$, where $\gH$ is a Hilbert space,
$A$ is a symmetric linear relation  in $\gH, \cH_0$ is a boundary Hilbert space
and $\cH_1$ is a subspace in $\cH_0$. Unlike known concept of a boundary
relation (boundary triplet) for $A^*$ our definition of $\G$ is applicable to
relations $A$ with possibly unequal deficiency indices $n_\pm(A)$. Next we
develop the known results on minimal and maximal relations induced by the
general  canonical system $ J y'(t)-B(t)y(t)=\D (t)f(t)$ on an interval
$\cI=(a,b),\; -\infty\leq a<b\leq\infty $ and then by using a special (so
called decomposing) boundary relation for $\Tma$ we describe in terms of
boundary conditions proper extensions of $\Tmi$ in the case of the regular
endpoint $a$ and arbitrary (possibly unequal) deficiency indices $n_\pm
(\Tmi)$. If the system is definite, then decomposing boundary relation $\G$
turns into the decomposing boundary triplet $\Pi=\bt$ for $\Tma$. Using such a
triplet we show that self-adjoint decomposing boundary conditions exist only
for Hamiltonian systems; moreover, we describe all such conditions in the
compact form. These results are generalizations of the known results by
Rofe-Beketov on regular differential operators.  We characterize also all
maximal dissipative and accumulative separated boundary conditions, which exist
for arbitrary (not necessarily Hamiltonian) definite canonical systems.
\end{abstract}

\email{vim@mail.dsip.net}

\subjclass[2000]{}

\keywords{}

\maketitle
\section{Introduction}
Assume that $\gH$ is a Hilbert space, $A$ is a closed symmetric linear relation
in $\gH$ and $A^*$ is the adjoint linear relation of $A$. Moreover, denote by
$[\gH_1, \gH_2]$ the set of all bounded operators between $\gH_1$ and $\gH_2$
and let $[\gH]=[\gH, \gH]$.

Recall \cite{GorGor,Mal92} that a triplet $\Pi=\bt $, where $\cH$ is an
auxiliary Hilbert space and $\G_0,\G_1: A^*\to \cH$ are (boundary) linear maps,
is called a boundary triplet for $A^*$ if the map
$\G:=(\G_0\;\;\G_1)^\top:A^*\to \cH\oplus\cH$ is surjective and the following
''abstract Green's identity'' holds
\begin {equation}\label{1.1}
(f',g)-(f,g')=(\G_1  \hat f, \G_0 \hat g)-(\G_0\hat f,\G_1\hat g), \quad \hat
f=\{f,f'\},\;\hat g=\{g,g'\}\in  A^*.
\end{equation}

In \cite{DM91,Mal92} an  abstract Weyl function $M_\Pi(\l)$ was associated with
a boundary triplet $\Pi$. This function is defined for all $\l\in\CR$ by the
equality
\begin{gather}\label{1.2}
\G_1\{f_\l,\l f_\l\}=M_\Pi(\l)\G_0\{f_\l,\l f_\l\}, \quad f_\l\in\ker (A^*-\l).
\end{gather}
It turns out that $M(\l)$ is a Nevanlinna $[\cH]$-valued function, i.e.,
$M(\l)$ is holomorphic on $\CR$, $M^*(\l)=M(\ov\l)$ and $\im\l\cd \im M(\l)\geq
0, \; \l\in\CR $. Moreover, the Nevanlinna function $M(\l)$ is uniformly
strict, that is $0\in\rho (\im M(\l)), \; \l\in\CR$.

By choosing a suitable boundary triplet for a concrete problem one can
parametrize various classes of extensions $\wt A\supset A$ in the most
convenient form. Moreover, the Weyl function enables to characterize spectra of
extensions $\wt A$ in the similar way as classical $m$-functions in the
spectral theory of Sturm-Liouville operators and Jacobi matrices. These and
other reasons made a boundary triplet and the corresponding Weyl function the
convenient tools in the extension theory of symmetric operators (linear
relations) and its applications (see \cite{GorGor,DM91,Mal92} and references
therein). At the same time the theory of boundary triplets and their Weyl
functions was developed in \cite{GorGor,DM91,Mal92} only for symmetric
relations $A$ with equal deficiency indices $n_+(A)=n_-(A)$.

To cover the case $n_+(A)\neq n_-(A)$ we generalized in \cite{Mog06.2}
definition of a boundary triplet as follows. Assume that $\cH_0$ is a Hilbert
space, $\cH_1$ is a subspace in $\cH_0$ and $\G_j: A^*\to \cH_j, \;
j\in\{0,1\}$ are linear maps. Then a collection $\Pi=\bta$ is a boundary
triplet (a $D$-triplet in terminology of \cite{Mog06.2}) for $A^*$ if the map
$\G:=(\G_0\;\;\G_1)^\top:A^*\to \cH_0\oplus\cH_1$ is surjective and the
identity
\begin {equation}\label{1.3}
(f',g)-(f,g')=(\G_1  \hat f, \G_0 \hat g)-(\G_0\hat f,\G_1\hat g)+i(P_2\G_0\hat
f,P_2\G_0\hat g), \quad \hat f=\{f,f'\},\;\hat g=\{g,g'\}\in  A^*
\end{equation}
holds in place of \eqref{1.1} (here $P_2$ is the orthoprojector in $\cH_0$ onto
 $\cH_2:=\cH_0\ominus\cH_1$). Associated with such a triplet $\Pi$ is the Weyl
 function $M_{\Pi +} (\l)$ defined for all $\l\in\bC_+$ by
\begin{gather}\label{1.4}
\G_1\{f_\l,\l f_\l\}=M_{\Pi +}(\l)\G_0\{f_\l,\l f_\l\}, \quad f_\l\in\ker
(A^*-\l)
\end{gather}
The function $M_{\Pi +}(\l)$ is holomorphic on $\bC_+$, takes on values in
$[\cH_0,\cH_1]$ and possesses a number of properties similar to those of the
Weyl function \eqref{1.2}. In particular, the function $M_\Pi(\l)=M_{\Pi
+}(\l)\up \cH_1$ is a uniformly strict Nevanlinna function with values in
$[\cH_1]$.

A boundary triplet $\bta$ for $A^*$ enables to parametrize efficiently  all
proper extensions of $A$. Namely, if $\cK$ is a Hilbert space and
$\{(C_0,C_1);\cK\}$ is a pair of operators $C_j\in [\cH_j,\cK]$, then the
equality (the abstract boundary condition)
\begin {equation}\label{1.5}
 \wt A= \{ \hat f\in A^*: C_0\G_0  \hat f + C_1 \G_1 \hat f =0\}
 \end{equation}
defines the proper extension $A\subset \wt A\subset A^*$ and conversely  each
such an extension $\wt A$ admits a unique representation  \eqref{1.5}.
Moreover, the extension $\wt A$ is maximal dissipative, maximal accumulative or
self-adjoint if and only if the operator pair  $\{(C_0,C_1);\cK\}$ belongs to
one of the special classes introduced in \cite{Mog06.1}.

It turns out that each boundary triplet $\Pi=\bta$ satisfies the relation
\begin{gather}\label{1.5a}
\dim\cH_1=n_-(A)\leq n_+(A)=\dim\cH_0
\end{gather}
and, therefore, it is applicable  to symmetric relations $A$ with unequal
deficiency indices. Clearly, in the case $\cH_0=\cH_1=:\cH$ such a triplet $\Pi
$ and the corresponding Weyl function $M_\Pi(\l)$ turn into the similar objects
in the sense of \cite{GorGor,Mal92}.

In \cite{DM95} the notion of a boundary triplet $\Pi=\bt$ for $A^*$ has been
extended to the case where the corresponding Weyl function $M_\Pi(\l)$ is a
(not necessarily uniformly strict) Nevanlinna function such that
$0\notin\sigma_p(\im M(i))$.  Next,  the concepts of a boundary relation and
its Weyl family which generalize the above notions of a boundary triplet and
its Weyl function were introduced in \cite{DM06}. According to \cite{DM06} a
boundary relation for $A^*$ is a (possibly multivalued) linear map
$\G:=(\G_0\;\;\G_1)^\top:\gH^2\to \cH\oplus\cH$ such that $\dom\G$ is dense in
$A^*$, the Green's identity \eqref{1.1} holds and a certain maximality
condition is satisfied. The Weyl function of the boundary relation $\G$ is
defined by
\begin{gather*}
M(\l)=\{\{\G_0\{f_\l,\l f_\l\}, \G_1\{f_\l,\l f_\l\}\}: f_\l\in\ker (A^*-\l)
\}, \quad\l\in\CR
\end{gather*}
and now it belongs to the class of Nevanlinna families; moreover, if the map
$\G_0$ is surjective, then $M(\l)$ is a Nevanlinna operator function. In the
paper \cite{DM09} the Weyl function was used for description of various classes
of the exit space extensions $\wt A(=\wt A^*)\supset A$.

In the present paper the new concept of a boundary relation for $A^*$ with
possibly unequal boundary spaces $\cH_0$ and $\cH_1$ is  introduced. Roughly
speaking this relation is a (possibly multivalued) linear map
$\G:=(\G_0\;\;\G_1)^\top:\gH^2\to \cH_0\oplus\cH_1$  such that
$\ov{\dom\G}=A^*$, the Green's identity \eqref{1.3} holds and a certain
maximality condition is satisfied (here as before $\cH_0$ is a Hilbert space
and $\cH_1$ is a subspace in $\cH_0$). Moreover, by means of the equality
\begin{gather}\label{1.6}
M_+(\l)=\{\{\G_0\{f_\l,\l f_\l\}, \G_1\{f_\l,\l f_\l\}\}: f_\l\in\ker (A^*-\l)
\}, \quad\l\in\bC_+
\end{gather}
we associate with a boundary relation $\G$   the  Weyl family $M_+(\l)$.

In the paper we study substantially the boundary relations $\BR$ with
$\dim\cH_0 <\infty$. We show that in this case $\dom\G=A^*$ and there is a
boundary triplet $\Pi_\G=\{\cK_0\oplus\cK_1,G_0, G_1\}$ for $A^*$ with
$\cK_j\in\cH_j, \; j\in \{0,1\}$ such that $\G$ can be represented roughly
speaking as a direct sum of (the graph of) the operator $G=(G_0\;\;G_1)^\top$
and $ \mul\G$.

The multivalued part $\mul \G$ which is a linear relation from $\cH_0$ to
$\cH_1$ is of importance in our considerations. If $\mul \G$ is the operator,
then the corresponding Weyl family $M_+(\l)$ is the holomorphic operator
function with values in $[\cH_0, \cH_1]$, which admits the block representation
by means of the Weyl function $M_{\Pi_\G +} (\l)$ of the boundary triplet
$\Pi_\G$ and $\mul \G $. In the case $\cH_0=\cH_1=:\cH$ one has also
$\cK_0=\cK_1=:\cK$ and the mentioned representation of $M(\l)$ is
\begin{gather}\label{1.7}
M(\l)=\begin{pmatrix} M_{\Pi_\G}(\l) & F \cr F^* & F'\end{pmatrix}:
\cK\oplus\cK^\perp\to \cK\oplus\cK^\perp,
\end{gather}
where $F$ and $F'$ are the operators defined  in terms of $\mul\G$. The
equality \eqref{1.7} shows that $M(\l)$ is a Nevanlinna function and
$M_{\Pi_\G}(\l)$ is the uniformly strict part of $M(\l)$.

Note that for the boundary relation $\BR$ with $\dim\cH_0 <\infty$ the
equalities
\begin{gather*}
\dim\cH_0=n_+(A)+\dim (\mul\G), \qquad \dim\cH_1=n_-(A)+\dim (\mul\G)
\end{gather*}
are valid (c.f. \eqref{1.5a}), so that $n_-(A)\leq n_+(A)<\infty$. At the same
time in the case of unequal deficiency indices $n_+(A)\neq n_-(A)$ each
boundary relation $\G:\gH^2\to \cH^2$ for $A^*$ in the sense of \cite{DM06}
satisfies the equality $\dim \cH=\infty$ (see \cite[Proposition 3.2]{DM09}).
This assertion shows that in the case  $n_+(A)\neq n_-(A)$ our definition of a
boundary relation is more natural and convenient for applications. Observe also
that other generalizations of boundary triplets can be found e.g. in
\cite{BehLan07}.

Next by using the concept of a boundary relation we investigate in the paper
linear relations induced by a general (not necessarily definite) canonical
system of differential equations with possibly unequal deficiency indices. Such
a system is of the form
\begin{gather}\label{1.8}
J y'(t)-B(t)y(t)=\D (t)f(t), \quad t\in\cI,
\end{gather}
where $J$ is an operator in the finite-dimensional Hilbert space $\bH$ such
that $J^*=J^{-1}=-J$ and $B(t)$ and $\D(t)$ are locally integrable
$[\bH]$-valued functions defined on an interval $\cI=(a,b),\; -\infty\leq a< b
\leq\infty,$ and such that $B(t)=B^*(t)$ and $ \D(t)\geq 0$ a.e. on $\cI$.
Without loss of generality we assume that
\begin{gather}\label{1.9}
\bH=H\oplus\hat H\oplus H
\end{gather}
with the Hilbert spaces $H$ and $\hat H$ and the operator $J$ is
\begin{gather} \label{1.10}
J=\begin{pmatrix} 0 & 0&-I_H \cr 0& i I_{\hat H}&0\cr I_H&
0&0\end{pmatrix}:H\oplus\hat H\oplus H \to H\oplus\hat H\oplus H.
\end{gather}
The canonical system \eqref{1.8} is called Hamiltonian if $\hat H=\{0\}$, in
which case the operator $J$ takes the form
\begin{gather*}
J=\begin{pmatrix} 0 & -I_H \cr  I_H& 0\end{pmatrix}:H\oplus H \to H\oplus H.
\end{gather*}
Clearly, the Hamiltonian system is a particular case of the system \eqref{1.8}.

Denote by $\lI$ the semi-Hilbert space of $\bH$-valued Borel  functions $f(t)$
on $\cI$ with $\int_\cI (\D(t)f(t),f(t))\, dt < \infty $ and let $(f,g)_\D$ be
the semi-definite inner product in $\lI$. Assume also that $\LI$ is the
corresponding Hilbert space of equivalence classes and $\pi$ is the quotient
map from $\lI$ onto $\LI$, so that the inner product in $\LI$ is $(\wt f, \wt g
)(=(\pi f,\pi g))=(f,g)_\D, \;\; \wt f,\wt g\in \LI$.

The null manifold $\cN$ of the system \eqref{1.8} plays an essential role in
our considerations. Recall \cite{KogRof75} that $\cN$ is defined as the set of
all solutions of the equation $J y'(t)-B(t)y(t)=0$ such that $\D(t)y(t)=0$ a.e.
on $\cI$. The system \eqref{1.8} is said to be definite if $\cN=\{0\}$ and
indefinite in the opposite case.

As is known the extension theory of symmetric relations is the natural
framework for boundary value problems involving canonical systems of
differential equations (see \cite{Orc,LT82,DLS88,DLS93,HSW00,BHSW10,LesMal03}
and references therein). This framework is based on the concept of minimal and
maximal relations which are defined as follows. Let $\tma$ be the set of all
pairs $\{y,f\}\in\lI\times \lI $ satisfying the system \eqref{1.8} and let
$\cT_0$ be the set of all $\{y,f\}\in\tma$ such that $y$ has compact support.
Then $\tma$ and $\cT_0$ are linear relations in $\lI$ and the Lagrange's
identity
\begin {equation*}
(f,z)_\D-(y,g)_\D=[y,z]_b -[y,z]_a,\quad \{y,f\}, \; \{z,g\} \in\tma.
\end{equation*}
holds with
\begin {equation}\label{1.11}
[y,z]_a:=\lim_{t \downarrow a}(J y(t),z(t)),\qquad [y,z]_b:=\lim_{t \uparrow
b}(J y(t),z(t)), \quad y,z \in\dom\tma.
\end{equation}
By using \eqref{1.11} introduce also the linear relation $\tmi$ in $\lI$ by
\begin {equation}\label{1.12}
\tmi=\{\{y,f\}\in\tma:\,[y,z]_a= [y,z]_b=0, \; z\in\dom\tma\}.
\end{equation}
Moreover, in the case of the regular endpoint $a$ (that is, if $a\neq -\infty$
and $B(t)$ and $\D(t)$ are integrable on $(a,\b), \;\b\in\cI$) let
\begin {equation} \label{1.13}
\cT_a=\{\{y,f\}\in\tma:\, y(a)=0 \;\;\text{and}\;\; [y,z]_b=0, \;
z\in\dom\tma\}.
\end{equation}
All the above relations in $\lI$ naturally generate by means of the equalities
\begin{gather}
\Tmi=\{\{\pi y, \pi f\}:\{y,f\}\in\tmi\}, \;\; \;\;\;T_a=\{\{\pi y, \pi
f\}:\{y,f\}\in\cT_a\},\label{1.14} \\
 T_0=\{\{\pi y, \pi f\}:\{y,f\}\in\cT_0\},
\;\;\;\;\;\Tma=\{\{\pi y, \pi f\}:\{y,f\}\in\tma\} \nonumber
\end{gather}
linear relations $\Tmi, \; T_a, \; T_0$ and $\Tma$ in the Hilbert space $\LI$.

As was shown in \cite{Orc} (see also \cite{LesMal03,BHSW10}) in the case of the
\emph{definite system} \eqref{1.8} $T_0$ is a symmetric linear relation in
$\LI$,  $\Tmi$ is closure  of $T_0$ and $\Tma=\Tmi^*(=T_0^*)$; moreover, if the
endpoint $a$ is regular then $\Tmi=T_a$. In view of this assertion $\Tmi$ and
$\Tma$ are called minimal and maximal relations respectively, which is in full
accord with similar definition of minimal and maximal operator for an ordinary
differential expression \cite{Nai}. At once  certain difficulties arise in the
case of an \emph{indefinite system} \eqref{1.8}, which can be explained as
follows. In the  definite case the quotient mapping $\pi$ isomorphically maps
$\dom\tma$ onto $\dom\Tma$, which enables one to identify in fact the relations
$\tma$  and $\Tma$. If the system is indefinite, then the mapping
$\pi\up\dom\tma$ has as nontrivial kernel the null manifold $\cN$, so that the
immediate identifying of  $\tma$  and $\Tma$ becomes impossible.

The above difficulties were partially overcome in the papers by Kac
\cite{Kac83,Kac84} (the case $\dim\bH=2$) and Lesch and Malamud \cite{LesMal03}
(the case $\dim\bH=n<\infty$), where general (not necessarily definite) systems
were studied. In these papers first the equality $T_0^*=\Tma$ is proved and
then the minimal relation is defined as  closure of $T_0$.

In the present paper we show that for the general system \eqref{1.8} the
minimal relation in $\LI$ can be also defined by the first equality in
\eqref{1.14} with $\tmi$ in the form \eqref{1.12}. Moreover in the case of the
regular endpoint $a$ the minimal relation coincides with the relation $T_a$
defined by \eqref{1.13} and the second equality in \eqref{1.14}. Observe also
that $\cT_a\subset \tmi$ and an interesting in our opinion  fact is that
generally speaking  $\cT_a\neq\tmi$ (for more details see Proposition
\ref{pr4.11a} and Example \ref{ex4.11b}).

Next assume that $a$ is  a regular endpoint for the  canonical system
\eqref{1.8},
\begin{gather*}
\nu_+:=\dim\ker (i J-I)(=\dim H), \qquad \nu_-:=\dim\ker (i J+I)(=\dim (\hat
H\oplus H))
\end{gather*}
and let $\nu_{b+} $ and $\nu_{b-} $ be indices of inertia of the skew-Hermitian
form $[y,z]_b$ (for simplicity assume that $\nu_{b+}\geq \nu_{b-}$). The
equality $\Tmi=T_a$ enables us to describe all proper extensions  of $\Tmi$ in
terms of boundary conditions. For this aim we use a special boundary relation
for $\Tma$ which we call decomposing. This boundary relation is defined as
follows.

Let $\cH_b$ and $\hat\cH_b$ be finite-dimensional  Hilbert spaces  and let
\begin{gather}\label{1.15}
\G_b=(\G_{0b}:\,  \hat\G_b:\,  \G_{1b})^\top:\dom\tma\to
\cH_b\oplus\hat\cH_b\oplus \cH_b
\end{gather}
be a surjective linear map such that
\begin{gather}\label{1.16}
[y,z]_b=i (\hat\G_b y, \hat\G_b z)-(\G_{1b}y,\G_{0b}z)+(\G_{0b}y,\G_{1b}z),
\quad y,z \in \dom\tma.
\end{gather}
(it is not difficult to prove the existence of such a map $\G_b$). Moreover,
for each function $y\in\dom\tma$ let
\begin{gather}\label{1.17}
y(t)=\{y_0(t),\, \hat y(t), \, y_1(t)  \}
\end{gather}
be the representation of  $y(t)$ in accordance with the decomposition
\eqref{1.9}. Then the decomposing boundary relation $\G:(\LI)^2\to\HH$ for
$\Tma$ is of the form
\begin{gather}\label{1.18}
\G=\lb \lb {\pi y\choose \pi f}, {\G_0' y\choose \G_1'y } \rb
:\{y,f\}\in\tma\rb,
\end{gather}
where $\cH_0$ and $\cH_1$ are Hilbert spaces defined by means of $\bH, \cH_b$
and $\hat \cH_b$ and $\G_j': \dom\tma\to \cH_j, \; j\in\{0,1\}$ are linear maps
constructed with the aid of $y(a)$ and the operators from \eqref{1.15}. If
$\Tmi$ has equal deficiency indices $n_+(\Tmi)=n_-(\Tmi)$, then
$\cH_0=\cH_1=H\oplus\hat H\oplus \cH_b$ and the decomposing boundary relation
\eqref{1.18} can be written as
\begin{gather}\label{1.19}
\G=\lb \lb {\pi y\choose \pi f}, {\{ y_0(a),\, \tfrac i {\sqrt 2}\ (\hat
y(a)-\hat\G_b y),\,\G_{0b}y \}\choose \{ y_1(a),\, \tfrac 1 {\sqrt 2} (\hat
y(a)+\hat\G_b y),\,-\G_{1b}y \} } \rb :\{y,f\}\in\tma\rb.
\end{gather}
In the case of the regular system one can put in \eqref{1.19} $\G_{0b}y=y_0(b),
\; \G_{1b}y=y_1(b)$ and $\hat\G_b y=\hat y(b)$. If $b$ is not regular, then
$\G_b y $ can be represented by means of certain limits at the point $b$
associated with the function $y\in\dom\tma$ (for more details see Remark
\ref{rem5.2} ). Therefore the decomposing boundary relation $\G$ is given by
\eqref{1.19} in terms of boundary values of the function $y\in\tma$ at the
endpoint $a$ (regular value) and $b$ (singular value), which is of importance
in our considerations of canonical systems.

Recall \cite{KogRof75} that the formal deficiency indices of the system
\eqref{1.8} are defined via
\begin{gather*}
N_\pm=\dim \{y\in\lI:\, J y'(t)-B(t)y(t)=\l\D(t)y(t) \;\;\text{a.e. on}\;\;
\cI\}, \quad \l\in\bC_\pm.
\end{gather*}
As was shown in \cite{LesMal03} the relations
\begin{gather*}
N_+=n_+(\Tmi)+k_\cN, \qquad N_-=n_-(\Tmi)+k_\cN
\end{gather*}
hold with $k_\cN=\dim\cN$. In the present paper by using just a fact of
existence of a decomposing boundary relation we prove the equalities
\begin{gather}\label{1.20}
N_+=\nu_+ + \nu_{b+}, \qquad N_-=\nu_- + \nu_{b-}.
\end{gather}
Formula \eqref{1.20} yields the known estimates $\nu_\pm\leq N_\pm\leq \dim\bH$
obtained by analytic methods in\cite{Atk,KogRof75}. Observe also that in a
somewhat different way the equalities \eqref{1.20} were proved for definite
systems  in \cite[Lemma 4.15]{BHSW10}.

Existence of the nontrivial multivalued part $\mul\G$ of the decomposing
boundary relation \eqref{1.18} is caused by the nontrivial null manifold $\cN$,
which can be seen from the equalities
\begin{gather}\label{1.21a}
\mul\G=\{\{\G_0'y,\G_1'y\}:\, y\in\cN\}, \qquad \dim (\mul\G)=k_\cN(=\dim\cN).
\end{gather}
Formula \eqref{1.21a} implies that for the definite canonical system
\eqref{1.8} the decomposing boundary relation turns into the decomposing
boundary triplet $\Pi=\bta$ for $\Tma$. In the case $n_+(\Tmi)=n_-(\Tmi)$ this
triplet is of the form $\Pi=\bt$ with the boundary Hilbert space
$\cH=H\oplus\hat H \oplus \cH_b$ and the operators $\G_j$ given by
\begin{gather}
\G_0 \{\wt y, \wt f\} =\{ y_0(a),\, \tfrac i {\sqrt 2} (\hat y(a)-\hat\G_b
y),\,\G_{0b}y \} (\in H\oplus\hat H \oplus \cH_b),\qquad\qquad\qquad\quad
\label{1.22}\\
\G_1 \{\wt y, \wt f\} = \{ y_1(a),\, \tfrac 1 {\sqrt 2} (\hat y(a)+\hat\G_b
y),\,-\G_{1b}y \}(\in H\oplus\hat H \oplus \cH_b), \quad \{\wt y, \wt f\} \in
\Tma \label{1.23}
\end{gather}
(here $\G_{0b}, \;\G_{1b}$ and $\hat\G_b$ are taken from \eqref{1.15}). In the
case of the regular system one can put $\cH= H\oplus\hat H \oplus H$ and
\begin{gather}
\G_0 \{\wt y, \wt f\}=\{ y_0(a),\, \tfrac i {\sqrt 2} (\hat y(a)-\hat
y(b)),\, y_0(b) \}(\in H\oplus\hat H \oplus H),\qquad\qquad\qquad\quad\label{1.24} \\
\G_1 \{\wt y, \wt f\}= \{ y_1(a),\, \tfrac 1 {\sqrt 2}(\hat y(a)+\hat
y(b)),\,-y_1(b) \}(\in H\oplus\hat H \oplus H),\quad \{\wt y, \wt f\} \in \Tma.
\label{1.25}
\end{gather}
The boundary triplet $\bt$ defined via \eqref{1.24} and \eqref{1.25} is similar
to that introduced, in fact,  by Rofe-Beketov \cite{Rof69} for regular
differential operators of a higher order. Observe also that other constructions
of a boundary triplet for $\Tma$ in the case of the definite system \eqref{1.8}
can be found in \cite{BHSW10}.

 The decomposing boundary triplet \eqref{1.22}, \eqref{1.23} enables us to describe
maximal dissipative, maximal accumulative and self-adjoint boundary conditions,
which define in terms of boundary values the extensions $\wt A\supset \Tmi$ of
the corresponding class. As a consequence we obtain the known description of
self-adjoint boundary conditions, given in \cite{Atk,GK,Orc} for regular
definite systems \eqref{1.8} and in \cite{Kra89} for definite Hamiltonian
systems with the regular endpoint $a$.

Finally by using the concept of a decomposing boundary triplet we examine
separated boundary conditions of various classes. Recall that self-adjoint
separated boundary conditions for definite Hamiltonian systems were studied
with the aid of analytic methods by many authors (see \cite{HinSch93,Kra89} and
references therein).  In the present paper we show that self-adjoint separated
boundary conditions for  the definite canonical system \eqref{1.8} exist if and
only if this system is Hamiltonian. Moreover, for the Hamiltonian system the
decomposing boundary triplet $\bt$ for $\Tma$ takes the form
\begin{gather*}
\G_0 \{\wt y, \wt f\} =\{ y_0(a),\, \G_{0b}y \} (\in H\oplus \cH_b),\qquad \G_1
\{\wt y, \wt f\} = \{ y_1(a),\, -\G_{1b}y \}(\in H \oplus \cH_b), \quad \{\wt
y, \wt f\} \in \Tma
\end{gather*}
and the general form of self-adjoint separated  boundary conditions is
\begin{gather}\label{1.26}
\wt A=\{\{\wt y, \wt f\}:\, N_{0a}y_0(a)+N_{1a}y_1(a)=0, \;\;
N_{0b}\G_{0b}y+N_{1b}\G_{1b}y=0 \},
\end{gather}
where the operators $N_{0a},\; N_{1a}$ and $N_{0b},\; N_{1b}$ are entries of
the self-adjoint operator pairs $\{(N_{0a}, N_{1a})\}$ and $\{(N_{0b},
N_{1b})\}$. These results are generalizations of those obtained by Rofe-Beketov
in \cite{Rof69} for regular differential operators. Moreover,  formula
\eqref{1.26} includes as a particular case the results on self-adjoint
separated  boundary conditions in \cite{HinSch93,Kra89}.

An interesting in our opinion fact is the existence of maximal dissipative an
accumulative separated boundary conditions for the not necessarily Hamiltonian
system \eqref{1.8} (in the paper we describe all these conditions). An
important subclass of maximal dissipative (accumulative) separated conditions
are those defined by a self-adjoint condition at the regular endpoint $a$ and
the maximal dissipative (accumulative) boundary condition at the singular
endpoint $b$. This subclass of boundary conditions may be useful in the theory
of not orthogonal spectral functions associated with the system \eqref{1.8} (we
are going to touch upon this subject elsewhere).

\section{Preliminaries}
\subsection{Linear relations}
The following notations will be used throughout the paper: $\gH$, $\cH$ denote
Hilbert spaces; $[\cH_1,\cH_2]$  is the set of all bounded linear operators
defined on $\cH_1$ with values in $\cH_2$; $[\cH]:=[\cH,\cH]$; $A\up \cL$ is
the restriction of an operator $A$ onto the linear manifold $\cL$; $P_\cL$ is
the orthogonal projector in $\gH$ onto the subspace $\cL\subset\gH$;
$\bC_+\,(\bC_-)$ is the upper (lower) half-plain  of the complex plain.

Recall that a linear relation $T$  from a linear space $L_0 $ to a linear space
$L_1$ is a linear manifold in the Cartesian product $L_0\times L_1$. It is
convenient to write $T: L_0\to L_1$ and interpret $T$ as a multi-valued linear
mapping from $L_0$ into $L_1$. If $L_0=L_1=:L$ one speaks of a linear relation
$T$ in $L$. For a linear relation $T:L_0\to L_1$ we denote by $\dom T,\,\ran T,
\,\ker T$ and $\mul T$  the domain,  range, kernel and the multivalued part of
$T$ respectively. The inverse $T^{-1}$ is a linear relation from $L_1$ to $L_0$
defined by $T^{-1}=\{\{f', f\}:\{f,f'\}\in T\}$.

Assume now that $\cH_0$ and $\cH_1$ are Hilbert spaces. Then the linear space
$\cH_0\times\cH_1$ with the inner product $(\{f,f'\}, \{g,g'\})_{\cH_0
\oplus\cH_1} =(f,g)_{\cH_0}+(f',g')_{\cH_1}$ is a Hilbert space
$\cH_0\oplus\cH_1$.  The set of all closed linear relations from $\cH_0$ to
$\cH_1$ (in $\cH$) will be denoted by $\C (\cH_0,\cH_1)$ ($\C(\cH)$). A closed
linear operator $T$ from $\cH_0$ to $\cH_1$  is identified  with its graph
$\text {gr}\, T\in\CA$. For a linear relation $T:\cH_0\to \cH_1$ we denote by
$T^*(\in\CB)$ the adjoint relation

In the case $T\in\CA$ we write $0\in \rho (T)$ if $\ker T=\{0\}$\ and\ $\ran
T=\cH_1$, or equivalently if $T^{-1}\in [\cH_1,\cH_0]$; $0\in \hat\rho (T)$\ \
if\ \  $\ker T=\{0\}$\ and\   $\ran (T)$ is a closed subspace in $\cH_1$. For a
linear relation $T\in \C(\cH)$ we denote by $\rho (T):=\{\l \in \bC:\ 0\in \rho
(T-\l)\}$ and $\hat\rho (T)=\{\l \in \bC:\ 0\in \hat\rho (T-\l)\}$ the
resolvent set and the set of regular type points  of $T$ respectively.

For a linear relation $T\in \C(\cH)$ and for any $\l \in \bC$ we let
\begin{equation*}
\gN_\l (T):=\ker (T^*-\l)\,(=\cH\ominus \ran (T-\ov\l)), \quad \hat
\gN_{\l}(T):=\{\{f,\l f\}:f\in \gN_{\l }(T)\}\subset T^*.
\end{equation*}
If $\ov\l\in\hat\rho (T)$, then $\gN_\l (T)$ is a defect subspace of  $T$.
Recall also the following definition.
\begin{definition}\label{def2.0}
A holomorphic operator function $\Phi (\cd):\bC\setminus\bR\to [\cH]$ is called
a Nevanlinna function  if $\im\, z\cd \im \Phi (z)\geq 0 $ and $\Phi ^*(z)=
\Phi (\ov z), \; z\in\bC\setminus\bR$.
\end{definition}
\subsection{Operator pairs }
Let $\cK,\cH_0,\cH_1$ be Hilbert spaces. A pair of operators $C_j\in
[\cH_j,\cK],  \;j\in\{0,1\}$  is called admissible if the range of the operator
\begin {equation}\label{2.1}
C=(C_0:\,C_1):\cH_0\oplus\cH_1\to\cK
\end{equation}
coincides with $\cK$. In the sequel all pairs \eqref{2.1} are admissible unless
otherwise stated.

 Two  pairs $(C_0^{(j)}:\,C_1^{(j)}):
\cH_0\oplus\cH_1\to\cK_j, \; j\in\{1,2\}$ will be called equivalent if
$C_0^{(2)}=XC_0^{(1)}$ and $C_1^{(2)}=XC_1^{(1)}$ with some isomorphism $X\in
[\cK_1,\cK_2]$.

It is clear that the set of all operator pairs \eqref{2.1}  falls into
nonintersecting classes of equivalent pairs. Moreover, the equality
\begin {equation}\label{2.2}
\t=\{(C_0,C_1);\cK\}:=\{\{h_0, h_1\}\in\cH_0\oplus\cH_1:\, C_0h_0+C_1h_1=0\}
\end{equation}
establishes a bijective correspondence between all linear relations  $\t\in\CA$
and all equivalence classes of  operator pairs \eqref{2.1}. Therefore in the
sequel  we identify (by means of \eqref{2.2}) a linear relation  $\t\in\CA$ and
the corresponding class of equivalent operator pairs $C_j\in [\cH_j,\cK], \;
j\in \{0,1\}$.

Next recall some results and definitions from our paper \cite{Mog06.1}.

Assume that $\cH_0$ is a Hilbert space, $\cH_1$ is a subspace in $\cH_0$,
$\cH_2:=\cH_0\ominus\cH_1$ and  $P_j$ is the orthoprojector in $\cH_0$ onto
$\cH_j,\; j\in\{1,2\}$. For an operator pair (linear relation)
$\t=\{(C_0,C_1);\cK\} (\in\CA)$ we let
\begin {gather*}
\wt S_\t:=2 \im (C_1C_{01}^*)-C_{02}C_{02}^*, \quad \wt S_\t\in [\cK],
\end{gather*}
where $C_{01}$ and $C_{02}$ are entries of the block representation
$C_0=(C_{02}:\, C_{01}):\cH_2\oplus\cH_1\to\cK$.
\begin{definition} \label{def2.1}$\,$ \cite{Mog06.1}
An operator pair (linear relation)  $\t=\{(C_0,C_1);\cK\}(\in\CA)$ belongs to
the class:

1) $Dis (\cH_0,\cH_1)$, if $\wt S_\t\geq 0$ and $0\in\rho (C_{01}-iC_1) $;

2) $Ac (\cH_0,\cH_1))$, if  $\wt S_\t \leq 0$ and $0\in\rho (C_0+iC_1P_1) $;

3) $Sym (\cH_0,\cH_1))$, if  $\wt S_\t = 0$ and $0\in\rho (C_{01}-iC_1) \cup
(C_0+iC_1P_1) $;

4))$Self (\cH_0,\cH_1)$, if $\t\in Dis (\cH_0,\cH_1)\cap Ac (\cH_0,\cH_1))$.
\end{definition}
Note that the inclusion $0\in\rho (C_{01}-iC_1)\cup \rho (C_0+iC_1P_1)$ implies
that $\ran (C_0:\, C_1)=\cK$. Therefore each of the above definitions
 1) -- 4)  gives an admissible operator
pair $(C_0:\, C_1)$.

If $\t\in Dis (\cH_0,\cH_1)$ ($\t\in Ac (\cH_0,\cH_1)$), then
$\dim\cK=\dim\cH_1$ (resp. $\dim\cK=\dim\cH_0$). Therefore for an operator pair
$\t\in Dis (\cH_0,\cH_1)\;(\t\in Ac (\cH_0,\cH_1))$ one can put $\cK=\cH_1$
(resp. $\cK=\cH_0$).

In the case $\cH_0=\cH_1:=\cH$ we let $Dis (\cH)=Dis (\cH,\cH)$ and similarly
the classes $ Ac (\cH),\; Sym (\cH)$ and $ Self (\cH)$ are defined. In view of
Definition \ref{def2.1} for each operator pair (linear relation)
$\t=\{(C_0,C_1);\cK\}(\in\C (\cH))$ the following equivalences hold:
\begin{gather}
\t\in Dis (\cH) \iff (\im (C_1C_0^*)\geq 0 \;\;\text{and}\;\; 0\in\rho
(C_0-iC_1) )\label{2.2.1}\\
\t\in Ac (\cH) \iff (\im (C_1C_0^*)\leq 0 \;\;\text{and}\;\; 0\in\rho
(C_0+iC_1) )\label{2.2.2}\\
\t\in Sym (\cH) \iff (\im (C_1C_0^*)= 0 \;\;\text{and}\;\; 0\in\rho
(C_0-iC_1)\cup\rho (C_0+iC_1) )\label{2.2.3}\\
\t\in Self (\cH) \iff (\im (C_1C_0^*)= 0 \;\;\text{and}\;\; 0\in\rho
(C_0-iC_1)\cap\rho (C_0+iC_1) )\label{2.2.4}
\end{gather}
Moreover, the classes $Dis (\cH), \;Ac (\cH),\; Sym (\cH)$ and $ Self (\cH)$
coincide with the known classes of all maximal dissipative, maximal
accumulative, maximal symmetric and self-adjoint linear relations in $\cH$
respectively.

The following  proposition  is immediate from Definition \ref{def2.1}.
\begin{proposition}\label{pr2.2}
1) In the case $\dim \cH_0<\infty$ the class $Self (\cH_0,\cH_1)$ is not empty
if and only if $\cH_0=\cH_1=:\cH$.

2) Let $\dim\cH<\infty $ and let $\t=\{(C_0,C_1);\cK\}(\in \C (\cH))$ be an
admissible operator pair such that $\dim\cK=\dim\cH$. Then the following
equivalences hold
\begin{gather*}
\t\in Dis (\cH) \Leftrightarrow (\im (C_1C_0^*)\geq 0, \quad \t\in Ac (\cH)
\Leftrightarrow (\im (C_1C_0^*)\leq 0, \quad \t\in Self (\cH) \Leftrightarrow
(\im (C_1C_0^*)= 0.
\end{gather*}
\end{proposition}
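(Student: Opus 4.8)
The plan is to derive both parts straight from Definition~\ref{def2.1}, the two dimension remarks and the equivalences \eqref{2.2.1}--\eqref{2.2.4} recorded just before the statement, together with the already--noted fact that $Dis(\cH),\,Ac(\cH),\,Self(\cH)$ are precisely the classes of maximal dissipative, maximal accumulative and self--adjoint relations in $\cH$; nothing beyond finite--dimensional linear algebra should be needed.

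For part~1), I would get ''only if'' from a dimension count: if $\t\in Self(\cH_0,\cH_1)=Dis(\cH_0,\cH_1)\cap Ac(\cH_0,\cH_1)$, then membership in $Dis(\cH_0,\cH_1)$ forces $\dim\cK=\dim\cH_1$ while membership in $Ac(\cH_0,\cH_1)$ forces $\dim\cK=\dim\cH_0$ (both remarks following Definition~\ref{def2.1}); hence $\dim\cH_1=\dim\cH_0<\infty$, and since $\cH_1$ is a subspace of $\cH_0$ this yields $\cH_1=\cH_0$. The ''if'' direction is immediate, since for $\cH_0=\cH_1=:\cH$ the class $Self(\cH)$ is the class of all self--adjoint relations in $\cH$, which contains the graph of the zero operator on $\cH$ (equivalently, the pair $C_0=I_\cH,\ C_1=0$ satisfies \eqref{2.2.4}), so it is nonempty.

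For part~2), fix $\dim\cH=n<\infty$ and an admissible pair $\t=\{(C_0,C_1);\cK\}\in\C(\cH)$ with $\dim\cK=n$. The implications ''$\Rightarrow$'' are simply one half of \eqref{2.2.1}--\eqref{2.2.4}; the content is the converse, i.e. that under these finite equal dimensions the second (maximality) condition $0\in\rho(C_0-iC_1)$, resp. $0\in\rho(C_0+iC_1)$, is forced by the sign of $\im(C_1C_0^*)$. Here is how I would handle the dissipative case. The condition $\im(C_1C_0^*)\ge0$ already makes $\t$ a dissipative relation, i.e. $\im(h_1,h_0)\ge0$ for all $\{h_0,h_1\}\in\t$: writing $L:=\{\{C_0^*u,\,C_1^*u\}:u\in\cK\}$, admissibility gives $\dim L=\dim\cK=n$ and \eqref{2.2} gives $\t=L^\perp$, while a short computation shows that $\im(C_1C_0^*)\ge0$ is exactly the statement that the symmetric form $Q(\{a,b\}):=\im(b,a)$ on the $2n$--dimensional space $\cH\oplus\cH$ — which has inertia indices $n$ and $n$ — is $\le0$ on the $n$--dimensional subspace $L$; thus $L$ is a maximal $Q$--nonpositive subspace, and the standard description of such a subspace as the graph of a contraction between the two definite subspaces of $Q$ gives $Q\ge0$ on $L^\perp=\t$ (alternatively one may simply invoke \cite{Mog06.1}). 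Granting dissipativity, any $h\in\ker(C_0-iC_1)$ satisfies $C_0h+C_1(-ih)=0$, i.e. $\{h,-ih\}\in\t$, whence $0\le\im(-ih,h)=-\|h\|^2$ and $h=0$; since $C_0-iC_1\in[\cH,\cK]$ with $\dim\cH=\dim\cK<\infty$, this injectivity gives $0\in\rho(C_0-iC_1)$, so $\t\in Dis(\cH)$ by \eqref{2.2.1}. The accumulative case is verbatim with $\{h,ih\}$ and \eqref{2.2.2}, and $\im(C_1C_0^*)=0$ then means the form is both $\ge0$ and $\le0$, hence $\t\in Dis(\cH)\cap Ac(\cH)=Self(\cH)$.

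The only genuinely non--formal step, and the one I expect to require care, is ''$\im(C_1C_0^*)\ge0\Rightarrow\t$ dissipative'': it is transparent whenever $C_0$ or $C_1$ is invertible (via a congruence identity such as $\im(C_0^{-1}C_1)=C_0^{-1}\,\im(C_1C_0^*)\,(C_0^{-1})^*$), but in general neither factor need be invertible, and I would settle it either by the inertia / graph--of--a--contraction argument sketched above or by a reference to \cite{Mog06.1}, where the classes of Definition~\ref{def2.1} are characterized. Everything else — the dimension counts, the inclusion $\{h,\mp ih\}\in\t$, and the passage from injectivity of $C_0\mp iC_1$ to $0\in\rho(C_0\mp iC_1)$ — is immediate from the equality of the finite dimensions of $\cH$ and $\cK$.
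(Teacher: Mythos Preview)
Your argument is correct. The paper does not give a proof at all; it simply states that the proposition ``is immediate from Definition~\ref{def2.1}'', so there is no route to compare against beyond that remark. Your part~1) is exactly the natural dimension count, and your part~2) is a valid derivation.

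One remark on the step you flag as ``the only genuinely non--formal step'' (showing that $\im(C_1C_0^*)\ge 0$ forces $\t$ to be dissipative, in order to conclude $\ker(C_0-iC_1)=\{0\}$): this detour can be bypassed entirely by working on the adjoint side. If $u\in\ker(C_0-iC_1)^*=\ker(C_0^*+iC_1^*)$, then $C_0^*u=-iC_1^*u$, and a one--line computation gives $(\im(C_1C_0^*)u,u)=-\|C_1^*u\|^2$; hence $C_1^*u=0$, then $C_0^*u=0$, and admissibility (injectivity of $(C_0:C_1)^*$) forces $u=0$. Since $\dim\cK=\dim\cH<\infty$, injectivity of $(C_0-iC_1)^*$ yields $0\in\rho(C_0-iC_1)$ directly. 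This avoids the Krein--space/inertia argument altogether and is presumably what the paper means by ``immediate''. Your inertia argument is nonetheless correct (indeed $L^\perp=S\,L^{[\perp]}$ for the signature operator $S$ of your form $Q$, and $Q(Sx)=\tfrac14 Q(x)$, so $Q$--nonnegativity of $L^{[\perp]}$ transfers to $L^\perp$), just heavier than needed.
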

\subsection{Boundary triplets and Weyl functions}
Let $A\in\C (\gH)$ be a closed  symmetric linear relation in the Hilbert space
$\gH$ and let $n_\pm (A):=\dim \gN_\l(A), \; \l\in\bC_\pm$ be deficiency
indices of $A$. Denote by $\exa$ the set of all proper extensions of $A$, i.e.,
the set of all relations  $\wt A\in \C (\gH)$  such that $A\subset\wt A\subset
A^*$.

 Next assume that $\cH_0$ is a Hilbert space,  $\cH_1$ is a subspace
in $\cH_0$ and   $\cH_2:=\cH_0\ominus\cH_1$, so that $\cH_0=\cH_1\oplus\cH_2$.
Denote by $P_j$ the orthoprojector  in $\cH_0$ onto $\cH_j,\; j\in\{1,2\} $.
\begin{definition}\label{def2.3}$\,$\cite{Mog06.2}
A collection $\Pi=\bta$, where $\G_j: A^*\to \cH_j, \; j\in\{0,1\}$ are linear
mappings, is called a boundary triplet for $A^*$, if the mapping $\G :\hat f\to
\{\G_0 \hat f, \G_1 \hat f\}$ from $A^*$ into $\cH_0\oplus\cH_1$ is surjective
and the following Green's identity
\begin {equation}\label{2.3}
(f',g)-(f,g')=(\G_1  \hat f,\G_0 \hat g)_{\cH_0}- (\G_0 \hat f,\G_1 \hat
g)_{\cH_0}+i(P_2\G_0 \hat f,P_2\G_0 \hat g)_{\cH_2}
\end{equation}
 holds for all $\hat
f=\{f,f'\}, \; \hat g=\{g,g'\}\in A^*$.
\end{definition}
In the following propositions some properties  of boundary triplets are
specified (see \cite{Mog06.2}).
\begin{proposition}\label{pr2.4}
If $\Pi=\bta$ is a boundary triplet for   $A^*$, then
\begin {equation}\label{2.4}
\dim \cH_1=n_-(A)\leq n_+(A)=\dim \cH_0.
\end{equation}
Conversely for any symmetric linear relation  $A\in \C (\gH)$ with $n_-(A)\leq
n_+(A)$ there exists a boundary triplet for $A^*$.
\end{proposition}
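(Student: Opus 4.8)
The plan is to establish Proposition \ref{pr2.4} in two parts: first the dimension count \eqref{2.4} for a given boundary triplet, and then the converse existence statement. For the first part I would start from the surjectivity of $\G=(\G_0\;\;\G_1)^\top:A^*\to\cH_0\oplus\cH_1$ together with the Green's identity \eqref{2.3}. Restricting $\hat f$ to the subspace $\hat\gN_\l(A)=\{\{f_\l,\l f_\l\}:f_\l\in\gN_\l(A)\}\subset A^*$ for $\l\in\bC_+$, the left-hand side of \eqref{2.3} becomes $(\l-\ov\l)(f_\l,g_\l)=2i\,\im\l\,(f_\l,g_\l)$, while the right-hand side for $\hat f=\hat g$ reads $-2i\,\im(\G_0\hat f,\G_1\hat f)_{\cH_0}+i\|P_2\G_0\hat f\|^2$. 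I expect that a careful bookkeeping of these identities on $\hat\gN_{\pm i}(A)$ shows that $\G_0\up\hat\gN_i(A)$ is injective with range all of $\cH_0$, while $\G_1\up\hat\gN_{-i}(A)$ is injective with range all of $\cH_1$ (this is the standard boundary-triplet argument, adapted to the asymmetric identity). Since $\dim\hat\gN_{\pm i}(A)=n_\pm(A)$, this yields $\dim\cH_0=n_+(A)$ and $\dim\cH_1=n_-(A)$; the inequality $n_-(A)\le n_+(A)$ then follows from $\cH_1\subseteq\cH_0$. The main obstacle here is handling the extra term $i(P_2\G_0\hat f,P_2\G_0\hat g)_{\cH_2}$ correctly — one must verify that it does not spoil injectivity of $\G_0$ on $\hat\gN_i(A)$, and here I would exploit the sign: on $\hat\gN_i(A)$ the combined right-hand side must equal $2\|f\|^2>0$, forcing $\G_0\hat f\neq 0$ when $f\neq0$.

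For the converse I would argue as follows. Given $A\in\C(\gH)$ with $n_-(A)\le n_+(A)$, fix Hilbert spaces $\cH_0$, $\cH_1$ with $\cH_1$ a subspace of $\cH_0$, $\dim\cH_0=n_+(A)$, $\dim\cH_1=n_-(A)$; then $\dim\cH_2=\dim(\cH_0\ominus\cH_1)=n_+(A)-n_-(A)$. The idea is to enlarge $A$ to a symmetric relation with equal deficiency indices by adding a "defect" of dimension $n_+(A)-n_-(A)$ on one side, build an ordinary boundary triplet for that, and then split off the extra coordinates into the $\cH_2$-part. Concretely, I would choose a partial isometry identifying part of $\gN_{-i}(A)$-type data and use von Neumann's formula $A^*=A\oplus\hat\gN_i(A)\oplus\hat\gN_{-i}(A)$ to define $\G_0$ and $\G_1$ explicitly in terms of the components of $\hat f=\hat f_0+\hat f_i+\hat f_{-i}$: roughly $\G_0\hat f$ should encode $\hat f_i$ and $\G_1\hat f$ should encode $\hat f_{-i}$, with the mismatch in dimensions absorbed by the projector $P_2$. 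One then checks Green's identity \eqref{2.3} by direct substitution, using orthogonality of the summands in the von Neumann decomposition and the defining relation $(f',g)-(f,g')$ on each piece, and checks surjectivity of $\G$ from the fact that the von Neumann components are free.

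I expect the converse to be the more delicate half, and the genuine obstacle is getting the asymmetric Green's identity \eqref{2.3} to close with the precise coefficient $i$ in front of $(P_2\G_0\hat f,P_2\G_0\hat g)_{\cH_2}$; this pins down exactly how the "extra" $(n_+-n_-)$-dimensional part of $\cH_0$ must be coupled to the defect space $\hat\gN_i(A)$. My expectation is that the clean way to organize this is to first recall (or reprove in a sentence) the classical result that a relation with equal deficiency indices has a boundary triplet in the sense of \eqref{1.1}, then realize $A$ inside a larger Hilbert space or enlarge its deficiency data, and finally perform a block decomposition $\cH_0=\cH_1\oplus\cH_2$ in which the $\cH_2$-block carries precisely the anti-symmetric correction; since all spaces are finite- or infinite-dimensional Hilbert spaces of the prescribed dimensions, the construction is then routine. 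Because the proposition is attributed to \cite{Mog06.2}, I would keep the write-up brief, citing that reference for the detailed verification and only indicating the decomposition and the key identities.
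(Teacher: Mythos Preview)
The paper does not prove Proposition~\ref{pr2.4} in the text: it is stated with a reference to \cite{Mog06.2} (see the sentence preceding it, ``In the following propositions some properties of boundary triplets are specified (see \cite{Mog06.2})''), and no argument is given. So there is no in-paper proof to compare against; your write-up would in fact be supplying what the paper omits.

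Your sketch is broadly sound, but one step deserves tightening. You assert that $\G_0\up\hat\gN_i(A)$ is surjective onto $\cH_0$ and $\G_1\up\hat\gN_{-i}(A)$ onto $\cH_1$; the injectivity halves follow exactly as you say from the sign of the Green identity, but the surjectivity does not fall out of the identity alone. The clean way to close this is a dimension count: Green's identity \eqref{2.3} forces $\ker\G=A$, and surjectivity of $\G$ then gives $\dim\cH_0+\dim\cH_1=\dim(A^*/A)=n_+(A)+n_-(A)$. Your two injectivity statements give $n_+(A)\le\dim\cH_0$ and $n_-(A)\le\dim\cH_1$; adding and comparing forces both to be equalities, and $\cH_1\subset\cH_0$ gives the inequality. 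This avoids appealing to surjectivity of the restricted maps (which in the paper's logic would rely on Proposition~\ref{pr2.5}, 4), stated \emph{after} \ref{pr2.4}). Incidentally, the paper itself, when it later uses an isomorphism on $\hat\gN_z(A)$ for $z\in\bC_-$, uses $P_1\G_0$ rather than $\G_1$ (see \eqref{2.7}); your choice of $\G_1$ also works for injectivity, but $P_1\G_0$ is what enters the $\g$-field construction.

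For the converse your plan via the von~Neumann decomposition is the natural one and matches what one expects \cite{Mog06.2} to do; the point you flag---that the coefficient $i$ in front of the $P_2$-term pins down how the extra $(n_+-n_-)$-dimensional block must be coupled---is exactly the crux. Since the paper is content to cite the reference, keeping that part brief and pointing to \cite{Mog06.2} is appropriate.
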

\begin{proposition}\label{pr2.5}
Let $\Pi=\bta$ be a boundary triplet for  $A^*$. Then:

1) $\ker \G_0\cap\ker\G_1=A$ and $\G_j$ is a bounded operator from $A^*$ into
$\cH_j, \;  j\in\{0,1\}$;

2) the set of all proper extensions $\wt A\in\exa$ is parameterized by linear
relations (operator pairs) $\t=\{(C_0,C_1);\cK\}$. More precisely, the mapping
 \begin {equation}\label{2.5}
\t\to  A_\t :=\{ \hat f\in A^*:\{\G_0  \hat f,\G_1 \hat f \}\in \t\}
 \end{equation}
establishes a bijective correspondence between the linear relations  $\t\in\CA$
and the extensions $ \wt A= A_\t\in\exa$. If $\t$ is given as an operator pair
$\t=\{(C_0,C_1);\cK \}$, then $A_\t$ can be represented in the form of an
abstract boundary condition:
\begin {equation}\label{2.5a}
A_\t=\{\hat f\in A^*: C_0\G_0 \hat f+C_1\G_1\hat  f=0 \}
\end{equation}

3) the extension $ A_\t$ is maximal dissipative, maximal
 accumulative, maximal symmetric or self-adjoint if and only if $\t$ belongs to
 the class $Dis,\; Ac,\; Sym$ or $Self (\cH_0,\cH_1)$ respectively;

 4) The equality
\begin {equation}\label{2.6}
A_0:=\ker \G_0=\{\hat f\in A^*:\G_0 \hat f=0\}
 \end{equation}
 defines the maximal symmetric extension $A_0\in\exa$ such that $n_-(A_0)=0$.
\end{proposition}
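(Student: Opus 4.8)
I would first record the algebraic non-degeneracy of the boundary form in \eqref{2.3}: if $u\in\cH_0$ and $v\in\cH_1$ satisfy $(v,h_0)_{\cH_0}-(u,h_1)_{\cH_0}+i(P_2u,P_2h_0)_{\cH_2}=0$ for all $h_0\in\cH_0$ and $h_1\in\cH_1$, then $u=v=0$; this follows by first putting $h_0=0$ (to get $u\perp\cH_1$, i.e. $u\in\cH_2$) and then splitting $h_0$ into its $\cH_1$- and $\cH_2$-components. For the identity $\ker\G_0\cap\ker\G_1=A$, note that for $\hat f\in A$ and arbitrary $\hat g\in A^*$ the left-hand side of \eqref{2.3} vanishes by the definition of the adjoint, so the surjectivity of $\G$ together with the non-degeneracy just proved forces $\G_0\hat f=\G_1\hat f=0$; conversely, if $\hat f\in A^*$ with $\G_0\hat f=\G_1\hat f=0$, then \eqref{2.3} gives $(f',g)=(f,g')$ for all $\hat g\in A^*$, i.e. $\hat f\in A^{**}=A$. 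To prove $\G_j$ bounded I would show that $\G$ has closed graph and invoke the closed graph theorem on the Hilbert space $A^*$: if $\hat f_n\to\hat f$ in the graph norm and $\G\hat f_n\to(h_0,h_1)$, then comparing the limit of \eqref{2.3} for $\hat f_n$ with \eqref{2.3} for $\hat f$ and applying the same non-degeneracy yields $\G_0\hat f=h_0$, $\G_1\hat f=h_1$. Hence $\G$, and with it each coordinate $\G_j=P_{\cH_j}\G$, is bounded.

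\textbf{Part 2.} Since $\ker\G=A$ and $\G:A^*\to\cH_0\oplus\cH_1$ is a bounded surjection, the induced map $\ov\G:A^*/A\to\cH_0\oplus\cH_1$ is a topological isomorphism by the bounded inverse theorem. The plan is to check that $\wt A\mapsto\G(\wt A)$ and $\t\mapsto A_\t=\G^{-1}(\t)$ are mutually inverse (using $A=\ker\G\subset\wt A$ and $\ran\G=\cH_0\oplus\cH_1$) and that, through $\ov\G$, closedness of $\wt A$ is equivalent to closedness of $\t$; this gives the bijection between $\t\in\CA$ and $\wt A=A_\t\in\exa$. The representation \eqref{2.5a} is then immediate: inserting $h_0=\G_0\hat f$ and $h_1=\G_1\hat f$ into the description \eqref{2.2} of $\t=\{(C_0,C_1);\cK\}$ turns the membership $\{\G_0\hat f,\G_1\hat f\}\in\t$ into the boundary condition $C_0\G_0\hat f+C_1\G_1\hat f=0$.

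\textbf{Part 3.} The starting point is the diagonal case $\hat g=\hat f$ of \eqref{2.3}, which gives $\im(f',f)=\im(\G_1\hat f,\G_0\hat f)_{\cH_0}+\tfrac12\|P_2\G_0\hat f\|^2_{\cH_2}$. For $\hat f\in A_\t$, writing $\{h_0,h_1\}=\{\G_0\hat f,\G_1\hat f\}\in\t$, this reads $\im(f',f)=\im(h_1,h_0)+\tfrac12\|P_2h_0\|^2=:Q_\t(h_0,h_1)$. I would first argue that $A_\t$ is dissipative, accumulative or symmetric precisely when $Q_\t\ge0$, $Q_\t\le0$ or $Q_\t=0$ on $\t$, the symmetric case using that $\im(f',f)\equiv0$ on a linear relation already forces symmetry by polarization. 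The crux is to rewrite these sign conditions on the \emph{constrained} form $Q_\t$ over $\ker(C_0\!:\!C_1)$ as operator inequalities on $\wt S_\t=2\im(C_1C_{01}^*)-C_{02}C_{02}^*$ of Definition \ref{def2.1}; this is the linear-algebraic content of \cite{Mog06.1}, and it is the step I expect to be the main obstacle. The second ingredient is maximality: $A_\t$ is maximal dissipative iff it is dissipative and $\ran(A_\t-\l)=\gH$ for $\l\in\bC_-$, and I would show this surjectivity equivalent to $0\in\rho(C_{01}-iC_1)$ by analysing the action of $\G$ on the defect subspaces $\hat\gN_\l(A)$ (equivalently, through the Weyl function of $\Pi$); the accumulative and symmetric cases are parallel, with $0\in\rho(C_0+iC_1P_1)$ and the union of the two conditions respectively. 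Finally, $Self(\cH_0,\cH_1)=Dis(\cH_0,\cH_1)\cap Ac(\cH_0,\cH_1)$ delivers the self-adjoint case, since self-adjointness is the same as maximal dissipativity and maximal accumulativity simultaneously.

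\textbf{Part 4.} The extension $A_0=\ker\G_0$ equals $A_\t$ for the pair $\t=\{(I_{\cH_0},0);\cH_0\}$, i.e. $\t=\{0\}\oplus\cH_1$. Here $C_1=0$, the block $C_{02}$ is the embedding $\cH_2\hookrightarrow\cH_0$ and $C_0+iC_1P_1=I_{\cH_0}$, so $\wt S_\t=-P_2\le0$ and $0\in\rho(I_{\cH_0})$; thus $\t\in Ac(\cH_0,\cH_1)$, and by Part 3 the relation $A_0$ is maximal accumulative, whence $\bC_+\subset\rho(A_0)$ and $n_-(A_0)=0$. On the other hand, \eqref{2.3} with $\G_0\hat f=\G_0\hat g=0$ gives $(f',g)-(f,g')=0$, so $A_0$ is symmetric; a symmetric relation is accumulative, so a maximal accumulative symmetric relation admits no proper symmetric extension and is therefore maximal symmetric. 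This establishes \eqref{2.6}.
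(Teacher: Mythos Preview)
The paper does not prove Proposition~\ref{pr2.5}; it is quoted from \cite{Mog06.2} as background (``In the following propositions some properties of boundary triplets are specified (see \cite{Mog06.2})''), so there is no in-text argument to compare against. Your sketch is correct and follows what one would expect the proof in \cite{Mog06.2} to look like: Part~1 via non-degeneracy of the boundary form plus surjectivity and the closed graph theorem, Part~2 via the induced isomorphism $A^*/A\cong\cH_0\oplus\cH_1$, Part~4 by specializing Part~3 to $\t=\{(I_{\cH_0},0);\cH_0\}$ and reading off $\t\in Ac(\cH_0,\cH_1)$.

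The only soft spot is Part~3, which you yourself flag: you reduce the dissipative/accumulative dichotomy to the sign of the constrained quadratic form $Q_\t(h_0,h_1)=\im(h_1,h_0)+\tfrac12\|P_2h_0\|^2$ on $\ker(C_0\!:\!C_1)$ and then defer the identification of this sign with the condition on $\wt S_\t$ to \cite{Mog06.1}, and likewise only sketch the maximality $\Leftrightarrow$ invertibility step through the defect subspaces. Since the paper itself treats these facts as imported from \cite{Mog06.1,Mog06.2}, this level of detail is appropriate; just be aware that the passage from ``$Q_\t\ge 0$ on $\t$'' to ``$\wt S_\t\ge 0$'' is where the honest computation lives, and that the maximality step uses that $\G_0\up\hat\gN_\l(A)$ (resp.\ $P_1\G_0\up\hat\gN_z(A)$) is bijective onto $\cH_0$ (resp.\ $\cH_1$), which is exactly what underlies the definitions \eqref{2.7}--\eqref{2.9}.
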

It turns out that for every $\l\in\bC_+\;(z\in\bC_-)$ the map $\G_0\up \hat
\gN_\l (A)$ (resp $P_1\G_0\up \hat \gN_z (A)$) is an isomorphism. This makes it
possible to introduce the $\g$-fields $\g_{\Pi +}(\cdot):\Bbb
C_+\to[\cH_0,\gH], \; \; \g_{\Pi -}(\cdot):\Bbb C_-\to[\cH_1,\gH]$ and the Weyl
functions $M_{\Pi +}(\cdot):\bC_+\to [\cH_0,\cH_1], \;\; M_{\Pi
-}(\cdot):\bC_-\to [\cH_1,\cH_0]$ by
\begin{gather}
\g_{\Pi +} (\l)=\pi_1(\G_0\up\hat \gN_\l (A))^{-1}, \;\;\l\in\Bbb C_+;\quad
\g_{\Pi -} (z)=\pi_1(P_1\G_0\up\hat\gN_z (A))^{-1}, \;\; z\in\Bbb C_-,
\label{2.7}\\
\G_1 \up \hat\gN_\l (A)=M_{\Pi +}(\l)\G_0 \up \hat\gN_\l (A),\quad \l\in\bC_+,
\label{2.8}\\
(\G_1+iP_2\G_0)\up \hat\gN_z (A)=M_{\Pi -}(z)P_1\G_0 \up \hat\gN_z (A),\quad
z\in \bC_-.\label{2.9}
\end{gather}
(here $\pi_1$ is the orthoprojection in $\gH\oplus\gH$ onto $\gH\oplus \{0\}$).
According to \cite{Mog06.2} all functions $\g_{\Pi\pm}$ and $M_{\Pi\pm} $ are
holomorphic on their domains and $(M_{\Pi +}(\l))^*=M_{\Pi-}(\ov \l),
\;\l\in\bC_+$.
\begin{remark}\label{rem2.6}
In the case $\cH_0=\cH_1:=\cH$ Definition \ref{def2.3} coincides with that of
the boundary triplet (boundary value space) $\Pi=\{\cH,\G_0,\G_1\}$ for $A^*$
given in \cite{GorGor}. For such a triplet $n_+(A)=n_-(A)=\dim \cH$,
$\,A_0(=\ker \G_0 )$ is a self-adjoint extension of $A$ and the relations
\begin {equation}\label{2.10}
\g_\Pi(\l)=\pi_1(\G_0\up\hat\gN_\l(A))^{-1}, \qquad \G_1\up\hat
\gN_\l(A)=M_\Pi(\l)\G_0 \up\hat\gN_\l(A), \qquad \l\in\rho (A_0)
\end{equation}
define the $\g$-field $\g_\Pi(\cd):\rho (A_0)\to [\cH,\gH]$
 and the Weyl function $M_\Pi(\cd):\rho (A_0)\to [\cH]$ \cite{DM91} associated
 with operator functions  \eqref{2.7}--\eqref{2.9} via $\g_\Pi(\l)=\g_{\Pi\pm}
(\l)$
  and $M_\Pi(\l)=M_{\Pi\pm}(\l), \;\l\in\bC_\pm $.
\end{remark}
\section{Boundary relations and their Weyl families}
Let $\gH $ and $\cH_0$ be Hilbert spaces, let $\cH_1$ be a subspace in $\cH_0$,
let $\cH_0=\cH_1\oplus\cH_2$ be the corresponding orthogonal decomposition of
$\cH_0$ with $\cH_2:=\cH_0\ominus\cH_1$ and let $P_j$ be the orthoprojector in
$\cH_0$ onto $\cH_j, \; j\in \{0,1\}$. In the sequel we deal with linear
relations from $\gH^2$ into $\cH_0\oplus\cH_1$. If $\G$ is such a relation,
then an element $\hat\f\in\G$ will be denoted by $\hat\f =\{\hat f,\hat h\}$,
where $\hat f=\{f,f'\}\in\gH^2\; (f,\,f'\in \gH)$ and $\hat h=\{h_0,h_1\}\in
\cH_0\oplus\cH_1\; (h_0\in\cH_0,\; h_1\in\cH_1)$. In this case it will be
convenient to write
\begin {equation}\label{3.1a}
\hat\f=\{\hat f,\hat h\}=\lb\hat f, {h_0 \choose h_1} \rb=\lb
\begin{pmatrix} f\cr f' \end{pmatrix},
\begin{pmatrix} h_0\cr h_1 \end{pmatrix}\rb.
\end{equation}
If in addition $\cH_j$ is decomposed as $\cH_j=\cH_{j1}\oplus\cH_{j2}\oplus
\dots  \cH_{j,n_j}, \; j\in\{0,1\}$, then the equality \eqref{3.1a} will be
also written as
\begin {equation*}
\hat\f=\lb\hat f, {\{h_{01}, h_{02}, \dots, h_{0,n_0}\}\choose \{h_{11},
h_{12}, \dots, h_{1,n_1}\}} \rb,
\end{equation*}
where $h_{0k}=P_{\cH_{0k}}h_0$ and $h_{1k}=P_{\cH_{1k}}h_1$ are components of
$h_0$ and $h_1$ respectively.

For a linear relation $\BR$ its multivalued part is a linear relation from
$\cH_0$ into $\cH_1$ given by
\begin {equation*}
 \mul\G =\lb \{h_0,h_1\}\in\cH_0\oplus\cH_1: \lb 0,{h_0\choose h_1}\rb \in\G \rb.
\end{equation*}
Using $\mul\G$ we define linear manifolds $\cK_\G'$ and $\cK_G''$ in $\cH_1$
via
\begin {gather}
\cK_\G'=P_1\,\dom (\mul\G)=\lb k'\in\cH_1: \lb 0,{k'+h_2\choose h_1} \rb \in\G
\;\;\text{for some}\;\; h_2\in\cH_2 \;\; \text{and}\; \; h_1\in\cH_1 \rb,
\label{3.3}\\
\cK_\G''=\mul (\mul\G)=\lb k''\in\cH_1:\lb 0,{0\choose
k''}\rb\in\G\rb.\label{3.4}
\end{gather}
Next introduce the signature operators
\begin {equation}\label{3.5}
J_\gH=\begin{pmatrix} 0 & -i I_\gH \cr iI_\gH & 0\end{pmatrix}:\gH\oplus\gH\to
\gH\oplus\gH, \;\;\;\; J_{01}=\begin{pmatrix} P_2 & -i I_{\cH_1} \cr iP_1 &
0\end{pmatrix}:\cH_0\oplus\cH_1\to \cH_0\oplus\cH_1
\end{equation}
and denote by $(\gH^2,J_\gH)$ and $(\HH,J_{01})$ the corresponding Krein
spaces. Recall \cite{Shm76} that a linear relation $\BR$ is called an isometric
relation from $(\gH^2,J_\gH)$ into $(\HH,J_{01})$ if
\begin {equation}\label{3.6}
(J_\gH \hat f,\hat g)_{\gH^2}=(J_{01} \hat h, \hat x)_{\cH_0\oplus \cH_1},
\quad \{\hat f, \hat h\}, \;\{\hat g, \hat x\}\in\G
\end{equation}
or, equivalently, if the identity
\begin {equation}\label{3.7}
(f',g)_\gH -  (f,g')_\gH= (h_1,x_0)_{\cH_0} - (h_0,x_1)_{\cH_0}+i (P_2h_0,P_2
x_0)_{\cH_2}
\end{equation}
holds for every $\{{f \choose f'}, {h_0 \choose h_1}\}, \;\{{g \choose g'},
{x_0 \choose x_1}\} \in \G $.
\begin{definition}\label{def3.2}
Let $A$ be a closed symmetric linear relation in $\gH$, let $\cH_0$ be a
Hilbert space and let $\cH_1$ be a subspace in $\cH_0$. A linear relation $\BR$
is called a boundary relation for $A^*$ if:

1) $\dom \G$ is dense in $A^*$ and $\G$ is an isometric relation from
$(\gH^2,J_\gH)$ into  $(\HH,J_{01})$, i.e., the abstract Green's identity
\eqref{3.7} holds;

2) if  $\hat\f=\{{g\choose g'}, {x_0\choose x_1}\}\in \gH^2\oplus
(\cH_0\oplus\cH_1)$ satisfies \eqref{3.7} for every $\{{f\choose f'},
{h_0\choose h_1}\}\in\G$, then $\hat\f\in\G$.
\end{definition}
The conditions 1) and 2) of Definition \ref{def3.2} imply that the boundary
relation $\G$ is a unitary relation from $(\gH^2,J_\gH)$ to $(\HH, J_{01})$
\cite{Shm76}. Therefore $\G$ is closed and $\ker\G=A$.
\begin{definition}\label{def3.2a}
The families of linear relations $M_+(\l):\cH_0\to\cH_1, \; \l\in\bC_+$ and
$M_-(z):\cH_1\to\cH_0, \; z\in\bC_-$ given by
\begin{gather}
M_+(\l)=\lb \{h_0, h_1\}\in \HH: \lb {f\choose \l f}, {h_0\choose h_1}\rb
\in\G \;\;\text{for some} \;\; f\in\gH \rb, \;\;\l\in\bC_+; \label{3.7.1}\\
M_-(z)=\lb \{P_1h_0, h_1+iP_2 h_0\}: \lb {f\choose z f}, {h_0\choose h_1}\rb
\in\G \;\;\text{for some} \;\; f\in\gH \rb, \;\; z\in\bC_- \label{3.7.2}
\end{gather}
are called the Weyl families corresponding to the boundary relation $\BR$ for
$A^*$.

If $M_+(\cd)$ (resp. $M_-(\cd)$) is operator-valued, it is called the Weyl
function corresponding to the boundary relation $\G$.
\end{definition}
In the sequel we deal with boundary relations of the special form introduced in
the following proposition.
\begin{proposition}\label{pr3.3}
Assume that $\Pi=\{\cK_0\oplus\cK_1,G_0,G_1\}$ is a boundary triplet for $A^*$
(see Definition \ref{def2.3}), $\cK_2:=\cK_0\ominus\cK_1$, $\cK'$ and $\cK''$
are Hilbert spaces and
\begin {equation}\label{3.7a}
\cH_1:=\cK_1\oplus \cK'\oplus\cK'', \qquad \cH_0:=\cK_0\oplus \cK'\oplus\cK''
(=\cK_2\oplus\cH_1).
\end{equation}
Moreover, let $F_0\in [\cK',\cK_0], \; F'\in [\cK']$, let
\begin {equation}\label{3.8}
F_0=(F_2\;\; F_1)^\top : \cK'\to \cK_2\oplus \cK_1
\end{equation}
be the block representation of $F_0$ and let
\begin {equation}\label{3.9}
F'-(F')^*+i F_2^*F_2=0.
\end{equation}
Then the equality
\begin {equation}\label{3.10}
\G=\lb \lb \hat f, {\{G_0\hat f-iF_2k',\, k',\, 0\}\}\choose \{G_1\hat f +F_1
k',\, F_0^* G_0\hat f +F'k',\, k''\}} \rb :\hat f\in A^*,\; k'\in\cK', \;
k''\in\cK'' \rb
\end{equation}
defines the boundary relation $\BR$ for $A^*$ such that $\cK_\G'=\cK'$ and
$\cK_\G''=\cK''$.
\end{proposition}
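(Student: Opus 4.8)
The plan is to verify the two conditions of Definition \ref{def3.2} directly, namely that $\G$ given by \eqref{3.10} is an isometric relation from $(\gH^2,J_\gH)$ into $(\HH,J_{01})$ with $\dom\G$ dense in $A^*$, and that $\G$ satisfies the maximality condition 2). The density of $\dom\G$ in $A^*$ is immediate since $\dom\G=A^*$ already (take $k'=0, k''=0$); this uses that for a boundary triplet $\Pi=\{\cK_0\oplus\cK_1,G_0,G_1\}$ in the sense of Definition \ref{def2.3} the maps $G_0,G_1$ are defined on all of $A^*$. So the real content is the isometry identity \eqref{3.7} and the maximality.

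First I would check the isometry identity \eqref{3.7}. Fix two elements of $\G$, say corresponding to $(\hat f, k', k'')$ and $(\hat g, l', l'')$, with $\hat f=\{f,f'\}$, $\hat g=\{g,g'\}$. Writing out the right-hand side of \eqref{3.7} with $h_0=\{G_0\hat f-iF_2 k',\, k',\, 0\}\in\cK_0\oplus\cK'\oplus\cK''$ decomposed so that $P_2 h_0$ is the $\cK_2$-component of $G_0\hat f-iF_2k'$ (note $\cH_2=\cH_0\ominus\cH_1=\cK_2$, and the $\cK'$-, $\cK''$-parts of $\cH_0$ lie in $\cH_1$), and $h_1=\{G_1\hat f+F_1k',\, F_0^*G_0\hat f+F'k',\, k''\}$, and similarly for $x_0,x_1$. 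The terms grouped by where they come from: the pure $G_j$-terms reproduce $(G_1\hat f,G_0\hat g)_{\cK_0}-(G_0\hat f,G_1\hat g)_{\cK_0}+i(P_{\cK_2}G_0\hat f,P_{\cK_2}G_0\hat g)_{\cK_2}$, which by the Green identity \eqref{2.3} for $\Pi$ equals $(f',g)-(f,g')$; that is exactly the left-hand side of \eqref{3.7}. So it remains to show that \emph{all the remaining terms} — those involving at least one of $k',k'',l',l''$ — cancel. The $k''$/$l''$ terms pair only with $\cK''$-components, and since $\cK''$ appears only in the $h_1$-slot with zero in the $h_0$-slot, the cross terms are $(k'',0)-(0,l'')+0=0$. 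The $k'$/$l'$ terms require the algebraic identity \eqref{3.9}: collecting the $\cK'$-coupling one gets a contribution proportional to $(F'l'-(F')^*l'+iF_2^*F_2 l', k')$-type expressions together with mixed $F_0^*G_0$–$F_1,F_2$ cross terms. The mixed cross terms involving $F_1$ and $F_0^*G_0$ telescope using $F_0=(F_2\ F_1)^\top$ (so $F_0^*=(F_2^*\ F_1^*)$ acting on $\cK_2\oplus\cK_1$), and the $F_2^*F_2$ term pairs with the $i(P_2\cdot,P_2\cdot)$ summand coming from the $-iF_2k'$ correction in the $\cK_2$-part of $h_0$; the surviving piece is precisely $(\,(F'-(F')^*+iF_2^*F_2)k',\,l'\,)$, which vanishes by \eqref{3.9}. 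This bookkeeping is the main obstacle: one must carefully track which inner-product space each term lives in (especially the role of $P_2$ acting only on the $\cK_2$-part) and confirm that the $-iF_2k'$ shift in the first component is exactly what is needed to kill the $F_2$-terms against the $J_{01}$ signature.

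Next I would establish the maximality condition 2), equivalently (by \cite{Shm76}, as remarked after Definition \ref{def3.2}) that $\G$ is a \emph{unitary} relation, for which it suffices to show $\G$ is isometric and that $\dim$-counting / surjectivity of the associated Cayley-type transform holds; but the cleanest route is the direct one. Suppose $\hat\f=\{\{g,g'\},\{x_0,x_1\}\}$ satisfies \eqref{3.7} against every element of $\G$. Testing against elements with $k'=k''=0$ and $\hat f$ ranging over $A^*$ forces, via the properties of the boundary triplet $\Pi$ (Proposition \ref{pr2.5}, part 1, and the Green identity), that the $\cK_0$-components of $x_0,x_1$ are of the form $G_0\hat g, G_1\hat g$ for some $\hat g=\{g,g'\}\in A^*$ with $\{g,g'\}=\hat\f$'s first component — here one uses surjectivity of $G=(G_0\ G_1)^\top$ onto $\cK_0\oplus\cK_1$. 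Then testing against elements with $\hat f=0$ but $k''\ne0$ pins down the $\cK''$-component of $x_0$ to be $0$ (so $x_0$ has the claimed block form), and testing against $\hat f=0$, $k'\ne0$ determines the $\cK'$-components and the remaining $\cK_2$-correction, again using \eqref{3.9} to see there is no obstruction. Collecting these, $\hat\f$ has exactly the form \eqref{3.10}, so $\hat\f\in\G$. This part is mostly a reversal of the computation in the isometry check, so once that is done carefully it goes through without new ideas.

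Finally, the identification $\cK_\G'=\cK'$ and $\cK_\G''=\cK''$ follows by inspecting $\mul\G$. From \eqref{3.10}, $\mul\G$ consists of those $\{h_0,h_1\}$ obtained with $\hat f\in A$ (since $\ker G_0\cap\ker G_1=A$ by Proposition \ref{pr2.5}, the first component being $0$ forces $\hat f\in A$, hence $G_0\hat f=G_1\hat f=0$); thus $\mul\G=\{\{(-iF_2k',k',0),(F_1k',F'k',k'')\}:k'\in\cK',k''\in\cK''\}$. Then by \eqref{3.3}, $\cK_\G'=P_1\dom(\mul\G)$: the $\cH_1$-part of $(-iF_2k',k',0)$ is $(0,k',0)$ (the $\cK_2$-part $-iF_2k'$ is projected out), so $\cK_\G'=\cK'$; and by \eqref{3.4}, $\cK_\G''=\mul(\mul\G)$ is the set of $h_1$ arising when $h_0=0$, i.e. $k'=0$, giving $h_1=(0,0,k'')$, so $\cK_\G''=\cK''$. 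I expect no difficulty here beyond the already-established form of $\G$.
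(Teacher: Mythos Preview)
Your approach is essentially the paper's: verify \eqref{3.7} by direct computation (the pure $G_j$-terms reproduce \eqref{2.3}, the $k'',l''$-terms vanish trivially, and the $k',l'$-terms cancel via \eqref{3.9}), then establish maximality by testing against elements of $\G$ with various $(\hat f,k',k'')$ and invoking surjectivity of $G=(G_0\;G_1)^\top$. One caution on the maximality step: testing with $k'=k''=0$ still leaves the term $(F_0^*G_0\hat f,x_0')$ in the identity, so you do not get $m_0=G_0\hat g$ and $m_1=G_1\hat g$ outright but rather $m_0=G_0\hat g-iF_2x_0'$ and $m_1=G_1\hat g+F_1x_0'$ (after first checking $\hat g\in A^*$ by taking $\hat f\in A$); your remark about ``the remaining $\cK_2$-correction'' indicates you are aware of this, but the earlier sentence should be adjusted accordingly.
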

\begin{proof}
It is easily seen that the following assertion (a) is valid:

(a) an element $\hat\f=\{\hat g, {x_0\choose x_1}\}\in \gH^2\oplus
(\cH_0\oplus\cH_1)$ with
\begin {equation}\label{3.11}
\hat g=\{g,g'\}\in\gH^2, \quad x_0=\{m_0,\,x_0', \, x_0''\}\in\cK_0\oplus
\cK'\oplus\cK'', \quad x_1=\{m_1,\, x_1',\, x_1''\}\in\cK_1\oplus
\cK'\oplus\cK''
\end{equation}
satisfies the identity \eqref{3.7} for every $\{{f\choose f'}, {h_0\choose
h_1}\}\in\G$ if and only if $x_0''=0$ and  the following equalities hold
\begin{gather}
(f',g)-(f,g')=\qquad\qquad\qquad\qquad\qquad\qquad\qquad\qquad\qquad
\qquad\qquad\qquad\qquad\qquad
\qquad\qquad\label{3.12} \\
=(G_1 \hat f, m_0)_{\cK_0}+(F_0^* G_0\hat f, x_0')_{\cK'}-(G_0 \hat f,
m_1)_{\cK_0}+i(P_{\cK_2} G_0\hat f,P_{\cK_2} m_0)_{\cK_2}, \;\;\; \hat f =
\{f,f'\} \in A^*; \nonumber\\
(F_0k', m_0)_{\cK_0}+(F'k',x_0')_{\cK'}-(k',x_1')_{\cK'}=0, \quad
k'\in\cK'.\label{3.13}
\end{gather}
Let $\{\hat g, {x_0\choose x_1}\}\in\G$, so that $\hat g\in A^*$ and $x_0, x_1$
are given by \eqref{3.11} with
\begin {equation*}
m_0=G_0 \hat g-i F_2 x_0', \quad x_0''=0, \quad m_1=G_1\hat g +F_1x_0', \quad
x_1'=F_0^* G_0\hat g+F' x_0'.
\end{equation*}
Then substitution of such $m_0, \; m_1$ and $x_1'$ into \eqref{3.12},
\eqref{3.13} and the immediate calculation with taking \eqref{3.9} and
\eqref{2.3} into account show that the equalities \eqref{3.12} and \eqref{3.13}
are satisfied. Therefore by assertion (a) the identity \eqref{3.7} holds for
every $\{{f \choose f'}, {h_0 \choose h_1}\}, \;\{{g \choose g'}, {x_0 \choose
x_1}\} \in \G $.

Assume now that an element $\{\hat g, {x_0\choose x_1}\}\in \gH^2\oplus
(\cH_0\oplus\cH_1)$ given by \eqref{3.11} satisfies the identity \eqref{3.7}
for every $\{{f\choose f'}, {h_0\choose h_1}\}\in\G$. Then by assertion (a)
$x_0''=0$ and  the equalities \eqref{3.12}, \eqref{3.13} are fulfilled.

If $\hat f=\{f,f'\}\in A$, then $G_0\hat f=G_1\hat f=0$ and by \eqref{3.12}
$(f',g)-(f,g')=0$. This implies that $\hat g\in A^*$. Next, in view of
\eqref{3.8} $F_0^* G_0\hat f =F_2^* P_{\cK_2}G_0\hat f+F_1^* P_{\cK_1}G_0\hat f
$ and the equality \eqref{3.12} can be written as
\begin{gather}
(f',g)-(f,g')=\qquad\qquad\qquad\qquad\qquad\qquad\qquad\qquad\qquad
\qquad\qquad\qquad\qquad\qquad
\qquad\qquad\label{3.14}\\
=(G_1 \hat f, P_{\cK_1}m_0)-(P_{\cK_1} G_0\hat f, m_1-F_1 x_0')+i(P_{\cK_2}
G_0\hat f,P_{\cK_2} m_0+iF_2x_0'), \;\;\;\; \hat f = \{f,f'\} \in A^*.\nonumber
\end{gather}
Since the map $G=(G_0\;\;G_1)^\top$ is surjective, it follows from \eqref{3.14}
and  \eqref{2.3} that
\begin {equation*}
P_{\cK_1}m_0=P_{\cK_1}G_0\hat g, \quad m_1-F_1 x_0'=G_1\hat g,\quad P_{\cK_2}
m_0+iF_2x_0'=P_{\cK_2}G_0\hat g
\end{equation*}
and, consequently,
\begin {equation*}
m_0=G_0\hat g-iF_2x_0', \quad m_1=G_1\hat g+F_1 x_0'.
\end{equation*}
Moreover, by using first \eqref{3.13} and then \eqref{3.8}, \eqref{3.9} one
obtains
\begin {equation*}
x_1'=F_0^* m_0+(F')^* x_0'=F_0^* G_0\hat g+((F')^*-i F_0^* F_2)x_0'=F_0^*
G_0\hat g+ F'x_0'.
\end{equation*}
Thus $\{\hat g, {x_0\choose x_1}\}\in \G$ and the linear relation \eqref{3.10}
satisfies both conditions of Definition \ref{def3.2}.

Finally the equalities $\cK_\G'=\cK'$ and $\cK_\G''=\cK''$ are immediate from
\eqref{3.10}.
\end{proof}
\begin{proposition}\label{pr3.3a}
Let under the assumptions of Proposition \ref{pr3.3} $\cK''=\{0\}$, so that
\begin {equation}\label{3.14.1}
\cH_1:=\cK_1\oplus \cK', \qquad \cH_0:=\cK_0\oplus \cK' (=\cK_2\oplus\cH_1).
\end{equation}
and the boundary relation $\BR$ for $A^*$ is (c.f. \eqref{3.10})

\begin {equation}\label{3.14.2}
\G=\lb \lb \hat f, {\{G_0\hat f-iF_2k',\, k'\}\}\choose \{G_1\hat f +F_1 k',\,
F_0^* G_0\hat f +F'k'\}} \rb :\hat f\in A^*,\; k'\in\cK' \rb.
\end{equation}
Assume also that $\g_{\Pi\pm}(\cd)$ and $M_{\Pi\pm}(\cd)$ are the $\g$-fields
and Weyl functions corresponding to the boundary triplet $\Pi$ (see
\eqref{2.7}-\eqref{2.9}) and $\G_0:\gH^2\to \cH_0$ is the linear relation given
by
\begin {equation*}
\G_0=P_{\cH_0\oplus \{0\}}\G=\lb\{\hat f, h_0\}\in \gH^2\oplus\cH_0: \lb \hat
f, {h_0\choose h_1} \rb \in \G \;\; \text{for some}\;\; h_1\in\cH_1 \rb.
\end{equation*}
Then: 1) $\ker\G_0=\ker G_0$, so that the equality
\begin {equation}\label{3.14.3}
A_0:=\ker \G_0=\{\hat f\in A^*: \{\hat f, 0\}\in\G_0\}
\end{equation}
gives the maximal symmetric extension $A_0\in\exa$ with $n_-(A_0)=0 \,(\iff
\bC_+\subset \rho (A_0))$;

2) the equalities
\begin{gather}
\hat\g_{+} (\l)=(\G_0\up\hat \gN_\l (A))^{-1},\qquad \g_{+}
(\l)=\pi_1\hat\g_{+}
(\l),\;\; \;\;\l\in\Bbb C_+;\label{3.14.4}\\
 \hat\g_{-} (z)=(P_1\G_0\up\hat\gN_z
(A))^{-1}, \qquad  \g_-(z)=\pi_1 \hat\g_{-} (z),\;\;\;\; z\in\Bbb
C_-,\label{3.14.5}
\end{gather}
define the holomorphic operator functions ($\g$-fields) $\g_{+}(\cdot):\Bbb
C_+\to[\cH_0,\gH]$ and $\g_{-}(\cdot):\Bbb C_-\to[\cH_1,\gH]$. Moreover,
\begin {equation}\label{3.14.6}
\g_+(\l)=\g_{\Pi +}(\l)\,(I_{\cK_0}\;\;iF_2), \;\;\l\in\bC_+; \qquad
\g_-(z)=\g_{\Pi -}(z)P_{\cK_1}, \;\;z\in\bC_-
\end{equation}
and the following identities hold
\begin{gather}
\g_+ (\mu)=\g_+ (\l )+ (\mu-\l )(A_0-\mu)^{-1} \g _{+} (\l), \quad \l, \mu\in
\bC_+  \label{3.14.7}\\
 \gamma _{-} (\omega)=\gamma _{-} (z)+
(\omega- z)(A_0^*-\omega)^{-1} \gamma _{-} (z),  \quad z, \omega\in
\bC_-  \label{3.14.8}\\
\gamma _{-} (z)P_1=\gamma _+ (\l)+ (z-\l)(A_0^*-z)^{-1} \gamma _{+} (\l), \quad
\l\in\bC_+,\; z\in \bC_-. \label{3.14.9}
\end{gather}

3) the corresponding Weyl families are the holomorphic operator functions
$M_{+}(\cdot):\bC_+\to [\cH_0,\cH_1]$ and $ M_{-}(\cdot):\bC_-\to
[\cH_1,\cH_0]$ associated with $M_{\Pi\pm}(\cd)$ by
\begin {equation}\label{3.14.10}
M_+(\l)=\begin{pmatrix} M_{\Pi +}(\l) & F_1+i\,M_{\Pi +}(\l)F_2 \cr F_0^* &
(F')^*\end{pmatrix}: \cK_0\oplus\cK'\to \cK_1\oplus\cK', \;\;\;\l\in\bC_+
\end{equation}
and $M_-(z)=M_+^*(\ov z), \; z\in\bC_-$. Moreover, the block representations
\begin{gather}
M_+(\l)=(N_+(\l)\;\; M(\l)):\cK_2\oplus\cH_1 \to \cH_1, \quad
\l\in\bC_+\label{3.14.11}\\
M_-(z)=(N_-(z)\;\; M(z))^\top: \cH_1\to\cK_2\oplus\cH_1  , \quad
z\in\bC_-\label{3.14.12}\\
\cM(\l)=\begin{pmatrix} \tfrac i 2 I_{\cK_2} & 0 \cr  N_+(\l) &
M(\l)\end{pmatrix}:\cK_2\oplus\cH_1 \to \cK_2\oplus\cH_1, \quad
\l\in\bC_+\label{3.14.13}\\
\cM(z)=\begin{pmatrix} -\tfrac i 2 I_{\cK_2} & N_-(z) \cr 0 & M(z)
\end{pmatrix} :\cK_2\oplus\cH_1 \to \cK_2\oplus\cH_1, \quad
z\in\bC_-\label{3.14.14}
\end{gather}
induce the Nevanlinna operator functions $M(\l)\,(\in [\cH_1])$ and $\cM(\l)\,
(\in [\cH_0])$;

4) the following identity holds
\begin {equation}\label{3.14.15}
\cM(\mu)-\cM^*(\l)=(\mu-\ov\l) \g_+^*(\l)\g_+(\mu), \quad \mu,\l\in\bC_+.
\end{equation}
\end{proposition}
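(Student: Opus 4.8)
The plan is to extract everything from the explicit form \eqref{3.14.2} of $\G$ together with the properties of the boundary triplet $\Pi$ recorded in Proposition \ref{pr2.5} and in \eqref{2.7}--\eqref{2.9}. \textbf{Parts 1) and 2).} By \eqref{3.14.2} an element $\{\hat f,h_0\}\in\G_0$ satisfies $h_0=\{G_0\hat f-iF_2k',\,k'\}$ for some $k'\in\cK'$, so $h_0=0$ forces $k'=0$ and $G_0\hat f=0$; hence $\ker\G_0=\ker G_0$, and \eqref{3.14.3} together with $n_-(A_0)=0$ (equivalently $\bC_+\subset\rho(A_0)$) is Proposition \ref{pr2.5}, 4). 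For part 2) I would first check that $\G_0\up\hat\gN_\l(A)$ is a bijection of $\hat\gN_\l(A)$ onto $\cH_0$ for $\l\in\bC_+$: if $\{\hat f_1,h_0\},\{\hat f_2,h_0\}\in\G_0$ with $\hat f_j\in\hat\gN_\l(A)$, then the $\cK'$-components of $h_0$ coincide, hence $G_0\hat f_1=G_0\hat f_2$, so $\hat f_1=\hat f_2$ since $G_0\up\hat\gN_\l(A)$ is injective; surjectivity onto $\cH_0=\cK_0\oplus\cK'$ holds because $G_0\up\hat\gN_\l(A)$ maps onto $\cK_0$. Thus $\hat\g_+(\l):=(\G_0\up\hat\gN_\l(A))^{-1}$ is a well-defined bounded operator, and unravelling the definitions gives $\hat\g_+(\l)\{m_0,k'\}=(G_0\up\hat\gN_\l(A))^{-1}(m_0+iF_2k')$, whence \eqref{3.14.6} (first equality) and holomorphy of $\g_+$ from that of $\g_{\Pi+}$. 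The case $z\in\bC_-$ is the same once one observes that $\cH_2=\cK_2$ and that $P_1$ kills the $\cK_2$-component of the $\G_0$-value, so $P_1\G_0\up\hat\gN_z(A)$ reduces to $\hat f\mapsto\{P_{\cK_1}G_0\hat f,\,k'\}$ and is a bijection onto $\cH_1$ because $P_{\cK_1}G_0\up\hat\gN_z(A)$ maps onto $\cK_1$; this yields \eqref{3.14.5} and the second equality in \eqref{3.14.6}. The relations \eqref{3.14.7}--\eqref{3.14.9} then follow by the standard $\g$-field manipulation: for $f_\l\in\gN_\l(A)$ the vector $f_\l+(\mu-\l)(A_0-\mu)^{-1}f_\l$ lies in $\gN_\mu(A)$ and has the same $\G_0$-value, which is legitimate since $\bC_+\subset\rho(A_0)$ (equivalently $\bC_-\subset\rho(A_0^*)$) by part 1); the three identities correspond to the choices $\l,\mu\in\bC_+$; $z,\omega\in\bC_-$; and $\l\in\bC_+,\,z\in\bC_-$.

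\textbf{Part 3).} I would compute the Weyl families directly from \eqref{3.7.1}--\eqref{3.7.2}. A pair $\{h_0,h_1\}\in M_+(\l)$ is, by \eqref{3.14.2}, of the form $h_0=\{G_0\hat f-iF_2k',\,k'\}$, $h_1=\{G_1\hat f+F_1k',\,F_0^*G_0\hat f+F'k'\}$ with $\hat f\in\hat\gN_\l(A)$; writing $h_0=\{m_0,k'\}$ one has $G_0\hat f=m_0+iF_2k'$, and inserting $G_1\hat f=M_{\Pi+}(\l)G_0\hat f$ from \eqref{2.8} and using \eqref{3.8}--\eqref{3.9} — which give $F_0^*F_2=F_2^*F_2$ and $F'+iF_2^*F_2=(F')^*$ — turns this into exactly the block operator \eqref{3.14.10}. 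Since $G_0\up\hat\gN_\l(A)$ is injective, the correspondence $h_0\mapsto h_1$ is single-valued and everywhere defined, so $M_+(\l)$ is a bounded operator, holomorphic because $M_{\Pi+}$ is; the relation $M_-(z)=M_+^*(\ov z)$ follows from $(M_{\Pi+}(\l))^*=M_{\Pi-}(\ov\l)$, or by the analogous direct computation from \eqref{3.7.2} with $P_2=P_{\cK_2}$. The block representations \eqref{3.14.11}--\eqref{3.14.14} are merely rewritings of \eqref{3.14.10} and $M_-(z)=M_+^*(\ov z)$ in the decomposition $\cH_0=\cK_2\oplus\cH_1$, which defines $N_\pm$ and $M$; the Nevanlinna properties of $M$ and $\cM$ come out of part 4).

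\textbf{Part 4) and the Nevanlinna property.} The identity \eqref{3.14.15} is the key point. For $h,x\in\cH_0$ set $\hat\f_\mu:=\{\hat\g_+(\mu)h,\,\{h,\,M_+(\mu)h\}\}$ and $\hat\f_\l:=\{\hat\g_+(\l)x,\,\{x,\,M_+(\l)x\}\}$; by parts 2) and 3) these lie in $\G$. Applying the Green's identity \eqref{3.7} to this pair, the left side becomes $(\mu-\ov\l)(\g_+(\mu)h,\g_+(\l)x)_\gH=(\mu-\ov\l)(\g_+^*(\l)\g_+(\mu)h,x)_{\cH_0}$, while the right side equals $((\iota M_+(\mu)-(\iota M_+(\l))^*+iP_2)h,\,x)_{\cH_0}$, where $\iota:\cH_1\hookrightarrow\cH_0$ and $P_2^*P_2=P_2$ is used. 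Comparing with the block forms \eqref{3.14.11} and \eqref{3.14.13} one checks that $\iota M_+(\mu)-(\iota M_+(\l))^*+iP_2=\cM(\mu)-\cM^*(\l)$ (the $\cK_2$-diagonal entries combine to $iI_{\cK_2}$, the off-diagonal ones to $N_+(\mu)$ and $-N_+(\l)^*$, and the $\cH_1$-corner to $M(\mu)-M(\l)^*$), which is \eqref{3.14.15}. Taking $\mu=\l$ gives $\im\cM(\l)=\im\l\cdot\g_+^*(\l)\g_+(\l)\ge0$ for $\l\in\bC_+$; together with $\cM^*(\l)=\cM(\ov\l)$ — read off from \eqref{3.14.13}--\eqref{3.14.14} using $M_-(z)=M_+^*(\ov z)$ — and holomorphy this shows $\cM$ is a Nevanlinna operator function, and restricting \eqref{3.14.15} to $\cH_1$, on which $\cM(\l)$ acts by $v\mapsto M(\l)v$, yields the same conclusion for $M$.

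The only genuine difficulty I anticipate is bookkeeping: keeping the two decompositions $\cH_0=\cK_0\oplus\cK'=\cK_2\oplus\cH_1$ (with $\cK_0=\cK_1\oplus\cK_2$, $\cH_1=\cK_1\oplus\cK'$, $\cH_2=\cK_2$) consistent throughout, and correctly matching $F_0^*$, the inclusion $\iota$, and the projections $P_1,P_2$ when translating the abstract Green's identity \eqref{3.7} into the matrix identity \eqref{3.14.15}. The analytic ingredients — holomorphy, the Nevanlinna inequalities, and the resolvent identities \eqref{3.14.7}--\eqref{3.14.9} — are routine once this algebra is in place.
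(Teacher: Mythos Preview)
Your proposal is correct and follows essentially the same route as the paper: both read parts 1)--3) directly off the explicit form \eqref{3.14.2} of $\G$ combined with the known properties \eqref{2.7}--\eqref{2.9} of the triplet $\Pi$, and both deduce the Nevanlinna property from the identity \eqref{3.14.15}. The paper is more terse, referring to \cite{Mog06.2} for the resolvent identities \eqref{3.14.7}--\eqref{3.14.9} and for \eqref{3.14.15}, whereas you actually carry out the Green's-identity computation for \eqref{3.14.15}; your block calculation $\iota M_+(\mu)-(\iota M_+(\l))^*+iP_2=\cM(\mu)-\cM^*(\l)$ is exactly the missing step and is correct.
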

\begin {proof}
The statement 1) is immediate from \eqref{3.14.2} and Proposition \ref{pr2.5},
3).

2) It follows from \eqref{3.14.2} that
\begin{gather*}
\G_0\up \hat\gN_\l(A)=\{\{\hat f_\l, \{G_0\hat f_\l-i \,F_2k',\, k'\}\}:\hat
f_\l\in \hat\gN_\l(A), \; k'\in\cK'\}, \;\;\;\;\l\in\bC_+\\
P_1\G_0\up \hat\gN_z (A)=\{\{\hat f_z, \{P_1 G_0\hat f_z, \, k'\}\}:\hat f_z\in
\hat\gN_z(A), \; k'\in\cK'\}, \;\;\;\; z\in\bC_-.
\end{gather*}
This and \eqref{2.7} imply that the equalities \eqref{3.14.4} and
\eqref{3.14.5} correctly define the operator functions $\g_+(\cd)$ and
$\g_-(\cd)$ such that \eqref{3.14.6} holds.

Next, one can prove the identities \eqref{3.14.7} - \eqref{3.14.9} in the same
way as in \cite [Proposition 3.15]{Mog06.2}. These identities  show that the
functions $\g_\pm(\cd)$ are holomorphic on their domains.

3)-4). Combining \eqref{3.7.1} with \eqref{3.14.2} one obtains
\begin {equation*}
M_+(\l)=\lb {\{G_0\hat f_\l-iF_2k',\, k'\}\choose \{M_{\Pi+}(\l)G_0 \hat f_\l
+F_1 k',\, F_0^* G_0\hat f_\l +F'k'\}}:\hat f_\l\in \hat\gN_\l (A),\; k'\in\cK'
\rb .
\end{equation*}
Letting here $h_0=G_0\hat f_\l-iF_2k'$ and taking \eqref{3.9} into account we
get
\begin {equation*}
M_+(\l)=\lb {\{h_0,\, k'\}\choose \{M_{\Pi+}(\l)h_0 +(F_1+i M_{\Pi+}(\l)F_2
)k',\, F_0^* h_0 +(F')^* k'\}}:h_0\in\cH_0,\; k'\in\cK' \rb .
\end{equation*}
This equality is equivalent to \eqref{3.14.10}. The identity \eqref{3.14.15} is
proved in the same way as similar one in \cite [Proposition 3.17]{Mog06.2}.
Finally, \eqref{3.14.15} implies that $\cM(\cd) $ and $M(\cd)$ are Nevanlinna
operator functions.
\end{proof}
In the next proposition we show that under the condition $\dim \cH_0<\infty $
formula \eqref{3.10} gives the general form of a boundary relation $\BR$ for
$A^*$.
\begin{proposition}\label {pr3.4}
Let $A$ be a closed symmetric linear relation in $\gH$ and let $\BR$ be a
boundary relation for $A^*$ such that $\dim \cH_0<\infty $. Then:

1) $n_\pm(A)<\infty$ and $\dom\G=A^*$;

2) the subspaces $\cK_\G'$ and $\cK_\G''$ (see \eqref{3.3} and \eqref{3.4}) are
mutually orthogonal and, consequently, the decompositions
\begin {equation}\label{3.14.16}
\cH_1:=\cK_1\oplus \cK_\G'\oplus\cK_\G'', \qquad \cH_0:=\cK_0\oplus
\cK_\G'\oplus\cK_\G''
\end{equation}
  hold with $\cK_1:=\cH_1\ominus (\cK_\G'\oplus\cK_\G'')$ and
  $\cK_0:=\cH_2\oplus \cK_1$;

3) for any $\{\hat f, {h_0\choose h_1}\}\in \G$ the inclusion $h_0\in
\cK_0\oplus \cK_\G'$ is valid;

4) for any $k'\in\cK_\G'$ there exists a unique triple of elements $ h_2\in
\cH_2, \; k_1\in\cK_1$ and $m'\in\cK_\G'$ such that
\begin {equation}\label{3.15}
\lb 0, {-i h_2+k' \choose k_1+m'} \rb\in\G.
 \end{equation}
Hence the relation \eqref{3.15} defines the operators $F_2 \in [\cK_\G',\cH_2],
\;F_1 \in [\cK_\G',\cK_1] $ and $F'\in [\cK_\G']$ via
\begin {equation}\label{3.16}
F_2 k'= h_2, \qquad F_1 k'=k_1, \qquad F'k'=m'\quad (k'\in\cK_\G').
\end{equation}
Moreover, these operators satisfy \eqref{3.9};

5) there exist linear maps $G_j:A^*\to \cK_j, \; j\in\{0,1\}$ such that
$\{\cK_0\oplus\cK_1, G_0, G_1\}$ is a boundary triplet for $A^*$ and $\G$
admits the representation \eqref{3.10} with $\cK'=\cK_\G', \;\cK''=\cK_\G''$
and the operator $F_0$ given by \eqref{3.8}.
\end{proposition}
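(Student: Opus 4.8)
The plan is to reconstruct, from an abstract boundary relation $\G$ with $\dim\cH_0<\infty$, the ingredients of Proposition~\ref{pr3.3} (a boundary triplet $\Pi=\{\cK_0\oplus\cK_1,G_0,G_1\}$ together with operators $F_2,F_1,F'$) and then verify that $\G$ has exactly the form \eqref{3.10}. The first two parts are finite-dimensional linear algebra driven by the unitarity of $\G$ as a relation between the Krein spaces $(\gH^2,J_\gH)$ and $(\HH,J_{01})$. Since $\G$ is unitary, $\ker\G=A$ and $\ran\G$ is $J_{01}$-nondegenerate in a suitable sense; because $\dim\cH_0<\infty$ the Green's identity \eqref{3.7} forces $\mul\G$ to be finite-dimensional and, combined with the already-proved fact that $\dim\cH_0<\infty\Rightarrow n_\pm(A)<\infty$ and $\dom\G=A^*$ (part 1), everything is finite-dimensional. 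To prove that $\cK_\G'$ and $\cK_\G''$ are orthogonal (part 2) I would apply the isometry identity \eqref{3.7} to pairs of elements of the form $\{0,\binom{h_0}{h_1}\}\in\mul\G$: plugging two such elements into \eqref{3.7} with $f=f'=g=g'=0$ gives $(h_1,x_0)_{\cH_0}-(h_0,x_1)_{\cH_0}+i(P_2h_0,P_2x_0)_{\cH_2}=0$, and specializing one of them to lie in $\mul(\mul\G)$ (i.e.\ $x_0=0$) yields $(h_1,0)-(0,x_1)+0=0$ automatically, so the real content is to take $h_0=0$ in the first slot and $x_0=k'+h_2$ in the second, which shows $(h_1,x_0)_{\cH_0}=0$ for all $h_1\in\cK_\G''$ and all $x_0$ whose $\cH_1$-part is in $\cK_\G'$; hence $\cK_\G''\perp\cK_\G'$ inside $\cH_1$. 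This gives the decompositions \eqref{3.14.16} with $\cK_1:=\cH_1\ominus(\cK_\G'\oplus\cK_\G'')$ and $\cK_0:=\cH_2\oplus\cK_1$.

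Part~3 (for every $\{\hat f,\binom{h_0}{h_1}\}\in\G$ one has $h_0\in\cK_0\oplus\cK_\G'$, i.e.\ the $\cK_\G''$-component of $h_0$ vanishes) again follows from \eqref{3.7}: fixing $\hat g=0$ and $\binom{x_0}{x_1}=\binom{0}{k''}$ with $k''\in\cK_\G''$ (legitimate since $\{0,\binom{0}{k''}\}\in\G$ by \eqref{3.4}), the identity collapses to $-(h_0,k'')_{\cH_0}=0$, so $h_0\perp\cK_\G''$. Part~4 is the crucial existence-and-uniqueness statement. Given $k'\in\cK_\G'$, by definition \eqref{3.3} there is some $\{0,\binom{k'+h_2}{h_1}\}\in\G$ with $h_2\in\cH_2$, $h_1\in\cH_1$; decompose $h_1=k_1+m'+k''$ along $\cK_1\oplus\cK_\G'\oplus\cK_\G''$. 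Subtracting the element $\{0,\binom{0}{k''}\}\in\G$ (which lies in $\mul\G$) we may assume $k''=0$, giving existence of a representative of the form \eqref{3.15}. For uniqueness I would show that if $\{0,\binom{-ih_2+k'}{k_1+m'}\}$ and $\{0,\binom{-i\tilde h_2+k'}{\tilde k_1+\tilde m'}\}$ both lie in $\G$, their difference is $\{0,\binom{-i(h_2-\tilde h_2)}{(k_1-\tilde k_1)+(m'-\tilde m')}\}\in\mul\G$ with zero $\cK_\G'$-component of the $\cH_0$-slot; pairing this difference with a general $\mul\G$-element via \eqref{3.7} (with $h_0$-slot's $\cK_\G'$-part free) forces the $\cH_2$-part $h_2-\tilde h_2=0$, the $\cK_1$-part $k_1-\tilde k_1=0$, and then $m'-\tilde m'=0$ because $\cK_\G'=P_1\dom(\mul\G)$ and the residual relation is single-valued over $\cK_\G'$ by construction. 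The linearity of $k'\mapsto(h_2,k_1,m')$ is then immediate from uniqueness, and since everything is finite-dimensional these maps are bounded, defining $F_2\in[\cK_\G',\cH_2]$, $F_1\in[\cK_\G',\cK_1]$, $F'\in[\cK_\G']$ by \eqref{3.16}. The relation \eqref{3.9}, i.e.\ $F'-(F')^*+iF_2^*F_2=0$, drops out of applying \eqref{3.7} to two elements \eqref{3.15} with parameters $k'$ and $\ell'$: the identity reads $(F_1\ell'+F'\ell'+\cdots,\, -iF_2k'+k')_{\cH_0}-(\cdots)_{\cH_0}+i(F_2k',F_2\ell')_{\cH_2}=0$ and, after using $\cK_1\perp\cK_\G'$ and $\cK_\G''$-vanishing, reduces exactly to $(F'\ell',k')_{\cK_\G'}-(\ell',F'k')_{\cK_\G'}+i(F_2k',F_2\ell')_{\cH_2}=0$ for all $k',\ell'$, which is \eqref{3.9}.

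For part~5 I would first construct the operators $G_0,G_1$. Set $\G_0:=P_{\cH_0\oplus\{0\}}\G$ as in Proposition~\ref{pr3.3a} and $A_0:=\ker\G_0$; from parts 3--4 and the unitarity of $\G$ one checks that $\G_0$ is single-valued modulo $\mul\G$ on the $\cH_2$-slot, and more precisely that for $\hat f\in A^*$ there is a well-defined element of $\G$ whose $\cH_0$-slot's $\cK_0$-part depends only on $\hat f$; define $G_0\hat f\in\cK_0$ to be the $\cK_0$-component of $h_0$ (which is unambiguous once we subtract off the $F_2k'$-contribution, using part~4 to fix $k'$ as the $\cK_\G'$-component of $h_0$) and $G_1\hat f\in\cK_1$ to be the $\cK_1$-component of $h_1-F_1k'$. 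Concretely: given $\hat f\in A^*$ pick any $\{\hat f,\binom{h_0}{h_1}\}\in\G$; by part~3 write $h_0=m_0+k'$ with $m_0\in\cK_0$, $k'\in\cK_\G'$; set $G_0\hat f:=m_0+iF_2k'$ --- no, more carefully, the bookkeeping should be arranged so that the representative matches \eqref{3.10}, so I would set $G_0\hat f$ to be $m_0$ shifted so that subtracting the part-4 element $\{0,\binom{-iF_2k'+k'}{F_1k'+F'k'}\}$ (wait, that has $\cH_2$-part $-iF_2k'$, not matching) --- the right normalization is to define $G_0\hat f$ as the $\cK_0$-part of $h_0+iF_2k'$ where $k'=P_{\cK_\G'}h_0$, and $G_1\hat f$ as the $\cK_1$-part of $h_1-F_1k'$; then consistency (independence of the chosen representative) follows from parts 3--4 applied to the difference of two representatives, which lies in $\mul\G$. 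One then verifies: (i) $G:=(G_0\ G_1)^\top$ is surjective onto $\cK_0\oplus\cK_1$ --- this uses surjectivity/maximality of $\G$ together with the dimension count $\dim\cK_0+\dim\cK_1=\dim\cH_0+\dim\cH_1-2\dim\cK_\G'-2\dim\cK_\G''$ matching $n_+(A)+n_-(A)$; (ii) the Green's identity \eqref{2.3} for $\{\cK_0\oplus\cK_1,G_0,G_1\}$ is obtained by substituting the definitions of $G_0,G_1$ into \eqref{3.7} and cancelling the $\cK_\G',\cK_\G''$-terms using \eqref{3.9}; (iii) $\ker G_0\cap\ker G_1=A$. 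These give that $\{\cK_0\oplus\cK_1,G_0,G_1\}$ is a boundary triplet, and by construction every element of $\G$ is of the form displayed in \eqref{3.10}; conversely every such element lies in $\G$ because $\G$, being unitary, is maximal isometric and the right-hand side of \eqref{3.10} is an isometric relation of the correct dimension (by Proposition~\ref{pr3.3} it is already a boundary relation, hence unitary, hence maximal). \textbf{The main obstacle} I anticipate is part~5, specifically pinning down the correct normalization in the definition of $G_0$ and $G_1$ so that the representative of a general element of $\G$ comes out \emph{exactly} in the shape \eqref{3.10} (with the precise $-iF_2k'$ and $+F_1k'$ and $F_0^*G_0\hat f+F'k'$ terms, where $F_0=(F_2\ F_1)^\top$), and then showing that the map $G$ so defined is genuinely surjective with kernel-intersection equal to $A$ --- all the Green-identity verifications are mechanical, but the bookkeeping that makes \eqref{3.10} literally correct, rather than merely correct up to an automorphism, is delicate.
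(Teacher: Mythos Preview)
Your approach to parts 1)--4) is essentially the same as the paper's, with one simplification you are missing in part~4): for uniqueness, rather than pairing the difference $\{0,\binom{i(h_2-\tilde h_2)}{(\tilde k_1-k_1)+(\tilde m'-m')}\}$ with a \emph{general} element of $\mul\G$, simply pair it with \emph{itself} in \eqref{3.7}. Since the $\cH_0$-slot lies entirely in $\cH_2$ and the $\cH_1$-slot in $\cH_1$, the first two terms vanish and the identity collapses to $0=i\|h_2-\tilde h_2\|^2$, forcing $h_2=\tilde h_2$; then the difference has zero $\cH_0$-slot, so its $\cH_1$-slot lies in $\cK_\G''$, which by the decomposition gives $k_1=\tilde k_1$ and $m'=\tilde m'$.

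For part~5) the paper's construction is considerably cleaner than your subtract-a-representative scheme and completely sidesteps the normalization bookkeeping you flag as the main obstacle. Instead of choosing a representative and correcting it, define the sub-relation
\[
T:=\Bigl\{\{\hat f,\tbinom{h_0}{h_1}\}\in\G:\ h_0\in\cK_0,\ h_1\in\cK_1\oplus\cK_\G'\Bigr\}.
\]
Using part~3) and the explicit description of $\G_\infty:=\{0\}\oplus\mul\G$ one checks $\G=T\dotplus\G_\infty$, so $T$ is single-valued with $\dom T=A^*$. Now apply \eqref{3.7} to a $\G_\infty$-element $\{0,\binom{-iF_2k'+k'}{F_1k'+F'k'}\}$ and a $T$-element $\{\hat f,\binom{k_0}{k_1+m'}\}$ (with $k_0\in\cK_0$, $k_1\in\cK_1$, $m'\in\cK_\G'$): the identity forces $m'=F_0^*k_0$. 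Setting $G_0\hat f:=k_0$ and $G_1\hat f:=k_1$ then gives $T$ exactly the form \eqref{3.20}, and $\G=T\dotplus\G_\infty$ \emph{is} \eqref{3.10}. The $F_0^*G_0\hat f$ term is forced by the Green's identity, not a normalization choice.

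There is a genuine gap in your surjectivity argument. The dimension count you propose, that $\dim\cK_0+\dim\cK_1=n_+(A)+n_-(A)$, is precisely the content of \eqref{3.22}, which the paper derives \emph{from} this proposition (Corollary~\ref{cor3.5}); invoking it here is circular. From $\ker G=A$ you only get $\dim\ran G=n_+(A)+n_-(A)\le\dim(\cK_0\oplus\cK_1)$, not equality. The paper's fix uses the maximality condition~2) of Definition~\ref{def3.2} directly: given $(m_0,m_1)\in\cK_0\oplus\cK_1$ orthogonal to $\ran G$, one builds the explicit element
\[
\hat\f=\Bigl\{0,\ \tbinom{\{m_1+iP_2m_0,\,0,\,0\}}{\{-P_1m_0,\,F_0^*(m_1+iP_2m_0),\,0\}}\Bigr\}
\]
and verifies (via the auxiliary characterization from the proof of Proposition~\ref{pr3.3}) that $\hat\f$ satisfies \eqref{3.7} against every element of $\G$. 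Maximality then forces $\hat\f\in\G$, hence $\hat\f\in\G_\infty$; comparing with the explicit form of $\G_\infty$ yields $m_0=m_1=0$. This is the step where the unitarity of $\G$ (not merely its isometry) actually enters.
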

\begin{proof}
1) Since $A=\ker \G$ and $\dim (\HH)<\infty$, it follows that $\dim (\dom\G /
A)<\infty$. Hence $\dom \G=\ov{\dom \G}=A^*$ and, consequently,
$n_+(A)+n_-(A)=\dim (A^*/ A) <\infty$.

2) - 3). Let $\{\hat f, {h_0\choose h_1}\}\in \G$ and $k''\in\cK_\G''$, so that
$\{0, {0\choose k''}\}\in \G$. Then by \eqref{3.7} $(h_0, k'')=0$, which
implies that $h_0\in \cH_0\ominus \cK_\G''$.

Assume now that $k'\in\cK_\G'$, so that $\{0, {k'+h_2\choose h_ 1}\}\in \G$
with some $h_2\in \cH_2 (\,\subset \cH_0 \ominus \cK_\G'' )$ and $h_1\in\cH_1$.
Then the above statement gives $k'+h_2\in \cH_0\ominus \cK_\G''$ and ,
consequently, $k'\in \cH_0\ominus \cK_\G''$. Therefore $\cK_\G'\perp \cK_\G''$.

4) Let $k'\in\cK_\G'$. Then according to \eqref{3.3} and the first equality in
\eqref{3.7a} $\{0, {k'-i h_2\choose k_1+m'+k''}\}\in \G$ with some $ h_2\in
\cH_2, \; k_1\in\cK_1,\; m'\in \cK_\G'$ and $k''\in\cK_\G''$, which in view of
\eqref{3.4} implies that $\left\{0, {k'-i h_2\choose k_1+m'}\right\}\in \G$.
Let us show that such $ h_2, \; k_1$ and $m'$ are unique for a given $k'$. If
$\left\{0, {k'-i\wt h_2\choose \wt k_1+ \wt m'}\right\}\in \G$ with some $\wt
h_2\in \cH_2, \;\wt k_1\in\cK_1$ and $\wt m'\in \cK_\G'$, then $\left\{0,
{i(h_2-\wt h_2)\choose (\wt k_1-k_1)+ (\wt m'-m')}\right\}\in \G$  and the
identity \eqref{3.7} yields $0=i|| h_2-\wt h_2||^2$. Therefore $ h_2-\wt h_2=0$
and, consequently, $(\wt k_1-k_1)+ (\wt m'-m')\in \cK_\G''$. Now the
decomposition \eqref{3.7a} yields $\wt k_1-k_1=0, \; \wt m'-m'=0$, so that $\wt
h_2=  h_2, \;\wt k_1=k_1$ and $\wt m'=m'$.

The equality \eqref{3.9} for operators $F_2$ and $F'$ is immediate from
identity \eqref{3.7} applied to $\left\{0, {-i F_2k'+k' \choose
F_1k'+F'k'}\right\}$ $\;\; (k'\in\cK_\G')$.

5) Combining \eqref{3.15} and \eqref{3.16} with \eqref{3.3} and \eqref{3.4} one
obtains
\begin {equation}\label{3.17}
\G_\infty:=\{0\}\oplus \mul\G=\lb \left\{0, {\{-i F_2k',\,k', \, 0 \} \choose
\{F_1k', \, F'k', \, k''\}}\right\}: k'\in\cK_\G', \; k''\in\cK_\G''  \rb.
\end{equation}
Let $T\subset \G$ be a linear relation from $\gH^2$ to $\HH$ given by
\begin {equation}\label{3.18}
T=\lb \lb \hat f, {h_0 \choose h_1}\rb\in\G: h_0\in \cK_0, \; h_1\in
\cK_0\oplus \cK_\G' \rb.
\end{equation}
Then in view of \eqref{3.17} and statement 3) $T\cap \G_\infty=\{0\}$ and the
decomposition
\begin {equation}\label{3.19}
\G=T\dotplus\G_\infty
\end{equation}
is valid. This and the equality $\dom \G=A^*$ imply that $T$ is the operator
with the domain $\dom T=A^*$. Moreover, applying the Green's identity
\eqref{3.7} to elements $\left\{0, {\{-i F_2k',\,k', 0 \} \choose \{F_1k', \,
F'k', \, 0\}}\right\}\in \G_\infty $ and $\left\{\hat f, {\{k_0,\,0,\, 0 \}
\choose \{k_1, \, m', \, 0\}}\right\}\in T $ one obtains
\begin {gather*}
0=(m',k')_{\cK'}-(k_0,F_1 k')_{\cK_0}-(k_0,F_2 k')_{\cK_0}=(
m',k')_{\cK'}-(k_0,F_0 k')_{\cK_0}, \;\;\;k'\in\cK_\G'.
\end{gather*}
Hence $m'=F_0^* k_0$ and formula \eqref{3.18} can be written as
\begin {equation}\label{3.20}
T=\lb \lb \hat f, {\{G_0\hat f,\, 0, \, 0\} \choose \{G_1\hat f,\, F_0^*G_0\hat
f, \, 0 \}}\rb: \hat f \in A^* \rb,
\end{equation}
where $G_0:=P_{\cK_0\oplus \{0\}}T$ and $G_1:=P_{\{0\}\oplus \cK_1}T$ are
linear maps from $A^*$ to $\cK_0$ and $\cK_1$ respectively. Combining
\eqref{3.19} with \eqref{3.17} and \eqref{3.20} we arrive at the representation
\eqref{3.10} of $\G$.

Now it remains to show that the operators $G_0$ and $G_1$ form the boundary
triplet $\Pi=\{\cK_0\oplus\cK_1, G_0, G_1\}$ for $A^*$. Applying the identity
\eqref{3.7} to elements of the linear relation $T$ (see \eqref{3.20}) one
obtains the Green's identity \eqref{2.3} for $G_0$ and $G_1$.

To prove surjectivity of the map $G=(G_0\;\;G_1)^\top$ assume that
$\{m_0,\,m_1\}\in \cK_0\oplus\cK_1$ and $(G_0\hat f, m_0)+(G_1\hat f, m_1)=0$
for all $\hat f\in A^*$. Then the element
\begin {equation}\label{3.21}
\hat\f= \lb 0, {\{m_1+i\,P_2m_0,\,0, \, 0 \}\choose \{-P_1m_0, \;
F_0^*(m_1+i\,P_2m_0), \; 0\}}  \rb \in \gH^2\oplus (\cH_0\oplus \cH_1)
\end{equation}
satisfies \eqref{3.12} and \eqref{3.13} and by assertion (a) in the proof of
Proposition \ref{pr3.3} $\hat\f$ satisfies the identity \eqref{3.7} for all
$\{{f\choose f'}, {h_0\choose h_1}\}\in\G$. Therefore by Definition
\ref{def3.2} and \eqref{3.21} $\hat\f\in \G_\infty$, which in view of
\eqref{3.17} gives $m_0=0$ and $m_1=0$. This implies that $\ran
G=\cH_0\oplus\cH_1$.
 \end{proof}
The following two corollaries arise from Propositions \ref{pr3.3} and
\ref{pr3.4}.
\begin{corollary} \label{cor3.5}
Let $\BR$ be a boundary relation for $A^*$ with $\dim\cH_0 < \infty$ and let
$n_\G:=\dim (\mul\G)$. Then: 1)  $n_-(A)\leq n_+(A)<\infty$ and
\begin {equation}\label{3.22}
\dim \cH_0=n_+(A)+n_\G, \qquad \dim \cH_1=n_-(A)+n_\G.
\end{equation}

2) in the case $\mul\G=\{0\}$ (and only in this case) the relation $\G$ turns
into the boundary triplet for $A^*$. More precisely, if $\mul\G=\{0\}$, then
$\G_0=P_{\cH_0\oplus \{0\}}\G$ and $\G_1=P_{\{0\}\oplus\cH_1 }\G$ are operators
and $\Pi=\bta$ is a boundary triplet for $A^*$.
\begin{proof}
1) Let $\{\cK_0\oplus\cK_1, G_0,G_1\}$ be a boundary triplet for $A^*$ defined
in Proposition \ref{pr3.4}, 5). Then according to \eqref{2.4} one has $\dim
\cK_0=n_+(A)$ and $\dim \cK_1=n_-(A)$. Moreover, \eqref{3.17} implies that
$\dim (\cK_\G'\oplus\cK_\G'')=n_\G$. This and decompositions \eqref{3.14.16} of
$\cH_0$ and $\cH_1$ yield \eqref{3.22}.

2) If $\mul\G=\{0\}$, then by \eqref{3.17} $\cK_\G'=\cK_\G''=\{0\}$. Therefore
in view of \eqref{3.14.16} and \eqref{3.10} the boundary triplet
$\{\cK_0\oplus\cK_1, G_0,G_1\}$ satisfies the equalities $\cK_j=\cH_j$ and
$G_j=\G_j, \; j\in\{0,1\}$.
\end{proof}
\end{corollary}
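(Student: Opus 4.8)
The plan is to extract everything from Proposition~\ref{pr3.4}, which, for a boundary relation $\BR$ with $\dim\cH_0<\infty$, already supplies a boundary triplet $\Pi=\{\cK_0\oplus\cK_1,G_0,G_1\}$ for $A^*$, the orthogonal decompositions \eqref{3.14.16} of $\cH_0$ and $\cH_1$, and the explicit form \eqref{3.10} of $\G$ with $\cK'=\cK_\G'$ and $\cK''=\cK_\G''$; that \eqref{3.10} indeed defines a boundary relation is in turn Proposition~\ref{pr3.3}.

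For statement~1) I would first note that Proposition~\ref{pr3.4},~1) already gives $n_\pm(A)<\infty$, and that applying Proposition~\ref{pr2.4} to $\Pi$ gives $\dim\cK_0=n_+(A)$ and $\dim\cK_1=n_-(A)$, so in particular $n_-(A)\le n_+(A)<\infty$. It then only remains to compute $\dim(\mul\G)$. For this I would read off from the explicit description \eqref{3.17} of $\G_\infty=\{0\}\oplus\mul\G$ that $\mul\G$ is parametrized by pairs $(k',k'')\in\cK_\G'\oplus\cK_\G''$, and that this parametrization is \emph{injective}, since $k'$ occurs as the $\cK_\G'$-coordinate of the first component and $k''$ as the $\cK_\G''$-coordinate of the second; using also that $\cK_\G'$ and $\cK_\G''$ are mutually orthogonal (Proposition~\ref{pr3.4},~2)) this yields $n_\G=\dim\cK_\G'+\dim\cK_\G''$. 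Substituting this together with $\dim\cK_0=n_+(A)$ and $\dim\cK_1=n_-(A)$ into \eqref{3.14.16} gives $\dim\cH_0=n_+(A)+n_\G$ and $\dim\cH_1=n_-(A)+n_\G$, i.e.\ \eqref{3.22}.

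For statement~2), if $\mul\G=\{0\}$ then the injectivity of the parametrization in \eqref{3.17} forces $\cK_\G'=\cK_\G''=\{0\}$, hence $\cK_j=\cH_j$ for $j\in\{0,1\}$ by \eqref{3.14.16}; putting $\cK'=\cK''=\{0\}$ into \eqref{3.10} collapses it to $\G=\{\{\hat f,\{G_0\hat f,G_1\hat f\}\}:\hat f\in A^*\}$, so that $\G_0=P_{\cH_0\oplus\{0\}}\G=G_0$ and $\G_1=P_{\{0\}\oplus\cH_1}\G=G_1$ are operators and $\Pi=\{\cK_0\oplus\cK_1,G_0,G_1\}=\bta$ is the desired boundary triplet. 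For the converse, if $\G$ turns into a boundary triplet, then by definition both coordinate projections $P_{\cH_0\oplus\{0\}}\G$ and $P_{\{0\}\oplus\cH_1}\G$ are single-valued, hence so is $\G$ itself, i.e.\ $\mul\G=\{0\}$.

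Essentially all of this is bookkeeping on top of Proposition~\ref{pr3.4}; the one place that calls for care is the dimension count for $\mul\G$, where one must invoke \emph{both} the injectivity of the parametrization \eqref{3.17} and the independence (orthogonality) of $\cK_\G'$ and $\cK_\G''$ granted by Proposition~\ref{pr3.4},~2) --- without both of these, $\dim(\mul\G)$ need not equal $\dim\cK_\G'+\dim\cK_\G''$ and \eqref{3.22} could fail.
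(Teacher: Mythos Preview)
Your proposal is correct and follows essentially the same route as the paper's proof: both extract the boundary triplet $\{\cK_0\oplus\cK_1,G_0,G_1\}$ from Proposition~\ref{pr3.4}, read off $\dim\cK_j=n_\mp(A)$ via Proposition~\ref{pr2.4}, compute $n_\G=\dim(\cK_\G'\oplus\cK_\G'')$ from \eqref{3.17}, and feed these into the decompositions \eqref{3.14.16}. Your version is slightly more explicit about why the parametrization in \eqref{3.17} is bijective onto $\mul\G$, and you also supply the converse in part~2) (``only in this case''), which the paper asserts in the statement but does not spell out in the proof.
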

\begin{corollary} \label{cor3.6}
Assume that $A$ is a closed symmetric linear relation in $\gH$, $\cH_0$ is a
Hilbert space with $\dim\cH_0 < \infty$, $\cH_1$ is a subspace in $\cH_0$ and
$\BR$ is a linear relation such that $\dom\G=A^*$ and $\ker\G=A$. If the
identity \eqref{3.7} is satisfied for $\G$ and
\begin {equation}\label{3.23}
\dim (\HH)=n_+(A)+n_-(A)+2\,n_\G,
\end{equation}
then $\G$ is a boundary relation for $A^*$.
\end{corollary}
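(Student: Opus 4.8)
The plan is to verify both conditions of Definition~\ref{def3.2} directly from the hypotheses, using the already-established machinery of Propositions~\ref{pr3.3} and~\ref{pr3.4}. First I would observe that condition~1) of Definition~\ref{def3.2} is essentially given: by assumption $\dom\G=A^*$ (so in particular $\dom\G$ is dense in $A^*$) and the identity \eqref{3.7} holds for $\G$, which is exactly the statement that $\G$ is an isometric relation from $(\gH^2,J_\gH)$ into $(\HH,J_{01})$. Thus the entire burden of the proof is the maximality condition~2): one must show that if $\hat\f=\{\hat g,{x_0\choose x_1}\}\in\gH^2\oplus(\HH)$ satisfies \eqref{3.7} for every element of $\G$, then already $\hat\f\in\G$.

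The key idea is a dimension count. Let $\G'$ denote the set of all $\hat\f\in\gH^2\oplus(\HH)$ satisfying \eqref{3.7} against every element of $\G$; this is the $J$-orthogonal companion of $\G$, it is a closed linear relation, and by the isometry property $\G\subseteq\G'$. So it suffices to prove $\dim(\G'/\G)=0$, and since $\G$ is contained in $\G'$ this will follow once I show $\dim\G'\le\dim\G$ (both being finite because $\dim(\HH)<\infty$ forces everything in sight to be finite-dimensional — note $\ker\G=A$ and $\dom\G=A^*$ give $\dim(\G/A)=\dim(A^*/A)=n_+(A)+n_-(A)<\infty$, hence $\dim\G=\dim A + n_+(A)+n_-(A)$ modulo the harmless infinite part $A$, which is handled by passing to $\G/(A\oplus\{0\})$ or equivalently by noting $\G'$ and $\G$ share the same ``$A$-part''). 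The cleanest bookkeeping is to work in the quotient: set $\widehat\G=\G/(A\oplus\{0\})$ and likewise for $\G'$; then $\dim\widehat\G=\dim(A^*/A)+n_\G=n_+(A)+n_-(A)+n_\G$, where the $n_\G$ comes from $\mul\G$, which sits inside the $\{0\}\oplus(\HH)$ part and survives the quotient.

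The main obstacle — and the step that genuinely uses hypothesis \eqref{3.23} — is computing $\dim\widehat\G'$. Here the standard Kre\u\i n-space fact is that for an isometric relation in a Kre\u\i n space the $J$-orthogonal companion satisfies $\dim(\HH)\text{-type identities}$; concretely, the map $\hat\f\mapsto(\text{value of the form against a fixed basis of }\G)$ realizes $\G'$ as the kernel of a linear map from $\gH^2\oplus(\HH)$, and the rank–nullity relation together with the nondegeneracy of $J_\gH$ and $J_{01}$ gives $\dim\widehat\G'=\dim\widehat\G\,^{[\perp]}$ computed as the ``symplectic complement'' inside the finite-dimensional indefinite space built from $A^*/A$ and $\HH$. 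Working this out, the dimension of the ambient form space is $\dim(A^*/A)+\dim(\HH)=n_+(A)+n_-(A)+\dim(\HH)$, and the $J$-orthogonal complement of a $d$-dimensional isometric subspace in a $(2n)$-dimensional symplectic-type space has dimension $2n-d$; substituting $d=\dim\widehat\G=n_+(A)+n_-(A)+n_\G$ and using \eqref{3.23} in the form $\dim(\HH)=n_+(A)+n_-(A)+2n_\G$, one finds
\begin{equation*}
\dim\widehat\G'=\bigl(n_+(A)+n_-(A)+\dim(\HH)\bigr)-\dim\widehat\G
=n_+(A)+n_-(A)+2n_\G-n_\G=n_+(A)+n_-(A)+n_\G=\dim\widehat\G.
\end{equation*}
Hence $\widehat\G'=\widehat\G$, i.e. $\G'=\G$, and condition~2) of Definition~\ref{def3.2} holds; therefore $\G$ is a boundary relation for $A^*$.

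An alternative (and perhaps more self-contained) route, which I would mention as a fallback, is to bypass the abstract Kre\u\i n-space count and instead reconstruct the boundary-triplet picture directly: mimic the proof of Proposition~\ref{pr3.4}, parts 2)--5), using only $\dom\G=A^*$, $\ker\G=A$, and the identity \eqref{3.7} (none of which required the maximality assumption there until the very end), to produce candidate operators $F_2,F_1,F',G_0,G_1$ and the decomposition $\G=T\dotplus\G_\infty$; the surjectivity argument for $G=(G_0\;G_1)^\top$ in that proof shows $\{\cK_0\oplus\cK_1,G_0,G_1\}$ is a genuine boundary triplet, and then Proposition~\ref{pr3.3} certifies that the relation \eqref{3.10} assembled from these data — which by construction equals our $\G$ — is a boundary relation. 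In this version the role of \eqref{3.23} is exactly to guarantee that the dimensions match up so that the $T$ produced is an \emph{operator} with full domain $A^*$ and that $\mul\G$ has the right size $n_\G$, i.e. that no ``extra'' multivalued part is forced; the hard point is checking that the surjectivity step goes through without already knowing maximality, which it does because that step in the proof of Proposition~\ref{pr3.4} only invokes assertion~(a) from the proof of Proposition~\ref{pr3.3}, a purely formal consequence of \eqref{3.7} and \eqref{2.3}.
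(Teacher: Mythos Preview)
Your primary approach via the Kre\u\i n-space dimension count is correct and is genuinely different from the paper's proof. The paper does not argue with the $J$-orthogonal companion $\G'$ at all; instead it follows exactly your ``alternative route'': it reruns the structural part of the proof of Proposition~\ref{pr3.4} (steps 2)--5), which use only $\dom\G=A^*$, $\ker\G=A$, and the identity \eqref{3.7}) to produce the decompositions \eqref{3.14.16} and the candidate maps $G_0,G_1$ satisfying Green's identity \eqref{2.3}, and then invokes Proposition~\ref{pr3.3} once it knows $\{\cK_0\oplus\cK_1,G_0,G_1\}$ is a boundary triplet. Your symplectic-complement argument is cleaner and more conceptual; the paper's route is more constructive, since it actually exhibits the underlying boundary triplet $\Pi_\G$ along the way.

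One correction to your description of the alternative route: you claim that ``the surjectivity step in the proof of Proposition~\ref{pr3.4} only invokes assertion (a) \ldots\ a purely formal consequence of \eqref{3.7} and \eqref{2.3}.'' That is not quite right. In Proposition~\ref{pr3.4} the surjectivity of $G=(G_0\;G_1)^\top$ is obtained by showing that the element $\hat\f$ in \eqref{3.21} satisfies \eqref{3.7} against all of $\G$ (via assertion (a)) and \emph{then} invoking the maximality condition 2) of Definition~\ref{def3.2} to conclude $\hat\f\in\G$, hence $\hat\f\in\G_\infty$. So that argument is not available here. The paper's proof of Corollary~\ref{cor3.6} gets surjectivity by a different, purely dimensional argument: from \eqref{3.17} one has $\dim(\cK_\G'\oplus\cK_\G'')=n_\G$, so \eqref{3.14.16} and \eqref{3.23} force $\dim(\cK_0\oplus\cK_1)=n_+(A)+n_-(A)$; since $\ker G=\ker\G=A$ and $\dom G=A^*$, the map $G$ induces a linear map from $A^*/A$ onto its range with $\dim(A^*/A)=n_+(A)+n_-(A)=\dim(\cK_0\oplus\cK_1)$, hence $G$ is surjective. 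This is where \eqref{3.23} enters in the paper's argument, playing the same role that it plays in your symplectic count.
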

\begin{proof}
Applying the same arguments as in the proof of Proposition \ref{pr3.4} one
obtains  decompositions \eqref{3.14.16} of $\cH_0$ and $\cH_1$ and the equality
\eqref{3.10} with $\cK'=\cK_\G', \; \cK''=\cK_\G''$ and operators $G_j:A^*\to
\cK_j, \; j\in\{0,1\},$ satisfying the Green's identity \eqref{2.3}. Moreover,
by \eqref{3.17} $\dim (\cK_\G' \oplus\cK_\G'')=n_\G$ and \eqref{3.14.16}
together with \eqref{3.23} gives $\dim (\cK_0\oplus\cK_1)=n_+(A)+n_-(A)$.
Observe also that in view of \eqref{3.10} $\ker G=\ker \G= A $ (here
$G=(G_0\;\;G_1)^\top$) and hence
\begin {equation*}
\dim (\dom G / \ker G)=\dim (A^* / A) =n_+(A)+n_-(A)=\dim (\cK_0\oplus\cK_1).
\end{equation*}
This implies that $\ran G=\cK_0\oplus\cK_1$ and, consequently, the operators
$G_0$ and $G_1$ form the boundary triplet $\{\cK_0\oplus\cK_1, G_0,G_1\}$ for
$A^*$.  Therefor by Proposition \ref{pr3.3} $\G$ is the boundary relation for
$A^*$.
\end{proof}
In the case $\cH_0=\cH_1=:\cH$ the above statements on boundary relations can
be rather simplified. Namely, the following corollary is immediate from
Propositions \ref{pr3.3} - \ref{pr3.4}.
\begin{corollary} \label{cor3.7}

Assume that $\Pi=\{\cK,G_0,G_1\}$ is a boundary triplet for $A^*$ (in the sense
of \cite{GorGor}), $\cK'$ and $\cK''$ are Hilbert spaces and
\begin {equation*}
\cH:=\cK\oplus \cK'\oplus\cK''.
\end{equation*}
Moreover, let $F\in [\cK',\cK]$ and $ F'=(F')^*\in [\cK']$ be linear operators.
Then the equality
\begin {equation}\label{3.25}
\G=\lb \lb \hat f, {\{G_0\hat f,\, k',\, 0\}\}\choose \{G_1\hat f +F k',\, F^*
G_0\hat f +F'k',\, k''\}} \rb :\hat f\in A^*,\; k'\in\cK', \; k''\in\cK'' \rb
\end{equation}
defines the boundary relation $\G:\gH^2\to\cH^2$ for $A^*$.

If in addition $\cK''=\{0\}$, then the following statements are valid:

1) the equality \eqref{3.14.3} defines the self-adjoint extension $A_0$ of $A$
and $A_0=\ker G_0$;

2) the relations
\begin{gather}
\hat\g (\l)= (\G_0\up\hat \gN_\l (A))^{-1},\qquad \g (\l)=\pi_1\hat\g
(\l),\;\; \;\;\l\in\rho (A_0);\label{3.26}\\
 {\rm gr} M(\l)=\lb \{h,h'\}\in \cH^2: \lb {f\choose \l f}, {h\choose h'}\rb
\in\G \;\;\text{for some} \;\; f\in\gH \rb, \;\;\l\in\rho (A_0), \label{3.27}
\end{gather}
define the $\g$-field $\g(\cdot):\rho (A_0) \to[\cH,\gH]$ and the Weyl function
$M(\cdot):\rho (A_0)\to [\cH]$ corresponding to $\G$. Moreover, $\g(\l)$ and
$M(\l)$ are associated with the $\g$-field $\g_\Pi(\l)$ and the Weyl function
$M_\Pi(\l)$ for the triplet $\Pi $ (see \eqref{2.10}) via
\begin {equation}\label{3.28}
\g(\l)=\g_{\Pi}(\l)P_{\cK}, \qquad M(\l)=\begin{pmatrix} M_{\Pi}(\l) & F \cr
F^* & F'\end{pmatrix}: \cK\oplus\cK'\to \cK\oplus\cK', \quad \l\in\rho (A_0)
\end{equation}
and the following identities hold
\begin{gather}
\g(\mu)=\g(\l)+(\mu-\l)(A_0-\mu)^{-1} \g(\l), \;\;\; \mu,\l\in\rho
(A_0)\label{3.29}\\
M(\mu)-M^*(\l)=(\mu-\ov\l) \g^*(\l)\g(\mu), \quad \mu,\l\in\rho
(A_0).\label{3.30}
\end{gather}
The identity \eqref{3.30} implies that $M(\cd)$ is a Nevanlinna operator
function.

Conversely, let $\G:\gH^2\to\cH^2$ be a boundary relation for $A^*$ with
$\dim\cH<\infty$  and let $\cK_\G'=\dom (\mul\G), \; \cK_\G''=\mul (\mul\G)$
(c.f. \eqref{3.3} and \eqref{3.4}). Then
\begin {equation*}
n_+(A)=n_-(A)=\dim \cH-n_\G
\end{equation*}
(here $n_\G=\dim (\mul\G)$) and the following statements hold:

1) $\cH=\cK\oplus\cK_\G'\oplus \cK_\G''$, where $\cK=\cH\ominus (\cK_\G'\oplus
\cK_\G'')$;

2) $\G$ admits the representation  \eqref{3.25} with some boundary triplet
$\Pi=\{\cK,G_0,G_1\}$ for $A^*$ and operators  $F\in [\cK_\G',\cK]$ and $
F'=(F')^*\in [\cK_\G']$.
\end{corollary}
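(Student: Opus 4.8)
The plan is to derive this corollary as the specialization $\cH_0=\cH_1=:\cH$ of Propositions \ref{pr3.3}, \ref{pr3.3a} and \ref{pr3.4}. First I would record what those propositions become in this case: since $\cH_2=\cH_0\ominus\cH_1=\{0\}$ one has $P_2=0$ and $P_1=I_\cH$; accordingly the associated boundary triplet has $\cK_0=\cK_1=:\cK$ and $\cK_2=\{0\}$; the block operator $F_0=(F_2\;\;F_1)^\top$ of \eqref{3.8} reduces to $F_0=F_1=:F\in[\cK',\cK]$ with $F_2=0$; the compatibility relation \eqref{3.9} becomes simply $F'=(F')^*$; and formula \eqref{3.10} turns into \eqref{3.25}. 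Moreover, in the present situation the deficiency indices of $A$ coincide (this follows from $\dim\cH_0=\dim\cH_1$ together with \eqref{3.22}, i.e. Corollary \ref{cor3.5}, 1)), so by Remark \ref{rem2.6} the triplet $\{\cK,G_0,G_1\}$ is a boundary triplet in the sense of \cite{GorGor} and $A_0=\ker G_0$ is \emph{self-adjoint}, not merely maximal symmetric; in particular $\CR\subset\rho(A_0)$.

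For the direct assertion I would invoke Proposition \ref{pr3.3}: under the stated hypotheses the right-hand side of \eqref{3.25} is exactly the relation \eqref{3.10} for the data $\cK_0=\cK_1=\cK$, $F_2=0$, $F_0=F_1=F$, with \eqref{3.9} reduced to $F'=(F')^*$; hence $\G$ is a boundary relation for $A^*$, and $\cK_\G'=\cK'$, $\cK_\G''=\cK''$. When $\cK''=\{0\}$ I would then specialize Proposition \ref{pr3.3a}. Statement 1) is the $\cH_0=\cH_1$ case of \eqref{3.14.3} together with the self-adjointness of $A_0$ noted above. For statement 2), since $P_1=I_\cH$ the maps $\G_0\up\hat\gN_\l(A)$ (for $\l\in\bC_+$) and $P_1\G_0\up\hat\gN_z(A)$ (for $z\in\bC_-$) are the same recipe, so $\hat\g_+$ and $\hat\g_-$ agree on their overlap and, via the resolvent identities \eqref{3.14.7}--\eqref{3.14.9}, extend to a single holomorphic $\g$-field $\g(\cd)$ on $\rho(A_0)$ satisfying \eqref{3.26} and \eqref{3.29}; likewise the Weyl-family formula \eqref{3.14.10} with $F_2=0$ collapses to the block formula in \eqref{3.28}, the first formula in \eqref{3.14.6} becomes $\g(\l)=\g_\Pi(\l)P_\cK$, and (because $\cK_2=\{0\}$) the auxiliary objects $N_\pm,\cM$ of \eqref{3.14.11}--\eqref{3.14.14} all reduce to $M$; finally \eqref{3.14.15} becomes \eqref{3.30}, which in turn yields the Nevanlinna property of $M(\cd)$.

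For the converse I would apply Proposition \ref{pr3.4} to the boundary relation $\G:\gH^2\to\cH^2$ with $\dim\cH<\infty$. Part 2) gives the orthogonal decomposition $\cH=\cK_0\oplus\cK_\G'\oplus\cK_\G''$ with $\cK_0=\cK_1=:\cK=\cH\ominus(\cK_\G'\oplus\cK_\G'')$ (here $\cH_2=\{0\}$ forces $\cK_0=\cK_1$), which is statement 1); and part 5) produces a boundary triplet $\{\cK,G_0,G_1\}$ together with the representation \eqref{3.10}, which under the present simplifications ($\cK_2=\{0\}$, hence $F_2=0$, $F_0=F_1=:F$, and $F'=(F')^*$ by part 4)) is precisely \eqref{3.25}, giving statement 2). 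The equality $n_+(A)=n_-(A)=\dim\cH-n_\G$ is the $\cH_0=\cH_1$ case of \eqref{3.22} (Corollary \ref{cor3.5}, 1)).

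I regard the only non-routine points as: (i) upgrading the conclusion ``$A_0$ maximal symmetric with $n_-(A_0)=0$'' of Proposition \ref{pr3.3a} to ``$A_0$ self-adjoint'', which is immediate once one knows $n_+(A)=n_-(A)$ (Remark \ref{rem2.6}); and (ii) checking that the two half-plane $\g$-fields and Weyl functions of Proposition \ref{pr3.3a} glue, after the collapse $P_1=I_\cH$, $\cK_2=\{0\}$, into the single objects $\g(\cd)$, $M(\cd)$ on $\rho(A_0)$ described by \eqref{3.26}--\eqref{3.30}. Both are bookkeeping rather than new arguments; everything else is direct substitution into Propositions \ref{pr3.3}--\ref{pr3.4}.
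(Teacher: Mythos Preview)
Your proposal is correct and follows exactly the approach the paper intends: the paper states that this corollary is immediate from Propositions \ref{pr3.3}--\ref{pr3.4}, and you have carried out precisely that specialization $\cH_0=\cH_1=:\cH$ with the bookkeeping made explicit. Your two flagged points (upgrading $A_0$ to self-adjoint via Remark \ref{rem2.6}, and gluing the half-plane objects into single functions on $\rho(A_0)$) are indeed the only places requiring comment, and you handle them correctly.
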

\begin{remark}\label{rem3.8}
1) In the case $\cH_0=\cH_1=:\cH$ our Definitions \ref{def3.2} and
\ref{def3.2a} of the boundary relation $\G:\gH^2\to\cH^2$ for $A^*$ and the
corresponding Weyl family $M(\cd)$ coincide with that introduced in
\cite{DM06}.

2) The identities \eqref{3.29} and \eqref{3.30} were proved in \cite{DM06} (see
also \cite{DM09}).
\end{remark}
\section{Canonical systems}
\subsection{Notations}
Let $\cI=\langle a,b\rangle\; (-\infty\leq a< b\leq\infty)$ be an interval of
the real line (in the case $a> -\infty $ (resp. $b<\infty$) the endpoint $a$
(resp. $b$) may or may not belong to $\cI$) and let $\bH$ be a
finite-dimensional   Hilbert space. Denote by $\cL_{loc}^1(\cI)$ the set of all
Borel operator functions $F(\cd)$ defined almost everywhere on $\cI$ with
values in $[\bH]$ and such that $\int\limits_{[\a,\b]}||F(t)||\,dt<\infty$ for
any finite segment $[\a,\b]\subset \cI$.

Next, denote by $\AC$ the set of all functions $f(\cd):\cI\to \bH$, which are
absolutely continuous on any segment $[\a,\b]\subset \cI$. Moreover, let $\ACf$
be the set of all functions $f\in\AC$ with the following property: if $a\in
\cI$ (resp. $b\in\cI$), then $f(a)=0$ (resp. $f(b)=0$); otherwise $f(t)=0$ on
some interval $(a,\a)\subset \cI$ (resp. $(\b,b)\subset \cI$). Clearly, in the
case of a finite segment $\cI=[a,b]$ the set $AC_0(\cI)$ coincides with the set
of all functions $f\in \AC$ such that $f(a)=f(b)=0$.

Let $\D(\cd)\in \cL_{loc}^1(\cI)$ be an operator function such that $\D(t)\geq
0 $ a.e. on $\cI$. Denote by $\lI$ the linear space of all Borel functions
$f(t)$ defined almost everywhere on $\cI$ with values in $\bH$ and such that
$\int\limits_{\cI}(\D (t)f(t),f(t))_\bH \,dt<\infty$. As is known \cite{Kac50,
DunSch} $\lI$ is a semi-Hilbert space with the semi-definite inner product
$(\cd,\cd)_\D$ and semi-norm $||\cd||_\D$ given by
\begin {equation*}
(f,g)_\D=\int_{\cI}(\D (t)f(t),g(t))_\bH \,dt, \quad
||f||_\D=((f,f)_\D)^{\frac1 2 }, \qquad f,g\in \lI.
\end{equation*}
The semi-Hilbert space $\lI$ gives rise to the Hilbert space $\LI=\lI /
\{f\in\lI: ||f||_\D=0\}$, i.e., $\LI$ is the Hilbert space of all equivalence
classes ($f$ equivalent to $g$ means $\D(t) (f(t)-g(t))=0$ a.e. on $\cI$) in
$\lI$. The inner product and norm in $\LI$ are defined by
\begin {equation*}
(\wt f, \wt g)=(f,g)_\D, \quad ||\wt f||=(\wt f, \wt f)^{\frac 1 2}=||f||_\D,
\qquad \wt f, \wt g\in\LI,
\end{equation*}
where $f\in\wt f \; (g\in\wt g)$ is any representative of the class $\wt f$
(resp. $\wt g$).

In the sequel we systematically use the quotient map $\pi$ from $\lI$ onto
$\LI$ given by $\pi f=\wt f(\ni f), \; f\in \lI$. Moreover, we let
$\wt\pi:=\pi\oplus\pi: (\lI)^2 \to (\LI)^2$, so that $\wt \pi\{f,g\}=\{\wt f,
\wt g\}, \;\; f,g \in \lI$. It is clear that $\ker \pi=\{f\in\lI:\, \D(t)f(t)=0
\;\;\text{a.e. on}\;\;\cI\}$.
\subsection{Minimal and maximal relations}
Let as above $\cI=\langle a,b\rangle \;(-\infty \leq a <b\leq\infty )$ be an
interval and let $\bH$ be a Hilbert space with $n:=\dim \bH\leq\infty$.
Moreover, let $B(\cd), \D (\cd)\in \cL_{loc}^1(\cI)$ be operator functions such
that $B(t)=B^*(t)$ and $\D(t)\geq 0$ a.e. on $\cI$ and let $J\in [\bH]$ be a
signature operator ( this means that  $J^*=J^{-1}=-J$).

A canonical system (on an interval $\cI$) is a system of differential equations
of the form
\begin {equation}\label{4.1}
J y'(t)-B(t)y(t)=\D(t) f(t), \quad t\in\cI,
\end{equation}
where $f(\cd)\in \lI$. Together with \eqref{4.1} we consider also the
homogeneous canonical system
\begin {equation}\label{4.1.1}
J y'(t)-B(t)y(t)=\l \D(t) y(t), \quad t\in\cI, \quad \l\in\bC.
\end{equation}
A function $y\in\AC$ is a solution of \eqref{4.1} (resp. \eqref{4.1.1}) if the
equality \eqref{4.1} (resp. \eqref{4.1.1} holds a.e. on $\cI$. In the sequel we
denote by $\cN_\l$ the linear space of all solutions of the homogeneous system
\eqref{4.1.1} belonging to $\lI$:
\begin {equation}\label{4.1.2}
\cN_\l=\{y\in \AC\cap \lI: J y'(t)-B(t)y(t)=\l \D(t) y(t) \;\;\text{a.e. on}
\;\; \cI\}, \quad\l\in\bC.
\end{equation}
It follows from \eqref{4.1.2} that $\dim \cN_\l\leq \dim \bH <\infty$.

As was shown in \cite{KogRof75} the set of all solutions of \eqref{4.1.1} such
that $\D(t)y(t)=0$ (a.e. on $\cI$) does not depend on $\l$. This enables one to
introduce the following definition.
\begin{definition}\label{def4.0}$\,$\cite{KogRof75}
The null manifold $\cN$ of the system \eqref{4.1} is the subspace of
$\cN_\l\;(\l\in\bC)$ given by
\begin {equation}\label{4.1.3}
\cN=\cN_\l\cap\ker\pi=\{y\in \AC: J y'(t)-B(t)y(t)=\l \D(t) y(t)\;\; \text{and}
\;\; \D(t)y(t)=0 \;\;\text{a.e. on} \;\; \cI\}.
\end{equation}
\end{definition}
For each $c\in\cI$ denote  by $\bH_c$ the subspace
\begin {equation}\label{4.1.4}
\bH_c=\{y(c): y(\cd)\in \cN\}(\subset \bH)
\end{equation}
and let
\begin {equation}\label{4.1.5}
k_\cN=\dim \cN=\dim \bH_c.
\end{equation}
Clearly, $\cN\subset\cN_\l \; (\l\in\bC)$ and for any fixed $\l_0 \in \bC
\setminus \bR$
\begin {equation}\label{4.1.6}
\cN=\cN_{\l_0}\cap \cN_{\ov \l_0}.
\end{equation}
According to \cite{Orc,Kac83,Kac84,LesMal03} the canonical system \eqref{4.1}
induces the \emph{maximal relations} $\tma$ in $\lI$ and $\Tma$ in $\LI$, which
are defined by
\begin {gather}
\tma=\{\{y,f\}\in\lI\times\lI:y\in\AC \;\;\text{and}\;\; J y'(t)-B(t)y(t)=\D(t)
f(t)\;\;\text{a.e. on}\;\; \cI \},\label{4.2}\\
\Tma=\wt\pi \tma=\{\{\wt y, \wt f\}\in\LI\oplus\LI: \wt y=\pi
y\;\;\text{and}\;\; \wt f=\pi f\;\;\text{for some}\;\;
\{y,f\}\in\tma\}.\label{4.3}
\end{gather}
For $\{y,f\}, \; \{z,g\}\in\tma$ and a segment $[\a,\b]\subset\cI$ integration
by parts yields
\begin {equation*}
\int_{[\a,\b]}(\D(t)f(t),z(t))\,dt-\int_{[\a,\b]}(\D(t)y(t),g(t))\,dt=(J
y(\b),z(\b))-(J y(\a),z(\a)).
\end{equation*}
Hence there exist the limits
\begin {equation}\label{4.4}
[y,z]_a:=\lim_{\a \downarrow a}(J y(\a),z(\a)),\qquad [y,z]_b:=\lim_{\b
\uparrow b}(J y(\b),z(\b)), \quad y,z \in\dom\tma
\end{equation}
and the following Lagrange's identity holds
\begin {equation}\label{4.5}
(f,z)_\D-(y,g)_\D=[y,z]_b -[y,z]_a,\quad \{y,f\}, \; \{z,g\} \in\tma.
\end{equation}
Formula \eqref{4.4} defines the boundary bilinear forms $[\cd,\cd]_a $ and
$[\cd,\cd]_b $ on $\dom \tma$, which play an essential role in what follows. By
using this form we define the \emph{minimal relations} $\tmi$ in $\lI$ and
$\Tmi$ in $\LI$ via
\begin {gather}
\tmi=\{\{y,f\}\in\tma:[y,z]_a=0 \;\; \text{and}\;\; [y,z]_b=0\;\;\text{for
every}\;\; z\in \dom \tma \},\label{4.6}\\
\Tmi= \wt\pi \tmi=\{\{\wt y, \wt f\}\in\LI\oplus\LI: \,\wt y=\pi
y\;\;\text{and}\;\; \wt f=\pi f\;\;\text{for some}\;\;
\{y,f\}\in\tmi\}.\label{4.7}
\end{gather}
Moreover, introduce linear relations $\cT_0$ in $\lI$ and $T_0$ in $\LI$ by
letting
\begin {gather}\label{4.8}
\cT_0=\{\{y,f\}\in \tma:\,  y\in\ACf\}, \quad T_0=\wt\pi\cT_0.
\end{gather}
It is clear that $\cT_0\subset \tmi\subset\tma$ and
$T_0\subset\Tmi\subset\Tma$.

Our next goal is to show that $\Tmi$ is a closed symmetric linear relation and
$T_{min}^*=\Tma$. To do this we start with the following definition.
\begin{definition}\label{def4.1}
A finite endpoint $a$ (resp. $b$) of the interval $\cI=\langle a,b \rangle$ is
said to be a regular endpoint of the canonical system \eqref{4.1} if $a\in\cI$
(resp. $b\in\cI$). The canonical system \eqref{4.1} is called regular if both
endpoints $a$ and $b$ are regular; otherwise it is called singular.
\end{definition}
Clearly, in the case of the regular endpoint $a$ (resp. b) integrals
$\int\limits_{[a,c]}||B(t)||\, dt$ and $\int\limits_{[a,c]}||\D(t)||\, dt$
(resp. $\int\limits_{[c,b]}||B(t)||\, dt$ and $\int\limits_{[c,b]}||\D(t)||\,
dt$) are finite for any $c\in (a,b)$.

If the system \eqref{4.1} is regular, then the identity \eqref{4.5} can be
written as
\begin {equation}\label{4.10}
(f,z)_\D-(y,g)_\D=(J y(b),z(b)) -(J y(a),z(a)),\quad \{y,f\}, \; \{z,g\}
\in\tma.
\end{equation}
In the case of the regular system \eqref{4.1} we associate with every subspace
$\bK\subset\bH$ two pairs of linear relations $\cT_\bK, \;\cT_{\bK *} $ in
$\lI$ and $T_\bK, \; T_ {\bK^*} $ in $\LI$ given by
\begin {gather}
\cT_\bK=\{\{y,f\}\in\tma: y(a)\in\bK \;\;\text{and} \;\; y(b)=0\},\label{4.11}\\
\cT_{\bK *}=\{\{y,f\}\in\tma: y(a)\in (J\bK)^\perp\}, \label{4.12}\\
T_\bK=\wt\pi \cT_\bK, \qquad T_{\bK *}=\wt\pi \cT_{\bK *}.\label{4.13}
\end{gather}
\begin{lemma}\label{lem4.2}
If the system \eqref{4.1} is regular, then for any subspace  $\bK\in\bH$ and
$\l\in\bC$
\begin {equation}\label{4.14}
\ran (T_\bK-\l)=(\ker (T_{\bK*}-\ov\l))^\perp.
\end{equation}
\end{lemma}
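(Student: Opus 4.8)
The plan is to prove the identity $\ran (T_\bK-\l)=(\ker (T_{\bK*}-\ov\l))^\perp$ by working on the level of the semi-Hilbert space $\lI$ and then pushing everything through the quotient map $\wt\pi$. First I would note that, since $\LI$ carries the quotient inner product, the orthogonal complement $(\ker (T_{\bK*}-\ov\l))^\perp$ is the set of all $\wt f\in\LI$ with $(\wt f,\wt z)=0$ for every $\wt z$ with $\{\wt z,\ov\l\wt z\}\in T_{\bK*}$; by the definitions \eqref{4.12}, \eqref{4.13} this means $(f,z)_\D=0$ for all $z\in\dom\tma$ with $\{z,\ov\l z\}\in\tma$ and $z(a)\in(J\bK)^\perp$. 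So the statement to be proved amounts to: a class $\wt f\in\LI$ lies in $\ran (T_\bK-\l)$ if and only if $(f,z)_\D=0$ for all such $z$.

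For the inclusion $\subset$, suppose $\{\wt y,\wt f+\l\wt y\}\in T_\bK$, i.e. (after choosing representatives) $\{y, f+\l y\}\in\tma$ with $y(a)\in\bK$, $y(b)=0$. For $z$ as above with $\{z,\ov\l z\}\in\tma$, apply the regular Lagrange identity \eqref{4.10} to the pairs $\{y,f+\l y\}$ and $\{z,\ov\l z\}$: the left side is $(f+\l y,z)_\D-(y,\ov\l z)_\D=(f,z)_\D$, while the right side is $(Jy(b),z(b))-(Jy(a),z(a))=-(Jy(a),z(a))$, and this vanishes because $y(a)\in\bK$ and $z(a)\in(J\bK)^\perp$ (note $(Jy(a),z(a))=\ov{(J z(a),y(a))}$ up to the appropriate conjugation, or directly $(Jy(a),z(a))=0$ since $Jy(a)\perp z(a)$ is exactly the orthogonality built into $(J\bK)^\perp$). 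Hence $(f,z)_\D=0$, giving $\wt f\in(\ker(T_{\bK*}-\ov\l))^\perp$.

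For the reverse inclusion, I would argue by a dimension/codimension count rather than a direct construction. The map $y\mapsto y(a)$ identifies $\cN_\l/(\cN_\l\cap\ker\pi)$-type quotients with subspaces of $\bH$; more usefully, using variation of parameters for the regular system \eqref{4.1.1} one knows $\ran(\tma-\l)=\lI$ on the level of $\lI$ (every $f\in\lI$ is $Jy'-By-\l\D y$ for some absolutely continuous $y$), and the fibres of $T_\bK-\l$ and the size of $\ker(T_{\bK*}-\ov\l)$ are both controlled by finite-dimensional data at the endpoint $a$ together with $\cN$ and $\cN_\l$. Concretely, $\ran(T_\bK-\l)^\perp=\ker((T_\bK-\l)^*)$ reduces the claim to showing $(T_\bK-\l)^*\supseteq$ (or $=$) the restriction of $\Tma-\ov\l$ to $\ker(T_{\bK*}-\ov\l)$; and since $\LI$ is not assumed finite-dimensional, I would instead show directly that $(\ker(T_{\bK*}-\ov\l))^\perp\subseteq\ran(T_\bK-\l)$ by: given $\wt f$ orthogonal to that kernel, first solve $\{y_1,f+\l y_1\}\in\tma$ with $y_1(b)=0$ (possible since $b$ is regular, by integrating the equation backwards from $b$), then correct $y_1$ by an element of $\cN_\l$ to arrange $y(a)\in\bK$; the obstruction to this correction is a linear functional on a finite-dimensional space, and the orthogonality hypothesis on $\wt f$ is exactly what kills it, again via the Lagrange identity \eqref{4.10} tested against solutions $z$ of the $\ov\l$-equation.

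The main obstacle I anticipate is the reverse inclusion, specifically the bookkeeping that turns ``$\wt f$ annihilates $\ker(T_{\bK*}-\ov\l)$'' into the precise solvability of the boundary-value problem ``$\{y,f+\l y\}\in\tma$, $y(a)\in\bK$, $y(b)=0$'': one must match up the finite-dimensional space of solutions $z\in\cN_{\ov\l}$ with prescribed behaviour at $a$ against the finite-dimensional space of admissible corrections in $\cN_\l$, and handle the role of the null manifold $\cN=\cN_\l\cap\cN_{\ov\l}$ carefully so that passing to equivalence classes $\wt\pi$ does not lose or gain dimensions. The regularity of both endpoints is what makes all the relevant limits in \eqref{4.4} into genuine evaluations $y(a),y(b)$, so \eqref{4.10} is available throughout, and that is the identity doing all the real work.
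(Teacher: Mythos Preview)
Your argument for the inclusion $\ran(T_\bK-\l)\subset(\ker(T_{\bK*}-\ov\l))^\perp$ is exactly the paper's: apply the regular Lagrange identity \eqref{4.10} to $\{y,f+\l y\}\in\tma$ with $y(a)\in\bK$, $y(b)=0$ and to $\{z,\ov\l z\}\in\tma$ with $z(a)\in(J\bK)^\perp$, and use $Jy(a)\in J\bK\perp z(a)$.

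For the reverse inclusion your sketch lands on the same mechanism as the paper (solve $Jy'-By=\D(f+\l y)$ backward from $b$ with $y(b)=0$, then test via Lagrange against solutions of the $\ov\l$-equation), but the intermediate ``correct $y_1$ by an element of $\cN_\l$'' step is misleading and, as written, does not work: any nonzero $w\in\cN_\l$ has $w(b)\neq 0$ (it solves a first-order system), so adding it to $y_1$ destroys the condition $y(b)=0$ you just arranged. The paper's point---and what your last sentence is really driving at---is that \emph{no correction is needed}: once $y_1(b)=0$, the orthogonality hypothesis on $\wt f$ already forces $y_1(a)\in\bK$. Concretely, for each $h\in(J\bK)^\perp$ take the solution $z_h$ of $Jz'-Bz=\ov\l\,\D z$ with $z_h(a)=h$; then $\wt z_h\in\ker(T_{\bK*}-\ov\l)$, so $(\wt f,\wt z_h)=0$, and Lagrange gives $(Jy_1(a),h)=-(f,z_h)_\D=0$. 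Since $h$ ranges over all of $(J\bK)^\perp$, this yields $Jy_1(a)\in J\bK$, i.e.\ $y_1(a)\in\bK$. Drop the correction step and the dimension-count detour; the direct argument is both shorter and what the paper does.
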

\begin{proof}
It follows from \eqref{4.11} and \eqref{4.13} that $\ran (T_\bK-\l)$ is the set
of all $\wt f\in\LI$ with the following property: there are $f\in\wt f$ and
$y\in\AC$ such that
\begin {equation}\label{4.15}
y(a)\in\bK, \quad y(b)=0 \;\;\text{and}\;\; \{y,f+\l y\}\in\tma.
\end{equation}
On the other hand, \eqref{4.12} and \eqref{4.13} imply that
\begin {equation}\label{4.16}
\ker (T_{\bK*}-\ov\l)=\{\wt z\in\LI:\, \{z, \ov\l z\}\in\tma \;\;\text{and}\;\;
z(a)\in (J\bK)^\perp \;\;\text{for some}\;\; z\in\wt z \}.
\end{equation}
Let $\wt f\in \ran (T_\bK-\l), \; \wt z\in \ker (T_{\bK*}-\ov\l)$ and let
$\{y,f+\l y\}, \; \{z, \ov\l z\}$ be the corresponding elements of $\tma$ from
\eqref{4.15}, \eqref{4.16}. Applying to these elements the Lagrange's identity
\eqref{4.10} one obtains
\begin {equation*}
(f,z)_\D=-(Jy(a), z(a))=0.
\end{equation*}
Hence $(\wt f,\wt z)=0$ and, consequently, $\ran (T_\bK-\l)\subset (\ker
(T_{\bK*} -\ov\l))^\perp$.

To prove the inverse inclusion assume that $\wt f\in (\ker (T_{\bK*} -\ov\l))
^\perp$ and let $f\in\wt f, \; f\in\lI$. Moreover, let $y\in\AC$ be the
solution of the equation
\begin {equation*}
Jy'-B(t)y=\D(t) (f(t)+\l y)
\end{equation*}
with the initial data $y(b)=0$, so that $\{y, f+\l y\}\in\tma$. Next, for every
$h\in (J\bK)^\perp$ let $z_h\in \AC$ be the solution of the equation
\begin {equation*}
Jz'-B(t)z= \ov\l \D(t)z
\end{equation*}
with the initial data $z_h(a)=h$ and let $\wt z_h=\pi z_h$. Since $\{z_h, \ov\l
z_h \}\in \tma$ and $z_h(a)\in (J\bK)^\perp $, it follows from \eqref{4.16}
that $\wt z_h\in \ker (T_{\bK*}-\ov\l)$ and, therefore, $(\wt f, \wt z_h)=0$.
Now application of the identity \eqref{4.10} to $\{y, f+\l y\}$ and $\{z_h,
\ov\l z_h\}$ gives
\begin {equation*}
(Jy(a), h)=(Jy(a),z_h(a))-(Jy(b),z_h(b))=-(f,z_h)_\D=-(\wt f, \wt z_h)=0,
\;\;\;\; h\in (J\bK)^\perp,
\end{equation*}
which implies that $y(a)\in\bK$. Thus for an arbitarry $\wt f\in (\ker
(T_{\bK*} -\ov\l)) ^\perp$ we have constructed $f\in\wt f$ and $y\in\AC$
satisfying \eqref{4.15}. This gives the requiered inclusion $(\ker (T_{\bK*}
-\ov\l)) ^\perp \subset \ran (T_\bK-\l)$.
\end{proof}

\begin{lemma}\label{lem4.3}
If the system \eqref{4.1} is regular, then for any subspace  $\bK\subset\bH$
\begin {equation}\label{4.17}
(T_\bK)^*=T_{\bK *}.
\end{equation}
In the particular case $\bK=\{0\}$ formula \eqref{4.17} gives $T_0^*=\Tma$.
\end{lemma}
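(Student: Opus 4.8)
The plan is to prove the operator identity $(T_\bK)^*=T_{\bK*}$ by a double inclusion, using the characterization of the range supplied by Lemma \ref{lem4.2} together with the Lagrange identity \eqref{4.10}. First I would establish $T_{\bK*}\subset (T_\bK)^*$, which is the easy direction: take $\{\wt z,\wt g\}\in T_{\bK*}$ and $\{\wt y,\wt f\}\in T_\bK$, lift them to representatives $\{z,\ov{g}\}$, $\{y,f\}\in\tma$ with $z(a)\in (J\bK)^\perp$, $y(a)\in\bK$, $y(b)=0$, and apply \eqref{4.10}. Since $y(b)=0$ and $(Jy(a),z(a))=0$ because $y(a)\in\bK$ and $z(a)\perp J\bK$, the boundary terms vanish and one gets $(\wt f,\wt z)=(\wt y,\wt g)$, i.e. $\{\wt z,\wt g\}\in (T_\bK)^*$.

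For the reverse inclusion $(T_\bK)^*\subset T_{\bK*}$ I would argue as follows. By definition a pair $\{\wt z,\wt g\}$ belongs to $(T_\bK)^*$ iff $(\wt f,\wt z)=(\wt y,\wt g)$ for all $\{\wt y,\wt f\}\in T_\bK$; equivalently, using that $T_\bK$ is $\wt\pi\cT_\bK$, iff $(f,z)_\D=(y,g)_\D$ for all representatives with $\{y,f\}\in\cT_\bK$. The first step is to show $\{\wt z,\wt g\}\in\Tma$: testing against the subclass of $\{y,f\}\in\cT_\bK$ with $y\in\ACf$ (so $y(a)=y(b)=0$, i.e. $\{y,f\}\in\cT_0$) shows $\{\wt z,\wt g\}\in T_0^*$, and one can identify $T_0^*$ with $\Tma$ — either by invoking the known result quoted after \eqref{1.14} or, more self-containedly, by the density/dimension argument that $\cT_0$ generates all of $\tma$ after quotienting, so that $\{\wt z,\wt g\}\in T_0^*=\Tma$ forces a representative $\{z,\ov g\}\in\tma$ with $\wt z=\pi z$, $\wt g=\pi g$. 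Once $\{z,\ov g\}\in\tma$ is in hand, the Lagrange identity \eqref{4.10} applied to this pair and an arbitrary $\{y,f\}\in\cT_\bK$ gives $0=(f,z)_\D-(y,g)_\D=(Jy(b),z(b))-(Jy(a),z(a))=-(Jy(a),z(a))$, using $y(b)=0$. Since $y(a)$ ranges over all of $\bK$ as $\{y,f\}$ ranges over $\cT_\bK$ (solve the equation with prescribed initial value at $a$), we conclude $(Jy(a),z(a))=0$ for every $y(a)\in\bK$, i.e. $Jz(a)\perp\bK$, equivalently $z(a)\in(J\bK)^\perp$ since $J$ is unitary and $J^*=-J$. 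Hence $\{z,\ov g\}\in\cT_{\bK*}$ and $\{\wt z,\wt g\}\in T_{\bK*}$.

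The special case $\bK=\{0\}$ follows immediately: then $\cT_{\{0\}}=\{\{y,f\}\in\tma: y(a)=0,\ y(b)=0\}$ — which in the regular case is exactly $\cT_0$ — so $T_{\{0\}}=T_0$, while $(J\{0\})^\perp=\bH$ gives $\cT_{\{0\}*}=\{\{y,f\}\in\tma: y(a)\in\bH\}=\tma$, hence $T_{\{0\}*}=\Tma$, and \eqref{4.17} reads $T_0^*=\Tma$.

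The main obstacle I anticipate is the first step of the reverse inclusion, namely establishing that a pair annihilating $T_\bK$ in the pairing actually admits a representative lying in $\tma$ (the passage from $\Tma=T_0^*$ at the level of equivalence classes back to an honest solution of the canonical system). In the regular case this is cleanest because $\pi$ restricted to $\dom\tma$ has kernel $\cN$ and one can adjust representatives, but care is needed to ensure the adjusted representative still solves \eqref{4.1} with the right right-hand side; everything else is a routine application of \eqref{4.10} and the solvability of the initial value problem for \eqref{4.1.1}.
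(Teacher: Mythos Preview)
Your easy inclusion $T_{\bK*}\subset(T_\bK)^*$ is fine and matches the paper. The problem is in the reverse inclusion: you propose to first deduce $\{\wt z,\wt g\}\in\Tma$ from $\{\wt z,\wt g\}\in T_0^*$ by invoking $T_0^*=\Tma$. But in the paper's logical structure, the equality $T_0^*=\Tma$ (for regular systems) is precisely the special case $\bK=\{0\}$ of the lemma you are proving, as the statement itself points out. So your argument is circular. The reference you cite ``after \eqref{1.14}'' is Orcutt's result for \emph{definite} systems only, which does not cover the general (possibly indefinite) case treated here; and your fallback ``density/dimension argument'' is not spelled out enough to stand on its own.

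The paper avoids this circularity by \emph{not} passing through $\Tma$ at all. Instead it uses Lemma~\ref{lem4.2} (with $\l=0$), which gives $\ran T_\bK=(\ker T_{\bK*})^\perp$ and hence $\ker T_{\bK*}=(\ran T_\bK)^\perp$ (using $\dim\ker T_{\bK*}<\infty$). Given $\{\wt z,\wt f\}\in(T_\bK)^*$, pick any $f\in\wt f$ and solve \eqref{4.1} with initial data $y(a)=0$; this produces $\{y,f\}\in\cT_{\bK*}$, hence $\{\wt y,\wt f\}\in T_{\bK*}\subset(T_\bK)^*$. Then $\{\wt z-\wt y,0\}\in(T_\bK)^*$, so $\wt z-\wt y\in(\ran T_\bK)^\perp=\ker T_{\bK*}$, whence $\{\wt z-\wt y,0\}\in T_{\bK*}$ and therefore $\{\wt z,\wt f\}=\{\wt z-\wt y,0\}+\{\wt y,\wt f\}\in T_{\bK*}$. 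The point is that Lemma~\ref{lem4.2} already encodes, via the range identity, exactly the information needed to recognize the first component as coming from $T_{\bK*}$, without ever needing to know that an abstract adjoint element has a $\tma$-representative. You correctly flagged this passage as the main obstacle; the resolution is the range formula from Lemma~\ref{lem4.2}, not $T_0^*=\Tma$.
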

\begin{proof}
Applying \eqref{4.10} to $\{y,f\}\in \cT_\bK$ and $\{z,g\}\in \cT_{\bK *}$ we
obtain $ (f,z)_\D-(y,g)_\D=0$. Therefore by \eqref{4.13} one has $T_{\bK
*}\subset (T_\bK)^*$.

Let us prove the inverse inclusion. First observe that in view of \eqref{4.16}
(with $\l=0$) $\dim \ker T_{\bK *}\leq \dim \cN_0 <\infty$. Hence $\ker T_{\bK
*}$ is a closed subspace in $\LI$ and \eqref{4.14} gives
\begin {equation}\label{4.18}
\ker T_{\bK *}=(\ran T_\bK)^\perp.
\end{equation}

Let $\{\wt z, \wt f\}\in (T_\bK)^*$. Choose $f\in\wt f, \; f\in\lI$ and let
$y\in \AC$ by the solution of \eqref{4.1} with $y(a)=0$. Then $\{y,f\}\in
\cT_{\bK *}$ and by \eqref{4.13} $\{\wt y, \wt f\}\in T_{\bK *}(\subset
(T_\bK)^*)$ (here $\wt y=\pi y$). Thus $\{\wt z-\wt y,0\}\in (T_\bK)^*$, which
 implies that $\wt z-\wt y \in (\ran T_\bK)^\perp$.
Therefore by \eqref{4.18} $\wt z -\wt y\in \ker T_{\bK *}$, so that $\{\wt z
-\wt y, 0\}\in T_{\bK *}$. Now representing $\{\wt z, \wt f\}$ as
\begin {equation*}
\{\wt z, \wt f\}=\{\wt z -\wt y, 0\}+\{\wt y, \wt f\}
\end{equation*}
and taking into account that both terms in the right hand part belong to
$T_{\bK *}$ one obtains $\{\wt z, \wt f\}\in T_{\bK *}$. This proves the
desired inclusion $(T_\bK)^*\subset T_{\bK *}$.
\end{proof}
\begin{lemma}\label{lem4.4}
Let the singular canonical system \eqref{4.1} be defined  on an interval
$\cI=\langle a,b \rangle$. For every finite segment $\cI'=[a',b']\subset\cI$
denote by $\cT_{max}^{\cI'}$ and $T_{max}^{\cI'}$ maximal relations in
$\cL_\D^2(\cI')$ and $L_\D^2(\cI')$ respectively induced by the restriction of
the system \eqref{4.1} onto $\cI'$. Then there exist a finite segment
$\cI_0'\subset \cI $, a point $c\in\cI_0'$ and a subspace $\bH_0\subset \bH$
with the following property: for any segment $\cI'\supset\cI_0'$ and for any
$\{\wt y, \wt f\}\in T_{max}^{\cI'}$ there exists a unique function
$\overset\circ y \in AC (\cI') $ such that $\overset\circ y\in \wt y,\;\;
\overset\circ y(c)\in \bH_0 $ and $\{\overset\circ y, f\}\in \cT_{max}^{\cI'}$
for any $f\in\wt f$.
\end{lemma}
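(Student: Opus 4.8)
The plan is to exhibit, on a sufficiently large finite subsegment, a canonical representative of each class $\wt y$ that actually satisfies the equation, the selection being made by prescribing the value of the representative at one point; the role of the finite segment $\cI_0'$ is to freeze the relevant ``local null manifolds''. First I would introduce, for every finite segment $\cJ=[a',b']\subset\cI$, the finite-dimensional space
\begin{equation*}
\cN^{\cJ}=\{y\in AC(\cJ):\ Jy'-B(t)y=0\ \text{and}\ \D(t)y(t)=0\ \text{a.e. on}\ \cJ\}.
\end{equation*}
Each $y\in\cN^{\cJ}$ extends uniquely to a solution of $Jy'-By=0$ on all of $\cI$ (because $B\in\cL_{loc}^1(\cI)$), so every $\cN^{\cJ}$ is identified with a subspace of the $(\dim\bH)$-dimensional solution space of $Jy'-By=0$ on $\cI$. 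Enlarging $\cJ$ only strengthens the constraint $\D y=0$, hence $\cJ_1\subseteq\cJ_2\Rightarrow\cN^{\cJ_2}\subseteq\cN^{\cJ_1}$; and since $\cI$ is the union of its finite subsegments, $\bigcap_{\cJ}\cN^{\cJ}=\cN$ (cf. \eqref{4.1.3}). Thus $\dim\cN^{\cJ}$ is a nonnegative, integer-valued, nonincreasing function of $\cJ$; picking a finite segment $\cI_0'\subset\cI$ at which its minimum is attained, one gets $\cN^{\cI'}=\cN^{\cI_0'}$ for every finite $\cI'\supset\cI_0'$, and a routine exhaustion argument identifies $\cN^{\cI_0'}$ with $\cN$ (given $y\in\cN^{\cI_0'}$ and any finite $\cJ\subset\cI$, apply the last displayed stabilisation to a finite segment containing $\cI_0'\cup\cJ$ to deduce $\D y=0$ a.e. on $\cJ$). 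Finally fix any $c\in\cI_0'$, put $\bH_c=\{n(c):n\in\cN\}$ as in \eqref{4.1.4} and $\bH_0:=\bH\ominus\bH_c$, and recall that $n\mapsto n(c)$ is an isomorphism of $\cN$ onto $\bH_c$ by uniqueness for the homogeneous system.

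For existence, fix a finite segment $\cI'\supset\cI_0'$ and $\{\wt y,\wt f\}\in T_{max}^{\cI'}=\wt\pi\,\cT_{max}^{\cI'}$, and choose $y_*\in\wt y$, $f_*\in\wt f$ with $\{y_*,f_*\}\in\cT_{max}^{\cI'}$. Write $y_*(c)=u+v$ with $u\in\bH_c$, $v\in\bH_0$, pick $n\in\cN$ with $n(c)=u$, and set $\overset\circ y=y_*-n$. Then $\overset\circ y\in AC(\cI')$ and $\overset\circ y(c)=v\in\bH_0$; since $\D n=0$ a.e. on $\cI$ one has $\overset\circ y\in\cL_\D^2(\cI')$ with $\pi\overset\circ y=\wt y$, i.e. $\overset\circ y\in\wt y$; and since $Jn'-Bn=0$ a direct computation gives $J\overset\circ y{}'-B\overset\circ y=\D f_*$, so $\{\overset\circ y,f_*\}\in\cT_{max}^{\cI'}$. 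Moreover, any $f\in\wt f$ satisfies $\D f=\D f_*$ a.e. on $\cI'$, whence $\{\overset\circ y,f\}\in\cT_{max}^{\cI'}$ for every $f\in\wt f$.

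For uniqueness, suppose $\overset\circ y_1$ and $\overset\circ y_2$ both have the asserted properties for the same $\{\wt y,\wt f\}$ and some finite $\cI'\supset\cI_0'$. Then $w:=\overset\circ y_1-\overset\circ y_2$ satisfies $\D w=0$ a.e. on $\cI'$ (because $\pi\overset\circ y_1=\pi\overset\circ y_2$) and $Jw'-Bw=\D f-\D f=0$ a.e. on $\cI'$, so $w\in\cN^{\cI'}=\cN$ and hence $w(c)\in\bH_c$. On the other hand $w(c)=\overset\circ y_1(c)-\overset\circ y_2(c)\in\bH_0$, so $w(c)\in\bH_c\cap\bH_0=\{0\}$; since $w$ solves the homogeneous system on $\cI'$ with $w(c)=0$, uniqueness forces $w\equiv0$ on $\cI'$, i.e. $\overset\circ y_1=\overset\circ y_2$.

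The only genuinely delicate point is the stabilisation step in the first paragraph: one must argue not merely that the decreasing family $\{\cN^{\cJ}\}$ stabilises, but that it stabilises exactly to $\cN$, and that this already happens on one finite segment $\cI_0'$ (which is precisely why $\cI'$ is required to contain $\cI_0'$ in the statement). Everything else reduces to finite-dimensional linear algebra together with existence and uniqueness of solutions of $Jy'-B(t)y=\D(t)f(t)$, available because $B,\D\in\cL_{loc}^1(\cI)$.
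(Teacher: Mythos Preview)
Your proof is correct and follows essentially the same approach as the paper: introduce the local null manifolds $\cN^{\cI'}$, use finite-dimensionality to obtain a segment $\cI_0'$ on which they stabilise, set $\bH_0$ to be the orthogonal complement of the corresponding initial-value subspace at a fixed $c\in\cI_0'$, and then construct $\overset\circ y$ by subtracting a null solution. The one minor difference is that you push the stabilisation one step further, proving that $\cN^{\cI_0'}$ actually coincides with the global null manifold $\cN$ and then defining $\bH_0=\bH\ominus\bH_c$ via \eqref{4.1.4}; the paper is content to work with $\bH_0:=(\bH^{\cI_0'})^\perp$ where $\bH^{\cI_0'}=\{y(c):y\in\cN^{\cI_0'}\}$, without identifying this with $\bH_c$. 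Your extra identification is correct and makes the choice of $\bH_0$ intrinsic, but it is not needed for the lemma itself.
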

\begin{proof}
Fix a point $c\in\cI $ and for any segment $\cI'\ni c$ put
\begin {gather}
\cN^{\cI'}=\{y\in AC(\cI'): \, J y'(t)-B(t)y(t)=0 \;\;\text {and}\;\;
\D(t)y(t)=0\;\;\text{a.e. on}\;\;\cI'\},\label{4.19}\\
\bH^{\cI'}=\{y(c):\, y(\cd)\in \cN^{\cI'}\}.\nonumber
\end{gather}
Clearly, the inclusion $\cI_1'\subset\cI_2'$ yields $\bH^{\cI_2'}\subset
\bH^{\cI_1'}$. Since $\dim \bH<\infty$, this implies that there exists a finite
segment $\cI_0'\subset\cI$ such that $\bH^{\cI'}=\bH^{\cI_0'}$ for all
$\cI'\supset \cI_0'$. Put $\bH_0:=(\bH^{\cI_0'})^\perp$ and show that such
$\cI_0'$ and  $\bH_0$ have the desired property.

If $\cI'\supset \cI_0'$ and $\{\wt y, \wt f\}\in T_{max}^{\cI'}$, then
according to definition \eqref{4.3} there is a function $y\in AC (\cI')$ such
that $y\in \wt y $ and for any $f\in\wt f$ the equality \eqref{4.1} holds a.e.
on $\cI'$. Let $\overset\circ y \in AC(\cI')$  be the solution of the equation
\eqref{4.1} on $\cI'$ with the initial data $\overset\circ
y(c)=P_{\bH_0}y(c)(\in\bH_0)$ and let $\f= y- \overset\circ y$. Then $J
\f'(t)-B(t)\f(t)=0$ a.e. on $\cI'$ and $\f(c)=y(c)-\overset\circ
y(c)\in\bH_0^\perp$. Since $\bH_0^\perp=\bH^{\cI_0'}=\bH^{\cI'}$, it follows
that $\f(c)\in \bH^{\cI'}$. Hence $\f\in\cN^{\cI'}$ and, consequently,
$\D(t)(y(t)-\overset\circ y(t))=\D(t)\f(t)=0$ a.e. on $\cI'$. This means that
$\overset\circ y\in \wt y, \; \overset\circ y(c)\in \bH_0$ and $\{\overset\circ
y, f\}\in \cT_{max}^{\cI'}$.

To prove uniqueness of such $\overset\circ y$ assume that $z\in AC (\cI')$ has
the same properties, i.e., $z\in \wt y,\;\; z(c)\in \bH_0$ and $\{z, f\}\in
\cT_{max}^{\cI'}$ for any $f\in\wt f$. Then the function $\psi:= \overset \circ
y -z$ satisfies the equalities $J\psi'(t)-B(t)\psi (t)=0 $ and $\D(t)\psi(t)=0$
a.e. on $\cI'$. Hence $\psi\in\cN^{\cI'}$ and, consequently, $\psi
(c)\in\bH^{\cI'}(=\bH_0^\perp)$. On the other hand, $\psi(c)\in\bH_0$, so that
$\psi(c)=0$. Therefore $\psi=0$ and hence $\overset \circ y=z$.
\end{proof}
\begin{proposition}\label{pr4.5}
Let  $T_0$ be the linear relation in $\LI$ given by \eqref{4.8} . Then
\begin {gather}
T_0^*=\Tma\label{4.22}
\end{gather}
\end{proposition}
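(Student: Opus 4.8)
The plan is to prove the two inclusions $T_{\min}^*\subseteq\Tma$ and $\Tma\subseteq T_0^*$ separately, then conclude using $T_0\subseteq\Tmi$. The easy direction is $\Tma\subseteq T_0^*$: this is just the Lagrange identity \eqref{4.5}. Indeed, if $\{y,f\}\in\tma$ and $\{z,g\}\in\cT_0$, then $z$ has compact support, so both $[y,z]_a$ and $[y,z]_b$ vanish (the defining limits in \eqref{4.4} are over neighbourhoods of the endpoints, where $z\equiv 0$), hence $(f,z)_\D-(y,g)_\D=0$. Passing to equivalence classes via $\wt\pi$ gives $(\wt f,\wt z)-(\wt y,\wt g)=0$ for all $\{\wt z,\wt g\}\in T_0$, i.e. $\{\wt y,\wt f\}\in T_0^*$. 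So $\Tma\subseteq T_0^*$.

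The substantive direction is $T_0^*\subseteq\Tma$, and here I would distinguish the regular and singular cases. If the system is regular, this is already contained in the excerpt: it is exactly the particular case $\bK=\{0\}$ of Lemma \ref{lem4.3}, which gives $T_0^*=\Tma$ outright (note $\cT_0$ coincides with $\cT_{\{0\}}$ when $\cI=[a,b]$, since $AC_0(\cI)$ then consists of the $y\in AC(\cI)$ with $y(a)=y(b)=0$). So the work is for the singular case, and the key tool is Lemma \ref{lem4.4}. The idea is to reduce the singular problem on $\cI$ to regular problems on an increasing exhaustion of $\cI$ by finite segments $\cI'\supset\cI_0'$, where $\cI_0'$, the point $c$, and the subspace $\bH_0$ are those produced by Lemma \ref{lem4.4}.

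Concretely, take $\{\wt z,\wt f\}\in T_0^*$ in the singular case; I want to produce a representative $z\in AC(\cI)$ and $f\in\wt f$ with $\{z,f\}\in\tma$. Fix any $f\in\wt f\subset\lI$. For each finite $\cI'=[a',b']\supset\cI_0'$, the restriction of the system to $\cI'$ is regular; I would check that $\{\wt z\up\cI',\wt f\up\cI'\}$ lies in $(T_0^{\cI'})^*=T_{\max}^{\cI'}$ (by Lemma \ref{lem4.3} applied on $\cI'$), using that any $\{v,g\}\in\cT_0^{\cI'}$ — that is, $v\in AC(\cI')$ with $v(a')=v(b')=0$ — extends by zero to an element of $\cT_0$ on $\cI$, so the pairing $(\wt z,\cdot)$ against it is controlled by $\{\wt z,\wt f\}\in T_0^*$. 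Thus for each such $\cI'$ there is, by Lemma \ref{lem4.4}, a \emph{unique} $\overset\circ y_{\cI'}\in AC(\cI')$ with $\overset\circ y_{\cI'}\in\wt z\up\cI'$, $\overset\circ y_{\cI'}(c)\in\bH_0$, and $\{\overset\circ y_{\cI'},f\up\cI'\}\in\cT_{\max}^{\cI'}$. The uniqueness forces consistency: if $\cI'\subseteq\cI''$ then $\overset\circ y_{\cI''}\up\cI'$ has all the defining properties on $\cI'$, hence equals $\overset\circ y_{\cI'}$. Therefore these glue to a single function $\overset\circ y\in AC(\cI)$ solving $J\overset\circ y{}'-B\overset\circ y=\D f$ a.e. on $\cI$ with $\overset\circ y\in\wt z$, i.e. $\{\overset\circ y,f\}\in\tma$, so $\{\wt z,\wt f\}=\{\pi\overset\circ y,\pi f\}\in\Tma$. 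Combining with $T_0\subseteq\Tmi\subseteq\Tma$ and $\Tma\subseteq T_0^*$, and taking adjoints, yields $T_0^*=\Tma$.

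The main obstacle I anticipate is the bookkeeping in the reduction step: verifying carefully that restriction commutes with taking adjoints in the required direction (that $\{\wt z,\wt f\}\in T_0^*$ on $\cI$ really does restrict to an element of $(T_0^{\cI'})^*$ on $\cI'$, which needs the extension-by-zero of compactly supported test pairs) and that the family $\{\overset\circ y_{\cI'}\}$ is genuinely consistent — the latter rests entirely on the uniqueness clause in Lemma \ref{lem4.4}, so that clause is doing the real work. Everything else is the Lagrange identity and the already-established regular case.
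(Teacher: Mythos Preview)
Your argument is correct and follows essentially the same route as the paper: the easy inclusion $\Tma\subseteq T_0^*$ via the Lagrange identity, the regular case via Lemma~\ref{lem4.3} with $\bK=\{0\}$, and in the singular case restricting to finite segments $\cI'$, using extension-by-zero of test pairs to land in $(T_0^{\cI'})^*=T_{\max}^{\cI'}$, then gluing the canonical representatives $\overset\circ y_{\cI'}$ via the uniqueness clause of Lemma~\ref{lem4.4}. The only slip is in the framing: your opening line writes $T_{\min}^*\subseteq\Tma$ where you mean $T_0^*\subseteq\Tma$, and the closing ``taking adjoints'' step is superfluous since by that point you have already established both inclusions for $T_0^*$ directly.
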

\begin{proof}
In the case of the regular system \eqref{4.1} the equality \eqref{4.22} was
proved in Lemma \ref{lem4.3}.

Assume now that the system \eqref{4.1} is singular. Then  applying the
Lagrange's identity \eqref{4.5} to $\{y,f\}\in\tma$ and $\{z,g\}\in\cT_0 $ we
obtain
\begin {equation}\label{4.23}
(f,z)_\D-(y,g)_\D=0.
\end{equation}
Therefore by \eqref{4.3} and \eqref{4.8} one has $\Tma\subset T_0^*$.

Let us prove the inverse inclusion $T_0^*\subset \Tma$. Assume that $\{\wt y,
\wt f\} \in T_0^*$ and choose $y,f\in \lI$ such that $\pi y=\wt y$ and $\pi
f=\wt f$. Next, for every finite segment  $\cI'=[a',b']\subset\cI$ denote by
$y_{\cI'}$ and $f_{\cI'}$ the restrictions  of the functions $y(\cd) $ and
$f(\cd)$ onto $\cI'$ and let $\wt y_{\cI'}=\pi_{\cI'}y_{\cI'}, \; \wt
f_{\cI'}=\pi_{\cI'}f_{\cI'}$, where $\pi_{\cI'}$ is the quotient map from
$\cL_\D^2(\cI')$ onto  $L_\D^2(\cI')$. Assume also that  $\cT_0^{\cI'}$ and
$T_0^{\cI'}$ are linear relations \eqref{4.8} in $\cL_\D^2(\cI')$ and
$L_\D^2(\cI')$ respectively induced by the restriction of the system
\eqref{4.1} onto $\cI'$.

Let $\{z_{\cI'}, g_{\cI'}\}\in \cT_0^{\cI'}$ and let $z(t)$ and $g(t)\;
(t\in\cI)$ be functions obtained from $z_{\cI'}$ and $g_{\cI'}$ by means of
their  zero continuation onto $\cI$.  Then $\{z,g\}\in \cT_0$ and,
consequently, \eqref{4.23} holds. Therefore
\begin {equation*}
\int_{\cI'}(\D(t)f_{\cI'}(t), z_{\cI'}(t))\, dt -\int_{\cI'}(\D(t)y_{\cI'}(t),
g_{\cI'}(t))\, dt=0, \quad \{z_{\cI'},g_{\cI'}\}\in\cT_0^{\cI'},
\end{equation*}
which implies that $\{\wt y_{\cI'}, \wt f_{\cI'}\}\in (T_0^{\cI'})^*$.
Moreover, $(T_0^{\cI'})^*=T_{max}^{\cI'}$, because the restriction of
\eqref{4.1} onto $\cI'$ is a regular system. Thus, $\{\wt y_{\cI'}, \wt
f_{\cI'}\} \in T_{max}^{\cI'}$ for every finite segment $\cI'\subset\cI$.

Next, by Lemma \ref{lem4.4} there exist a finite segment $\cI_0'\subset \cI $,
a point $c\in\cI_0'$ and a subspace $\bH_0\subset \bH$ with the following
property: for any finite segment $\cI'\supset\cI_0'$  there exists a unique
function $\overset\circ y_{\cI'} \in AC (\cI') $ such that
$\pi_{\cI'}\overset\circ y_{\cI'}= \wt y_{\cI'},\;\; \overset\circ
y_{\cI'}(c)\in \bH_0 $ and $\{\overset\circ y_{\cI'}, f_{\cI'}\}\in
\cT_{max}^{\cI'}$. Moreover, by using uniqueness of the function $\overset\circ
y_{\cI'}$ (for a given $\cI'$) one can easily verify that for any pair of
segments $\cI_1', \; \cI_2'$ such that $\cI_0'\subset\cI_1'\subset
\cI_2'\subset\cI $ the restriction $\overset\circ y_{\cI_2'}\up \cI_1' $
coincides with $\overset\circ y_{\cI_1'}$. This allows us to introduce the
function $\overset\circ y\in \AC$ by setting $\overset\circ y(t)=\overset\circ
y_{\cI'}(t), \; t\in\cI$, where $\cI'$ is an arbitrary segment such that
$\cI_0'\subset\cI'\subset \cI $ and $t\in\cI'$. It is clear that $\pi
\overset\circ y=\wt y$ and $\{\overset\circ y, f\}\in \tma$, which gives the
inclusion $\{\wt y, \wt f\}\in \Tma$. Therefore  $T_0^*\subset \Tma$ and the
equality \eqref{4.22} is valid.
 \end{proof}
\begin{lemma}\label{lem4.6}
Let the canonical system \eqref{4.1} be given on an interval $\cI=[a,b\rangle$
with the regular endpoint $a$. Assume also that $\cT_1, \;\cT_2$ and $T_1,\;
T_2$ are linear relations in $\lI$ and $\LI$ respectively defined by
\begin{gather}
\cT_1=\{\{y,f\}\in\tma:\, [y,z]_b=0 \;\;\text{for every}\;\; z\in\dom\tma\},
\qquad T_1=\wt\pi\cT_1,\label{4.24}\\
\cT_2=\{\{y,f\}\in\tma:\, y(a)=0\}, \qquad T_2=\wt\pi\cT_2.\label{4.25}.
\end{gather}
Then
\begin{gather}\label{4.26}
T_1^*=T_2.
\end{gather}
\end{lemma}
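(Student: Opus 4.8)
The plan is to prove the two inclusions $T_1^*\supset T_2$ and $T_1^*\subset T_2$ separately, following the same strategy as in the proof of Lemma \ref{lem4.3} but now with the singular endpoint $b$ replaced by the vanishing boundary form $[\cd,\cd]_b$ and the regular endpoint $a$ handled via the initial condition $y(a)=0$. First I would establish $T_2\subset T_1^*$: for $\{y,f\}\in\cT_1$ and $\{z,g\}\in\cT_2$ the Lagrange identity \eqref{4.5} reads $(f,z)_\D-(y,g)_\D=[y,z]_b-[y,z]_a$; since $\{y,f\}\in\cT_1$ the term $[y,z]_b$ vanishes (because $z\in\dom\tma$), and since $z(a)=0$ the term $[y,z]_a=\lim_{\a\downarrow a}(Jy(\a),z(\a))$ also vanishes by continuity at the regular endpoint $a$. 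Passing to quotients via $\wt\pi$ gives $\{\wt z,\wt g\}\in T_1^*$, hence $T_2\subset T_1^*$.

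The harder inclusion is $T_1^*\subset T_2$. Here I would mimic the structure of Lemma \ref{lem4.3}: given $\{\wt z,\wt f\}\in T_1^*$, pick a representative $f\in\wt f\subset\lI$ and let $y\in\AC$ be the solution of the inhomogeneous equation $Jy'-B(t)y=\D(t)f(t)$ with initial data $y(a)=0$, so that $\{y,f\}\in\cT_2$. One must then check $\{y,f\}\in\cT_1$, i.e. that $\wt f$ is attained by an element of $T_2$; granting this, $\{\wt z-\wt y,0\}\in T_1^*$ where $\wt y=\pi y$, and the argument reduces to showing $\ker T_1$ is closed and $\wt z-\wt y\in\ker T_1$, after which $\{\wt z,\wt f\}=\{\wt z-\wt y,0\}+\{\wt y,\wt f\}$ exhibits $\{\wt z,\wt f\}$ as a sum of elements of $T_2$. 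For the closedness of $\ker T_1$: by \eqref{4.16}-type reasoning $\ker T_1$ consists of classes $\wt z$ with $\{z,0\}\in\tma$ (so $z\in\cN_0$) and $[z,w]_b=0$ for all $w\in\dom\tma$, and since $\dim\cN_0<\infty$ (because $\cN_0\subset\bigcup$ of finite-dimensional solution spaces, cf. the remark after \eqref{4.1.2}) this is a finite-dimensional, hence closed, subspace of $\LI$. To show $\wt z-\wt y\in\ker T_1$ one needs $\{\wt z-\wt y,0\}$ to be orthogonal to $\ran T_1$ and then to identify $(\ran T_1)^\perp$ with $\ker T_1$; this is where the analogue of Lemma \ref{lem4.2}'s range-kernel duality must be reproduced in the present singular-at-$b$ setting.

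The main obstacle I anticipate is precisely this range-kernel duality $\ran(T_1-\l)=(\ker(T_1^*-\ov\l))^\perp$ at $\l=0$, or equivalently a description of $(\ran T_1)^\perp$: in Lemma \ref{lem4.2} the regularity of the system at both endpoints was used to freely prescribe initial data and invoke \eqref{4.10}, but here the form $[\cd,\cd]_b$ at the singular endpoint $b$ is not given by point evaluation. I expect one handles this by the localization device of Lemma \ref{lem4.4}: restrict to finite segments $\cI'=[a,b']\subset\cI$, where the restricted system is regular and Lemma \ref{lem4.3} (or rather its $\bK=\{0\}$-type variants) applies, construct the solution $\overset\circ y$ on each $\cI'$ with $\overset\circ y(a)=0$, verify consistency of the restrictions as $\cI'$ grows, glue to obtain a global $\overset\circ y\in\AC$ with $\pi\overset\circ y=\wt z-\wt y$ and $[\overset\circ y,z]_b=0$ for all $z\in\dom\tma$, and thereby conclude $\{\wt z-\wt y,0\}\in T_1$. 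Combined with the preliminary reduction this yields $\{\wt z,\wt f\}\in T_2$, completing the proof. If the system is already regular, the whole argument collapses to a direct application of Lemma \ref{lem4.3} with $\bK=\{0\}$, since then $\cT_1=\{\{y,f\}\in\tma:y(b)=0\}=\cT_0$-type and $\cT_2=\cT_{\{0\}*}$ in the notation of \eqref{4.11}-\eqref{4.12}.
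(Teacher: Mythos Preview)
Your easy inclusion $T_2\subset T_1^*$ is fine and matches the paper. The hard inclusion, however, has a genuine gap and an unnecessary detour.

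The gap is in your step ``let $y\in\AC$ be the solution with $y(a)=0$, so that $\{y,f\}\in\cT_2$''. Membership in $\cT_2$ requires $y\in\lI$, and for a singular endpoint $b$ this is \emph{not} automatic: the solution of $Jy'-B(t)y=\D(t)f(t)$ with $y(a)=0$ need not be $\D$-square-integrable for an arbitrary $f\in\lI$. It is true a posteriori (once $T_1^*=T_2$ is known, $y$ agrees with a representative of $\wt z$ modulo $\ker\pi$), but at this point in the argument you have not earned it. Your subsequent kernel/range discussion therefore rests on an unproven assertion, and the localization you propose at the end is applied to the residual $\wt z-\wt y$ rather than to the quantity whose integrability is in doubt.

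The detour is the whole $\ker T_1$/$(\ran T_1)^\perp$ layer. Note first that the relation you should be tracking is $T_2$, not $T_1$: you need $\{\wt z-\wt y,0\}\in T_2$ to conclude. But $\ker T_2=\{0\}$ (any $v\in\cN_0$ with $v(a)=0$ vanishes identically by ODE uniqueness), so the Lemma~\ref{lem4.3} template collapses to $\wt z=\wt y$ directly. There is no need for a range-kernel duality at the global level.

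The paper's proof bypasses both issues by localizing \emph{immediately}. Given $\{\wt y,\wt f\}\in T_1^*$, restrict to each finite segment $\cI_\b=[a,\b]$; test functions in $\cT^\b:=\{\{u,g\}\in\cT_{max}^\b:u(\b)=0\}$ zero-extend into $\cT_1$, so $\{\wt y_\b,\wt f_\b\}\in(T^\b)^*$. Now $\cI_\b$ is regular, and Lemma~\ref{lem4.3} with $\bK=\bH$ gives $(T^\b)^*=T_2^\b$, so $\{\wt y_\b,\wt f_\b\}\in T_2^\b$. Hence there is $\ov y_\b\in AC(\cI_\b)$ with $\ov y_\b(a)=0$, $\pi_\b\ov y_\b=\wt y_\b$, solving the equation on $\cI_\b$. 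Uniqueness of such $\ov y_\b$ is immediate from the initial condition $\ov y_\b(a)=0$ (no appeal to Lemma~\ref{lem4.4} is needed here), so the $\ov y_\b$ are consistent and glue to a global $\ov y\in\AC$ with $\pi\ov y=\wt y$ and $\ov y(a)=0$. Since $\ov y$ differs from the chosen representative of $\wt y$ by a $\D$-null function on each $\cI_\b$, it lies in $\lI$; thus $\{\ov y,f\}\in\cT_2$ and $\{\wt y,\wt f\}\in T_2$.
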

\begin{proof}
The inclusion $T_2\subset T_1^*$ follows from the identity \eqref{4.5} applied
to $\{y,f\}\in \cT_1$ and $\{z,g\}\in\cT_2$.

To prove the inverse inclusion assume that $\{\wt y, \wt f\}\in T_1^*$ and let
$y,f\in \lI, \; \pi y=\wt y, \; \pi f =\wt f$. Moreover, for every $\b\in\cI$
let $\cI_\b:=[a,\b]$, let $y_\b$ and $f_\b$ be the restrictions of the
functions $y(\cd)$ and $f(\cd)$ onto $\cI_\b$ and let $\wt y_\b=\pi_\b y_\b, \;
\wt f_\b=\pi_\b f_\b $, where $\pi_\b$ is the quotient map from
$\cL_\D^2(\cI_\b)$ onto $L_\D^2 (\cI_\b)$. Consider also linear relations
$\cT^\b, \; \cT^\b_1$ in $\cL_\D^2 (\cI_\b)$ and  $T^\b, \; T^\b_1$ in $L_\D^2
(\cI_\b)$ given by
\begin{gather}
\cT^\b=\{\{y,f\}\in\cT_{max}^\b:\, y(\b)=0\}, \qquad
T^\b=\wt\pi_\b\cT^\b\nonumber\\
\cT_2^\b=\{\{y,f\}\in\cT_{max}^\b:\, y(a)=0\}, \qquad
T_2^\b=\wt\pi_\b\cT_2^\b.\label{4.27}
\end{gather}
It follows from \eqref{4.17} (with $\bK=\bH$) that $(T^\b)^*=T_2^\b$ and the
same arguments as in the proof of Proposition \ref{pr4.5} give the inclusion
$\{\wt y_\b, \wt f_\b\}\in T_2^\b, \;\b\in\cI$.

Next, according to definition \eqref{4.27} of $T_2^\b$ there is a function $\ov
y_\b\in AC (\cI_\b)$ such that $\pi_\b \ov y_\b=\wt y_\b, \; \ov y_\b(a)=0$ and
$J \ov y_\b'(t)-B(t)\ov y_\b(t)=\D(t)f_\b(t) $ a.e. on $\cI_\b$. Moreover, it
is easily seen that for a given $\b\in\cI$ such a function is unique, so that
$\ov y_{\b_1}=\ov y_{\b_2}\up \cI_{\b_1}$ for any $ \b_1<\b_2$.  Therefore the
equality $\ov y(t)=\ov y_\b(t), \; t\in\cI, \; \b>t$ correctly defines the
function $\ov y\in \AC$ such that $\pi \ov y=\wt y$ and $\{\ov y, f\}\in\cT_2$.
This implies that $\{\wt y, \wt f\}\in T_2$ and hence $T_1^*\subset T_2$.
\end{proof}
\begin{proposition}\label{pr4.7}
Let $a$ be a regular endpoint of the canonical system \eqref{4.1} and let
$\cT_a$ and $T_a$ be linear relations in $\lI$ and $\LI$ respectively given by
\begin{gather}\label{4.28}
\cT_a=\{\{y,f\}\in\tma: y(a)=0 \;\;\text{and}\;\;\, [y,z]_b=0 \;\;\text{for
every}\;\; z\in\dom\tma\}, \qquad T_a=\wt\pi\cT_a.
\end{gather}
Moreover, let $T_0$ be the relation \eqref{4.8} and let $\ov T_0$ be the
closure of $T_0$. Then  $T_a$ is a closed symmetric relation  and
\begin{gather}
T_a=\ov T_0\label{4.30}\\
T_a^*=\Tma\label{4.31}
\end{gather}
Similarly if $b$ is a regular endpoint of the system \eqref{4.1} and
\begin{gather}\label{4.32}
\cT_b=\{\{y,f\}\in\tma: y(b)=0 \;\;\text{and}\;\;\, [y,z]_a=0 \;\;\text{for
every}\;\; z\in\dom\tma\}, \qquad T_b=\wt\pi\cT_b,
\end{gather}
then $T_b$ is a closed symmetric relation in $\LI$ and
\begin{gather}\label{4.34}
T_b=\ov T_0, \qquad T_b^*=\Tma.
\end{gather}
\end{proposition}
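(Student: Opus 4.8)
The plan is to prove, in this order, that $T_a^*=\Tma$, that $T_a$ is symmetric, that $\ov{T_a}=\ov{T_0}$, and finally that $T_a$ is closed; the last point is the only substantial one, and the assertions for $T_b$ will then follow by the same argument with the roles of $a$ and $b$ interchanged. First I would compute the adjoint. For $\{y,f\}\in\tma$ and $\{z,g\}\in\cT_a$ the Lagrange identity \eqref{4.5} reads $(f,z)_\D-(y,g)_\D=[y,z]_b-[y,z]_a$; here $[y,z]_b=0$, since $[z,y]_b=0$ for all $z\in\dom\tma$ and the form $[\cd,\cd]_b$ is skew-Hermitian, and $[y,z]_a=(Jy(a),z(a))=0$ because $z(a)=0$ and $a$ is regular. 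Hence $\Tma=\wt\pi\tma\subseteq T_a^*$. Conversely $\cT_0\subseteq\cT_a$, so $T_0\subseteq T_a$ and $T_a^*\subseteq T_0^*=\Tma$ by Proposition \ref{pr4.5}; thus $T_a^*=\Tma$, which is \eqref{4.31}. Since moreover $T_a=\wt\pi\cT_a\subseteq\wt\pi\tma=\Tma=T_a^*$, the relation $T_a$ is symmetric.

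Taking adjoints once more, $\ov{T_a}=T_a^{**}=\Tma^*=(T_0^*)^*=T_0^{**}=\ov{T_0}$, and $T_0\subseteq T_a$ gives $\ov{T_0}\subseteq\ov{T_a}$; so it only remains to show $\ov{T_0}\subseteq T_a$. Fix $\{\wt y,\wt f\}\in\ov{T_0}$. The inclusion $\ov{T_0}\subseteq\Tma$ (valid because $T_0\subseteq\Tma$ and $\Tma$ is closed) provides a representative $y$ with $\{y,f\}\in\tma$, and the equality $\ov{T_0}=\Tma^*$, combined with \eqref{4.5}, gives $[y,z]_b=[y,z]_a=(Jy(a),z(a))$ for every $z\in\dom\tma$. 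Suppose for a moment that $y(a)\in\bH_a$ (notation of \eqref{4.1.4}), and pick $\psi\in\cN$ with $\psi(a)=y(a)$. A direct computation from $\D\psi=0$, $J^*J=I$ and $B=B^*$ shows that $\tfrac{d}{dt}(J\psi(t),z(t))=0$ for every $z\in\dom\tma$, so $[\psi,z]_b=(J\psi(a),z(a))=(Jy(a),z(a))$. Consequently $\hat y:=y-\psi$ satisfies $\{\hat y,f\}\in\tma$, $\pi\hat y=\wt y$, $\hat y(a)=0$, and $[\hat y,z]_b=[y,z]_b-(J\psi(a),z(a))=0$ for all $z\in\dom\tma$; that is, $\{\hat y,f\}\in\cT_a$ and $\{\wt y,\wt f\}\in T_a$. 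So everything reduces to the claim $y(a)\in\bH_a$.

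This claim is the heart of the proof — it is where the possibly indefinite character of the system enters — and I would establish it by a local analysis near the regular endpoint $a$, in the spirit of the proofs of Proposition \ref{pr4.5} and Lemma \ref{lem4.6}. Write $\{\wt y,\wt f\}=\lim_n\{\pi y_n,\pi f_n\}$ with $\{y_n,f_n\}\in\cT_0$, so $y_n(a)=0$. On each segment $\cI_\b=[a,\b]\subset\cI$ the restricted system is regular, and the pairs $\{\pi_\b(y_n\up\cI_\b),\pi_\b(f_n\up\cI_\b)\}$ lie in the relation $\wt\pi_\b\{\{u,h\}\in\cT_{max}^{\cI_\b}:u(a)=0\}$ (since $y_n(a)=0$); this is exactly the relation $T_2^\b$ from the proof of Lemma \ref{lem4.6}, and it is closed, being an adjoint by \eqref{4.17} (with $\bK=\bH$). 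Since these restrictions converge in $L_\D^2(\cI_\b)$, the pair $\{\pi_\b(y\up\cI_\b),\pi_\b(f\up\cI_\b)\}$ lies in $T_2^\b$ too; hence some representative of $\pi_\b(y\up\cI_\b)$ vanishes at $a$, and, since it differs from $y\up\cI_\b$ by an element of $\cN^{\cI_\b}$ (see \eqref{4.19}), we obtain $y(a)\in\{\f(a):\f\in\cN^{\cI_\b}\}$ for every $\b$.

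Finally, using that a solution of $Jy'-By=0$ is determined by its value at $a$, one checks that $\bigcap_{\b}\{\f(a):\f\in\cN^{\cI_\b}\}=\bH_a$: any $h$ in the left-hand side is the value at $a$ of the unique solution of $Jy'-By=0$ with $y(a)=h$, whose restriction to each $\cI_\b$ must then coincide with an element of $\cN^{\cI_\b}$, so that $\D y=0$ on all of $\cI$ and $y\in\cN$. Therefore $y(a)\in\bH_a$, so $\ov{T_0}\subseteq T_a$, and with the previous paragraphs $T_a=\ov{T_a}=\ov{T_0}$ is a closed symmetric relation with $T_a^*=\Tma$, i.e. \eqref{4.30}--\eqref{4.31}. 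The case of a regular endpoint $b$ is entirely parallel: with $\cT_b$ as in \eqref{4.32} one gets $\Tma\subseteq T_b^*$ from \eqref{4.5}, $T_b^*\subseteq T_0^*=\Tma$ from $\cT_0\subseteq\cT_b$, and then $\ov{T_0}\subseteq T_b$ by the same local argument carried out on segments $[\a,b]\subset\cI$ and invoking Lemma \ref{lem4.3} (again with $\bK=\bH$) at the endpoint $b$; this yields \eqref{4.34}. The one genuinely nontrivial ingredient is the identification $\bigcap_\b\{\f(a):\f\in\cN^{\cI_\b}\}=\bH_a$, i.e. that a representative of a limit of elements of $T_0$ can be corrected by a null-manifold element so as to vanish at the regular endpoint; everything else is formal manipulation with adjoints and the Lagrange identity.
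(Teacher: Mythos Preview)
Your argument is correct but takes a different route from the paper. The paper does not first compute $T_a^*$ and then work to show $T_a$ is closed; instead it proves directly that $T_a=\Tma^*$, from which everything else follows at once. The key step is the inclusion $\Tma^*\subset T_a$: since $T_1\subset\Tma$ one has $\Tma^*\subset T_1^*$, and Lemma~\ref{lem4.6} gives $T_1^*=T_2$; thus every $\{\wt y,\wt f\}\in\Tma^*$ already has a representative $\{y,f\}\in\tma$ with $y(a)=0$, and then the Lagrange identity forces $[y,z]_b=0$ for all $z$, i.e.\ $\{y,f\}\in\cT_a$. Combined with the easy inclusion $T_a\subset\Tma^*$, this gives $T_a=\Tma^*$, so $T_a$ is closed, $T_a^*=\Tma^{**}=\Tma$, and $\ov T_0=\Tma^*=T_a$.

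Your approach, by contrast, establishes $T_a^*=\Tma$ first (from $T_0\subset T_a$ and Proposition~\ref{pr4.5}), reduces everything to $\ov T_0\subset T_a$, and then carries out a local analysis on segments $[a,\b]$ to show that a representative of an element of $\ov T_0$ satisfies $y(a)\in\bH_a$, after which you correct by an element of $\cN$. This local step is essentially a reworking of the proof of Lemma~\ref{lem4.6} with a different conclusion (you extract $y(a)\in\bigcap_\b\bH_a^{\cI_\b}=\bH_a$ rather than building a global representative with $y(a)=0$), and your derivative computation for $(J\psi,z)$ is just a reproof of \eqref{4.45.1}. The paper's route is shorter because it invokes Lemma~\ref{lem4.6} as a black box and avoids the null-manifold correction altogether; your route is more self-contained and makes the role of $\bH_a$ and $\cN$ explicit, which is instructive but not needed here.
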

\begin{proof}
Applying the Lagrange's identity \eqref{4.5} to $\{y,f\}\in \cT_a$ and $\{z,
g\}\in \tma$ one obtains the equality \eqref{4.23}. Therefore
\begin{gather}\label{4.35}
\Tma\subset T_a^* \;\;\;\text{and}\;\;\; T_a\subset \Tma^*.
\end{gather}
Moreover, by \eqref{4.28} $T_a\subset \Tma$, which together with the first
inclusion in \eqref{4.35} shows that $T_a$ is symmetric.

Next, assume that $T_1$ and $T_2$ are the linear relations \eqref{4.24} and
\eqref{4.25}. Since $T_1\subset\Tma$, it follows that $\Tma^*\subset T_1^*$ and
by \eqref{4.26} $\Tma^*\subset T_2$. Therefore for any $\{\wt y, \wt
f\}\in\Tma^*$ there is $\{y,f\}\in \tma$ such that $y(a)=0$ and $\wt \pi
\{y,f\}=\{\wt y, \wt f\}$. This and the identity \eqref{4.5} give
\begin{gather*}
[y,z]_b=(f,z)_\D-(y,g)_\D=0, \qquad z\in\dom \tma,
\end{gather*}
which implies that $\{\wt y, \wt f\}\in T_a$. Hence $\Tma^*\subset T_a$ and by
the second inclusion in \eqref{4.35}
\begin{gather}\label{4.36}
T_a=\Tma^*.
\end{gather}
Therefore $T_a$ is closed. Moreover, by \eqref{4.22} $\Tma$ is also closed,
which together with \eqref{4.36} gives \eqref{4.31}. Finally, combining
\eqref{4.22} with \eqref{4.31} we arrive at \eqref{4.30}.

Similarly one proves the relations  \eqref{4.34}.
\end{proof}
\begin{corollary}\label{cor4.8}
Under the assumptions of Lemma \ref{lem4.6} the equality $T_2^*=T_1$ is valid.
\end{corollary}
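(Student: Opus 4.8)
The plan is to derive $T_2^*=T_1$ from Lemma \ref{lem4.6} by a duality argument. Since Lemma \ref{lem4.6} already gives $T_1^*=T_2$, taking adjoints yields $T_2^*=(T_1^*)^*=\ov{T_1}$, the closure of $T_1$. Thus the whole assertion reduces to the single statement that the linear relation $T_1$ is closed, and it is this that the remainder of the argument would establish.

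To prove $T_1$ closed I would squeeze it between two closed relations whose ``gap'' is finite-dimensional. Comparing the definitions \eqref{4.24}, \eqref{4.28} and \eqref{4.2} one has $\cT_a\subset\cT_1\subset\tma$, and applying $\wt\pi$ gives $T_a\subset T_1\subset\Tma$. By Proposition \ref{pr4.7} the relation $T_a$ is closed and $T_a^*=\Tma$, so $\Tma$ is closed as well and $T_1$ lies between the closed subspaces $T_a$ and $\Tma=T_a^*$ of the Hilbert space $\LI\oplus\LI$. The next step is to check that $\dim(\Tma/T_a)<\infty$: since $T_a$ is closed symmetric with $T_a^*=\Tma$, one has $\dim(\Tma/T_a)=n_+(T_a)+n_-(T_a)$; moreover $n_\pm(T_a)=\dim\ker(\Tma-\l)$ for $\l\in\bC_\mp$, and every element of $\ker(\Tma-\l)$ is of the form $\pi y$ with $y$ a solution of \eqref{4.1.1} lying in $\lI$, i.e. $y\in\cN_\l$, whence $n_\pm(T_a)\leq\dim\cN_\l\leq\dim\bH<\infty$. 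Finally, letting $q:\Tma\to\Tma/T_a$ be the (bounded) quotient map, the image $T_1/T_a$ is a linear subspace of the finite-dimensional space $\Tma/T_a$, hence closed in it; since $T_a\subset T_1$ one has $T_1=q^{-1}(T_1/T_a)$, so $T_1$ is closed as the preimage of a closed set under a continuous map. Combined with $T_2^*=\ov{T_1}$, this yields $T_2^*=T_1$.

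The only step that is not a purely formal manipulation is the finiteness $\dim(\Tma/T_a)<\infty$, equivalently the finiteness of the deficiency indices of $T_a$; I expect this to be the sole (and mild) obstacle, and it is handled by the elementary fact that solutions of \eqref{4.1.1} belonging to $\lI$ form a space of dimension at most $\dim\bH$. One could instead try to prove $T_2^*\subset T_1$ directly, lifting $\{\wt z,\wt g\}\in T_2^*$ to some $\{z,g\}\in\tma$ (using $T_a\subset T_2$ and $T_a^*=\Tma$) and invoking Lagrange's identity \eqref{4.5} to force $[z,y]_b=0$ for \emph{all} $y\in\dom\tma$; but this route requires controlling the dependence of $[\cd,\cd]_b$ on the chosen representative, which is obstructed by the null manifold $\cN$, so the closedness argument above is preferable since it avoids this difficulty altogether.
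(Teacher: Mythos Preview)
Your proof is correct and follows essentially the same route as the paper: both arguments reduce the claim to the closedness of $T_1$ via $T_2^*=(T_1^*)^*=\ov{T_1}$, establish the chain $T_a\subset T_1\subset\Tma$, invoke Proposition~\ref{pr4.7} for the closedness of $T_a$ and the equality $T_a^*=\Tma$, and use the finite-dimensionality of $\cN_\l$ to conclude that $T_a$ has finite deficiency indices, whence $T_1$ is closed. (A minor slip: you wrote $\l\in\bC_\mp$ where the paper's convention gives $n_\pm(T_a)=\dim\ker(\Tma-\l)$ for $\l\in\bC_\pm$; this does not affect the argument.)
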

\begin{proof}
It follows from \eqref{4.31} that for each $\l\in\bC\setminus \bR$ the defect
subspace of the close symmetric relation  $T_a$ is
\begin{gather*}
\gN_\l(T_a)=\ker (\Tma-\l)=\pi \cN_\l.
\end{gather*}
Therefore $T_a$ has finite deficiency indices and \eqref{4.24} gives
$T_a\subset T_1\subset \Tma$. Consequently, $T_1$ is closed and the required
equality $T_2^*=T_1$ follows from \eqref{4.26}.
\end{proof}
\begin{lemma}\label{lem4.9}
Let the canonical system \eqref{4.1} be defined on an interval $\cI=\langle a,
b \rangle $. For each subinterval $\cI_\b:=[\b, b\rangle\;(\b\in\cI) $ denote
by $\cT_{max}^\b$ and $T_{max}^\b$ maximal  relations induced by the
restriction of the system \eqref{4.1} onto $\cI_\b$ and let $\cT_1^\b$ and
$T_1^\b$ be linear relations in $\cL_\D^2(\cI_\b)$ and $L_\D^2(\cI_\b)$
respectively given by
\begin{gather}\label{4.37}
\cT_1^\b=\{\{y,f\}\in\cT_{max}^\b:\, [y,z]_b=0 \;\;\text{for every}\;\;
z\in\dom\cT_{max}^\b\}, \qquad T_1^\b=\wt\pi_\b\cT_1^\b.
\end{gather}
Then there exists a subinterval $\cI_{\b_0}\subset \cI$, a point $c\subset
\cI_{\b_0}$ and a subspace $\hat\bH\subset\bH$ with the following property: for
any interval $\cI_\b\supset \cI_{\b_0}$ and for any $\{\wt y,\wt f\}\in T_1^\b$
there exists a unique function $\hat y\in AC(\cI_\b)$ such that $\pi_\b\hat
y=\wt y, \; \hat y(c)\in\hat\bH$ and $\{\hat y, f\}\in \cT_1^\b$ for any
$f\in\wt f$.
\end{lemma}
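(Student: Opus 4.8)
The statement is the singular-endpoint analogue of Lemma \ref{lem4.4}, but now localized at the \emph{singular} endpoint $b$ and working with the relation $\cT_1^\b$ (those $\{y,f\}$ with vanishing boundary form at $b$) rather than with the full maximal relation. The strategy is to mimic the proof of Lemma \ref{lem4.4} almost verbatim, replacing the null manifold $\cN^{\cI'}$ of a finite segment by the appropriate ``null manifold at $b$'' on the half-open interval $\cI_\b$. Concretely, fix a point $c\in\cI$ and for each $\b<c$ consider
\begin{gather*}
\cN_1^{\b}:=\{y\in AC(\cI_\b):\, J y'(t)-B(t)y(t)=0,\ \D(t)y(t)=0 \text{ a.e. on }\cI_\b,\ [y,z]_b=0\ \forall z\in\dom\cT_{max}^\b\},\\
\hat\bH^{\b}:=\{y(c):\, y\in\cN_1^{\b}\}\subset\bH.
\end{gather*}
Note $\cN_1^\b\subset \cN^{\cI_\b}$ (the null manifold of the restricted system on $\cI_\b$), so $\dim\hat\bH^\b\le\dim\bH<\infty$; and $\cI_{\b_1}\subset\cI_{\b_2}$ (i.e. $\b_1>\b_2$) gives $\hat\bH^{\b_2}\subset\hat\bH^{\b_1}$ by restriction. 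Hence the family $\{\hat\bH^\b\}$ is eventually constant: there is $\b_0$ with $\hat\bH^\b=\hat\bH^{\b_0}=:\hat\bK$ for all $\b\le\b_0$, i.e. for all $\cI_\b\supset\cI_{\b_0}$. Set $\hat\bH:=\bH\ominus\hat\bK$ and $c\in\cI_{\b_0}$; these are the objects in the statement.

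For the existence part: given $\cI_\b\supset\cI_{\b_0}$ and $\{\wt y,\wt f\}\in T_1^\b$, pick (by \eqref{4.3} and \eqref{4.37}) a representative $y\in AC(\cI_\b)$ with $y\in\wt y$, $\{y,f\}\in\cT_{max}^\b$ for every $f\in\wt f$, and $[y,z]_b=0$ for all $z\in\dom\cT_{max}^\b$. Let $\hat y$ be the solution of \eqref{4.1} on $\cI_\b$ with initial data $\hat y(c)=P_{\hat\bH}y(c)$, and put $\f=y-\hat y$. Then $J\f'-B\f=0$ a.e. on $\cI_\b$ and $\f(c)\in\hat\bH^\perp=\hat\bK=\hat\bH^{\b_0}=\hat\bH^\b$, so by definition of $\hat\bH^\b$ there is $\psi\in\cN_1^\b$ with $\psi(c)=\f(c)$; since solutions of the homogeneous equation are determined by their value at $c$, $\f=\psi$, hence $\f\in\cN_1^\b$. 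In particular $\D(t)\f(t)=0$ a.e. (so $\hat y\in\wt y$), $\{\hat y,f\}\in\cT_{max}^\b$ for each $f\in\wt f$, and $[\hat y,z]_b=[y,z]_b-[\f,z]_b=0-0=0$ for all $z\in\dom\cT_{max}^\b$; thus $\{\hat y,f\}\in\cT_1^\b$ and $\hat y(c)=P_{\hat\bH}y(c)\in\hat\bH$, as required. The key subtlety here — and the one step I would write out carefully — is that $[\f,z]_b=0$; this holds precisely because $\f\in\cN_1^\b$, which is exactly why the boundary-form condition had to be built into the definition of $\cN_1^\b$ rather than using the bare null manifold $\cN^{\cI_\b}$.

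For uniqueness: if $z\in AC(\cI_\b)$ also satisfies $z\in\wt y$, $z(c)\in\hat\bH$, and $\{z,f\}\in\cT_1^\b$, then $\chi:=\hat y-z$ satisfies $J\chi'-B\chi=\D(t)(f-f)=0$ and $\D(t)\chi(t)=0$ a.e. on $\cI_\b$, and $[\chi,w]_b=[\hat y,w]_b-[z,w]_b=0$ for all $w\in\dom\cT_{max}^\b$, so $\chi\in\cN_1^\b$; hence $\chi(c)\in\hat\bH^\b=\hat\bK=\hat\bH^\perp$. But also $\chi(c)=\hat y(c)-z(c)\in\hat\bH$, so $\chi(c)=0$, and since $\chi$ solves the homogeneous equation $J\chi'-B\chi=0$ it follows that $\chi\equiv 0$, i.e. $\hat y=z$. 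The main obstacle I anticipate is purely bookkeeping: verifying that the boundary bilinear form $[\cdot,\cdot]_b$ on the half-open interval $\cI_\b$ is well defined on all of $\dom\cT_{max}^\b$ (this is the analogue of \eqref{4.4}, obtained by integration by parts on segments $[\a,\b']\subset\cI_\b$ and taking $\b'\uparrow b$) and that it behaves additively so that the cancellations $[y,z]_b-[\f,z]_b$ and $[\hat y,w]_b-[z,w]_b$ above are legitimate; this is routine given the identities already established in Section 4 but should be stated explicitly.
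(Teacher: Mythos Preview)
Your proof is correct and follows essentially the same route as the paper's: your $\cN_1^\b$ and $\hat\bH^\b$ coincide with the paper's $\cN^\b=\ker\cT_1^\b\cap\ker\pi_\b$ and $\bH^\b$, and the stabilization-then-complement construction of $\hat\bH$, together with the existence and uniqueness arguments, is identical. The one step you underestimate is the monotonicity $\hat\bH^{\b_2}\subset\hat\bH^{\b_1}$ for $\cI_{\b_1}\subset\cI_{\b_2}$, which you dismiss as ``by restriction''. Restricting $y\in\cN_1^{\b_2}$ to $\cI_{\b_1}$ trivially preserves $Jy'-By=0$ and $\D y=0$, but the condition $[y,z]_b=0$ must now hold for \emph{every} $z\in\dom\cT_{max}^{\b_1}$, and such $z$ need not be restrictions of functions from $\dom\cT_{max}^{\b_2}$. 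The paper proves this carefully as its assertion (a): given $\{z_1,g_1\}\in\cT_{max}^{\b_1}$, solve $Jz'-Bz=0$ on $[\b_2,\b_1]$ with $z(\b_1)=z_1(\b_1)$ and glue to obtain $\{z_2,g_2\}\in\cT_{max}^{\b_2}$ agreeing with $\{z_1,g_1\}$ near $b$; then $[y\up\cI_{\b_1},z_1]_b=[y,z_2]_b=0$. This extension argument, rather than the additivity of $[\cd,\cd]_b$ that you flag, is the actual content of the monotonicity step.
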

\begin{proof}
Fix a point $c\in\cI$ and for any interval $\cI_\b(=[\b,b\rangle)\ni c$ let
\begin{gather*}
\cN^\b=\ker \cT_1^\b\cap \ker \pi_\b=\{y\in AC(\cI_\b): \, \{y,0\}\in\cT_1^\b
\;\;\text{and}\;\; \D(t)y(t)=0 \;\;\text{a.e. on} \;\;
\cI_\b\},\\
\bH^\b = \{y(c):y\in \cN^\b\}.
\end{gather*}
Let us prove the following assertion:

(a)\hskip 2mm if $\cI_{\b_1}\subset \cI_{\b_2}(\iff \b_2 \leq \b_1)$ and
$y_2\in \cN^{\b_2}$, then $y_1:=y_2\up \cI_{\b_1}\in\cN^{\b_1} $.

Indeed, the inclusion $y\in\cN^\b $ is equivalent to the relations
\begin{gather}
J y'(t)-B(t)y(t)=0 \;\;\text{and}\;\; \D(t)y(t)=0\;\; \text{a.e on}\;\; \cI_\b,
\label{4.40}\\
\lim\limits_{t\uparrow b}(J y(t), z(t))=0, \quad \{z,g\}\in \cT_{max}^\b.
\label{4.41}
\end{gather}
 Since $y_1$ is a restriction of $y_2$ and \eqref{4.40} holds for
$y_2$ on $\cI^{\b_2}$, it follows that \eqref{4.40} is valid for $y_1$ on
$\cI^{\b_1}$. Next, assume that $\{z_1, g_1\}\in \cT_{max}^{\b_1} $ and let
$z(t)$ be the solution of the equation
\begin{gather*}
J z'(t)-B(t)z(t)=0, \quad t\in [\b_2,\b_1]
\end{gather*}
such that $z(\b_1)=z_1(\b_1)$. Then the pair $\{z_2,g_2\}$ with
\begin{gather*}
z_2(t)=\begin{cases} z_1(t), \; t\in \cI_{\b_1}\cr z(t), \; t\in [\b_2,\b_1)
\end{cases}, \qquad g_2(t)=\begin{cases} g_1(t), \; t\in \cI_{\b_1}\cr 0,\;
t\in [\b_2,\b_1)
\end{cases}
\end{gather*}
belongs to $\cT_{max}^{\b_2}$ and, consequently, $\lim\limits_{t\uparrow b}(J
y_2(t), z_2(t))=0$. At the same time $z_1=z_2\up \cI_{\b_1}$, so that
$\lim\limits_{t\uparrow b}(J y_1(t), z_1(t))=0$. Hence \eqref{4.41} holds for
$y_1$, which completes the proof of the assertion (a).

It follows from (a) that $\cI_{\b_1}\subset \cI_{\b_2}$ yields $\bH^{\b_2}
\subset \bH^{\b_1}$. Since $\dim \bH<\infty$, this implies that  there exists a
subinterval $\cI_{\b_0}=[\b_0, b\rangle$ such that $\bH^\b=\bH^{\b_0}$ for all
$\cI_\b\supset\cI_{\b_0}$. Next by using the same arguments as in the proof of
Lemma \ref{lem4.4} one shows that the statement of the lemma holds for the
constructed above interval $\cI_{\b_0}$ and the subspace $\hat \bH=
(\bH^{\b_0})^\perp$.
\end{proof}
\begin{lemma}\label{lem4.10}
Let the canonical system \eqref{4.1} be given on an interval $\cI=\langle a,b
\rangle$. Assume also that $\cT_1, \;\cT_3$ and $T_1,\;T_3$ are linear
relations in $\lI$ and $\LI$ respectively given by \eqref{4.24} and
\begin{gather}\label{4.42}
\cT_3=\{\{y,f\}\in\tma: \, [y,z]_a=0 \;\;\text{for every}\;\; z\in\dom\tma\},
\qquad T_3=\wt\pi\cT_3.
\end{gather}
Then
\begin{gather}\label{4.43}
T_3^*=T_1.
\end{gather}
\end{lemma}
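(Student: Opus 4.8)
The plan is to prove the two inclusions $T_1\subset T_3^*$ and $T_3^*\subset T_1$ separately. The first is a one-line consequence of the Lagrange identity, while the second is the substantial part, which I would handle by localizing near the singular endpoint $b$ and invoking the regular-endpoint results already established (Corollary \ref{cor4.8} and Lemma \ref{lem4.9}). For $T_1\subset T_3^*$ I take $\{y,f\}\in\cT_1$ and $\{z,g\}\in\cT_3$ and apply \eqref{4.5}. Since $z\in\dom\tma$, the definition \eqref{4.24} of $\cT_1$ gives $[y,z]_b=0$; since $\{z,g\}\in\cT_3$ (see \eqref{4.42}) and $y\in\dom\tma$ we get $[z,y]_a=0$, and as the form $[\cd,\cd]_a$ is skew-Hermitian ($[y,z]_a=-\ov{[z,y]_a}$) this forces $[y,z]_a=0$ as well. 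Hence \eqref{4.5} reduces to $(f,z)_\D-(y,g)_\D=0$, and because $\wt\pi\cT_3=T_3$ this says precisely that $\{\pi y,\pi f\}\in T_3^*$.

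For the reverse inclusion, let $\{\wt y,\wt f\}\in T_3^*$. Since $\cT_0\subset\cT_3$ and $T_0^*=\Tma$ by Proposition \ref{pr4.5}, we first obtain $\{\wt y,\wt f\}\in\Tma$, so I may fix $\{y,f\}\in\tma$ with $\wt\pi\{y,f\}=\{\wt y,\wt f\}$. For $\b\in\cI$ put $\cI_\b:=[\b,b\rangle$ and let $\cT_{max}^\b,\cT_1^\b$ be as in Lemma \ref{lem4.9} and $\cT_2^\b:=\{\{u,v\}\in\cT_{max}^\b:\,u(\b)=0\}$, with $\wt\pi_\b$-images $T_{max}^\b,T_1^\b,T_2^\b$; write $y_\b,f_\b$ for the restrictions of $y,f$ to $\cI_\b$. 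The first step is to show $\{\wt y_\b,\wt f_\b\}\in T_1^\b$ for every $\b\in\cI$. Given $\{z_\b,g_\b\}\in\cT_2^\b$, extend both functions by zero to $\cI$; since $z_\b(\b)=0$ the result is a pair $\{z,g\}\in\tma$ with $z\equiv0$ on $\langle a,\b)$, so $[z,w]_a=0$ for every $w\in\dom\tma$, i.e. $\{z,g\}\in\cT_3$. Testing $\{\wt y,\wt f\}\in T_3^*$ against $\wt\pi\{z,g\}\in T_3$ and using that $z,g$ vanish on $\langle a,\b)$ yields $(\wt f_\b,\wt z_\b)=(\wt y_\b,\wt g_\b)$ in $L^2_\D(\cI_\b)$; as this holds for all $\{\wt z_\b,\wt g_\b\}\in T_2^\b$ we get $\{\wt y_\b,\wt f_\b\}\in(T_2^\b)^*$. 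Since $\b$ is a regular endpoint of the system restricted to $\cI_\b$, Corollary \ref{cor4.8} applied on $\cI_\b$ gives $(T_2^\b)^*=T_1^\b$, hence $\{\wt y_\b,\wt f_\b\}\in T_1^\b$.

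The second step is to reassemble a single representative of $\{\wt y,\wt f\}$ whose boundary form vanishes at $b$. By Lemma \ref{lem4.9} there are a subinterval $\cI_{\b_0}\subset\cI$, a point $c\in\cI_{\b_0}$ and a subspace $\hat\bH\subset\bH$ such that for each $\cI_\b\supset\cI_{\b_0}$ the element $\{\wt y_\b,\wt f_\b\}\in T_1^\b$ possesses a unique representative $\hat y_\b\in AC(\cI_\b)$ with $\pi_\b\hat y_\b=\wt y_\b$, $\hat y_\b(c)\in\hat\bH$ and $\{\hat y_\b,f_\b\}\in\cT_1^\b$. Uniqueness forces $\hat y_{\b_1}\up\cI_{\b_2}=\hat y_{\b_2}$ whenever $\cI_{\b_0}\subset\cI_{\b_2}\subset\cI_{\b_1}$, once one checks that the restriction of $\hat y_{\b_1}$ still satisfies the $\cT_1^{\b_2}$-condition — which it does because the form $[\cd,\cd]_b$ only feels the behaviour near $b$ (any $w'\in\dom\cT_{max}^{\b_2}$ extends to some $w\in\dom\cT_{max}^{\b_1}$ agreeing with $w'$ near $b$). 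Hence the $\hat y_\b$ glue to a function $\hat y\in\AC$ defined on all of $\cI$ (for a regular endpoint $a$ one also allows $\b=a$, so $\cI_a=\cI$), with $\pi\hat y=\wt y$, $\{\hat y,f\}\in\tma$, and $[\hat y,z]_b=0$ for every $z\in\dom\tma$ (evaluating the form on any sufficiently small $\cI_\b$). Therefore $\{\hat y,f\}\in\cT_1$ and $\{\wt y,\wt f\}=\wt\pi\{\hat y,f\}\in T_1$, which finishes the proof.

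I expect the main obstacle to be exactly the bookkeeping of this gluing step: identifying $\hat y_{\b_1}\up\cI_{\b_2}$ with $\hat y_{\b_2}$ through the uniqueness clause of Lemma \ref{lem4.9} (which requires extending test functions from $\cI_{\b_2}$ to $\cI_{\b_1}$ and tracking the limit at $b$), and then checking that the glued function is genuinely defined on all of $\cI$ rather than merely on $\bigcup_\b\cI_\b$; the remaining passages — between $(\cd,\cd)_\D$ on $\cI$ and on the subintervals, and between $[\cd,\cd]_b$ on $\cI$ and on the $\cI_\b$ — are routine but must be handled carefully because of the zero-extension and the possibly indefinite nature of the system.
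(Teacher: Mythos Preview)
Your proof is correct and follows essentially the same route as the paper's: the Lagrange identity for $T_1\subset T_3^*$, and for the reverse inclusion the localization to $\cI_\b=[\b,b\rangle$, the identification $(T_2^\b)^*=T_1^\b$ via Corollary \ref{cor4.8}, and the gluing via Lemma \ref{lem4.9}. You have in fact filled in more detail than the paper's sketch provides --- in particular the restriction/extension argument needed to invoke the uniqueness clause of Lemma \ref{lem4.9}, and the preliminary observation $T_3^*\subset T_0^*=\Tma$ (which the paper omits) --- and your closing paragraph correctly identifies where the care is required.
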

\begin{proof}
We give only the sketch of the proof, because it is similar to that of
Proposition \ref{pr4.5} and Lemma \ref{lem4.6}.

The inclusion $T_1\subset T_3^*$ follows from the Lagrange's identity
\eqref{4.5}. To prove the inverse inclusion assume that $\{\wt y, \wt f\}\in
T_3^*$. For every interval $\cI_\b=[\b,b\rangle$ construct the restrictions
$\wt y_\b,\; \wt f_\b \in L_\D^2 (\cI_\b)$ onto $\cI_\b$ in the same way as in
the proof of Lemma \ref{lem4.6}. Moreover, let $\cT_1^\b$ and $T_1^\b$ be the
relations \eqref{4.37} and let $\cT_2^\b=\{\{y,f\}\in\cT_{max}^\b: \,
y(\b)=0\}, \; T_2^\b=\wt\pi_\b \cT_2^\b$. Then by using the Lagrange's identity
one proves the inclusion $\{\wt y_\b,\; \wt f_\b\} \in (T_2^\b)^*$. At the same
time by Corollary \ref{cor4.8} $(T_2^\b)^*=T_1^\b$, so that $\{\wt y_\b,\; \wt
f_\b\} \in T_1^\b$  for every interval $\cI_\b$. Now by using Lemma
\ref{lem4.9} one obtains the inclusion $\{\wt y,\; \wt f\} \in T_1$. Hence
$T_3^*\subset T_1$, which yields \eqref{4.43}.
\end{proof}
Now we are ready to prove the main theorem of the subsection.
\begin{theorem}\label{th4.11}
Let $\Tma$ and $\Tmi$ be maximal and minimal relations \eqref{4.3} and
\eqref{4.7} induced by the canonical system \eqref{4.1} on the interval
$\cI=\langle a, b\rangle$ and let $T_0$ be the relation \eqref{4.8}.  Then
$\Tmi$ is a closed symmetric linear relation in $\LI$ and
\begin{gather}\label{4.44}
\ov T_0=\Tmi, \qquad \Tmi^*=\Tma.
\end{gather}

If in addition the endpoint $a$ (resp. $b$) is regular and $T_a$ (resp. $T_b$)
is the relation \eqref{4.28} (resp. \eqref{4.32}), than $\Tmi=T_a$ (resp.
$\Tmi=T_b$).

If the system \eqref{4.1} is regular, then $\Tmi=T_0$ and every $\l\in\bC$ is a
regular type point of $\Tmi$, that is $\hat\rho (\Tmi)=\bC$.
\end{theorem}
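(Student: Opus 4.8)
The plan is to prove $\Tmi=\Tma^*$. By Proposition~\ref{pr4.5} we have $T_0^*=\Tma$, hence $\Tma^*=T_0^{**}=\ov{T_0}$ and $\Tma^{**}=\Tma$ (adjoint relations being closed); so once $\Tmi=\Tma^*$ is known it follows at once that $\ov{T_0}=\Tmi$ and $\Tmi^*=\Tma^{**}=\Tma$, which is \eqref{4.44}. One inclusion is routine. If $\{\wt y,\wt f\}\in\Tmi$, pick $\{y,f\}\in\tmi$ with $\wt\pi\{y,f\}=\{\wt y,\wt f\}$; for any $\{z,g\}\in\tma$ the Lagrange identity \eqref{4.5} reads $(f,z)_\D-(y,g)_\D=[y,z]_b-[y,z]_a=0$ since $\{y,f\}\in\tmi$, and because every element of $\Tma$ is of the form $\wt\pi\{z,g\}$ with $\{z,g\}\in\tma$ this means $\{\wt y,\wt f\}\in\Tma^*$. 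Hence $\Tmi\subset\Tma^*$; the same computation applied to $\{y,f\}\in\cT_0$, where membership $y\in\ACf$ forces $[y,z]_a=[y,z]_b=0$, also yields $\cT_0\subset\tmi$ and thus $T_0\subset\Tmi$.

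The substantive point is the reverse inclusion $\Tma^*\subset\Tmi$. I would derive it from Lemma~\ref{lem4.10}. Since $\cT_3\subset\tma$ we have $T_3\subset\Tma$, so taking adjoints and using $T_3^*=T_1$ gives $\Tma^*\subset T_1$. Thus every $\{\wt y,\wt f\}\in\Tma^*$ admits a representative $\{y,f\}\in\cT_1$, i.e.\ with $[y,z]_b=0$ for all $z\in\dom\tma$. Inserting this \emph{particular} $\{y,f\}$ and an arbitrary $\{z,g\}\in\tma$ into \eqref{4.5}, the left-hand side vanishes because $\{\wt y,\wt f\}\in\Tma^*$, and since $[y,z]_b=0$ we are forced to conclude $[y,z]_a=0$ for all $z\in\dom\tma$ too. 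Hence this same representative $\{y,f\}$ lies in $\tmi$, so $\{\wt y,\wt f\}\in\Tmi$. This proves $\Tma^*\subset\Tmi$, whence $\Tmi=\Tma^*=\ov{T_0}$; in particular $\Tmi$ is closed, and from $T_0\subset\Tmi=\Tma^*\subset\Tma=\Tmi^*$ it is symmetric. This settles \eqref{4.44}.

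If $a$ is a regular endpoint, Proposition~\ref{pr4.7} gives $T_a=\ov{T_0}$, so combined with $\ov{T_0}=\Tmi$ we get $\Tmi=T_a$; a regular endpoint $b$ is treated identically via \eqref{4.34}. (Alternatively, $\cT_a\subset\tmi$ is immediate since $y(a)=0$ forces $[y,z]_a=0$, giving $T_a\subset\Tmi$, while $T_1^*=T_2$ from Lemma~\ref{lem4.6} furnishes for each element of $\Tma^*$ a representative with $y(a)=0$, hence $\Tma^*\subset T_a$ and $\Tmi\subset T_a$.)

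Finally, suppose the system is regular. Then $\ACf$ consists exactly of the functions vanishing at $a$ and at $b$, while $[y,z]_b=(Jy(b),z(b))$; since the homogeneous Cauchy problem is solvable on the whole compact interval for every prescribed value at $b$, the set $\{z(b):z\in\dom\tma\}$ is all of $\bH$, so (as $J$ is invertible) the condition ``$[y,z]_b=0$ for all $z\in\dom\tma$'' is equivalent to $y(b)=0$. Therefore $\cT_a=\cT_0$, so $T_a=T_0$, and by the preceding paragraph $\Tmi=T_0$ (in particular $T_0$ is closed). To see $\hat\rho(\Tmi)=\bC$, fix $\l\in\bC$: if $\{\wt y,\l\wt y\}\in\Tmi=T_0$, a representative $y$ solves $Jy'-By=\l\D y$ with $y(a)=0$, hence $y\equiv0$ by uniqueness for this linear ODE, so $\ker(\Tmi-\l)=\{0\}$; and Lemma~\ref{lem4.2} applied with $\bK=\{0\}$ (for which $\cT_\bK=\cT_0$, i.e.\ $T_\bK=\Tmi$, and $\cT_{\bK *}=\tma$, i.e.\ $T_{\bK *}=\Tma$) gives $\ran(\Tmi-\l)=(\ker(\Tma-\ov\l))^\perp$, which is closed. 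Thus $0\in\hat\rho(\Tmi-\l)$ for every $\l$, that is $\hat\rho(\Tmi)=\bC$. The only step in all of this that is not mere formalism is the inclusion $\Tma^*\subset\Tmi$, and it rests entirely on Lemma~\ref{lem4.10} (hence ultimately on the cutting-off constructions of Lemmas~\ref{lem4.4} and~\ref{lem4.9}); everything else is the Lagrange identity together with routine adjoint and closure manipulations.
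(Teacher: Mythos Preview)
Your proof is correct and follows essentially the same route as the paper: both hinge on Lemma~\ref{lem4.10} to obtain $\Tma^*\subset T_1$, then use the Lagrange identity to upgrade a $\cT_1$-representative of an element of $\Tma^*$ to a $\tmi$-representative, which is precisely the step the paper abbreviates as ``arguments similar to that in the proof of Proposition~\ref{pr4.7}''. The only cosmetic difference is in the regular case: you deduce $\Tmi=T_0$ by showing $\cT_a=\cT_0$ directly (via surjectivity of $z\mapsto z(b)$), whereas the paper instead verifies \eqref{4.45} for $T_0$ and infers closedness of $T_0$ from symmetry; both arguments are valid and rely on the same ingredients (uniqueness for the Cauchy problem and Lemma~\ref{lem4.2} with $\bK=\{0\}$).
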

\begin{proof}
It follows from the Lagrange's identity \eqref{4.5} that $\Tma\subset\Tmi^*$
and $\Tmi\subset\Tma^*$. This and the obvious inclusion $\Tmi\subset\Tma$ show
that $\Tmi$ is symmetric.

Next assume that $T_1$  and $T_3$ are the linear relations \eqref{4.24} and
\eqref{4.42}. Since $T_3\subset \Tma$, it follows that $\Tma^*\subset T_3^*$
and by \eqref{4.43} $\Tma^*\subset T_1$. Now the arguments similar to that in
the proof of Proposition \ref{pr4.7} give the equality $\Tma^*=\Tmi$, which
together with \eqref{4.22} leads to \eqref{4.44}. Moreover, combining
\eqref{4.44} with \eqref{4.30} and \eqref{4.34} we arrive at the required
statement for systems with the regular endpoint $a$ or $b$.

Assume now that the system \eqref{4.1} is regular and show that in this case
\begin{gather}\label{4.45}
\ker (T_0-\l)=\{0\}, \qquad \ov{\ran (T_0-\l)}=\ran (T_0-\l), \qquad \l\in\bC.
\end{gather}
If $\wt y\in \ker (T_0-\l)$, then $\{\wt y, \l\wt y\}\in T_0$ and,
consequently, there is $y\in\AC$ such that $\pi y=\wt y, \; y(a)=y(b)=0$ and
$y$ is a solution of \eqref{4.1.1}. Hence $y=0$, which gives the first equality
in \eqref{4.45}. Moreover, formula \eqref{4.14} (with $\bK=\{0\}$) implies the
second equality in \eqref{4.45}.

Since $T_0$ is symmetric, it follows from  \eqref{4.45} that $T_0$ is closed.
Therefore by \eqref{4.44} $\Tmi=T_0$ and \eqref{4.45} yields the equality
$\hat\rho (\Tmi)=\bC$.
\end{proof}
Let $\cN$ be the null manifold \eqref{4.1.3} of the canonical system
\eqref{4.1}. Then $\{y,0\}\in\tma$ for every $y\in\cN$ and the Lagrange's
identity \eqref{4.5} gives
\begin{gather}\label{4.45.1}
[y,z]_a=[y,z]_b, \qquad y\in\cN, \;z\in \dom \tma.
\end {gather}
This enables us to introduce the subspace $\cN'\subset\cN$ via
\begin{gather}
\cN'=\{y\in\cN:\,[y,z]_a =0, \;\; 
z\in\dom\tma\}= \{y\in\cN:\, [y,z]_b=0,\;\;
 z\in\dom\tma\}.\label{4.45.2}
\end{gather}
Next, the relations $\{y,f\}\in\tma$ and $\wt\pi \{y,f\}=0$ mean that $y\in\AC,
\; f\in\lI$ and
\begin{gather*}
J y'(t)-B(t)y(t)=\D(t)f(t), \quad \D(t)y(t)=0, \quad \D(t)f(t)=0
\;\;\;\;\text{a.e. on  }\;\; \cI.
\end{gather*}
Therefore
\begin{gather}
\ker (\wt\pi\up\tma)=\{\{y,f\}\in\lI\times \lI:\, y\in \cN \:\:\text{and}
 \:\;\D(t)f(t)=0\;\;\text{a.e. on}\;\;\cI\},\label{4.45.6}\\
\ker (\wt\pi\up\tmi)=\{\{y,f\}\in\lI\times \lI:\, y\in \cN' \:\:\text{and}
 \:\;\D(t)f(t)=0\;\;\text{a.e. on}\;\;\cI\}.\label{4.45.7}
\end{gather}
\begin{proposition}\label{pr4.11a}
Let $a$ be a regular endpoint of the system \eqref{4.1}, let $\cT_a$ be the
linear relation \eqref{4.28} and let $\hat\cN'=\{\{y,0\}:\,y\in\cN'\}$. Then
\begin{gather}\label{4.45.4}
\tmi=\cT_a\dotplus \hat\cN',
\end{gather}
which implies that the equality $\tmi=\cT_a$ holds if and only if $\cN'=\{0\}$.
\end{proposition}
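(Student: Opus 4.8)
The plan is to show the two inclusions $\cT_a\dotplus\hat\cN'\subset\tmi$ and $\tmi\subset\cT_a\dotplus\hat\cN'$, together with the fact that the sum is direct. Directness is the easiest point: if $\{y,0\}\in\hat\cN'$ lies in $\cT_a$, then $y\in\cN'$ and $y(a)=0$; but $y\in\cN\subset\cN_\l$ solves the homogeneous system, and a solution of a first order linear system vanishing at the regular endpoint $a$ vanishes identically, so $\cT_a\cap\hat\cN'=\{0\}$. For the first inclusion, $\cT_a\subset\tmi$ is immediate from the definitions \eqref{4.6} and \eqref{4.28} (the condition $y(a)=0$ forces $[y,z]_a=0$ for all $z\in\dom\tma$), and $\hat\cN'\subset\tmi$ follows from \eqref{4.45.2}: for $y\in\cN'$ we have $\{y,0\}\in\tma$, $[y,z]_a=0$ and, by \eqref{4.45.1}, also $[y,z]_b=[y,z]_a=0$ for every $z\in\dom\tma$; hence $\{y,0\}\in\tmi$. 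Since $\tmi$ is a linear manifold, $\cT_a\dotplus\hat\cN'\subset\tmi$.

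The substantive direction is $\tmi\subset\cT_a\dotplus\hat\cN'$. Given $\{y,f\}\in\tmi$, the idea is to correct $y$ by an element of $\cN$ so that the corrected function vanishes at $a$, and then check that the correction actually lies in $\cN'$. First I would pass to the Hilbert space picture: $\{\pi y,\pi f\}\in\Tmi$, and by Theorem \ref{th4.11} $\Tmi=T_a$, so by \eqref{4.28} there is a representative, call it $\overset\circ y$, with $\pi\overset\circ y=\pi y$, $\overset\circ y(a)=0$, $\{\overset\circ y,f_0\}\in\tma$ for a suitable $f_0$ with $\pi f_0=\pi f$, and $[\overset\circ y,z]_b=0$ for all $z\in\dom\tma$; thus $\{\overset\circ y,f_0\}\in\cT_a$. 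Set $\f:=y-\overset\circ y$ and $g:=f-f_0$. Then $\pi\f=0$ and $\pi g=0$, so by \eqref{4.45.6} applied to $\{\f,g\}=\{y,f\}-\{\overset\circ y,f_0\}\in\tma$ we get $\f\in\cN$ and $\D(t)g(t)=0$ a.e.; since $\cN$ is a space of solutions of the \emph{homogeneous} system with $\D y=0$, one checks that in fact $\{\f,0\}\in\tma$ and $\{\f,g\}-\{\f,0\}$ contributes nothing, i.e. we may write $\{y,f\}=\{\overset\circ y,f_0\}+\{\f,0\}+\{0,g\}$; but $\{0,g\}\in\cT_a$ because $\D g=0$ gives $\pi g=0$ and $g$ trivially satisfies all the boundary conditions. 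Absorbing $\{0,g\}$ into the $\cT_a$ term, we have $\{y,f\}=\{\overset\circ y,f_0\}+\{\f,0\}$ with the first summand in $\cT_a$ and $\f\in\cN$.

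It remains to verify $\f\in\cN'$, i.e. $[\f,z]_a=0$ for every $z\in\dom\tma$. This is where I expect the only real work. Since $\{y,f\}\in\tmi$ we have $[y,z]_a=0$ and since $\{\overset\circ y,f_0\}\in\cT_a\subset\tmi$ we have $[\overset\circ y,z]_a=0$, for all $z\in\dom\tma$; subtracting, $[\f,z]_a=0$ for all $z\in\dom\tma$, which is exactly the defining condition \eqref{4.45.2} of $\cN'$. Hence $\{\f,0\}\in\hat\cN'$ and $\{y,f\}\in\cT_a\dotplus\hat\cN'$, completing the inclusion. The final sentence of the proposition, that $\tmi=\cT_a$ iff $\cN'=\{0\}$, is then immediate: if $\cN'=\{0\}$ then $\hat\cN'=\{0\}$ and \eqref{4.45.4} gives $\tmi=\cT_a$, while conversely if $\tmi=\cT_a$ then by directness $\hat\cN'=\hat\cN'\cap\cT_a=\{0\}$, so $\cN'=\{0\}$.

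The main obstacle, as indicated, is the bookkeeping in the middle paragraph: one must be careful that the representative $\overset\circ y$ furnished by $\Tmi=T_a$ can be chosen so that the pair $\{\overset\circ y,f_0\}$ genuinely lies in $\cT_a$ (not merely in $\tma$ modulo $\ker\wt\pi$), and that the leftover pieces $\{\f,g\}$ decompose cleanly using \eqref{4.45.6}. Once the decomposition $\{y,f\}=\{\overset\circ y,f_0\}+\{\f,0\}$ with $\overset\circ y(a)=0$ and $\f\in\cN$ is in hand, the verification $\f\in\cN'$ is a one line subtraction of boundary forms. I would also remark that \eqref{4.45.1}, which underlies $\hat\cN'\subset\tmi$, is itself just the Lagrange identity \eqref{4.5} applied to $\{y,0\},\{z,g\}\in\tma$.
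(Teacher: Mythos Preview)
Your proof is correct and follows essentially the same approach as the paper's. The paper compresses your middle paragraph into the single line $\tmi=\cT_a+\ker(\wt\pi\!\up\!\tmi)$ (which holds because $\cT_a\subset\tmi$ and $\wt\pi\tmi=\wt\pi\cT_a=\Tmi$ by Theorem~\ref{th4.11}) and then invokes the pre-stated formula \eqref{4.45.7} for $\ker(\wt\pi\!\up\!\tmi)$, whereas you use \eqref{4.45.6} and verify the $\cN'$ condition by the boundary-form subtraction; but \eqref{4.45.7} is itself proved by exactly that subtraction, so the arguments coincide.
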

\begin{proof}
Since $\cT_a\subset\tmi$ and by Theorem \ref{th4.11} $\wt\pi \tmi=\wt\pi\cT_a
(=\Tmi)$, it follows that
\begin{gather}\label{4.45.5}
\tmi=\cT_a+\ker (\wt\pi\up\tmi).
\end{gather}
Clearly, the inclusion $\{0,f\}\in\cT_a$ holds for any $f\in\lI$ with
$\D(t)f(t)=0\;\;\text{a.e. on}\;\;\cI$. Combining this assertion  with
\eqref{4.45.5} and \eqref{4.45.7} one obtains $\tmi=\cT_a+\hat\cN'$. Moreover,
for each $y\in\cN\cap\dom \cT_a$ one has $y(0)=0$ , so that  $y=0$. Hence
$\cT_a\cap\hat\cN'=\{0\}$, which gives the direct decomposition \eqref{4.45.4}.
\end{proof}
\begin{example}\label{ex4.11b}
Consider the canonical system \eqref{4.1} with $\bH=\bC^2$ and operator
coefficients $J, \; B(t)$  and $\D(t)$ given in the standard basis of $\bC^2$
by the matrices
\begin{gather*}
J=\begin{pmatrix} 0 & -1 \cr 1& 0 \end{pmatrix}, \quad B(t)=0, \quad
\D(t)=\begin{pmatrix} 1 & 0 \cr 0 & 0 \end{pmatrix}, \quad t\in [0,\infty).
\end{gather*}
One immediately checks that for this system $\cN_\l=\cN=\{y(t)\equiv \{0,
C\}:\, C\in\bC\}$ and each function $z\in\dom \tma$ is of the form $z(t)=\{0,
\;z_2(t) \}(\in\bC^2)$. Hence $(J y(t), z(t))\equiv 0\;\;(y\in\cN, \;
z\in\dom\tma)$, so that $\cN'=\cN\neq \{0\}$. This example shows that there
exist canonical systems with the regular endpoint $a$ such that $\tmi\neq
\cT_a$.
\end{example}
\subsection{Deficiency indices and Neumann formulas}
Let $\tma$ be the maximal relation \eqref{4.2} in $\lI$ induced by the
canonical system \eqref{4.1} and let $\cN_\l$ be the subspace \eqref{4.1.2}. It
follows from \eqref{4.2} that
\begin{gather}\label{4.46}
\cN_\l=\ker (\tma-\l)=\{y\in\lI:\; \{y,\l y\}\in\tma\}, \quad\l\in\bC.
\end{gather}
Assume also that $\hat\cN_\l$ is a subspace in $\tma$ given by
$\hat\cN_\l=\{\{y,\l y\}: y\in\cN_\l\}, \;\l\in\bC$.
\begin{definition}\label{def4.12} $\,$\cite{KogRof75}
The numbers $N_+=\dim \cN_i$ and $N_-=\dim\cN_{-i} $ are called the formal
deficiency indices of the system \eqref{4.1}.
\end{definition}
It is clear that $N_\pm\leq n$. Moreover, if the system \eqref{4.1} is regular,
then $N_+=N_-=n$.

Next assume that
\begin{gather*}
\gN_\l:=\gN_\l(\Tmi)=\ker (\Tma-\l), \quad \l\in\bC
\end{gather*}
is the defect subspace and
\begin{gather*}
n_\pm:=n_\pm (\Tmi )=\dim \gN_\l, \quad \l\in\bC_\pm
\end{gather*}
are deficiency indices of the symmetric relation $\Tmi$ in $\LI$. It is easily
seen that $\pi\cN_\l=\gN_\l$ and $\ker(\pi\up\cN_\l)=\cN$ for each $\l\in\bC$.
This implies the following proposition.
\begin{proposition}\label{pr4.13}$\,$ \cite{KogRof75,LesMal03}
Given a canonical system \eqref{4.1}. Then $N_\pm=\dim \cN_\l, \; \l\in\bC_\pm$
(i.e., $\dim \cN_\l$ does not depend on $\l$ in either $\bC_+$ or $\bC_-$) and
\begin{gather}\label{4.49}
N_+=n_+ + k_\cN, \quad N_-=n_- + k_\cN.
\end{gather}
\end{proposition}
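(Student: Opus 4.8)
The plan is to exploit the two identities recorded just before the statement, namely $\pi\cN_\l=\gN_\l$ and $\ker(\pi\up\cN_\l)=\cN$ for every $\l\in\bC$, and to turn them into the dimension count \eqref{4.49}. First I would verify these identities carefully. For the inclusion $\pi\cN_\l\subset\gN_\l$ one takes $y\in\cN_\l$, observes $\{y,\l y\}\in\tma$ by \eqref{4.46}, hence $\{\pi y,\l\pi y\}=\wt\pi\{y,\l y\}\in\Tma$ by \eqref{4.3}, so that $\pi y\in\ker(\Tma-\l)=\gN_\l$. For the reverse inclusion one takes $\wt y\in\gN_\l$; then $\{\wt y,\l\wt y\}\in\Tma=\wt\pi\tma$, so there is $\{z,g\}\in\tma$ with $\pi z=\wt y$ and $\pi g=\l\pi z$; the latter means $\D(t)(g(t)-\l z(t))=0$ a.e.\ on $\cI$, and substituting into $Jz'-Bz=\D g$ gives $Jz'-Bz=\l\D z$ a.e., i.e.\ $z\in\cN_\l$ with $\pi z=\wt y$. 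For the kernel identity one simply notes $\ker(\pi\up\cN_\l)=\cN_\l\cap\ker\pi$, which equals $\cN$ by Definition \ref{def4.0}; here the $\l$-independence of $\cN$ built into \eqref{4.1.3} is exactly what makes the statement uniform in $\l$.

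Next I would apply the rank-nullity theorem to the surjective linear map $\pi\up\cN_\l:\cN_\l\to\gN_\l$. Since $\dim\cN_\l\leq\dim\bH<\infty$ by \eqref{4.1.2}, all spaces in sight are finite-dimensional, and we obtain $\dim\cN_\l=\dim\gN_\l+\dim\ker(\pi\up\cN_\l)=\dim\gN_\l+k_\cN$ for every $\l\in\bC$, using \eqref{4.1.5}.

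Finally I would invoke Theorem \ref{th4.11}, by which $\Tmi$ is a closed symmetric linear relation; consequently its deficiency number $\dim\gN_\l(\Tmi)=\dim\gN_\l$ depends only on the half-plane containing $\l$, equalling $n_+$ for $\l\in\bC_+$ and $n_-$ for $\l\in\bC_-$. Combined with the previous step this shows that $\dim\cN_\l=n_\pm+k_\cN$ is constant on $\bC_\pm$; evaluating at $\l=\pm i$ and recalling Definition \ref{def4.12}, i.e.\ $N_\pm=\dim\cN_{\pm i}$, yields simultaneously the $\l$-independence of $\dim\cN_\l$ on each half-plane and the formulas \eqref{4.49}.

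I do not expect a serious obstacle here. The only points requiring a little care are the reverse inclusion $\gN_\l\subset\pi\cN_\l$, where one uses the freedom to adjust a representative within its $\D$-equivalence class (replacing $g$ by $\l z$) to promote a mere $\wt\pi$-preimage of $\{\wt y,\l\wt y\}$ to a genuine solution of the homogeneous system \eqref{4.1.1}; and the verification that $\pi\up\cN_\l$ maps onto all of $\gN_\l$, not merely a subspace, which must be in hand before the rank-nullity count is applied.
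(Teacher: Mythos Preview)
Your proposal is correct and follows exactly the route the paper itself indicates: the paper records the two identities $\pi\cN_\l=\gN_\l$ and $\ker(\pi\up\cN_\l)=\cN$ immediately before the proposition and states that they imply it, and you simply supply the details (the rank--nullity count and the appeal to Theorem~\ref{th4.11} for the constancy of $\dim\gN_\l$ on each half-plane). There is no substantive difference between your argument and the paper's.
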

 As is known (see for instance \cite{Ben72}), for any closed symmetric relation
$A$ in $\gH$ the Neumann formula is valid. In the case of the minimal relation
$\Tmi$ in $\LI$ this formula is
\begin{gather}\label{4.50}
\Tma=\Tmi \dotplus \hat\gN_\l(\Tmi) \dotplus \hat\gN_{\ov\l}(\Tmi),
\quad\l\in\bC\setminus\bR.
\end{gather}
In the following proposition we show that similar formulas hold for $\tmi$ and
$\tma$.
\begin{proposition}\label{pr4.14}
Let $\tmi$ and $\tma$ be minimal and maximal relations in $\lI$ induced by the
system \eqref{4.1}. Assume also that $\cN$ is the null manifold \eqref{4.1.3},
$\cN'\subset \cN$ is the subspace \eqref{4.45.2} and let $k_{\cN'}=\dim \cN'$.
Then:  1) for each $\l\in\bC\setminus\bR$ the following Neumann formulas hold
\begin{gather}\label{4.51}
\tma=\tmi + (\hat\cN_\l \dotplus \hat\cN_{\ov\l}), \qquad \tmi\cap (\hat\cN_\l
\dotplus \hat\cN_{\ov\l})=\cN'\oplus \cN = \{\{y,f\}:\,y\in\cN', \; f\in\cN\}.
\end{gather}

2) the following equality is valid
\begin{gather}\label{4.52}
\dim (\tma/\tmi)=\dim (\dom\tma/\dom\tmi)= N_+ +N_- -k_\cN-k_{\cN'}.
\end{gather}
\begin{proof}
Since $\wt\pi\tma=\Tma, \; \wt\pi\tmi=\Tmi$ and $\wt\pi \hat\cN_\l=\hat\gN_
\l(\Tmi)$, it follows from \eqref{4.50} that
\begin{gather}\label{4.53}
\tma=\tmi + (\hat\cN_\l \dotplus \hat\cN_{\ov\l})+\ker (\wt\pi\up \tma),\quad
\l\in \bC\setminus\bR
\end{gather}
(clearly $\hat\cN_\l\cap \hat\cN_{\ov\l}=\{0\}$, so that the sum $\hat\cN_\l
\dotplus \hat\cN_{\ov\l}$ in \eqref{4.53} is direct). Next, the inclusion
$y\in\cN$ implies that $y\in \cN_\l\cap\cN_{\ov\l}$. Moreover,
\begin{gather*}
\{y,0\}=\left\{
 -\tfrac {\ov\l}{\l-\ov\l} y, -\l\tfrac { \ov\l }{\l-\ov\l} y
\right\} + \left\{ \tfrac {\l}{\l-\ov\l} y, \ov\l\tfrac { \l }{\l-\ov\l} y
\right\}.
\end{gather*}
and consequently
\begin{gather}\label{4.55}
\{y,0\}\in\hat\cN_\l \dotplus \hat\cN_{\ov\l} \;\;\;\;\; (y\in\cN,\;\;
\l\in\CR).
\end{gather}
Furthermore  for each $f\in\lI$ such that $\D(t)f(t)=0$ a.e on $\cI$ one has
$\{0,f\}\in\tmi$. This and \eqref{4.45.6} give the inclusion $\ker
(\wt\pi\up\tma)\subset \tmi + (\hat\cN_\l \dotplus \hat\cN_{\ov\l})$, which
together with \eqref{4.53} yields the first equality in \eqref{4.51}.

Let us prove the second relation in \eqref{4.51}. If $\{y,f\}\in \tmi\cap
(\hat\cN_\l \dotplus \hat\cN_{\ov\l})$, then $\wt\pi \{y,f\}\in \Tmi\cap
(\hat\gN_\l  \dotplus \hat\gN_{\ov\l})$ and hence $\wt\pi \{y,f\}=0$. Therefore
by \eqref{4.45.7} $y\in \cN'$ and in view of \eqref{4.55} $\{y,0\}\in\hat\cN_\l
\dotplus \hat\cN_{\ov\l} $. Moreover, since $\{y,f\}\in\hat\cN_\l \dotplus
\hat\cN_{\ov\l}$ as well, one has $\{0,f\}\in\hat\cN_\l \dotplus
\hat\cN_{\ov\l} $. This implies that there exist $y\in\cN_\l$ and
$z\in\cN_{\ov\l}$ such that $y+z=0$ and $\l y+\ov\l z =f$. Hence
$f=(\ov\l-\l)z=(\l-\ov\l)y$, so that $f\in \cN_\l \cap\cN_{\ov\l}$ and by
\eqref{4.1.6} $f\in\cN$. Thus $\{y,f\}\in \cN'\oplus \cN$.

Conversely, let $\{y,f\}\in \cN'\oplus \cN$  with $y\in\cN'$ and $f\in\cN$.
Then according to \eqref{4.45.2} $\{y,0\}\in\tmi$ and \eqref{4.55} gives
$\{y,0\}\in\hat\cN_\l \dotplus \hat\cN_{\ov\l}$. Therefore the inclusion
$\{y,0\}\in \tmi\cap (\hat\cN_\l \dotplus \hat\cN_{\ov\l})$ is valid. Next,
$\{0,f\}\in \tmi$ and the representation
\begin{gather*}
\{0,f\}=\tfrac 1 {\l-\ov\l}(\{f,\l f\}-\{f,\ov\l f\})
\end{gather*}
together with \eqref{4.1.6} shows that $\{0,f\}\in \hat\cN_\l \dotplus
\hat\cN_{\ov\l}$. Thus $\{0,f\}\in \tmi\cap ( \hat\cN_\l \dotplus
\hat\cN_{\ov\l})$ and therefore $\{y,f\}\in \tmi\cap ( \hat\cN_\l \dotplus
\hat\cN_{\ov\l})$ as well. This proves the second relation in \eqref{4.51}.

To prove \eqref{4.52} we first note that the equality $r:=\dim (\tma /
\tmi)=N_+ +N_- - k_\cN-k_{\cN'}$ is immediate from \eqref{4.51}. Next assume
that $\{\{y_j,f_j\}\}_1^r$ is a basis  of $\tma$ modulo $\tmi$. Then the
immediate checking shows that $\{y_j\}_1^r$ is the basis of $\dom \tma$ modulo
$\dom \tmi $. Therefore $\dim (\dom \tma / \dom \tmi)=r(=\dim (\tma / \tmi))$
which completely proves \eqref{4.52}.
\end{proof}
\end{proposition}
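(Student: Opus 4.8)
The plan is to transfer the classical Neumann formula \eqref{4.50} for $\Tmi$ (which holds because $\Tmi^*=\Tma$ by Theorem \ref{th4.11}) up to the level of $\lI$ by means of the quotient map $\wt\pi$, and then to compensate for the non-injectivity of $\wt\pi$ with the help of the explicit descriptions \eqref{4.45.6} and \eqref{4.45.7} of $\ker(\wt\pi\up\tma)$ and $\ker(\wt\pi\up\tmi)$. As a preliminary step I would record two elementary inclusions valid for every $\l\in\CR$: first, $\{y,0\}\in\hat\cN_\l\dotplus\hat\cN_{\ov\l}$ for $y\in\cN$, which follows from the decomposition $\{y,0\}=\{-\tfrac{\ov\l}{\l-\ov\l}y,\,-\l\tfrac{\ov\l}{\l-\ov\l}y\}+\{\tfrac{\l}{\l-\ov\l}y,\,\ov\l\tfrac{\l}{\l-\ov\l}y\}$ together with $\cN\subset\cN_\l\cap\cN_{\ov\l}$; second, $\{0,f\}\in\hat\cN_\l\dotplus\hat\cN_{\ov\l}$ for $f\in\cN$, which follows from $\{0,f\}=\tfrac1{\l-\ov\l}(\{f,\l f\}-\{f,\ov\l f\})$. (Here $\hat\cN_\l\cap\hat\cN_{\ov\l}=\{0\}$, so the indicated sums are direct.)

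For the first equality in \eqref{4.51} I would apply $\wt\pi$ to \eqref{4.50}. Using $\wt\pi\tma=\Tma$, $\wt\pi\tmi=\Tmi$, $\wt\pi\hat\cN_\l=\hat\gN_\l(\Tmi)$, and the fact that $\tmi$, $\hat\cN_\l$, $\hat\cN_{\ov\l}$ are all contained in $\tma$, any $\{y,f\}\in\tma$ differs from a sum of elements of $\tmi$, $\hat\cN_\l$ and $\hat\cN_{\ov\l}$ by an element of $\ker(\wt\pi\up\tma)$; hence $\tma=\tmi+(\hat\cN_\l\dotplus\hat\cN_{\ov\l})+\ker(\wt\pi\up\tma)$. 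By \eqref{4.45.6} every element of $\ker(\wt\pi\up\tma)$ has the form $\{y,0\}+\{0,f\}$ with $y\in\cN$ and $\D(t)f(t)=0$ a.e.; here $\{0,f\}\in\tmi$, while $\{y,0\}\in\hat\cN_\l\dotplus\hat\cN_{\ov\l}$ by the preliminary inclusion, so $\ker(\wt\pi\up\tma)\subset\tmi+(\hat\cN_\l\dotplus\hat\cN_{\ov\l})$ and the first equality follows.

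For the second equality I would prove both inclusions. If $\{y,f\}\in\tmi\cap(\hat\cN_\l\dotplus\hat\cN_{\ov\l})$, then $\wt\pi\{y,f\}\in\Tmi\cap(\hat\gN_\l(\Tmi)\dotplus\hat\gN_{\ov\l}(\Tmi))=\{0\}$ by the directness of \eqref{4.50}, so $\{y,f\}\in\ker(\wt\pi\up\tmi)$ and \eqref{4.45.7} gives $y\in\cN'$ and $\D f=0$ a.e.; subtracting the membership $\{y,0\}\in\hat\cN_\l\dotplus\hat\cN_{\ov\l}$ already established leaves $\{0,f\}\in\hat\cN_\l\dotplus\hat\cN_{\ov\l}$, and writing $\{0,f\}=\{u,\l u\}+\{v,\ov\l v\}$ with $u\in\cN_\l$, $v\in\cN_{\ov\l}$ forces $v=-u$ and $f=(\l-\ov\l)u\in\cN_\l\cap\cN_{\ov\l}=\cN$ by \eqref{4.1.6}; hence $\{y,f\}\in\cN'\oplus\cN$. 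Conversely, for $y\in\cN'$ and $f\in\cN$ one has $\{y,0\}\in\tmi$ (by \eqref{4.45.2}, which gives $[y,z]_a=[y,z]_b=0$ for all $z\in\dom\tma$) and $\{0,f\}\in\tmi$ (since $\D f=0$ a.e.), while both pairs lie in $\hat\cN_\l\dotplus\hat\cN_{\ov\l}$ by the preliminary inclusions; adding them yields $\{y,f\}\in\tmi\cap(\hat\cN_\l\dotplus\hat\cN_{\ov\l})$.

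Then \eqref{4.52} follows quickly: from \eqref{4.51}, $\dim(\tma/\tmi)=\dim(\hat\cN_\l\dotplus\hat\cN_{\ov\l})-\dim(\cN'\oplus\cN)=\dim\cN_\l+\dim\cN_{\ov\l}-k_\cN-k_{\cN'}$, which equals $N_++N_--k_\cN-k_{\cN'}$ by Proposition \ref{pr4.13}; and the coincidence with $\dim(\dom\tma/\dom\tmi)$ results from the fact that $\{y,f\}\mapsto y$ induces an isomorphism $\tma/\tmi\to\dom\tma/\dom\tmi$ --- it is surjective, and if $\{y,f\}\in\tma$ with $y\in\dom\tmi$, then choosing $\{y,g\}\in\tmi$ gives $\{0,f-g\}\in\tma$, hence $\D(f-g)=0$ a.e., $\{0,f-g\}\in\tmi$, and $\{y,f\}=\{y,g\}+\{0,f-g\}\in\tmi$. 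The step I expect to be the main obstacle is the identification of $\tmi\cap(\hat\cN_\l\dotplus\hat\cN_{\ov\l})$: unlike in the abstract Neumann formula this intersection is nontrivial --- it is governed by the nonzero kernels \eqref{4.45.6}--\eqref{4.45.7} --- and one must pin down that the surviving pairs are exactly those with $y$ in the distinguished subspace $\cN'$ (not merely in $\cN$) and $f$ in $\cN$, which forces one to unwind what it means for $\{0,f\}$ to lie in $\hat\cN_\l\dotplus\hat\cN_{\ov\l}$ and to invoke $\cN=\cN_\l\cap\cN_{\ov\l}$.
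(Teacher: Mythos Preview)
Your proof is correct and follows essentially the same route as the paper's: you lift the Neumann formula \eqref{4.50} through $\wt\pi$ to obtain $\tma=\tmi+(\hat\cN_\l\dotplus\hat\cN_{\ov\l})+\ker(\wt\pi\up\tma)$, absorb $\ker(\wt\pi\up\tma)$ via \eqref{4.45.6} and the two elementary decompositions of $\{y,0\}$ and $\{0,f\}$, identify the intersection using \eqref{4.45.7} and $\cN=\cN_\l\cap\cN_{\ov\l}$, and deduce the dimension count. Your explicit isomorphism $\tma/\tmi\to\dom\tma/\dom\tmi$ via $\{y,f\}\mapsto y$ is exactly what the paper's ``immediate checking'' of the basis property amounts to.
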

In the following proposition we give a somewhat different form of the Neumann
formulas, which hold in the case of the regular endpoint.
\begin{proposition}\label{pr4.15}
Let $\cT_a$ be the linear relation \eqref{4.28} induced by the system
\eqref{4.1} with the regular endpoint $a$. Then
\begin{gather}\label{4.57}
\tma=\cT_a + (\hat\cN_\l \dotplus \hat\cN_{\ov\l}), \qquad \cT_a\cap
(\hat\cN_\l \dotplus \hat\cN_{\ov\l})=\{0\}\oplus \cN = \{\{0,f\}:
f\in\cN\},\quad \l\in\CR,
\end{gather}
and the following equality holds
\begin{gather}\label{4.58}
\dim (\tma/\cT_a)=\dim (\dom\tma/\dom\cT_a)= N_+ +N_- -k_\cN.
\end{gather}
\begin{proof}
Let $\cN'\subset \cN$ be the subspace \eqref{4.45.2}. Then by \eqref{4.55}
$\hat\cN'\subset \hat\cN_\l \dotplus \hat\cN_{\ov\l}$ and the first equality in
\eqref{4.51} together with \eqref{4.45.4} gives the first equality in
\eqref{4.57}.

Next assume that $\{y,f\}\in \cT_a\cap (\hat\cN_\l \dotplus \hat\cN_{\ov\l})$.
Since $\cT_a\subset\tmi$, it follows from \eqref{4.51} that $y\in\cN'$ and
$f\in\cN$. Moreover, by \eqref{4.28} $y(a)=0$ and therefore $y=0$. Hence
$\{y,f\}\in \{0\}\oplus \cN$. Conversely, in view of the second equality in
\eqref{4.51} each pair $\{0,f\}$ with $f\in\cN$ belongs to $\hat \cN_\l
\dotplus \hat\cN_{\ov\l}$ and obviously  $\{0,f\}\in\cT_a$. Hence
$\{0,f\}\in\cT_a\cap (\hat \cN_\l \dotplus \hat\cN_{\ov\l})$, which yields the
second equality in \eqref{4.57}. Finally, one proves formula \eqref{4.58} in
the same way as \eqref{4.52}.
\end{proof}
\end{proposition}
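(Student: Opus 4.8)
The plan is to obtain Proposition~\ref{pr4.15} by combining the Neumann formulas for $\tmi$ from Proposition~\ref{pr4.14} with the decomposition $\tmi=\cT_a\dotplus\hat\cN'$ of Proposition~\ref{pr4.11a}. First I would note that $\cN'\subset\cN$, so that \eqref{4.55} gives $\hat\cN'\subset\hat\cN_\l\dotplus\hat\cN_{\ov\l}$ for every $\l\in\CR$. Plugging $\tmi=\cT_a\dotplus\hat\cN'$ into the first relation of \eqref{4.51}, namely $\tma=\tmi+(\hat\cN_\l\dotplus\hat\cN_{\ov\l})$, and absorbing the summand $\hat\cN'$ into $\hat\cN_\l\dotplus\hat\cN_{\ov\l}$, one immediately gets the first equality in \eqref{4.57}. (The sum $\hat\cN_\l\dotplus\hat\cN_{\ov\l}$ is direct since $\l\neq\ov\l$, exactly as in the proof of Proposition~\ref{pr4.14}.)

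Next I would compute $\cT_a\cap(\hat\cN_\l\dotplus\hat\cN_{\ov\l})$. For the inclusion $\subset$: any $\{y,f\}$ in this intersection lies in $\tmi\cap(\hat\cN_\l\dotplus\hat\cN_{\ov\l})$ because $\cT_a\subset\tmi$, and by the second relation in \eqref{4.51} this set equals $\cN'\oplus\cN$; hence $y\in\cN'\subset\cN_\l$ and $f\in\cN$. But $y\in\dom\cT_a$ forces $y(a)=0$ by \eqref{4.28}, and a solution of the homogeneous system \eqref{4.1.1} vanishing at the regular endpoint $a$ is identically zero (uniqueness for the linear ODE $y'=J^{-1}(B(t)+\l\D(t))y$ with locally integrable coefficients). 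Thus $y=0$ and $\{y,f\}\in\{0\}\oplus\cN$. For the reverse inclusion: given $f\in\cN$, the pair $\{0,f\}$ satisfies $y(a)=0$ and $[y,z]_b=0$ trivially, so $\{0,f\}\in\cT_a$; and taking $y=0\in\cN'$ in the second relation of \eqref{4.51} shows $\{0,f\}\in\hat\cN_\l\dotplus\hat\cN_{\ov\l}$ (equivalently, one may use $\{0,f\}=\tfrac{1}{\l-\ov\l}(\{f,\l f\}-\{f,\ov\l f\})$ together with $\cN\subset\cN_\l\cap\cN_{\ov\l}$). This establishes the second equality in \eqref{4.57}.

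Finally, for \eqref{4.58} I would use \eqref{4.57} together with $\dim(\hat\cN_\l\dotplus\hat\cN_{\ov\l})=\dim\cN_\l+\dim\cN_{\ov\l}=N_++N_-$ (Proposition~\ref{pr4.13}) and $\dim(\{0\}\oplus\cN)=k_\cN$ to get $\dim(\tma/\cT_a)=N_++N_--k_\cN$. The equality $\dim(\dom\tma/\dom\cT_a)=\dim(\tma/\cT_a)$ then follows exactly as in the proof of \eqref{4.52}: if $\{y_j,f_j\}_1^r$ is a basis of $\tma$ modulo $\cT_a$, then its first components $\{y_j\}_1^r$ form a basis of $\dom\tma$ modulo $\dom\cT_a$. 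There is no serious obstacle here; the only point requiring a moment of care is the appeal to ODE uniqueness at the regular endpoint to conclude $y=0$ in the intersection, while everything else is bookkeeping built on Propositions~\ref{pr4.11a}, \ref{pr4.13} and \ref{pr4.14}.
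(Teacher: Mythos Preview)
Your proof is correct and follows essentially the same route as the paper: you combine Proposition~\ref{pr4.11a} with Proposition~\ref{pr4.14} via \eqref{4.55} to get the first equality in \eqref{4.57}, use the second relation in \eqref{4.51} together with $y(a)=0$ and ODE uniqueness for the intersection, and then count dimensions exactly as in \eqref{4.52}. The only difference is that you spell out the ODE-uniqueness step explicitly, whereas the paper simply notes $y\in\cN'$ and $y(a)=0$ imply $y=0$.
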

\begin{proposition}\label{pr4.16}
Assume that the canonical system \eqref{4.1} has the regular endpoint $a$.
Moreover, let $\cT_1$ be the linear relation \eqref{4.24} and let
$\bH_1=\{y(a):\, y\in\dom\cT_1\}$.  Then $\tmi\subset \cT_1\subset\tma$ and
\begin{gather} \label{4.60}
\bH_1=(J\bH_a)^\perp,\qquad \dim (\dom \cT_1 / \dom\cT_a)=n-k_\cN,
\end{gather}
where  $\bH_a$ and $k_\cN$ are defined by \eqref{4.1.4} and \eqref{4.1.5}
respectively.
\end{proposition}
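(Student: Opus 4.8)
The plan is to derive everything from the single equality $\bH_1=(J\bH_a)^\perp$. The inclusions $\tmi\subset\cT_1\subset\tma$ are immediate, since by \eqref{4.6} and \eqref{4.24} the relation $\tmi$ is obtained from $\cT_1$ by imposing the additional condition $[y,z]_a=0$. Since $a$ is regular, $y\mapsto y(a)$ is a well-defined linear map of $\dom\cT_1$ onto $\bH_1$, and comparing \eqref{4.24} with \eqref{4.28} its kernel is seen to be exactly $\dom\cT_a$; hence $\dim(\dom\cT_1/\dom\cT_a)=\dim\bH_1$. As $J$ is unitary, $\dim(J\bH_a)^\perp=n-\dim\bH_a=n-k_\cN$ by \eqref{4.1.5}. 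Thus once $\bH_1=(J\bH_a)^\perp$ is proved, both the first claim and $\dim(\dom\cT_1/\dom\cT_a)=n-k_\cN$ follow at once.

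For the inclusion $\bH_1\subset(J\bH_a)^\perp$ fix $y\in\dom\cT_1$ and $w\in\cN$. Then $\{w,0\}\in\tma$, so $[y,w]_b=0$ by \eqref{4.24}; applying \eqref{4.45.1} to the pair $w\in\cN,\ y\in\dom\tma$ and using the skew-Hermitian symmetry of $[\cd,\cd]_a$ we get $[y,w]_a=[y,w]_b=0$, and since $a$ is regular $[y,w]_a=(Jy(a),w(a))$. Hence $(Jy(a),w(a))=0$ for every $w\in\cN$, i.e.\ $y(a)\in(J\bH_a)^\perp$.

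The reverse inclusion $(J\bH_a)^\perp\subset\bH_1$ is the substantial part. By the first paragraph and Proposition \ref{pr4.15} (formula \eqref{4.58}) it is equivalent to $\dim(\dom\tma/\dom\cT_1)=N_++N_--n$, and here ``$\ge$'' is already available (it is the inclusion $\bH_1\subset(J\bH_a)^\perp$ together with \eqref{4.58}), so only ``$\le$'' must be shown. Put $\bH_{\tma}=\{z(a):z\in\dom\tma\}$ and introduce the relation $\cT_3=\{\{y,f\}\in\tma:[y,z]_a=0\ \text{for every}\ z\in\dom\tma\}$, so that regularity of $a$ gives $\dom\cT_3=\{y\in\dom\tma:y(a)\in(J\bH_{\tma})^\perp\}$. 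Lemma \ref{lem4.10} gives $(T_3)^*=T_1$ and Corollary \ref{cor4.8} gives $(T_2)^*=T_1$, where $T_2=\wt\pi\,\cT_2$ and $\cT_2=\{\{y,f\}\in\tma:y(a)=0\}$; hence $\ov{T_2}=\ov{T_3}$, and since $T_2=(T_1)^*$ is closed (Lemma \ref{lem4.6}) and $T_2\subset T_3$, we conclude $\wt\pi\,\cT_3=T_3=T_2=\wt\pi\,\cT_2$. Consequently each $\{y,f\}\in\cT_3$ has a representative $\{y',f'\}\in\cT_2$ with $\wt\pi\{y',f'\}=\wt\pi\{y,f\}$; then $y-y'\in\cN$ by \eqref{4.45.6} and $y'(a)=0$, so $y(a)\in\bH_a$. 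Thus $\bH_{\tma}\cap(J\bH_{\tma})^\perp\subset\bH_a$, whence (intersecting again with $(J\bH_{\tma})^\perp$, using $\bH_a\subset\bH_{\tma}$ and \eqref{4.45.2}) $\bH_{\tma}\cap(J\bH_{\tma})^\perp=\bH_a\cap(J\bH_{\tma})^\perp=\{w(a):w\in\cN'\}$, of dimension $k_{\cN'}:=\dim\cN'$.

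It remains to show $(J\bH_{\tma})^\perp\subset\bH_{\tma}$, i.e.\ that $\bH_{\tma}$ is $J$-coisotropic; then $\dim(J\bH_{\tma})^\perp=\dim\bigl(\bH_{\tma}\cap(J\bH_{\tma})^\perp\bigr)=k_{\cN'}$, so $\dim\bH_{\tma}=n-k_{\cN'}$. Feeding this into the routine count $\dim(\dom\tma/\dom\cT_1)=\dim(\tma/\cT_1)=\dim(\Tma/T_1)+(k_\cN-k_{\cN'})$ (the last term computed from \eqref{4.45.1}, \eqref{4.45.6}, \eqref{4.45.7}, since $\ker(\wt\pi\up\cT_1)=\ker(\wt\pi\up\tmi)$), together with $\dim(\Tma/T_1)=n_++n_--(\dim\bH_{\tma}-k_\cN)$ (from $(T_1)^*=T_2$, $\Tmi^*=\Tma$, and the surjection $\Tma\to\bH_{\tma}/\bH_a$ with kernel $T_2$), and with \eqref{4.49}, yields $\dim(\dom\tma/\dom\cT_1)=N_++N_--n$, as required. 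The $J$-coisotropy of $\bH_{\tma}$ is the one genuinely non-formal point: it says that the values at the regular endpoint $a$ of the solutions $y\in\dom\tma$ are constrained by the behaviour at the (possibly singular) endpoint $b$ in the expected way, and is to be proved by restricting the system to $[\b,b\rangle$ and invoking Lemma \ref{lem4.9} to control the limit data at $b$. I expect this last point, rather than the abstract bookkeeping, to be the main obstacle.
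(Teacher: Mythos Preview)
Your reduction to the coisotropy statement $(J\bH_{\tma})^\perp\subset\bH_{\tma}$ is correct as bookkeeping (the dimension counts all check out), but the proof is incomplete precisely at that point, and you acknowledge it yourself. The trouble is that this coisotropy is not easier than the original inclusion $(J\bH_a)^\perp\subset\bH_1$: both amount to \emph{constructing} an element of $\dom\tma$ with a prescribed value at $a$, which is exactly the analytical content of the proposition. Your suggestion to use Lemma~\ref{lem4.9} is not convincing: that lemma produces canonical representatives for elements of $T_1^\beta$ on intervals $[\beta,b\rangle$, i.e.\ it operates at the singular end $b$, whereas what is needed here is to realize a given vector at the regular end $a$ as $y(a)$ for some $y\in\dom\tma$ (indeed for some $y\in\dom\cT_1$).

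The paper attacks the inclusion $(J\bH_a)^\perp\subset\bH_1$ directly. For a regular system it introduces the finite-dimensional space $\cN_0'=\{y\in\cN_0:\,y(a)\in\bH_a^\perp\}$, which is a genuine Hilbert space under $(\cdot,\cdot)_\D$ because $\cN_0'\cap\cN=\{0\}$; given $h\in(J\bH_a)^\perp$ it uses Riesz representation on $\cN_0'$ to find $f_h\in\cN_0'$ with $(f_h,z)_\D=-(Jh,z(a))$ for all $z\in\cN_0'$, and then the solution $y$ of $Jy'-By=\D f_h$ with $y(b)=0$ satisfies $y(a)=h$ by Lagrange's identity, while $y(b)=0$ forces $y\in\dom\cT_1$. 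For a singular endpoint $b$ one restricts to a compact segment $[a,\beta_0]$ chosen (as in Lemma~\ref{lem4.4}) so that the null manifold has stabilized, applies the regular construction there, and extends by zero to $\cI$. This construction is exactly the ``genuinely non-formal point'' you flagged; without it, or something equivalent, your argument does not close.
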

\begin{proof}
It follows from \eqref{4.45.1} that $(Jy(a), z(a))=0$ for any $y\in\dom\cT_1$
and $z\in\cN$. Therefore $\bH_1\subset (J\bH_a)^\perp$ and to prove the first
equality in \eqref{4.60} it remains to show that $(J\bH_a)^\perp\subset \bH_1$.

First assume that the system \eqref{4.1} is regular and let
\begin{gather}\label{4.61a}
\bH_1'=\{y(a):\, y\in\dom\tma \;\;\text{and}\;\; y(b)=0\}.
\end{gather}
Moreover, let $\cN_0'$ be the subspace in $\cN_0(=\ker \tma)$ given by
$\cN_0'=\{y\in\cN_0:\, y(a)\in \bH_a^\perp\}$. Then $\cN_0'\cap \cN=\{0\}$ and,
therefore, the equality $(y,y)_\D=0\; (y\in\cN_0')$ implies $y=0$. Hence
$\cN_0'$ is a finite-dimensional Hilbert space with the inner product
$(y,z)_\D$.

Let $h\in (J\bH_a)^\perp$, so that $Jh\in \bH_a^\perp$. Then
$\f(z)=-(Jh,z(a)),\;\; z\in\cN_0' $ is an  antilinear functional on $\cN_0'$
and hence there exists $f_h\in\cN_0'$ such that
\begin{gather} \label{4.62}
(f_h,z)_\D=-(J h,z(a)), \quad z\in\cN_0'.
\end{gather}
Next assume that $y\in\AC$ is the solution of the equation $Jy'-B(t)y=\D(t)
f_h(t)$ such that $y(b)=0$. Then $\{y,f_h\}\in\tma$ and, consequently, $y(a)\in
\bH_1' $. Therefore $y(a)\in (J\bH_a)^\perp$, which gives the inclusion $J
y(a)\in \bH_a^\perp$. Applying now the Lagrange's  identity \eqref{4.10} to
$\{y,f_h\} $ and $\{z,0\}\; (z\in\cN_0')$ and taking \eqref{4.62} into account
one obtains
\begin{gather*}
-(J h,z(a))=(f_h,z)_\D=-(J y(a), z(a)), \quad z\in\cN_0'.
\end{gather*}
In this equality $Jh\in\bH_a^\perp, \; J y(a)\in\bH_a^\perp$ and $z(a)$ takes
on any values from $\bH_a^\perp$, when $z$ run through $\cN_0'$. Therefore
$y(a)=h$ and, consequently, $h\in\bH_1'$. Thus $(J\bH_a)^\perp\subset \bH_1'$,
which together with the obvious inclusion $\bH_1'\subset\bH_1(\subset
(J\bH_a)^\perp)$ gives
\begin{gather} \label{4.63}
\bH_1=\bH_1'=(J\bH_a)^\perp.
\end{gather}

Assume now that the system \eqref{4.1} is singular. For each finite segment
$\cI'=[a,\b]\subset\cI$ denote by $\cN^{\cI'}$ the linear space \eqref{4.19}
and let $\bH_a^{\cI'}=\{y(a): y\in\cN^{\cI'}\}$. It is easily seen that
$\bH_a=\bigcap_{\cI'\subset\cI}\bH_a^{\cI'}$. Moreover, it was shown in the
proof of Lemma \ref{lem4.4} that there is a segment $\cI_0'=[a,\b_0]$ such that
$\bH_a^{\cI_0'}\subset \bH_a^{\cI'}$ for all $\cI'\subset \cI_0'$ and
$\bH_a^{\cI_0'} = \bH_a^{\cI'}$ for all $\cI'\supset \cI_0'$. This implies that
$\bH_a^{\cI_0'}= \bH_a$.

Next assume that $\tma^{\cI_0'}$ is the maximal relation in $\cL_\D^2(\cI_0')$
induced by the restriction of the system \eqref{4.1} onto $\cI_0'$. Since this
restriction is regular, it follows from \eqref{4.63} and \eqref{4.61a} that for
any $h\in (J\bH_a)^\perp (= (J\bH_a^{\cI_0'})^\perp)$ there exists $\{y,f\}\in
\tma^{\cI_0'}$ such that $y(0)=h$ and $y(\b_0)=0$. Continuing the functions $y$
and $f$ by $0$ onto $\cI$ we obtain the pair $\{y,f\}\in\cT_1$ with $y(0)=h$.
This yields the required inclusion $(J\bH_a)^\perp\subset \bH_1$.

Let us  prove the second  equality in \eqref{4.60}. It follows from the first
equality in \eqref{4.60} that $r_1:=\dim\bH_1=n-k_\cN$. Let $\{y_j\}_1^{r_1}$
be a system of functions $y_j\in \dom \cT_1$ such that $\{y_j(0)\}_1^{r_1}$ is
a basis in $\bH_1$. Then the immediate checking shows that this system forms a
basis of $\dom\cT_1$ modulo $\dom\cT_a$, which yields the desired equality.
\end{proof}
\begin{definition}
The canonical system \eqref{4.1} is called  definite if the corresponding null
manifold $\cN=\{0\}$.
\end{definition}
The following corollaries are implied by the above results on arbitrary (not
necessarily definite) canonical systems.
\begin{corollary}\label{cor4.18}
If the system \eqref{4.1} is definite, then $N_\pm=n_\pm$ and the following
Neumann formula holds
\begin{gather} \label{4.64}
\tma=\tmi\dotplus \hat \cN_\l\dotplus\hat\cN_{\ov\l}, \qquad \l\in\CR.
\end{gather}
\begin{proof}
The desired statements are immediate from Propositions \ref{pr4.13} and
\ref{pr4.14}.
\end{proof}
\end{corollary}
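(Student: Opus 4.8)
The plan is to derive everything from the general results of Propositions \ref{pr4.13} and \ref{pr4.14} by specializing to the case $k_\cN=0$. Since the system \eqref{4.1} is definite, $\cN=\{0\}$ by definition, and hence $k_\cN=\dim\cN=0$. Feeding this into the relations $N_+=n_++k_\cN,\ N_-=n_-+k_\cN$ of Proposition \ref{pr4.13} immediately yields $N_\pm=n_\pm$.

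For the Neumann formula \eqref{4.64} I would first invoke the first equality in \eqref{4.51}, which already gives $\tma=\tmi + (\hat\cN_\l \dotplus \hat\cN_{\ov\l})$ for every $\l\in\CR$; note that the summand $\hat\cN_\l \dotplus \hat\cN_{\ov\l}$ is itself a direct sum, since $\l\neq\ov\l$ forces $\hat\cN_\l\cap\hat\cN_{\ov\l}=\{0\}$ (this is recorded in the proof of Proposition \ref{pr4.14}). It then remains only to check that the sum $\tmi+(\hat\cN_\l \dotplus \hat\cN_{\ov\l})$ is direct, and this is exactly where the second equality in \eqref{4.51} does the work: $\tmi\cap(\hat\cN_\l \dotplus \hat\cN_{\ov\l})=\cN'\oplus\cN$, and since $\cN'\subset\cN=\{0\}$ we get $\cN'=\{0\}$, so the intersection is trivial. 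Combining these two observations produces the three-fold direct decomposition \eqref{4.64}.

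I do not expect a genuine obstacle here: the substantive content — namely the two Neumann identities \eqref{4.51} in the possibly indefinite setting — has already been established in Proposition \ref{pr4.14}, and the corollary amounts to the bookkeeping that all three summands are independent once $\cN$ (and therefore $\cN'$, $k_{\cN'}$) collapses to zero. As a consistency check one may also observe that in this case \eqref{4.52} reads $\dim(\tma/\tmi)=N_++N_-$, which is precisely the dimension count dictated by \eqref{4.64}.
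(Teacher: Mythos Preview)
Your argument is correct and is exactly the intended one: the paper's own proof simply says the statements are immediate from Propositions \ref{pr4.13} and \ref{pr4.14}, and you have spelled out precisely how the specialization $k_\cN=0$ (and hence $\cN'=\{0\}$) collapses \eqref{4.49} to $N_\pm=n_\pm$ and turns \eqref{4.51} into the direct decomposition \eqref{4.64}.
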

\begin{corollary}\label{cor4.19}
Let the canonical system \eqref{4.1} with the regular endpoint $a$ be definite.
Then $\cT_a=\tmi$ and for every $h\in\bH$ there exists $\{y,f\}\in\tma$ such
that $y(a)=h$. If in addition the system is regular (that is, $\cI=[a,b]$),
then for any $h_1, \; h_2\in\bH $ there is $\{y,f\}\in\tma$ such that
$y(a)=h_1$ and $y(b)=h_2$.
\end{corollary}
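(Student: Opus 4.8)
The plan is to obtain all three assertions as consequences of the structural results already proved, principally Proposition~\ref{pr4.11a}, Proposition~\ref{pr4.16}, and the Neumann-type dimension formula \eqref{4.58}. First, for the equality $\cT_a=\tmi$: since the system is definite, its null manifold satisfies $\cN=\{0\}$, and hence the subspace $\cN'\subset\cN$ of \eqref{4.45.2} is $\{0\}$ as well, so that $\hat\cN'=\{0\}$. The decomposition $\tmi=\cT_a\dotplus\hat\cN'$ of Proposition~\ref{pr4.11a} then reduces to $\tmi=\cT_a$.

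Next, for the surjectivity of $y\mapsto y(a)$ on $\dom\tma$: definiteness gives $k_\cN=\dim\cN=0$, so the subspace $\bH_a$ from \eqref{4.1.4} (with $c=a$) is $\{0\}$, and therefore $\bH_1=(J\bH_a)^\perp=\bH$ by the first equality in \eqref{4.60}. Since $\bH_1=\{y(a):\,y\in\dom\cT_1\}$ by definition and $\cT_1\subset\tma$, it follows that for every $h\in\bH$ there is a pair $\{y,f\}\in\cT_1\subset\tma$ with $y(a)=h$.

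For the simultaneous prescription of $y(a)$ and $y(b)$ in the regular case, let $\cI=[a,b]$ and consider the linear map $\Phi\colon\dom\tma\to\bH\oplus\bH$ given by $\Phi y=\{y(a),y(b)\}$, which is well defined since $y(a)$ and $y(b)$ depend only on $y$. I first claim $\ker\Phi=\dom\cT_a$. For a regular system $[y,z]_b=(Jy(b),z(b))$ for all $y,z\in\dom\tma$; moreover, as $z$ runs over the homogeneous solutions of $Jz'(t)-B(t)z(t)=0$, each of which lies in $\lI$ because it is bounded on the compact interval $\cI$ while $\D$ is integrable, the value $z(b)$ runs over all of $\bH$. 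Hence, for $y\in\dom\tma$, the relation $[y,z]_b=0$ for every $z\in\dom\tma$ is equivalent to $y(b)=0$, so that $\dom\cT_a=\{y\in\dom\tma:\,y(a)=y(b)=0\}=\ker\Phi$, as claimed. Consequently $\Phi$ induces an injection $\dom\tma/\dom\cT_a\hookrightarrow\bH\oplus\bH$. By \eqref{4.58}, together with $k_\cN=0$ and the identities $N_+=N_-=n$ that hold for regular systems, the source of this injection has dimension $N_++N_--k_\cN=2n=\dim(\bH\oplus\bH)$, so the injection is an isomorphism. Therefore $\Phi$ is onto: for arbitrary $h_1,h_2\in\bH$ there exists $\{y,f\}\in\tma$ with $y(a)=h_1$ and $y(b)=h_2$.

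The only step that is not routine is the identification $\ker\Phi=\dom\cT_a$ above, and within it the point needing justification is that the trace $z\mapsto z(b)$ is surjective on $\dom\tma$; this is immediate from the existence of a homogeneous solution taking any prescribed value at $b$ together with its membership in $\lI$ for a regular system. Alternatively, the third assertion can be proved directly: matching $y(b)=h_2$ for the solution of $Jy'-By=\D g$ with $y(a)=h_1$ reduces to surjectivity of the map $g\mapsto\int_a^b Y(s)^{-1}J^{-1}\D(s)g(s)\,ds$, where $Y$ is the fundamental matrix of $Jz'(t)-B(t)z(t)=0$, and a nonzero vector annihilating its range would yield a nonzero element of $\cN$, contradicting definiteness.
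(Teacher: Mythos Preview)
Your proof is correct. For the first two assertions you argue exactly as the paper does, invoking Proposition~\ref{pr4.11a} (with $\cN'=\{0\}$) and Proposition~\ref{pr4.16} (with $\bH_a=\{0\}$).

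For the regular case your route differs from the paper's. The paper reaches back into the \emph{proof} of Proposition~\ref{pr4.16}, specifically to the refined equality \eqref{4.63}, which in the definite setting says that the set $\bH_1'=\{y(a):y\in\dom\tma,\ y(b)=0\}$ is all of $\bH$; it then produces $\{y_1,f_1\}$ with $y_1(a)=h_1,\ y_1(b)=0$, obtains $\{y_2,f_2\}$ with $y_2(a)=0,\ y_2(b)=h_2$ by the $a\leftrightarrow b$ symmetry, and sums them. Your argument instead sets up the trace map $\Phi:\dom\tma\to\bH\oplus\bH$, identifies $\ker\Phi$ with $\dom\cT_a$ (using that for a regular system the values $z(b)$ of homogeneous solutions fill $\bH$, hence $[y,z]_b=0$ for all $z$ forces $y(b)=0$), and then concludes surjectivity by the dimension formula \eqref{4.58} together with $N_+=N_-=n$ and $k_\cN=0$. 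The paper's approach is more constructive and gives the slightly stronger intermediate fact that one can prescribe one endpoint while forcing the other to vanish; your dimension-counting approach is self-contained in that it cites only the \emph{statement} of Proposition~\ref{pr4.15} rather than an equation buried in another proof. Your alternative sketch via the variation-of-constants integral is also valid: a vector $v$ annihilating the range of $g\mapsto\int_a^b Y(s)^{-1}J^{-1}\D(s)g(s)\,ds$ produces the homogeneous solution $z(s)=J(Y(s)^*)^{-1}v$ with $\D z=0$ a.e.\ and $z(a)=Jv\neq0$, contradicting $\cN=\{0\}$.
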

\begin{proof}
The first statement follows from Propositions \ref{pr4.11a} and \ref{pr4.16}.

Next assume that the system is definite and regular and let $h_1,h_2\in\bH$.
Then by \eqref{4.61a} and \eqref{4.63} there is $\{y_1,f_1\}\in \tma$ with
$y_1(a)=h_1$ and $y_1(b)=0$. Moreover, by symmetry there is $\{y_2,f_2\}\in
\tma$ with $y_2(a)=0$ and $y_2(b)=h_2$. Clearly, the sum $\{y,f\}=\{y_1,f_1\}
+\{y_2,f_2\}$ has the required properties.
\end{proof}
\begin{remark}\label{rem4.20}
For the definite system \eqref{4.1} Theorem \ref{th4.11} and Corollary
\ref{cor4.19} were proved in \cite{Orc}; the Neumann formula \eqref{4.64} was
obtained in \cite{LesMal03}.

The  general (not necessarily definite) canonical system of an arbitrary order
$n$ was considered in \cite{LesMal03}, where the minimal relation $\Tmi$ was
defined as closure of $T_0$ (see \eqref{4.8}) and then the equality
$\Tmi^*(=T_0^*)=\Tma$ was proved . Note in this connection that our definition
\eqref{4.6}, \eqref{4.7} of $\Tmi$ seems to be more natural  and convenient for
applications; in particular cases of differential operators and definite
canonical systems such a representation of $\Tmi$ can be found, e.g., in
\cite{Nai,BHSW10}. Observe also that  our Proposition \ref{pr4.16} improves
similar result in \cite[Proposition 2.12]{LesMal03}.
\end{remark}
\section{Boundary relations for canonical systems  and boundary conditions}
\subsection{Boundary bilinear forms}
In this section we suppose that the  canonical systems \eqref{4.1} is defined
on the interval $\cI=[a, b\rangle $ with the regular endpoint $a$.

As is known the signature operator  in \eqref{4.1} is unitary equivalent to
\begin{gather} \label{5.1}
J=\begin{pmatrix} 0 & 0&-I_H \cr 0& i\d I_{\hat H}&0\cr I_H&
0&0\end{pmatrix}:H\oplus\hat H\oplus H \to H\oplus\hat H\oplus H,
\end{gather}
where $\d\in\{-1,1\}$ and $H,\;\hat H$ are finite-dimensional  Hilbert spaces.
The numbers $\d, \dim H$ and $\dim\hat H$ are unitary invariants of $J$, which
are defined by the following relations: if we let
\begin{gather}\label{5.3}
\nu_+=\dim\ker (i J-I) \;\;\; \text{and} \;\;\; \nu_-=\dim\ker (i J+I),
\end{gather}
then
\begin{gather}\label{5.3a}
\d=\sign (\nu_- -\nu_+), \quad \dim H=\text{min} \{\nu_+,\nu_-\}, \quad  \dim
\hat H=|\nu_- -\nu_+|.
\end{gather}
Using this fact we assume without loss of generality that
\begin{gather}\label{5.4}
\bH=H\oplus\hat H \oplus H
\end{gather}
and the signature operator $J$ in \eqref{4.1} is given by \eqref{5.1}.

Next consider the boundary bilinear form $[\cd,\cd]_b$ on $\dom \tma $ defined
by \eqref{4.4}. Clearly, this form is skew-Hermitian (that is
$[y,z]_b=-\ov{[z.y]}_b$) and its kernel coincides with $\dom\cT_1$, where
$\cT_1$ is the linear relation \eqref{4.24}. Moreover, since  $\dom \tmi\subset
\dom\cT_1\subset\dom \tma $ and by \eqref{4.52} $\dim (\dom\tma /
\dom\tmi)<\infty$, there exists a (not unique) direct decomposition
\begin{gather}\label{5.7}
\dom\tma=\dom \cT_1\dotplus \cD_{b+}\dotplus \cD_{b-}
\end{gather}
such that
\begin{gather}\label{5.9}
\nu_{b+}:=\dim \cD_{b+}<\infty, \quad \nu_{b-}:=\dim \cD_{b-}<\infty
\end{gather}
and the following relations  are valid
\begin{gather}\label{5.7a}
\im [y,y]_b>0, \;\; 0\neq y\in\cD_{b+}; \quad \im [z,z]_b<0, \;\; 0\neq
z\in\cD_{b-}; \quad [y,z]_b=0, \;\; y\in\cD_{b+}, \; z\in\cD_{b-}.
\end{gather}
As is known the numbers $\nu_{b+} $ and $\nu_{b-}$ are called indices if
inertia of the form $[\cd,\cd]_b$.These numbers are uniquely defined by the
form and do not depend on the choice of the decomposition \eqref{5.7}.
\begin{lemma}\label{lem5.1}
Let $[\cd,\cd]_b$ be the bilinear form \eqref{4.4} with the indices of inertia
\eqref{5.9} and let $\d_b:=\sign (\nu_{b+}-\nu_{b-})$. Then: 1) there exists
Hilbert spaces $\cH_b$ and $\hat\cH_b$ and a surjective linear map
\begin{gather}\label{5.13}
\G_b=(\G_{0b}:\,  \hat\G_b:\,  \G_{1b})^\top:\dom\tma\to
\cH_b\oplus\hat\cH_b\oplus \cH_b
\end{gather}
such that
\begin{gather}\label{5.15}
[y,z]_b=i\d_b (\hat\G_b y, \hat\G_b z)-(\G_{1b}y,\G_{0b}z)+(\G_{0b}y,\G_{1b}z),
\quad y,z \in \dom\tma.
\end{gather}
Letting $\bH_b:=\cH_b\oplus\hat\cH_b\oplus \cH_b$ and introducing the signature
operator $J_b\in [\bH_b]$ by
\begin{gather}\label{5.16a}
J_b=\begin{pmatrix} 0 & 0&-I_{\cH_b} \cr 0& i\d_b I_{\hat \cH_b}&0\cr
I_{\cH_b}& 0&0\end{pmatrix}:\cH_b\oplus\hat \cH_b\oplus \cH_b \to
\cH_b\oplus\hat \cH_b\oplus \cH_b
\end{gather}
one can represent the identity \eqref{5.15} as
\begin{gather}\label{5.16b}
[y,z]_b=(J_b \G_b y, \G_b z)_{\bH_b},\quad y,z \in \dom\tma.
\end{gather}

2) if a surjective linear map $\G_b$ of the form \eqref{5.13} satisfies
\eqref{5.15} , then $\ker\G_b=\dom\cT_1$ and
\begin{gather}\label{5.17}
\dim\cH_b=\text{min} \{\nu_{b+}, \nu_{b-}\}, \qquad \dim \hat\cH_b=|\nu_{b+}-
\nu_{b-}|.
\end{gather}
\begin{proof}
1) Assume for definiteness that $\nu_{b+}\geq \nu_{b-}$, so that $\d_b=1$. It
follows from \eqref{5.7a} that $\cD_{b+}$  and $\cD_{b-}$ are
finite-dimensional  Hilbert spaces with the inner products $(y_1,y_2)_+=-i[y_1,
y_2]_b, \; y_1, y_2\in \cD_{b+}$ and $(z_1,z_2)_-=i[z_1,z_2]_b, \; z_1,z_2\in
\cD_{b-}$ respectively. Moreover, by \eqref{5.7}
\begin{gather}\label{5.18}
\dom\tma=\dom \cT_1\dotplus (\hat\cH_b\oplus\cH_b )\dotplus \cD_{b-},
\end{gather}
where $\hat\cH_b$ and $\cH_b$ are subspaces in $\cD_{b+}$ such that
$\dim\cH_b=\nu_{b-}(=\dim \cD_{b-})$ and $\cD_{b+}=\hat\cH_b\oplus\cH_b$.

Let $V$ be a unitary operator from $\cD_{b-}$ onto $\cH_b$ and let
\begin{gather}\label{5.19}
\hat\G_b= \cP_{\hat\cH_b}, \qquad \G_{0b}=\tfrac1 {\sqrt 2}(\cP_{\cH_b}+V
\cP_{\cD_{b-}}),  \qquad  \G_{1b}=-\tfrac {i} {\sqrt 2}(\cP_{\cH_b}-V
\cP_{\cD_{b-}}),
\end{gather}
where $\cP_{\hat\cH_b}, \;\cP_{\cH_b}$ and $\cP_{\cD_{b-}}$ are the skew
projections onto the subspaces $\hat\cH_b, \; \cH_b$ and $\cD_{b-}$
corresponding to the decomposition \eqref{5.18}. The immediate checking shows
that the map $\G_b$ given by \eqref{5.13} and \eqref{5.19} is surjective and
satisfies \eqref{5.15}.

Similarly one proves the statement 1)  in the case $\nu_{b+}< \nu_{b-}$.

The statement 2) immediately follows from surjectivity of $\G_b$ and the
identity \eqref{5.15}.
\end{proof}
\end{lemma}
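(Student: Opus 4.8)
The plan is to bring the skew-Hermitian boundary form $[\cd,\cd]_b$ on $\dom\tma$ into canonical form. I will use the facts recorded in \eqref{5.7}--\eqref{5.9} that $\dom\cT_1$ is exactly the radical of $[\cd,\cd]_b$, that $\dom\tma/\dom\cT_1$ is finite-dimensional, and that a direct decomposition $\dom\tma=\dom\cT_1\dotplus\cD_{b+}\dotplus\cD_{b-}$ with the sign properties \eqref{5.7a} exists. To prove 1), assume for definiteness $\nu_{b+}\geq\nu_{b-}$, so $\d_b=1$. Since the form is skew-Hermitian, $[y,y]_b$ is purely imaginary, so $(y_1,y_2)_+:=-i[y_1,y_2]_b$ is a positive-definite inner product on $\cD_{b+}$ and $(z_1,z_2)_-:=i[z_1,z_2]_b$ is one on $\cD_{b-}$. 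I would split $\cD_{b+}=\hat\cH_b\oplus\cH_b$ into $(\cd,\cd)_+$-orthogonal subspaces with $\dim\cH_b=\nu_{b-}=\dim\cD_{b-}$, fix a unitary $V:(\cD_{b-},(\cd,\cd)_-)\to(\cH_b,(\cd,\cd)_+)$, and define $\hat\G_b,\G_{0b},\G_{1b}$ by the skew projections in \eqref{5.19} relative to the decomposition \eqref{5.18}. Surjectivity of $\G_b=(\G_{0b}:\,\hat\G_b:\,\G_{1b})^\top$ follows by explicitly solving $\G_{0b}y=u_0$, $\G_{1b}y=u_1$, $\hat\G_b y=\hat u$ for $y$ in the span of $\hat\cH_b$, $\cH_b$ and $\cD_{b-}$.

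The identity \eqref{5.15} is then verified by a short bilinear computation: writing $y,z$ in components with respect to \eqref{5.18}, the $\dom\cT_1$-components drop out because $\dom\cT_1$ is the radical, the cross terms between $\cD_{b+}$ and $\cD_{b-}$ vanish by \eqref{5.7a}, and the remaining $\hat\cH_b$-, $\cH_b$- and $\cD_{b-}$-parts recombine, using $V$ and the normalizing factors $\tfrac1{\sqrt2}$, $-\tfrac i{\sqrt2}$ in \eqref{5.19}, exactly into the right-hand side of \eqref{5.15}. The case $\nu_{b+}<\nu_{b-}$ is entirely symmetric. Finally \eqref{5.16b} is immediate: with $J_b$ as in \eqref{5.16a} one has $J_b\G_b y=\{-\G_{1b}y,\ i\d_b\hat\G_b y,\ \G_{0b}y\}$, so that $(J_b\G_b y,\G_b z)_{\bH_b}$ equals the right-hand side of \eqref{5.15}.

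For 2) I would argue abstractly, not using the particular $\G_b$ of part 1). The right-hand side of \eqref{5.15} is the pullback along $\G_b$ of the form $(J_b\cd,\cd)$ on $\bH_b$, which is nondegenerate because $J_b$ is invertible. If $y\in\ker\G_b$ then $[y,z]_b=0$ for all $z\in\dom\tma$, hence $\ker\G_b\subset\dom\cT_1$; conversely, surjectivity of $\G_b$ identifies, via \eqref{5.15}, the quotient $\dom\tma/\ker\G_b$ equipped with the form induced by $[\cd,\cd]_b$ with $(\bH_b,(J_b\cd,\cd))$, and since the radical of that induced form is $\dom\cT_1/\ker\G_b$, nondegeneracy forces $\dom\cT_1=\ker\G_b$. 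Thus $(\bH_b,(J_b\cd,\cd))$ is isomorphic, as a space with form, to $(\dom\tma/\dom\cT_1,[\cd,\cd]_b)$, so $(J_b\cd,\cd)$ has indices of inertia $(\nu_{b+},\nu_{b-})$. Reading off these indices directly from the block form \eqref{5.16a} — the $\cH_b\oplus\cH_b$-block $\begin{pmatrix}0&-I\cr I&0\end{pmatrix}$ is neutral with indices $(\dim\cH_b,\dim\cH_b)$, while the middle block $i\d_b I_{\hat\cH_b}$ is definite with indices $(\dim\hat\cH_b,0)$ or $(0,\dim\hat\cH_b)$ according to $\d_b=\sign(\nu_{b+}-\nu_{b-})$ — and comparing yields \eqref{5.17}. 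The only genuinely non-formal steps are the surjectivity and normalization bookkeeping in part 1) and, in part 2), the computation of the indices of inertia of $(J_b\cd,\cd)$, which rests on Sylvester's law of inertia for the Hermitian form $-i(J_b\cd,\cd)$ on the finite-dimensional space $\bH_b$; I expect the latter identification to be the most delicate point to phrase cleanly.
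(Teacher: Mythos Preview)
Your proof is correct and follows essentially the same approach as the paper: for part 1) you reproduce the paper's construction via \eqref{5.18}, \eqref{5.19} verbatim, and for part 2) you simply spell out the Sylvester-type argument that the paper compresses into the one line ``immediately follows from surjectivity of $\G_b$ and the identity \eqref{5.15}.'' Nothing in your write-up deviates from the paper's route.
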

\begin{remark}\label{rem5.2}
One can show that the map $\G_b$ in Lemma \ref{lem5.1} can be represented in
the more explicit form. Namely, it is not difficult to prove that there exist
systems of functions $\{\psi_j\}_1^{\nu_b},\;\{\f_j\}_1^{\hat\nu_b} $ and
$\{\t_j\}_1^{\nu_b}$ in $\dom\tma$ with $\nu_b=\text{min} \{\nu_{b+},
\nu_{b-}\}$ and $\hat\nu_b=|\nu_{b+}- \nu_{b-}|$ such that  the
operators
\begin{gather*}
\G_{0b}y =\{[y,\psi_j ]_b\}_1^{\nu_b}, \quad   \hat\G_b y=\{[y,\f_j
]_b\}_1^{\hat\nu_b}, \quad \G_{1b}y =\{[y,\t_j ]_b\}_1^{\nu_b}, \quad
y\in\dom\tma
\end{gather*}
form the surjective linear map $\G_b=(\G_{0b}:\,  \hat\G_b:\,
\G_{1b})^\top:\dom\tma\to \bC^{\nu_b}\oplus\bC^{\hat \nu_b}\oplus \bC^{\nu_b}$
satisfying the identity \eqref{5.15}. This assertion  shows that for each
$y\in\dom\tma$ the elements $\G_{0b} y, \G_{1b}y$ and $\hat\G_b y$ are, in
fact, boundary values of the function $y(\cd)$ at the endpoint $b$.
\end{remark}
\subsection{Decomposing boundary relations}
 Assume without loss of generality that the Hilbert space $\bH$ and the signature
operator $J$ in \eqref{4.1} are defined by \eqref{5.4} and \eqref{5.1}
respectively. In this case each function $y(\cd)\in\dom\tma$ admits the
representation
\begin{gather}\label{5.21}
y(t)=\{y_0(t),\, \hat y(t),\, y_1(t)\}(\in \bH), \quad t\in\cI,
\end{gather}
where $y_0(t), \hat y(t)$ and $y_1(t)$ are components of $y(t)$ corresponding
to the decomposition \eqref{5.4}.

Let $\nu_+$ and $\nu_-$ by given by \eqref{5.3} and let $\nu_{b+}$ and
$\nu_{b-}$ be indices of inertia \eqref{5.9}. Then according to Lemma
\ref{lem5.1} there exist Hilbert spaces $\cH_b$ and $\hat\cH_b$ satisfying
\eqref{5.17} and the surjective linear map $\G_b=(\G_{0b}:\,  \hat\G_b:\,
\G_{1b})^\top$ such that \eqref{5.15} holds. Without loss of generality assume
that
\begin{gather}\label{5.22}
\nu_{b+}-\nu_{b-}\geq \nu_- -\nu_+
\end{gather}
and consider the following three alternative cases:

(i) $\nu_--\nu_+\geq 0$

It follows from \eqref{5.3a} and \eqref{5.17} that in this case
\begin{gather}\label{5.23}
\dim H=\nu_+, \quad \dim\hat H=\nu_- -\nu_+, \quad \dim\cH_b=\nu_{b-}, \quad
\dim\hat\cH_b=\nu_{b+}-\nu_{b-}
\end{gather}
and the inequality \eqref{5.22} gives $\dim \hat H \leq \dim\hat\cH_b$.
Therefore without loss of generality we can assume that $\hat H$ is a subspace
in $\hat\cH_b$. Letting $\cH_2=\hat\cH_b\ominus \hat H$, we obtain
$\hat\cH_b=\cH_2\oplus \hat H$, so that the operator  $\hat \G_b$ in
\eqref{5.13} admits the block representation $\hat \G_b=(\G_{2b}:\,
\hat\G_{1b})^\top: \dom\tma\to\cH_2\oplus \hat H$. Put
\begin{gather*}
\cH_1=H\oplus \hat H\oplus \cH_b,\quad \cH_0=\cH_2\oplus\cH_1= \cH_2\oplus
H\oplus \hat H\oplus \cH_b
\end{gather*}
and introduce the operators
\begin{gather}
\G_0'y=\{ \G_{2b}y,\, y_0(a),\, \tfrac i {\sqrt 2}(\hat
y(a)-\hat\G_{1b}y)\},\,\G_{0b}y\} (\in\cH_2\oplus H\oplus  \hat
H\oplus \cH_b),\label{5.25}\\
\G_1'y=\{y_1(a),\,  \tfrac 1 {\sqrt 2}(\hat y(a)+\hat\G_{1b}y),\, -\G_{1b}y\}
(\in H\oplus \hat H \oplus \cH_b),\quad y\in\dom\tma \label{5.26}.
\end{gather}

(ii) $\nu_- -\nu_+ < 0$ and $\nu_{b+}-\nu_{b-}\geq 0 $, so that
\begin{gather}\label{5.27}
\dim H=\nu_-, \quad \dim\hat H=\nu_+ -\nu_-, \quad \dim\cH_b=\nu_{b-}, \quad
\dim\hat\cH_b=\nu_{b+}-\nu_{b-}.
\end{gather}
In this case we put
\begin{gather}
 \cH_2=\hat H\oplus \hat\cH_b, \quad \cH_1=H \oplus \cH_b,\quad
\cH_0= \cH_2\oplus \cH_1=(\hat H\oplus \hat\cH_b)\oplus H \oplus \cH_b\nonumber\\
\G_0'y=\{\{\hat y(a),\, \hat\G_b y\},\, y_0(a),\, \G_{0b}y\} (\in (\hat
H\oplus\hat\cH_b)\oplus H\oplus  \cH_b),\label{5.28}\\
\G_1'y=\{y_1(a),  -\G_{1b}y\} (\in H \oplus \cH_b),\quad
y\in\dom\tma\label{5.29}.
\end{gather}

 (iii) $\nu_{b+}-\nu_{b-}< 0 $, so that
\begin{gather}\label{5.30}
\dim H=\nu_-, \quad \dim\hat H=\nu_+ -\nu_-, \quad \dim\cH_b=\nu_{b+}, \quad
\dim\hat\cH_b=\nu_{b-}-\nu_{b+}.
\end{gather}
 In view of \eqref{5.22} one has $\dim \hat \cH_b \leq \dim \hat H$, which enables us
to assume by analogy with the case (i) that  $\hat\cH_b\subset \hat H $.
Letting $\cH_2=\hat H\ominus \cH_b$ one obtains  $\hat H=\cH_2\oplus
\hat\cH_b$, which implies the representation $\hat y(t)=\{\hat y_2(t),\, \hat
y_b(t)\}(\in \cH_2\oplus \hat\cH_b)$ of the functions $\hat y(t)$ from
\eqref{5.21}.

In the case (iii) we let
\begin{gather}
\cH_1=H\oplus \hat\cH_b \oplus \cH_b,\quad \cH_0=\cH_2\oplus\cH_1=\cH_2\oplus
H\oplus \hat\cH_b \oplus \cH_b\nonumber\\
\G_0'y=\{\hat y_2(a),\, y_0(a),\,-\tfrac i {\sqrt 2}(\hat y_b(a)-\hat\G_b
y)\},\,\G_{0b}y\}(\in \cH_2\oplus H\oplus \hat \cH_b \oplus \cH_b)\label{5.31}\\
 \G_1'y=\{y_1(a),  \tfrac 1 {\sqrt
2}(\hat y_b(a)+\hat\G_b y),-\G_{1b}y\} (\in H\oplus \hat\cH_b \oplus
\cH_b),\quad y\in\dom\tma\label{5.32}.
\end{gather}

Note that in each of the cases (i)--(iii) $\cH_1$ is a subspace in $\cH_0$,
$\cH_2=\cH_0\ominus \cH_1$ and $\G_j'$ is a linear map from $\dom\tma$ to
$\cH_j, \; j\in \{0.1\}$. Moreover, formulas \eqref{5.23}, \eqref{5.27} and
\eqref{5.30} imply that in all cases (i)--(iii)

\begin{gather}\label{5.32a}
\dim\cH_0=\nu_+ +\nu_{b+}, \qquad \dim\cH_1=\nu_- +\nu_{b-}.
\end{gather}
\begin{theorem}\label{th5.3}
Assume that $a$ is a regular endpoint for the canonical system \eqref{4.1}  and
the inequality \eqref{5.22} is satisfied. Moreover, let $\cH_j$ be Hilbert
spaces and $\G_j':\dom\tma\to \cH_j, \; j\in\{0,1\}$ be linear maps constructed
for the alternative cases (i)--(iii) just before the theorem. Then the equality
\begin{gather}\label{5.33}
\G=\lb \lb {\pi y\choose \pi f}, {\G_0' y\choose \G_1'y } \rb
:\{y,f\}\in\tma\rb
\end{gather}
defines the boundary relation $\G:(\LI)^2\to \HH$ for $\Tma(=\Tmi^*)$ with
\begin{gather}\label{5.34}
\dim\cH_0=N_+ \quad\emph{and} \quad \dim\cH_1=N_-.
\end{gather}
\end{theorem}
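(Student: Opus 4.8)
The plan is to realise $\G$ as a boundary relation by checking the hypotheses of Corollary \ref{cor3.6}, applied with $\gH=\LI$, $A=\Tmi$ and $A^*=\Tma$. Recall that, since $a$ is regular, $\Tmi$ is a closed symmetric relation with $\Tmi^*=\Tma$ by Theorem \ref{th4.11}, and that $\dim\cH_0<\infty$ by \eqref{5.32a}. Thus it suffices to verify: $\dom\G=\Tma$ and $\ker\G=\Tmi$; the isometry identity \eqref{3.7} for $\G$; and the dimension equality \eqref{3.23}. Once these are in hand, Corollary \ref{cor3.6} gives that $\G$ is a boundary relation for $\Tma=\Tmi^*$, and \eqref{5.34} will follow from Corollary \ref{cor3.5}.

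For the domain and the kernel: $\dom\G=\wt\pi\tma=\Tma$ is immediate from \eqref{4.3}. For the kernel I would read off from the explicit formulas \eqref{5.25}--\eqref{5.32} that, in each of the three cases (i)--(iii), the conditions $\G_0'y=0$ and $\G_1'y=0$ together are equivalent to $y(a)=0$ and $\G_b y=0$ --- in every case the block $\pm\tfrac{i}{\sqrt2}(\cdots)$ of $\G_0'y$ and the block $\tfrac1{\sqrt2}(\cdots)$ of $\G_1'y$ vanish simultaneously precisely when both of their summands do. Since $\ker\G_b=\dom\cT_1$ by Lemma \ref{lem5.1}\,(2), the condition $\G_b y=0$ says that $[y,z]_b=0$ for all $z\in\dom\tma$; together with $y(a)=0$ this is exactly the defining condition of $\cT_a$ in \eqref{4.28}. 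Hence $\ker\G=\wt\pi\cT_a=T_a=\Tmi$, again by Theorem \ref{th4.11}.

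For the Green identity: given $\{y,f\},\{z,g\}\in\tma$, the left side of \eqref{3.7} equals $(\pi f,\pi z)-(\pi y,\pi g)=(f,z)_\D-(y,g)_\D=[y,z]_b-[y,z]_a$ by the Lagrange identity \eqref{4.5}. I would then substitute $[y,z]_a=(Jy(a),z(a))$ with $J$ as in \eqref{5.1} and $[y,z]_b=(J_b\G_b y,\G_b z)$ as in \eqref{5.16b}, expand the right side of \eqref{3.7} using the formulas for $\G_0',\G_1'$ and the orthoprojector $P_2$ onto $\cH_2$, and compare the two sides term by term: the mixed terms pairing a value at $a$ against a $\G_b$-block cancel, the ``diagonal'' $\hat H$- and $\hat\cH_b$-terms reproduce precisely the factors $i\d$ and $i\d_b$ occurring in $J$ and $J_b$, and the remaining $H$- and $\cH_b$-terms match the off-diagonal entries of $J$ and $J_b$. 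The sign patterns built into \eqref{5.25}--\eqref{5.32} are exactly those dictated by $\d=\sign(\nu_--\nu_+)$ and $\d_b=\sign(\nu_{b+}-\nu_{b-})$ in the respective cases, so the same verification goes through uniformly in (i), (ii), (iii). I expect this case-by-case bookkeeping to be the main obstacle: it is the one genuinely laborious step, though entirely mechanical.

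For the dimension equality, first note that $n_\G:=\dim(\mul\G)=k_\cN$. Indeed, $\{h_0,h_1\}\in\mul\G$ iff there is $\{y,f\}\in\tma$ with $\pi y=\pi f=0$ --- equivalently $y\in\cN$ --- and $(h_0,h_1)=(\G_0'y,\G_1'y)$; since $\G_0'y=\G_1'y=0$ forces $y(a)=0$ and hence $y=0$ by uniqueness of solutions of $Jy'-By=0$, the map $y\mapsto(\G_0'y,\G_1'y)$ is injective on $\cN$, so $\dim(\mul\G)=\dim\cN=k_\cN$. Next I would prove $\nu_++\nu_-+\nu_{b+}+\nu_{b-}=N_++N_-$ via the linear map $\Psi:\dom\tma\to\bH\oplus\bH_b$, $\Psi y=(y(a),\G_b y)$, where $\bH_b=\cH_b\oplus\hat\cH_b\oplus\cH_b$: here $\ker\Psi=\{y\in\dom\tma:y(a)=0,\ \G_b y=0\}=\dom\cT_a$, so $\dim\ran\Psi=\dim(\dom\tma/\dom\cT_a)=N_++N_--k_\cN$ by \eqref{4.58}; on the other hand $\G_b$ is surjective onto $\bH_b$ and $\ran\Psi\cap(\bH\oplus\{0\})=\{(y(a),0):y\in\ker\G_b=\dom\cT_1\}=(J\bH_a)^\perp\oplus\{0\}$, of dimension $\dim\bH-k_\cN$ by Proposition \ref{pr4.16}, whence $\dim\ran\Psi=\dim\bH_b+\dim\bH-k_\cN$. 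Comparing the two values of $\dim\ran\Psi$ and using $\dim\bH=\nu_++\nu_-$, $\dim\bH_b=\nu_{b+}+\nu_{b-}$ (by \eqref{5.3a} and \eqref{5.17}) gives the claimed identity. Then, by \eqref{5.32a}, this identity and \eqref{4.49},
\[
\dim(\HH)=(\nu_++\nu_{b+})+(\nu_-+\nu_{b-})=N_++N_-=n_+(\Tmi)+n_-(\Tmi)+2n_\G,
\]
which is exactly \eqref{3.23}. So Corollary \ref{cor3.6} applies: $\G$ is a boundary relation for $\Tma=\Tmi^*$. Finally, Corollary \ref{cor3.5} gives $\dim\cH_0=n_+(\Tmi)+n_\G=n_+(\Tmi)+k_\cN=N_+$ and $\dim\cH_1=n_-(\Tmi)+n_\G=N_-$ by \eqref{4.49}, which is \eqref{5.34}.
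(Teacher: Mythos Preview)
Your proposal is correct and follows essentially the same approach as the paper: both verify the hypotheses of Corollary \ref{cor3.6} by checking $\dom\G=\Tma$, $\ker\G=\Tmi$ (via $\ker\G'=\cT_a$ and Theorem \ref{th4.11}), the Green identity \eqref{3.7}, the equality $n_\G=k_\cN$ via \eqref{5.35.1}, and the dimension identity \eqref{3.23} using \eqref{4.58}, Proposition \ref{pr4.16} and \eqref{4.49}, and then read off \eqref{5.34} from Corollary \ref{cor3.5}. The only cosmetic difference is that the paper counts dimensions through the chain of quotients $\dom\tma\supset\dom\cT_1\supset\dom\cT_a$ (using $\nu_{b+}+\nu_{b-}=\dim(\dom\tma/\dom\cT_1)$ directly from \eqref{5.7}--\eqref{5.9}), whereas you package the same count as a two-way evaluation of $\dim\ran\Psi$ for $\Psi y=(y(a),\G_b y)$; these are the same computation.
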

\begin{proof}
Let us show that the linear relation \eqref{5.33} satisfies the assumptions of
Corollary \ref{cor3.6} for $A:=\Tmi$.

Assume that $\G'=(\G_0':\, \G_1')^\top: \dom\tma\to \HH$. Then definitions
\eqref{5.25}--\eqref{5.32} of $\G_0'$ and $\G_1'$ and the equality $\ker
\G_b=\dom\cT_1$ (see Lemma \ref{lem5.1}, 2)) imply that $\ker \G'=\cT_a$.
Therefore by \eqref{5.33} one has $\ker\G=\wt\pi\,\ker \G'=\wt\pi \cT_a=\Tmi$.
Moreover, it follows from \eqref{5.33} that $\dom\G=\wt\pi\tma =\Tma $.

Next, the immediate calculations with taking \eqref{5.15} into account show
that in each of the cases (i)--(iii) the operators $\G_0'$ and $\G_1'$ satisfy
the relation
\begin{gather*}
[y,z]_b- (J y(a), z(a))=(\G_1'y, \G_0'z)-(\G_0'y, \G_1'z)+ i (P_2\G_0'y,
P_2\G_0'z), \quad  y,z\in\dom\tma.
\end{gather*}
This  and the Lagrange's identity \eqref{4.5} give the identity \eqref{3.7} for
$\G$.

Now it remains to prove  \eqref{3.23}. It follows from \eqref{5.7} and
\eqref{5.9} that
\begin{gather*}
\nu_{b+}+\nu_{b-}=\dim (\dom\tma / \dom \cT_1)=\dim (\dom\tma / \dom
\cT_a)-\dim (\dom\cT_1 / \dom \cT_a).
\end{gather*}
Combining this equality with \eqref{4.58} and the second equality in
\eqref{4.60} one obtains
\begin{gather*}
\nu_{b+}+\nu_{b-}=(N_+ +N_- -k_\cN) -(n-k_\cN)=N_++N_--n.
\end{gather*}
This and \eqref{5.32a} give
\begin{gather}\label{5.35}
\dim (\HH)=(\nu_++\nu_-)+(\nu_{b+}+\nu_{b-})=n+(N_++N_--n)=N_++N_-.
\end{gather}

Next, in view of \eqref{5.33} one has $\mul\G=\{\G'y:\, \{y,f\}\in \ker
(\wt\pi\up \tma)\;\;\text{for some}\;\;f\in\lI\}$ and \eqref{4.45.6} yields
\begin{gather}\label{5.35.1}
\mul\G=\G'\cN=\{\{\G_0'y,\G_1' y\}:\, y\in\cN\}.
\end{gather}
Since obviously $\ker (\G'\up\cN)=\{0\}$, it follows from \eqref{5.35.1} that
\begin{gather}\label{5.36}
n_\G(=\dim (\mul\G))=\dim\cN=k_N.
\end{gather}
This and \eqref{4.49} imply that
\begin{gather}\label{5.37}
n_+ +n_- + 2n_\G=(N_+ -k_\cN)+(N_- -k_\cN)+2k_\cN=N_+ +N_-.
\end{gather}
Combining now  \eqref{5.35} and \eqref{5.37} we arrive at the required equality
\begin{gather*}
\dim (\HH)=n_+ +n_- + 2n_\G.
\end{gather*}
Thus according to Corollary \ref{cor3.6} formula \eqref{5.33} defines the
boundary relation $\G$ for $\Tma$. Moreover, combining \eqref{3.22} with
\eqref{5.36} and \eqref{4.49} we obtain the equalities \eqref{5.34}.
\end{proof}
\begin{definition}\label{def5.4a}
The boundary relation $\G:(\LI)^2\to \HH$ constructed in Theorem \ref{th5.3}
will be called a decomposing  boundary relation  for $\Tma$.
\end{definition}
\begin{proposition}\label{pr5.4}
The formal deficiency indices of the canonical system \eqref{4.1} with the
regular endpoint $a$ can be calculated via
\begin{gather}\label{5.38}
N_+=\nu_+ +\nu_{b+}, \qquad N_-=\nu_- +\nu_{b-},
\end{gather}
where $\nu_\pm$ are the numbers \eqref{5.3} and $\nu_{b+}, \; \nu_{b-}$ are
indices of inertia of the bilinear form $[\cd,\cd]_b$. It follows from
\eqref{5.38} that in the case of the regular endpoint $a$ the following
inequalities hold
\begin{gather}\label{5.39}
\nu_+\leq N_+\leq n, \qquad \nu_-\leq N_-\leq n.
\end{gather}
\end{proposition}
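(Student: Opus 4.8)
The plan is to read off the proposition directly from Theorem~\ref{th5.3}, since almost all of its content has already been accumulated in the construction preceding that theorem. The Hilbert spaces $\cH_0$ and $\cH_1$ entering the decomposing boundary relation \eqref{5.33} were built, in each of the alternative cases (i)--(iii), so as to satisfy \eqref{5.32a}, i.e.\ $\dim\cH_0=\nu_+ +\nu_{b+}$ and $\dim\cH_1=\nu_- +\nu_{b-}$. On the other hand Theorem~\ref{th5.3} asserts in \eqref{5.34} that $\dim\cH_0=N_+$ and $\dim\cH_1=N_-$. Comparing these two pairs of equalities yields \eqref{5.38} at once, so the first step of the proof is simply this comparison.

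The one point that needs care is that Theorem~\ref{th5.3} was stated under the extra hypothesis \eqref{5.22}, $\nu_{b+}-\nu_{b-}\geq \nu_- -\nu_+$, whereas Proposition~\ref{pr5.4} makes no such assumption; so the second step is to remove it. This is not restrictive: if \eqref{5.22} fails, one replaces $J$, $B$ and $f$ in \eqref{4.1} by $-J$, $-B$ and $-f$. The resulting object is again a canonical system on $\cI$ with the regular endpoint $a$ and with the \emph{same} spaces $\lI$, $\LI$ (since $\D$ is unchanged), its signature operator $-J$ satisfies $\dim\ker(-iJ-I)=\nu_-$ and $\dim\ker(-iJ+I)=\nu_+$, its boundary form at $b$ equals $-[\cd,\cd]_b$ and therefore has indices of inertia $\nu_{b-}$, $\nu_{b+}$, and its formal deficiency indices, computed from $-Jy'+By=\l\D y$, are $\dim\cN_{-i}=N_-$ and $\dim\cN_i=N_+$. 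Thus passing to this system interchanges simultaneously $\nu_+\leftrightarrow\nu_-$, $\nu_{b+}\leftrightarrow\nu_{b-}$ and $N_+\leftrightarrow N_-$, and transforms \eqref{5.22} into the inequality $\nu_{b-}-\nu_{b+}\geq\nu_+ -\nu_-$, which now holds precisely because the original \eqref{5.22} failed. Applying the first step to the modified system then gives $N_-=\nu_- +\nu_{b-}$ and $N_+=\nu_+ +\nu_{b+}$, i.e.\ again \eqref{5.38}.

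Finally, the inequalities \eqref{5.39} are immediate corollaries of \eqref{5.38}: the indices of inertia $\nu_{b+},\nu_{b-}$ of the skew-Hermitian form $[\cd,\cd]_b$ are nonnegative, whence $\nu_\pm\leq N_\pm$; and $N_\pm=\dim\cN_{\pm i}\leq\dim\bH=n$ by \eqref{4.1.2}, as was already observed right after Definition~\ref{def4.12}. I do not expect any genuine obstacle here: the whole proposition is essentially the bookkeeping identity obtained by equating \eqref{5.32a} with \eqref{5.34}, and the only delicate point is the reduction to the standing hypothesis \eqref{5.22} of Theorem~\ref{th5.3}, handled by the $J\mapsto-J$, $B\mapsto-B$ symmetry above.
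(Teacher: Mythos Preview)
Your proof is correct and follows exactly the paper's own approach: combine \eqref{5.32a} with \eqref{5.34} to get \eqref{5.38} under the hypothesis \eqref{5.22}, and then pass to the system $-Jy'+By=\D f$ to handle the complementary case. You have in fact supplied the details of the $J\mapsto -J$, $B\mapsto -B$ symmetry that the paper only indicates in one line, correctly checking that all three pairs $\nu_\pm$, $\nu_{b\pm}$, $N_\pm$ are simultaneously swapped.
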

\begin{proof}
If $\nu_\pm$ and $\nu_{b\pm}$ satisfy \eqref{5.22}, then the equalities
\eqref{5.38} follow from \eqref{5.34} and \eqref{5.32a}. In the opposite case
$\nu_{b+}-\nu_{b-}< \nu_- -\nu_+$ the equalities \eqref{5.38} can be obtained
by passage to the system
\begin{gather*}
-J y'(t)+B(t)y(t)=\D(t)f(t).
\end{gather*}
\end{proof}
In the case $N_+=N_-$ the construction of the decomposing boundary relation for
$\Tma$ can be rather simplified. Namely, the following corollary is valid.
\begin{corollary}\label{cor5.5}
Assume that $a$ is a regular endpoint for the canonical system \eqref{4.1}.
Then:

1) this system has equal deficiency indices $N_+=N_-$  if and only if
\begin{gather}\label{5.40}
\nu_{b+}-\nu_{b-}=\nu_- -\nu_+
\end{gather}
2) if $N_+=N_-$, then there exist a Hilbert space $\cH_b$ with
$\dim\cH_b=\text{min}\{\nu_{b+},\nu_{b-}\}$ and a surjective linear map
\begin{gather}\label{5.41}
\G_b=(\G_{0b}:\,  \hat\G_b:\,  \G_{1b})^\top:\dom\tma\to \cH_b\oplus\hat
H\oplus \cH_b
\end{gather}
 such that the identity \eqref{5.15} holds with $\d_b=\d(=\sign (\nu_- -
 \nu_+))$. Moreover, for each such a map $\G_b$ the equality
\begin{gather}\label{5.42}
\G=\lb \lb {\pi y\choose \pi f}, {\{ y_0(a),\, \tfrac i {\sqrt 2}\d (\hat
y(a)-\hat\G_b y),\,\G_{0b}y \}\choose \{ y_1(a),\, \tfrac 1 {\sqrt 2} (\hat
y(a)+\hat\G_b y),\,-\G_{1b}y \} } \rb :\{y,f\}\in\tma\rb
\end{gather}
defines the decomposing boundary relation  $\G:(\LI)^2 \to (H\oplus\hat H\oplus
\cH_b )^2$ for $\Tma$.

In the case of the regular system \eqref{4.1} one can put $\cH_b= H$ and
\begin{gather}\label{5.43}
\G=\lb \lb {\pi y\choose \pi f}, {\{ y_0(a),\, \tfrac i {\sqrt 2}\d (\hat
y(a)-\hat y(b)),\, y_0(b) \}\choose \{ y_1(a),\, \tfrac 1 {\sqrt 2} (\hat
y(a)+\hat y(b)),\,-y_1(b) \} } \rb :\{y,f\}\in\tma\rb.
\end{gather}
\end{corollary}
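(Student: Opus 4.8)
The plan is to deduce the corollary from Proposition~\ref{pr5.4}, Lemma~\ref{lem5.1} and Theorem~\ref{th5.3}. For statement~1), Proposition~\ref{pr5.4} gives $N_+=\nu_+ +\nu_{b+}$ and $N_-=\nu_- +\nu_{b-}$, so the equality $N_+=N_-$ holds precisely when $\nu_+ +\nu_{b+}=\nu_- +\nu_{b-}$, i.e.\ when \eqref{5.40} holds; this needs no further argument.

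For the existence part of~2), assume $N_+=N_-$, so that \eqref{5.40} is in force. By Lemma~\ref{lem5.1} there is a surjective linear map $\G_b=(\G_{0b}:\,\hat\G_b:\,\G_{1b})^\top:\dom\tma\to\cH_b\oplus\hat\cH_b\oplus\cH_b$ satisfying the identity \eqref{5.15} with $\dim\cH_b=\text{min}\{\nu_{b+},\nu_{b-}\}$, $\dim\hat\cH_b=|\nu_{b+}-\nu_{b-}|$ and $\d_b=\sign(\nu_{b+}-\nu_{b-})$. By \eqref{5.40} and \eqref{5.3a} one has $\dim\hat\cH_b=|\nu_{b+}-\nu_{b-}|=|\nu_- -\nu_+|=\dim\hat H$ and $\d_b=\sign(\nu_{b+}-\nu_{b-})=\sign(\nu_- -\nu_+)=\d$ (when $\nu_+=\nu_-$ both $\hat H$ and $\hat\cH_b$ are trivial and the term $i\d_b(\hat\G_b y,\hat\G_b z)$ in \eqref{5.15} drops out, so one may simply set $\d_b=\d$). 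Composing $\hat\G_b$ with a unitary operator from $\hat\cH_b$ onto $\hat H$ then turns $\G_b$ into a surjective map of the required form \eqref{5.41} still obeying \eqref{5.15} with $\d_b=\d$.

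Next, fix \emph{any} surjective $\G_b$ of the form \eqref{5.41} for which \eqref{5.15} holds with $\d_b=\d$; since the proof of Theorem~\ref{th5.3} uses only surjectivity of $\G_b$, the identity \eqref{5.15} and the equality $\ker\G_b=\dom\cT_1$ (Lemma~\ref{lem5.1},~2)), it applies to this $\G_b$ as well. By \eqref{5.40} the inequality \eqref{5.22} holds with equality, so I would run the construction preceding Theorem~\ref{th5.3} with this $\G_b$ and check which alternative occurs: if $\nu_- -\nu_+\geq0$ we are in case~(i); if $\nu_- -\nu_+<0$ then $\nu_{b+}-\nu_{b-}=\nu_- -\nu_+<0$, which rules out case~(ii) and places us in case~(iii). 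In both surviving cases the auxiliary subspace $\cH_2$ occurring there has dimension $|\nu_{b+}-\nu_{b-}|-|\nu_- -\nu_+|=0$, hence $\cH_0=\cH_1=H\oplus\hat H\oplus\cH_b$, the blocks $\G_{2b}$ and $\hat y_2$ disappear, $\hat\G_{1b}=\hat\G_b$ and $\hat y_b=\hat y$; using $\d=1$ (or $\hat H=\{0\}$) in case~(i) and $\d=-1$ in case~(iii), formulas \eqref{5.25}--\eqref{5.26} (resp.\ \eqref{5.31}--\eqref{5.32}) reduce verbatim to the pair of maps appearing in \eqref{5.42}. Theorem~\ref{th5.3} then yields that \eqref{5.42} defines a decomposing boundary relation for $\Tma$. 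For the regular system I would take $\cH_b=H$ and $\G_b y=y(b)=\{y_0(b),\hat y(b),y_1(b)\}$: this map is surjective because for any $h\in\bH$ the solution of \eqref{4.1} with $f=0$ and $y(b)=h$ gives $\{y,0\}\in\tma$ with $y(b)=h$, and writing $[y,z]_b=(J y(b),z(b))$ with $J$ as in \eqref{5.1} shows \eqref{5.15} holds with $\G_{0b}y=y_0(b)$, $\hat\G_b y=\hat y(b)$, $\G_{1b}y=y_1(b)$, $\d_b=\d$; by Lemma~\ref{lem5.1},~2) then $\dim\cH_b=\dim H=\text{min}\{\nu_{b+},\nu_{b-}\}$, and substituting these operators into \eqref{5.42} produces \eqref{5.43}.

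The only genuine work here is the case analysis~(i)--(iii), the sign bookkeeping for $\d$ and $\d_b$, and the verification that $\cH_2=\{0\}$ under \eqref{5.40}; once that is settled, the corollary is a direct consequence of Proposition~\ref{pr5.4}, Lemma~\ref{lem5.1} and Theorem~\ref{th5.3}.
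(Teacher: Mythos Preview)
Your proof is correct and follows essentially the same route as the paper: statement~1) via Proposition~\ref{pr5.4}, and statement~2) by combining \eqref{5.40} with \eqref{5.3a} and \eqref{5.17} to get $\dim\hat\cH_b=\dim\hat H$ and $\d_b=\d$, then identifying $\hat\cH_b$ with $\hat H$ and reading off that the general formula \eqref{5.33} collapses to \eqref{5.42}. The paper's own proof is terser and does not spell out the case analysis~(i)--(iii) or the vanishing of $\cH_2$, but your more explicit bookkeeping (in particular the observation that case~(ii) is ruled out under \eqref{5.40}, and the sign check $\d=1$ in case~(i), $\d=-1$ in case~(iii)) fills in exactly what the paper leaves implicit.
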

\begin{proof}
The statement 1) follows from \eqref{5.38}.

2) Combining \eqref{5.40} with \eqref{5.3a} and \eqref{5.17} one obtains $\dim
\hat\cH_b=\dim \hat  H$. Therefore one can put in \eqref{5.13} $\hat\cH_b= \hat
H$, in which case the map $\G_b$ takes the form \eqref{5.41} and the equality
\eqref{5.33} for $\G$ can be represented as \eqref{5.42}. In the case of the
regular system one can put $\cH_b=H$ and $\G_b y=\{y_0(b),\, \hat y(b), \,
y_1(b)\}(\in\bH)$, so that the equality \eqref{5.42} takes the form
\eqref{5.43}.
\end{proof}
\begin{corollary}\label{cor5.6}
Assume that the canonical system \eqref{4.1} with the regular endpoint $a$ has
minimal formal deficiency indices $N_+=\nu_+$ and $N_-=\nu_-$. If $N_+ \geq
N_-$, then the equality
\begin{gather}\label{5.44}
\G=\lb \lb {\pi y\choose \pi f}, {\{ \hat y(a), \,y_0(a)\}\choose  y_1(a) } \rb
:\{y,f\}\in\tma\rb
\end{gather}
defines the decomposing boundary relation $\G:(\LI)^2\to (\hat H\oplus H)\oplus
H $ for $\Tma$.
\end{corollary}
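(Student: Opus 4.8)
The plan is to obtain Corollary~\ref{cor5.6} as a degenerate instance of Theorem~\ref{th5.3}. First I would note that, under the hypothesis $N_+=\nu_+$ and $N_-=\nu_-$, formula \eqref{5.38} of Proposition~\ref{pr5.4} forces the indices of inertia of the boundary form $[\cd,\cd]_b$ to vanish: $\nu_{b+}=\nu_{b-}=0$. By Lemma~\ref{lem5.1} this means that one may take $\cH_b=\hat\cH_b=\{0\}$ and that the corresponding map $\G_b$ is the zero map; equivalently, by Lemma~\ref{lem5.1}, 2), $\dom\cT_1=\dom\tma$ and $[y,z]_b\equiv0$ on $\dom\tma$.

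Next I would verify that Theorem~\ref{th5.3} applies. The assumption $N_+\geq N_-$ combined with $N_\pm=\nu_\pm$ reads $\nu_+\geq\nu_-$, hence $\nu_{b+}-\nu_{b-}=0\geq\nu_--\nu_+$, which is precisely the normalization \eqref{5.22}. Thus Theorem~\ref{th5.3} produces a decomposing boundary relation of the form \eqref{5.33} with the maps $\G_0',\G_1'$ constructed before that theorem. Since $\nu_{b+}-\nu_{b-}=0$ is not negative, the alternative case (iii) does not occur; we are in case (i) when $\nu_+=\nu_-$ (so that $\nu_--\nu_+=0\geq0$) and in case (ii) when $\nu_+>\nu_-$ (so that $\nu_--\nu_+<0$ and $\nu_{b+}-\nu_{b-}=0\geq0$).

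It then remains to substitute $\cH_b=\hat\cH_b=\{0\}$ and $\G_{0b}=\hat\G_b=\G_{1b}=0$ into the defining formulas. In case (i) one has $\hat H=\{0\}$ and $\cH_2=\hat\cH_b\ominus\hat H=\{0\}$, so \eqref{5.25}--\eqref{5.26} collapse to $\G_0'y=y_0(a)$, $\G_1'y=y_1(a)$ with $\cH_0=\cH_1=H=\hat H\oplus H$. In case (ii) one has $\cH_2=\hat H\oplus\hat\cH_b=\hat H$ and $\cH_1=H\oplus\cH_b=H$, so \eqref{5.28}--\eqref{5.29} collapse to $\G_0'y=\{\hat y(a),y_0(a)\}$, $\G_1'y=y_1(a)$ with $\cH_0=\hat H\oplus H$, $\cH_1=H$. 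In both cases \eqref{5.33} turns into \eqref{5.44}, and the dimension count $\dim\cH_0=\dim(\hat H\oplus H)=\nu_+=N_+$, $\dim\cH_1=\dim H=\nu_-=N_-$ is consistent with \eqref{5.34}. Hence, by Theorem~\ref{th5.3}, \eqref{5.44} defines the decomposing boundary relation $\G:(\LI)^2\to(\hat H\oplus H)\oplus H$ for $\Tma$.

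The only genuine work lies in this last paragraph: keeping track of the identifications of $\cH_2,\cH_0,\cH_1$ and checking that all terms involving $\G_b$, $\hat\G_b$, $\hat y_b(a)$, $\G_{0b}$ and $\G_{1b}$ indeed disappear. There is no analytic obstacle, since Theorem~\ref{th5.3} already supplies the Green identity \eqref{3.7} and the maximality; one need only be mindful that in case (i) the space $\hat H$ reduces to $\{0\}$, so the target $(\hat H\oplus H)\oplus H$ degenerates to $H\oplus H$, in agreement with \eqref{5.44}.
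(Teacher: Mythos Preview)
Your proposal is correct and follows essentially the same route as the paper: deduce $\nu_{b+}=\nu_{b-}=0$ from Proposition~\ref{pr5.4}, conclude $\cH_b=\hat\cH_b=\{0\}$, and then specialize the general construction of Theorem~\ref{th5.3} to obtain \eqref{5.44}. The paper's proof is terser, citing only the case~(ii) formulas \eqref{5.28}--\eqref{5.29}, whereas you carefully distinguish between case~(i) (when $\nu_+=\nu_-$, so $\hat H=\{0\}$) and case~(ii) (when $\nu_+>\nu_-$); this extra care is harmless and arguably more precise, since strictly speaking $\nu_--\nu_+=0$ falls under case~(i), but the two cases coincide once $\hat H$ is trivial.
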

\begin{proof} Since $\nu_{b+}=\nu_{b-}=0$, it follows from \eqref{5.17} that
$\cH_b=\hat\cH_b=\{0\}$. Combining this equalities with \eqref{5.28},
\eqref{5.29} and \eqref{5.33} we obtain the representation \eqref{5.44} for
$\G$.
\end{proof}
\subsection{Boundary conditions for definite systems}
As is known (see for instance \cite{LesMal03})the maximal operator $\Tma$
induced by the definite system \eqref{4.1} possesses the following property:
for each $\{\wt y, \wt f\}\in \Tma$ there exists a unique function $y\in\AC$
such that $y\in\wt y$ and $\{y,f\}\in\tma$ for each $f\in\wt f$. Bellow,
without any additional comments, we associate such a function $y\in\AC$ with
each pair $\{\wt y, \wt f\}\in \Tma$.
\begin{theorem}\label{th5.9}
Let under the conditions of Theorem \ref{th5.3} the canonical system
\eqref{4.1} be definite. Then:

1) The operators $\G_j:\Tma\to\cH_j, \; j\in \{0,1\}$ given by
\begin{gather}\label{5.52}
\G_0\{\wt y, \wt f\}=\G_0'y, \qquad \G_1\{\wt y, \wt f\}=\G_1'y, \qquad \{\wt
y, \wt f\}\in\Tma
\end{gather}
form the boundary triplet $\Pi=\bta$ for $\Tma$.

In the case of minimal deficiency indices $n_+=\nu_+$ and $n_-=\nu_-$ one has
$\cH_0=\hat H\oplus H, \; \cH_1=H$ and the equality \eqref{5.52} takes the form
\begin{gather}\label{5.52a}
\G_0\{\wt y, \wt f\}= \{\hat y(a), \, y_0(a)\}(\in\hat H\oplus H), \quad
\G_1\{\wt y, \wt f\}= y_1(a)(\in H), \quad \{\wt y, \wt f\} \in \Tma.
\end{gather}

2) If $n_+=n_-$, then the statement 2) of Corollary \ref{cor5.5} holds and the
decomposing boundary relation \eqref{5.42} turns into the boundary  triplet
$\Pi=\{\cH,\G_0, \G_1\}$ for $\Tma$ with $\cH=H\oplus\hat H \oplus \cH_b$ and
the operators $\G_j: \Tma\to \cH$ given by
\begin{gather}
\G_0 \{\wt y, \wt f\} =\{ y_0(a),\, \tfrac i {\sqrt 2}\d (\hat y(a)-\hat\G_b
y),\,\G_{0b}y \} (\in H\oplus\hat H \oplus \cH_b),\qquad\qquad\qquad\quad\label{5.53}\\
\G_1 \{\wt y, \wt f\} = \{ y_1(a),\, \tfrac 1 {\sqrt 2} (\hat y(a)+\hat\G_b
y),\,-\G_{1b}y \}(\in H\oplus\hat H \oplus \cH_b), \quad \{\wt y, \wt f\} \in
\Tma, \label{5.54}
\end{gather}
with $\d=\sign (\nu_--\nu_+)$. In the case of the regular system \eqref{4.1}
one can put $\cH= H\oplus\hat H \oplus H$ and
\begin{gather}
\G_0 \{\wt y, \wt f\}=\{ y_0(a),\, \tfrac i {\sqrt 2}\d (\hat y(a)-\hat
y(b)),\, y_0(b) \}(\in H\oplus\hat H \oplus H),\qquad\qquad\qquad \label{5.55} \\
\G_1 \{\wt y, \wt f\}= \{ y_1(a),\, \tfrac 1 {\sqrt 2}(\hat y(a)+\hat
y(b)),\,-y_1(b) \}(\in H\oplus\hat H \oplus H), \quad \{\wt y, \wt f\} \in
\Tma.\label{5.56}.
\end{gather}
\end{theorem}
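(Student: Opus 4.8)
The plan is to deduce everything from Theorem \ref{th5.3} together with Corollary \ref{cor3.5}, 2). First I would note that since the canonical system \eqref{4.1} is definite, the null manifold is $\cN=\{0\}$, whence $k_\cN=0$. Formula \eqref{5.35.1} then shows that the multivalued part of the decomposing boundary relation $\G$ from Theorem \ref{th5.3} is $\mul\G=\G'\cN=\{0\}$. Applying Corollary \ref{cor3.5}, 2) with $A:=\Tmi$ immediately yields that $\G_0:=P_{\cH_0\oplus\{0\}}\G$ and $\G_1:=P_{\{0\}\oplus\cH_1}\G$ are (single-valued) operators and that $\Pi=\bta$ is a boundary triplet for $\Tma(=\Tmi^*)$.

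Next I would identify the action of $\G_0$ and $\G_1$. For the definite system the map $\wt\pi\up\dom\tma$ is injective (this is precisely the property recalled just before the theorem: each $\{\wt y,\wt f\}\in\Tma$ arises from a unique $y\in\AC$ with $y\in\wt y$ and $\{y,f\}\in\tma$). Combining this with the defining formula \eqref{5.33} of $\G$ and the fact that $\G_0',\G_1'$ depend only on $y$ (not on $f$), one reads off $\G_0\{\wt y,\wt f\}=\G_0'y$, $\G_1\{\wt y,\wt f\}=\G_1'y$, i.e.\ \eqref{5.52}, with $\G_0',\G_1'$ given by the case-dependent formulas \eqref{5.25}--\eqref{5.32}. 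In the case of minimal formal deficiency indices $n_+=\nu_+$, $n_-=\nu_-$ one has $\nu_{b+}=\nu_{b-}=0$ (note $n_\pm=N_\pm$ here by Corollary \ref{cor4.18}, since $k_\cN=0$, and $N_+\geq N_-$ follows from \eqref{5.22}), so Corollary \ref{cor5.6} applies: $\G$ already has the explicit form \eqref{5.44}, and substituting into \eqref{5.52} gives \eqref{5.52a} with $\cH_0=\hat H\oplus H$ and $\cH_1=H$.

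For part 2), assuming $n_+=n_-$ (equivalently $N_+=N_-$, again via $n_\pm=N_\pm$), I would invoke Corollary \ref{cor5.5}: condition \eqref{5.40} holds, there is a surjective map $\G_b$ of the form \eqref{5.41} satisfying \eqref{5.15} with $\d_b=\d$, and the decomposing boundary relation is given by \eqref{5.42}. Since $\mul\G=\{0\}$ by the first part, this boundary relation is in fact the boundary triplet $\{\cH,\G_0,\G_1\}$ with $\cH=H\oplus\hat H\oplus\cH_b$, where $\G_0$ and $\G_1$ are obtained by reading off the two rows of \eqref{5.42}, i.e.\ exactly \eqref{5.53} and \eqref{5.54}. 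In the regular case one substitutes $\cH_b=H$ and $\G_by=\{y_0(b),\hat y(b),y_1(b)\}$ as in Corollary \ref{cor5.5}, turning \eqref{5.42} into \eqref{5.43} and hence \eqref{5.53}, \eqref{5.54} into \eqref{5.55}, \eqref{5.56}.

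The only genuinely delicate point is the passage from the boundary relation $\G$ on $(\LI)^2$ to honest operators $\G_j$ acting on $\Tma$: one must be sure that $\mul\G=\{0\}$ forces $\G$ to be the graph of $(\G_0\ \ \G_1)^\top$ on all of $\Tma$, and that this operator is well defined on equivalence classes $\{\wt y,\wt f\}$. Both facts are already packaged — the first in Corollary \ref{cor3.5}, 2), the second in the definiteness of $\wt\pi\up\dom\tma$ — so no new argument is needed, and the remainder is just bookkeeping of the explicit formulas \eqref{5.25}--\eqref{5.32}, \eqref{5.42}, \eqref{5.44}.
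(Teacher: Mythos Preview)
Your argument is correct and follows essentially the same route as the paper: use \eqref{5.35.1} with $\cN=\{0\}$ to get $\mul\G=\{0\}$, invoke Corollary \ref{cor3.5}, 2) to turn the decomposing boundary relation into a boundary triplet, and then specialize via Corollaries \ref{cor5.6} and \ref{cor5.5} for the minimal-deficiency and equal-deficiency cases respectively. Your additional remarks on well-definedness (injectivity of $\pi\up\dom\tma$ in the definite case) and on why $N_+\geq N_-$ under \eqref{5.22} with $\nu_{b\pm}=0$ simply make explicit what the paper leaves implicit.
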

\begin{proof}
1) Let $\G$ be the decomposing boundary relation \eqref{5.33} for $\Tma$. Then
by \eqref{5.35.1} $\mul \G=\{0\}$ and Corollary \ref{cor3.5}, 2) implies that
the operators \eqref{5.52} form the boundary triplet $\Pi=\bta$ for $\Tma$.
Moreover, in the case $n_\pm=\nu_\pm$ the equality \eqref{5.44} gives
\eqref{5.52a}.

The statement 2) of the theorem follows from Corollary \ref{cor5.5}, 2).
\end{proof}
In the sequel the boundary triplet $\Pi=\bta$ defined in Theorem \ref{th5.9},
1) will be called the decomposing boundary triplet for $\Tma$. In the case of
equal deficiency indices $n_+=n_-$ such a triplet takes the form
$\Pi=\{\cH,\G_0, \G_1\}$, where $\cH=H\oplus\hat H \oplus \cH_b$ and $\G_0, \;
\G_1$ are defined by \eqref{5.53} and \eqref{5.54}.
\begin{proposition}\label{pr5.10}
Let the minimal relation $\Tmi$ induced by the definite  system \eqref{4.1}
with the regular endpoint $a$ has equal deficiency indices $n_+=n_-$ and let
$\Pi=\bt$ be the decomposing boundary triplet \eqref{5.53}, \eqref{5.54} for
$\Tma$. Then for each operator pair (linear relation) $\t=\{(C_0,C_1);\cK\}$
given by the block representations
\begin{gather}\label{5.57}
C_0=(C_{0a}:\hat C_a:C_{0b}):H\oplus\hat H \oplus \cH_b\to\cK, \qquad
C_1=(C_{1a}:\hat C_b:C_{1b}):H\oplus\hat H \oplus \cH_b\to\cK
\end{gather}
the equality (the boundary conditions)
\begin{gather}\label{5.58}
\wt A=\{\{\wt y, \wt f\}\in\Tma:\,C_{0a}y_0(a)+\hat C_a \hat y(a)+
C_{1a}y_1(a)+C_{0b}\G_{0b}y+\hat C_b \hat \G_b y + C_{1b}\G_{1b}y=0 \}
\end{gather}
defines a proper extension $\wt A$ of $\Tmi$ and, conversely, for each such an
extension there is a unique  admissible  operator pair (linear relation)
$\t=\{(C_0,C_1);\cK\}$ given by \eqref{5.57} and such that \eqref{5.58} holds.
Moreover, the extension \eqref{5.58} is maximal dissipative, maximal
accumulative or self-adjoint if and only if the operator pair (linear relation)
$\wt\t=\{(\wt C_0,\wt C_1);\cK\}$ with
\begin{gather}\label{5.59}
\wt C_0=(C_{0a}: -\tfrac {i\d}{\sqrt 2} (\hat C_a-\hat C_b):C_{0b}), \qquad \wt
C_1=(C_{1a}: \tfrac 1 {\sqrt 2} (\hat C_a+\hat C_b): -C_{1b})
\end{gather}
is maximal dissipative, maximal accumulative or self-adjoint respectively.
\end{proposition}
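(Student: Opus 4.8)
The plan is to reduce the statement to Proposition \ref{pr2.5} by recasting the boundary condition \eqref{5.58} as an abstract boundary condition $\wt C_0\G_0\hat f+\wt C_1\G_1\hat f=0$ for the decomposing boundary triplet $\Pi=\bt$ of Theorem \ref{th5.9}, 2). Under the present hypotheses ($n_+=n_-$ and definiteness) this triplet has $\cH=H\oplus\hat H\oplus\cH_b$ and $\G_0,\G_1$ given by \eqref{5.53}, \eqref{5.54}; moreover $\cH_0=\cH_1=\cH$, so that $Dis(\cH,\cH)$, $Ac(\cH,\cH)$ and $Self(\cH,\cH)$ coincide with the usual classes $Dis(\cH)$, $Ac(\cH)$ and $Self(\cH)$ (see Remark \ref{rem2.6}). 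By definiteness, for $\hat f=\{\wt y,\wt f\}\in\Tma$ the function $y\in\AC$ with $y\in\wt y$, $\{y,f\}\in\tma$ is unique, hence the ``raw'' boundary data
\begin{gather*}
r(\hat f):=\bigl(\{y_0(a),\,\hat y(a),\,\G_{0b}y\},\ \{y_1(a),\,\hat\G_b y,\,\G_{1b}y\}\bigr)\in\cH\oplus\cH
\end{gather*}
is a well-defined bounded function of $\hat f$, and, in view of the block forms \eqref{5.57}, condition \eqref{5.58} says exactly that $r(\hat f)\in\t$, where $\t=\{(C_0,C_1);\cK\}$ is the linear relation associated with the pair \eqref{5.57} via \eqref{2.2}.

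Next I would compare $r(\hat f)$ with the pair $(\G_0\hat f,\G_1\hat f)\in\cH\oplus\cH$. Reading off \eqref{5.53}, \eqref{5.54} one gets $(\G_0\hat f,\G_1\hat f)=V r(\hat f)$, where $V\in[\cH\oplus\cH]$ is the operator which is the identity on the $y_0(a)$-, $y_1(a)$- and $\G_{0b}y$-components, equals $-I_{\cH_b}$ on the $\G_{1b}y$-component, and acts on the two $\hat H$-components ($\hat y(a)$ and $\hat\G_b y$) by the matrix $\bigl(\begin{smallmatrix}\tfrac{i\d}{\sqrt2}&-\tfrac{i\d}{\sqrt2}\\ \tfrac1{\sqrt2}&\tfrac1{\sqrt2}\end{smallmatrix}\bigr)$, whose determinant is $i\d\neq0$. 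Hence $V$ is boundedly invertible, its $\hat H$-block having inverse $\bigl(\begin{smallmatrix}-\tfrac{i\d}{\sqrt2}&\tfrac1{\sqrt2}\\ \tfrac{i\d}{\sqrt2}&\tfrac1{\sqrt2}\end{smallmatrix}\bigr)$ (where $\d^2=1$ is used). Therefore \eqref{5.58} is equivalent to $(\G_0\hat f,\G_1\hat f)\in V\t=:\wt\t$, and computing the operator-pair form of $V\t$ --- i.e. substituting $r(\hat f)=V^{-1}(\G_0\hat f,\G_1\hat f)$ into \eqref{5.58} and collecting the $\G_0$- and $\G_1$-terms --- yields precisely $\wt\t=\{(\wt C_0,\wt C_1);\cK\}$ with $\wt C_0,\wt C_1$ as in \eqref{5.59}. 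This sign-and-factor bookkeeping is the only genuine computation in the proof and is where care is needed.

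Finally, since $V$ is a homeomorphism of $\cH\oplus\cH$, the map $\t\mapsto\wt\t=V\t$ is a bijection of $\C(\cH)$ onto itself taking closed relations to closed relations, and $\ran(\wt C_0:\wt C_1)=\ran\bigl((C_0:C_1)V^{-1}\bigr)=\ran(C_0:C_1)$, so $\t$ is admissible if and only if $\wt\t$ is. By Proposition \ref{pr2.5}, 2) applied with $A=\Tmi$, the extensions $A_{\wt\t}=\{\hat f\in\Tma:\wt C_0\G_0\hat f+\wt C_1\G_1\hat f=0\}$ run exactly once over all proper extensions of $\Tmi$ as $\wt\t$ runs over $\C(\cH)$; transporting this through the bijection $\t\leftrightarrow\wt\t$ together with the equivalence just established shows that $\wt A$ in \eqref{5.58} is a proper extension of $\Tmi$ and that every proper extension admits a unique representation \eqref{5.58}. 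By Proposition \ref{pr2.5}, 3), $A_{\wt\t}$ is maximal dissipative, maximal accumulative or self-adjoint if and only if $\wt\t\in Dis(\cH)$, $Ac(\cH)$ or $Self(\cH)$ respectively, which is the remaining assertion. The main obstacle is thus not conceptual but the accurate determination of the change-of-parameter map $V$ (and of $V^{-1}$) from the somewhat involved formulas \eqref{5.53}, \eqref{5.54} and \eqref{5.57}.
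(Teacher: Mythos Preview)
Your proposal is correct and follows essentially the same route as the paper: rewrite the boundary condition \eqref{5.58} as $\wt C_0\G_0\hat f+\wt C_1\G_1\hat f=0$ for the decomposing triplet \eqref{5.53}, \eqref{5.54}, and then invoke Proposition~\ref{pr2.5}. The paper compresses this into a single line (``It follows from \eqref{5.53} and \eqref{5.59} that \eqref{5.58} can be written as $\wt C_0\G_0+\wt C_1\G_1=0$''), whereas you make the linear change of variables $V$ and its inverse explicit; the computation you flag as ``the only genuine computation'' is precisely what the paper leaves to the reader.
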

\begin{proof}
It follows from \eqref{5.53} and \eqref{5.59} that the boundary conditions
\eqref{5.58} can be written as
\begin{gather}\label{5.59a}
\wt A=\{\{\wt y, \wt f\}\in\Tma:\, \wt C_0 \G_0 \{\wt y, \wt f\}+\wt C_1 \G_1
\{\wt y, \wt f\}=0 \}.
\end{gather}
This and Proposition \ref{pr2.5} yield the desired statements.
\end{proof}
In the following corollary we give a somewhat different description of proper
extensions $\wt A\in Ext_{\Tma} $.
\begin{corollary}\label{cor5.10a}
Assume that $a$ is a regular endpoint for the definite canonical system
\eqref{4.1} and $n_+=n_-=:m$. Let $\bH_b$ be a Hilbert space, let $J_b\in
[\bH_b] $ be a signature operator and let $\G_b:\dom\tma \to \bH_b$ be a
surjective linear map such that \eqref{5.16b} holds (according to Lemma
\ref{lem5.1} such $\bH_b,\; J_b$ and $\G_b$ exist and $\dim \bH_b=\nu_{b+}+
\nu_{b+}$). Moreover, let $\cK$ be a Hilbert space with $\dim\cK=m$, let
$C_a\in [\bH,\cK]$ and $C_b\in [\bH_b,\cK] $ be operators such that $\ran
(C_a:\,C_b)=\cK$ and let $\wt A\in Ext _{\Tmi}$ be an extension given by
\begin{gather}\label{5.59b}
\wt A=\{\{\wt y, \wt f\}\in\Tma: \, C_a y(a)+C_b \G_b y=0\}.
\end{gather}
Then $\wt A$ is maximal dissipative, maximal accumulative or self-adjoint if
and only if
\begin{gather}\label{5.59c}
i(C_a J C_a^*-C_b J_b C_b^*)\leq 0, \quad i(C_a J C_a^*-C_b J_b C_b^*)\geq 0
\;\;\;\text{or} \;\;\; C_a J C_a^*=C_b J_b C_b^*
\end{gather}
respectively.
\end{corollary}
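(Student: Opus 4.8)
The plan is to deduce the statement from Proposition~\ref{pr5.10} (and thereby from the decomposing boundary triplet of Theorem~\ref{th5.9}) after reducing the abstract data $(\bH_b,J_b,\G_b)$ to canonical form. Since the kernel of the skew-Hermitian form $[\cd,\cd]_b$ on $\dom\tma$ is $\dom\cT_1$ (see the discussion preceding Lemma~\ref{lem5.1}), and $\G_b$ is surjective with $J_b$ invertible, \eqref{5.16b} forces $\ker\G_b=\dom\cT_1$; hence $\G_b$ descends to an isomorphism of $\dom\tma/\dom\cT_1$ onto $\bH_b$ intertwining $[\cd,\cd]_b$ with $(J_b\cd,\cd)$, so that the self-adjoint involution $iJ_b$ on $\bH_b$ has the same indices of inertia as the Hermitian form $i[\cd,\cd]_b$. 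Using \eqref{5.7a} and (since $n_+=n_-$) condition \eqref{5.40}, one checks that the canonical signature operator $J_b^{(0)}$ of the form \eqref{5.16a} with $\d_b=\d$, acting on $\bH_b^{(0)}:=\cH_b\oplus\hat H\oplus \cH_b$ with $\dim\cH_b=\min\{\nu_{b+},\nu_{b-}\}$ and $\hat\cH_b=\hat H$, produces the same indices of inertia and the same dimension; hence there is a Hilbert space unitary $W:\bH_b\to\bH_b^{(0)}$ with $WJ_bW^*=J_b^{(0)}$. Replacing $\G_b$ by $W\G_b$ and $C_b$ by $C_bW^*$ leaves the extension \eqref{5.59b} and the operator $C_bJ_bC_b^*=(C_bW^*)J_b^{(0)}(C_bW^*)^*$ unchanged and preserves $\ran(C_a:\,C_b)=\cK$, so we may assume from the start that $\G_b=(\G_{0b}:\,\hat\G_b:\,\G_{1b})^\top:\dom\tma\to\cH_b\oplus\hat H\oplus\cH_b$ is a canonical decomposing boundary map as in Corollary~\ref{cor5.5}, 2) and $J_b$ is given by \eqref{5.16a} with $\d_b=\d$.

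Now decompose $C_a=(C_{0a}:\,\hat C_a:\,C_{1a})$ with respect to $\bH=H\oplus\hat H\oplus H$ and $C_b=(C_{0b}:\,\hat C_b:\,C_{1b})$ with respect to $\bH_b=\cH_b\oplus\hat H\oplus\cH_b$. Since $y(a)=\{y_0(a),\hat y(a),y_1(a)\}$ and $\G_b y=\{\G_{0b}y,\hat\G_b y,\G_{1b}y\}$, the boundary condition $C_ay(a)+C_b\G_b y=0$ in \eqref{5.59b} is exactly \eqref{5.58} for the operator pair $\t=\{(C_0,C_1);\cK\}$ with $C_0=(C_{0a}:\,\hat C_a:\,C_{0b})$ and $C_1=(C_{1a}:\,\hat C_b:\,C_{1b})$; this pair is admissible because its columns are a rearrangement of those of $(C_a:\,C_b)$, whose span is $\cK$. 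Thus $\wt A$ of \eqref{5.59b} equals the extension \eqref{5.58}, and Proposition~\ref{pr5.10} shows that $\wt A$ is maximal dissipative, maximal accumulative or self-adjoint if and only if the pair $\wt\t=\{(\wt C_0,\wt C_1);\cK\}$ of \eqref{5.59} has the same property. The pair $\wt\t$ is admissible as well, since $\ran(\hat C_a-\hat C_b)+\ran(\hat C_a+\hat C_b)=\ran\hat C_a+\ran\hat C_b$ yields $\ran(\wt C_0:\,\wt C_1)=\ran(C_a:\,C_b)=\cK$; moreover $\dim\cK=m=\dim\cH$, because $\dim\cH=\dim H+\dim\hat H+\dim\cH_b=\max\{\nu_+,\nu_-\}+\min\{\nu_{b+},\nu_{b-}\}=n_\pm$ by \eqref{5.3a}, \eqref{5.17} and Proposition~\ref{pr5.4}. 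Hence Proposition~\ref{pr2.2}, 2) applies and gives $\wt\t\in Dis(\cH)\Leftrightarrow\im(\wt C_1\wt C_0^*)\geq0$, $\wt\t\in Ac(\cH)\Leftrightarrow\im(\wt C_1\wt C_0^*)\leq0$ and $\wt\t\in Self(\cH)\Leftrightarrow\im(\wt C_1\wt C_0^*)=0$.

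It remains to identify $\im(\wt C_1\wt C_0^*)$ with the operator occurring in \eqref{5.59c}. Substituting \eqref{5.59} and computing directly,
\begin{gather*}
2\,\im(\wt C_1\wt C_0^*)=2\,\im(C_{1a}C_{0a}^*)-2\,\im(C_{1b}C_{0b}^*)+\d\,(\hat C_a\hat C_a^*-\hat C_b\hat C_b^*).
\end{gather*}
On the other hand, writing out $J$ as in \eqref{5.1} and $J_b$ as in \eqref{5.16a} (with $\d_b=\d$) in the same block decompositions gives $C_aJC_a^*=2i\,\im(C_{1a}C_{0a}^*)+i\d\,\hat C_a\hat C_a^*$ and $C_bJ_bC_b^*=2i\,\im(C_{1b}C_{0b}^*)+i\d\,\hat C_b\hat C_b^*$, whence $i\bigl(C_aJC_a^*-C_bJ_bC_b^*\bigr)=-2\,\im(\wt C_1\wt C_0^*)$. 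Combining this with the equivalences of the previous paragraph, $\wt A$ is maximal dissipative iff $i(C_aJC_a^*-C_bJ_bC_b^*)\leq0$, maximal accumulative iff $i(C_aJC_a^*-C_bJ_bC_b^*)\geq0$, and self-adjoint iff $C_aJC_a^*=C_bJ_bC_b^*$, which is precisely \eqref{5.59c}.

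The main obstacle is the reduction in the first paragraph: one must verify that the map relating the given $\G_b$ to a canonical decomposing map can be chosen to be a genuine Hilbert space unitary $W$, since only then is $C_bJ_bC_b^*$ preserved; a mere $(J_b,J_b^{(0)})$-isometry of Krein spaces would not be enough. This is exactly the content of the equality of the indices of inertia of $(iJ_b\cd,\cd)_{\bH_b}$ and $(iJ_b^{(0)}\cd,\cd)_{\bH_b^{(0)}}$, which follows from \eqref{5.16b}, \eqref{5.40} and the structure of \eqref{5.1} and \eqref{5.16a}. Everything else is either a direct appeal to the results already established for the canonical decomposing boundary triplet or a routine block computation.
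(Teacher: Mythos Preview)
Your proof is correct and follows essentially the same route as the paper: reduce to the canonical form of $\G_b$ and $J_b$, rewrite \eqref{5.59b} as \eqref{5.58}, apply Proposition~\ref{pr5.10} together with Proposition~\ref{pr2.2},\,2), and compute $2\,\im(\wt C_1\wt C_0^*)=i(C_bJ_bC_b^*-C_aJC_a^*)$. The paper simply asserts the ``without loss of generality'' reduction, whereas you justify it via the unitary $W$ intertwining the signature operators; your additional verification that $\dim\cK=\dim\cH$ is also a point the paper leaves implicit.
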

\begin{proof}
Assume without loss of generality that $\bH_b=\cH_b\oplus \hat H \oplus\cH_b $
and the operators $J_b$ and $\G_b$ are of the form \eqref{5.16a} and
\eqref{5.13} respectively (with $\hat H$ in place of $\hat\cH_b$). Then
according to Theorem \ref{th5.9} the Hilbert space $\cH=H\oplus \hat H\oplus
\cH_b$ and the operators \eqref{5.53}, \eqref{5.54} form the (decomposing)
boundary triplet $\Pi=\bt$ for $\Tma$. Next assume that
\begin{gather*}
C_a=(C_{0a}: \hat C_a :C_{1a}):H\oplus\hat H \oplus H\to\cK, \quad
C_b=(C_{0b}:\hat C_b: C_{1b}):\cH_b\oplus\hat H \oplus \cH_b\to\cK
\end{gather*}
are the block representations of $C_a$ and $C_b$ and let $\wt C_0$ and $\wt
C_1$ be given by \eqref{5.59}. Then \eqref{5.59b} can be  written as
\eqref{5.58} and according to Proposition \ref{pr5.10} $\wt A$ is maximal
dissipative, maximal accumulative or self-adjoint if and only if the operator
pair $\t==\{(\wt C_0,\wt C_1);\cK\}$ belongs to the same class. The immediate
calculations show that
\begin{gather*}
2\im (\wt C_1 \wt C_0^*)=i(C_b J_b C_b^*-C_a J C_a^*).
\end{gather*}
Moreover, since $\ran (C_a:\,C_b)=\cK$, it follows that the operator pair $(\wt
C_0:\,\wt C_1 )$ is admissible. Applying now Proposition \ref{pr2.2}, 2) we
arrive at the required statement.
 \end{proof}
\begin{definition}\label{def5.11}
The boundary conditions \eqref{5.58} are said to be separated  if there exists
a decomposition $\cK=\cK_a\oplus\cK_b$ such that the operators \eqref{5.57} are
\begin{gather}
C_0=\begin{pmatrix} N_{0a} & \hat N_a & 0 \cr 0 & 0 & N_{0b}
\end{pmatrix}:H\oplus\hat H \oplus \cH_b\to\cK_a\oplus\cK_b\label{5.60}\\
C_1=\begin{pmatrix} N_{1a} & 0 & 0 \cr 0 & \hat N_b & N_{1b}
\end{pmatrix}:H\oplus\hat H \oplus \cH_b\to\cK_a\oplus\cK_b\label{5.61}
\end{gather}
and, consequently, the equality \eqref{5.58} takes the form
\begin{gather}\label{5.62}
\wt A=\{\{\wt y, \wt f\}\in\Tma:\,N_{0a}y_0(a)+\hat N_a \hat y(a)+
N_{1a}y_1(a)=0, \;\; N_{0b}\G_{0b}y+\hat N_b \hat \G_b y + N_{1b}\G_{1b}y=0 \}.
\end{gather}

The separated boundary conditions \eqref{5.62} will by called maximal
dissipative, maximal accumulative or self-adjoint if they define the extension
$\wt A $ of the corresponding class.
\end{definition}
With the separated boundary conditions \eqref{5.62} we associate the operators
\begin{gather}
S_a=\im (N_{1a}N_{0a}^*)+\tfrac 1 2 \hat N_a \hat N_a^*, \qquad  S_b=\im
(N_{1b}N_{0b}^*)+\tfrac 1 2 \hat N_b \hat N_b^*,\label{5.63}\\
\wt N_a=(N_{0a}-i N_{1a}\, :\, -i\sqrt{2} \hat N_a):H\oplus\hat H\to \cK_a,
\quad \wt N_b=(i\sqrt{2} \hat N_b\, :\,N_{0b}-i N_{1b} ): \hat H\oplus
\cH_b\to\cK_b.\label{5.64}
\end{gather}
\begin{theorem}\label{th5.12}
Let for simplicity $\nu_-\geq \nu_+$ and let the assumptions of Proposition
\ref{pr5.10} be satisfied. Then: 1) the separated boundary conditions defined
by \eqref{5.60}--\eqref{5.62} are maximal dissipative if and only if
\begin{gather}\label{5.65}
S_a\geq 0, \quad S_b\leq 0 \;\;\; \emph {and} \;\;\; 0\in \rho (\wt N_a)\cap
\rho (N_{0b}+i N_{1b}),
\end{gather}
in which case the following equalities hold
\begin{gather}\label{5.65a}
\dim\cK_a=\nu_-, \qquad \dim\cK_b=\nu_{b-}.
\end{gather}
The same boundary conditions are maximal accumulative if and only if
\begin{gather}\label{5.66}
S_a\leq 0, \quad S_b\geq 0 \;\;\; \emph {and} \;\;\; 0\in \rho (N_{0a}+i
N_{1a})\cap\rho(\wt N_b),
\end{gather}
in which  case
\begin{gather}\label{5.66a}
\dim\cK_a=\nu_+, \qquad \dim\cK_b=\nu_{b+}
\end{gather}
(here $\cK_a$ and $\cK_b$ are Hilbert spaces from \eqref{5.60} and
\eqref{5.61}).

2) self-adjoint separated boundary conditions  exist if and only if
$\nu_-=\nu_+$ or, equivalently, if and only if $\bH=H\oplus H $ and the
operator $J$ in \eqref{4.1} is
\begin{gather*}
J=\begin{pmatrix} 0 & -I_H \cr I_H & 0 \end{pmatrix}:H\oplus H\to H\oplus H.
\end{gather*}
If this condition is satisfied, then:

(i) the decomposing boundary triplet \eqref{5.53}, \eqref{5.54} takes the form
$\Pi=\bt$, where $\cH=H\oplus\cH_b$ and the operators $\G_j, \; j\in\{0,1\}$
are given by
\begin{gather}\label{5.67}
\G_0 \{\wt y,\wt f\}=\{y_0(a), \G_{0b}y\}(\in H\oplus\cH_b), \quad \G_1 \{\wt
y,\wt f\}=\{y_1(a), -\G_{1b}y\}(\in H\oplus\cH_b), \quad\{\wt y,\wt f\}\in\Tma
\end{gather}

(ii) the general form of  self-adjoint separated  boundary conditions is
\begin{gather}\label{5.68}
\wt A=\{\{\wt y, \wt f\}\in\Tma:\,N_{0a}y_0(a)+ N_{1a}y_1(a)=0, \;\;
N_{0b}\G_{0b}y+ N_{1b}\G_{1b}y=0 \},
\end{gather}
where the operators $N_{ja}\in [H,\cK_a]$ and $N_{jb}\in [\cH_b,\cK_b], \;
j\in\{0,1\}$ are components of  self-adjoint operator pairs $\t_a=\{(N_{0a},
N_{1a} ); \cK_a\}$ and $\t_b=\{(N_{0b}, N_{1b} ); \cK_b\}$.
\end{theorem}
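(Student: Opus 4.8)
The plan is to deduce the whole statement from Proposition \ref{pr5.10} together with the operator‐pair descriptions \eqref{2.2.1}--\eqref{2.2.4} of the classes $Dis(\cH),Ac(\cH),Self(\cH)$ and the fact that, by definition, $Self(\cH)=Dis(\cH)\cap Ac(\cH)$ (see Definition \ref{def2.1}). Since $n_+=n_-$, Corollary \ref{cor5.5}, 1) gives $\nu_{b+}-\nu_{b-}=\nu_--\nu_+$, which is $\geq 0$ by the hypothesis $\nu_-\geq\nu_+$; hence $\d:=\sign(\nu_--\nu_+)\in\{0,1\}$, and by \eqref{5.3a} and \eqref{5.17}
\begin{gather*}
\dim H=\nu_+,\quad \dim\hat H=\nu_--\nu_+,\quad \dim\cH_b=\nu_{b-},\quad \dim\hat\cH_b=\nu_{b+}-\nu_{b-}.
\end{gather*}
First I would substitute the separated block forms \eqref{5.60}, \eqref{5.61} of $(C_0,C_1)$ into the twisted pair \eqref{5.59}, obtaining $\wt C_0,\wt C_1\in[\cH,\cK]$ (with $\cH=H\oplus\hat H\oplus\cH_b$ and $\cK=\cK_a\oplus\cK_b$) as explicit $2\times3$ operator matrices.

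The heart of the argument is the ensuing computation. One should check that, relative to the orthogonal splittings $\cH=(H\oplus\hat H)\oplus\cH_b=H\oplus(\hat H\oplus\cH_b)$ and $\cK=\cK_a\oplus\cK_b$, the three operators entering Definition \ref{def2.1} are block diagonal:
\begin{gather*}
\im(\wt C_1\wt C_0^*)=\begin{pmatrix} S_a & 0 \cr 0 & -S_b\end{pmatrix}, \qquad \wt C_0-i\wt C_1=\begin{pmatrix} \wt N_a & 0 \cr 0 & N_{0b}+iN_{1b}\end{pmatrix},\\
\wt C_0+i\wt C_1=\begin{pmatrix} N_{0a}+iN_{1a} & 0 \cr 0 & \wt N_b\end{pmatrix},
\end{gather*}
where $S_a,S_b$ and $\wt N_a,\wt N_b$ are exactly the operators \eqref{5.63}, \eqref{5.64}. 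I expect this bookkeeping to be the main (though entirely routine) obstacle: one must track the factors $i,\sqrt2$ and the sign $\d$, and in particular verify that the off-diagonal blocks, which a priori couple $\cK_a$ and $\cK_b$ through the shared space $\hat H$, cancel. When $\nu_-=\nu_+$ the spaces $\hat H$ and $\hat\cH_b$ are trivial, so $\d$ plays no role there.

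Granting this, part 1) is immediate. By Proposition \ref{pr5.10} the extension \eqref{5.62} is maximal dissipative iff $\wt\t=\{(\wt C_0,\wt C_1);\cK\}\in Dis(\cH)$, i.e. by \eqref{2.2.1} iff $\im(\wt C_1\wt C_0^*)\geq 0$ and $0\in\rho(\wt C_0-i\wt C_1)$; block diagonality (and finite-dimensionality) turn this into $S_a\geq0$, $S_b\leq0$ and $0\in\rho(\wt N_a)\cap\rho(N_{0b}+iN_{1b})$, which is \eqref{5.65}, and \eqref{2.2.2} gives the accumulative criterion \eqref{5.66} in the same way. For the dimension formulas, $0\in\rho(\wt N_a)$ forces $\dim\cK_a=\dim(H\oplus\hat H)=\nu_-$ and $0\in\rho(N_{0b}+iN_{1b})$ forces $\dim\cK_b=\dim\cH_b=\nu_{b-}$, giving \eqref{5.65a}; dually $0\in\rho(N_{0a}+iN_{1a})$ gives $\dim\cK_a=\dim H=\nu_+$ and $0\in\rho(\wt N_b)$ gives $\dim\cK_b=\dim(\hat H\oplus\cH_b)=(\nu_--\nu_+)+\nu_{b-}=\nu_{b+}$, giving \eqref{5.66a}.

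For part 2), a self-adjoint separated extension forces $\wt\t\in Self(\cH)=Dis(\cH)\cap Ac(\cH)$, hence both \eqref{5.65} and \eqref{5.66}; but then $0\in\rho(\wt N_a)$ and $0\in\rho(N_{0a}+iN_{1a})$ give $\nu_-=\dim\cK_a=\nu_+$, so existence implies $\nu_-=\nu_+$, equivalently $\bH=H\oplus H$ with $J$ in the stated Hamiltonian form. Conversely, if $\nu_-=\nu_+$ then $\d=0$, $\hat H=\{0\}$ and (Corollary \ref{cor5.5}, 1)) $\hat\cH_b=\{0\}$, so the decomposing triplet \eqref{5.53}, \eqref{5.54} collapses to \eqref{5.67} and $\wt C_0=\mathrm{diag}(N_{0a},N_{0b})$, $\wt C_1=\mathrm{diag}(N_{1a},-N_{1b})$; by \eqref{2.2.4} $\wt\t\in Self(\cH)$ iff $\im(N_{1a}N_{0a}^*)=\im(N_{1b}N_{0b}^*)=0$, $0\in\rho(N_{0a}-iN_{1a})\cap\rho(N_{0a}+iN_{1a})$ and $0\in\rho(N_{0b}-iN_{1b})\cap\rho(N_{0b}+iN_{1b})$, i.e. iff $\t_a=\{(N_{0a},N_{1a});\cK_a\}\in Self(H)$ and $\t_b=\{(N_{0b},N_{1b});\cK_b\}\in Self(\cH_b)$, which is \eqref{5.68}. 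Since self-adjoint pairs $\t_a,\t_b$ exist, this also yields existence, completing part 2). Finally, the hypotheses $0\in\rho(\cdot)$ occurring in \eqref{5.65}, \eqref{5.66}, \eqref{5.68} already entail $\ran(\wt C_0:\wt C_1)=\cK$, so Proposition \ref{pr5.10} is applicable throughout with no separate admissibility verification.
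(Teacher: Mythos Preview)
Your proposal is correct and follows essentially the same route as the paper: substitute the separated blocks \eqref{5.60}--\eqref{5.61} into \eqref{5.59}, verify that $\im(\wt C_1\wt C_0^*)=\mathrm{diag}(S_a,-S_b)$ and that $\wt C_0\mp i\wt C_1$ are block diagonal with the entries \eqref{5.64}, and then read off parts 1) and 2) from Proposition~\ref{pr5.10} together with \eqref{2.2.1}--\eqref{2.2.4}. The only cosmetic slip is writing $\d\in\{0,1\}$: in the paper $\d\in\{-1,1\}$ by \eqref{5.1}, but as you note $\hat H=\{0\}$ when $\nu_-=\nu_+$, so under the standing hypothesis $\nu_-\geq\nu_+$ one may simply take $\d=1$ throughout the computation.
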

\begin{proof}
1) Let $\wt C_0$ and $\wt C_1$ be the operators \eqref{5.59} corresponding to
the separated boundary conditions \eqref{5.62}. Then in view of \eqref{5.60}
and \eqref{5.61} one has
\begin{gather}
\wt C_0=\begin{pmatrix} N_{0a} & -\frac i {\sqrt 2} \hat N_a & 0 \cr 0 &\frac i
{\sqrt 2} \hat N_b  & N_{0b}
\end{pmatrix}:H\oplus\hat H \oplus \cH_b\to\cK_a\oplus\cK_b\label{5.69}\\
\wt C_1=\begin{pmatrix} N_{1a} & \frac 1 {\sqrt 2} \hat N_a & 0 \cr 0 & \frac 1
{\sqrt 2}\hat N_b & -N_{1b}
\end{pmatrix}:H\oplus\hat H \oplus \cH_b\to\cK_a\oplus\cK_b\label{5.70}
\end{gather}
Combining now the last statement in Proposition \ref{pr5.10} with formulas
\eqref{2.2.1} and \eqref{2.2.2} we obtain the following assertion:

(a) the boundary conditions \eqref{5.62} are maximal dissipative (resp. maximal
accumulative) if and only if $\im (\wt C_1 \wt C_0^*)\geq 0$ and $0\in\rho (\wt
C_0-i\wt C_1 )$ (resp. $\im (\wt C_1 \wt C_0^*)\leq 0$ and $0\in\rho (\wt
C_0+i\wt C_1 )$ ).

It follows from \eqref{5.69} and \eqref{5.70} that
\begin{gather*}
\wt C_1 \wt C_0^*=\begin{pmatrix} N_{1a}N_{0a}^*+\tfrac i 2 \hat N_a \hat N_a^*
& -\tfrac i 2 \hat N_a \hat N_b^* \cr \tfrac i 2 \hat N_b \hat N_a^* &
-N_{1b}N_{0b}^*-\tfrac i 2 \hat N_b \hat N_b^*
\end{pmatrix}
\end{gather*}
and, consequently, $ \im (\wt C_1 \wt C_0^*)=\text{diag}(S_a, -S_b)$. Hence the
following equivalences are valid
\begin{gather}\label{5.71}
\im (\wt C_1 \wt C_0^*)\geq 0 \iff S_a\geq 0 \;\;\text{and}\;\; S_b\leq 0;
\quad \im (\wt C_1 \wt C_0^*)\leq 0 \iff S_a\leq 0 \;\;\text{and}\;\; S_b\geq
0.
\end{gather}
Moreover, by \eqref{5.69} and \eqref{5.70} one has
\begin{gather*}
\wt C_0-i\wt C_1 =\begin{pmatrix}\wt N_a & 0 \cr 0 & N_{0b}+i
N_{1b}\end{pmatrix}:(H\oplus\hat H)\oplus \cH_b\to \cK_a\oplus\cK_b,\\
\wt C_0+i\wt C_1 =\begin{pmatrix} N_{0a}+i N_{1a} & 0 \cr 0 & \wt N_b
\end{pmatrix}:H\oplus(\hat H\oplus \cH_b)\to
\cK_a\oplus\cK_b,
\end{gather*}
which yields the equivalences
\begin{gather}\label{5.72}
0\in\rho (\wt C_0-i\wt C_1 )\Leftrightarrow 0\in \rho (\wt N_a)\cap \rho
(N_{0b}+i N_{1b}),\quad 0\in\rho (\wt C_0+i\wt C_1 )\Leftrightarrow 0\in\rho
(N_{0a}+i N_{1a}) \cap \rho (\wt N_b).
\end{gather}
Now  assertion (a) together with \eqref{5.71} and \eqref{5.72} gives the
required description of all maximal dissipative and  accumulative separated
boundary conditions by means of \eqref{5.65} and \eqref{5.66}. Moreover,
\eqref{5.65} implies that
\begin{gather*}
\dim\cK_a=\dim (H\oplus\hat H), \qquad \dim\cK_b=\dim\cH_b,
\end{gather*}
which in view of \eqref{5.3a} and \eqref{5.17} leads to \eqref{5.65a}.
Similarly one proves the equalities \eqref{5.66a}.

2) Since self-adjoint separated boundary conditions are simultaneously maximal
dissipative and maximal accumulative, it follows from \eqref{5.65a} and
\eqref{5.66a} that an existence of such conditions yields the equality
$\nu_-=\nu_+$. Moreover, if this equality is satisfied, then the general form
\eqref{5.68} of self-adjoint separated boundary conditions follows from the
statement 1) of the theorem.
\end{proof}
\begin{remark}\label{rem5.13}
1) Theorem \ref{th5.12} enables one to introduce the important class of maximal
accumulative (dissipative) separated boundary conditions, which consist of the
self-adjoint condition at the regular endpoint $a$ and  the maximal
accumulative (dissipative) condition at the point $b$. If, for instance,
$\nu_-\geq \nu_+$, then such separated conditions are defined by
\begin{gather*}
\wt A=\{\{\wt y, \wt f\}\in\Tma:\,N_{0a}y_0(a)+ N_{1a}y_1(a)=0, \;\;
N_{0b}\G_{0b}y+\hat N_b \hat \G_b y + N_{1b}\G_{1b}y=0 \},
\end{gather*}
where the operators $N_{0a}$ and  $N_{0b}$ form the self-adjoint pair
$\t_a=\{(N_{0a}, N_{1a} ); \cK_a\} $, while the operators $N_{0b}, \; N_{1b}$
and $\hat N_b$ form the maximal accumulative pair $\t_b=\{((\tfrac i {\sqrt
2}\hat N_b :N_{0b}),( \tfrac 1 {\sqrt 2}\hat N_b:-N_{1b} )); \cK_b\}$.

2) For a regular definite system \eqref{4.1} one can put in Corollary
\ref{cor5.10a} $\bH_b=\bH, \; J_b=J$ and $\G_b y=y(b), y\in\dom\tma$, in which
case this corollary gives the following well known statement \cite{GK,Orc}: the
extension $\wt A= \{\{\wt y, \wt f\}\in\Tma :\, C_a y(a)+C_b y(b)=0\}$ is
self-adjoint if and only if $C_aJC_a^*=C_bJC_b^*$. The case of the singular
endpoint $b$ under the additional assumptions $\nu_+=\nu_-$ and $\mul
\Tma=\{0\}$ was considered in the paper \cite{Kra89}, where the criterium for
self-adjointness  of the boundary condition \eqref{5.59b} in the form of the
last equality in \eqref{5.59c} was obtained. Note in this connection that our
approach based on the concept of a decomposing boundary triplet seems to be
more convenient. In particular, such an approach made it possible to describe
in Theorem \ref{th5.12} various classes of separated boundary conditions.
 \end{remark}

\end{document}